\newcommand{\n}[1]{\left\lVert#1\right\rVert}
\def\d{\mathrm{d}}
\newcommand{\var}{\mathrm{Var}}
\newcommand{\E}{\mathbb{E}}
\newcommand{\R}{\mathbb{R}}
\newcommand{\Z}{\mathbb{Z}}
\newcommand{\bT}{\mathbb{T}}
\newcommand{\C}{\mathbb{C}}
\newcommand{\M}{\mathcal{M}}
\newcommand{\hmc}{\mathrm{HMC}}
\newcommand{\gmc}{\mathrm{GMC}}
\newcommand{\N}{\mathbb{N}}
\renewcommand{\P}{\mathbb{P}}
\newcommand{\Q}{\mathbb{Q}}
\newcommand{\bn}{\mathbf{n}}
\renewcommand{\bm}{\mathbf{m}}
\newcommand{\T}{\mathcal T}
\newcommand{\A}{\mathcal A}
\newcommand{\cP}{\mathcal P}
\newcommand{\cL}{\mathcal L}
\newcommand{\ddd}{\overset{\text{d}}{=}}
\newcommand{\bone}{ {\mathbbm{1}} }
\renewcommand{\ge}{\geqslant}
\renewcommand{\le}{\leqslant}
\renewcommand{\geq}{\geqslant}
\renewcommand{\leq}{\leqslant}
\newcommand{\ee}{\varepsilon}
\theoremstyle{plain}
\newtheorem{theorem}{Theorem}[section]
\newtheorem{corollary}[theorem]{Corollary}
\newtheorem{lemma}[theorem]{Lemma}
\newtheorem{proposition}[theorem]{Proposition}
\newtheorem{definition}[theorem]{Definition}
\theoremstyle{remark}
\newtheorem{remark}{Remark}
\theoremstyle{definition}
\newtheorem{question}{Question}
\renewcommand{\cite}{\citet}
\renewcommand{\cdots}{\dots}
\newcommand{\ceil}[1]{\lceil #1\rceil}
\newcommand{\floor}[1]{\lfloor #1\rfloor}
\pgfplotsset{compat=1.18}
\begin{document}

\date{}

\title{Universality and Phase Transitions in Low Moments of Secular Coefficients of Critical Holomorphic Multiplicative Chaos}

\author{Haotian Gu\thanks{Department of Mathematics, Duke University, USA. Email: haotian.gu@duke.edu}\hspace{1cm} Zhenyuan Zhang\thanks{Department of Mathematics, Stanford University, USA. Email: zzy@stanford.edu}}

\maketitle

\begin{abstract}
We investigate the low moments $\mathbb{E}[|A_N|^{2q}],\, 0<q\leq 1$ of {secular coefficients} $A_N$ of the {critical non-Gaussian holomorphic multiplicative chaos}, i.e.~coefficients of $z^N$ in the power series expansion of $\exp(\sum_{k=1}^\infty X_kz^k/\sqrt{k})$, where $\{X_k\}_{k\geq 1}$ are i.i.d.~rotationally invariant unit variance complex random variables. Inspired by Harper's remarkable result on random multiplicative functions, Soundararajan and Zaman recently showed that if each $X_k$ is standard complex Gaussian, $A_N$ features better-than-square-root cancellation: $\mathbb{E}[|A_N|^2]=1$ and $\mathbb{E}[|A_N|^{2q}]\asymp (\log N)^{-q/2}$ for fixed $q\in(0,1)$ as $N\to\infty$.
We show that this asymptotics holds universally if $\mathbb{E}[e^{\gamma|X_k|}]<\infty$ for some $\gamma>2q$. As a consequence, we establish the universality for the tightness of the normalized secular coefficients $A_N(\log(1+N))^{1/4}$, generalizing a result of Najnudel, Paquette, and Simm. Another corollary is the almost sure regularity of some critical non-Gaussian holomorphic chaos in appropriate Sobolev spaces.
Moreover, we characterize the asymptotics of $\mathbb{E}[|A_N|^{2q}]$ for $|X_k|$ following a stretched exponential distribution with an arbitrary scale parameter, which exhibits a completely different behavior and underlying mechanism from the Gaussian universality regime. 
As a result, we unveil a double-layer phase transition around the critical case of exponential tails. 
Our proofs combine Harper's robust approach with a careful analysis of the (possibly random) leading terms in the monomial decomposition of $A_N$.
\end{abstract}
\smallskip
\noindent \textbf{Keywords.} Holomorphic multiplicative chaos; secular coefficients;  low moments; universality; phase transition 

\smallskip
\noindent \textbf{MSC 2020.} 60G60; 60G50

\tableofcontents

\section{Introduction}
For polynomials $\phi$ on the closed unit disc $\overline{\mathbb{D}}$ and a fixed constant $\vartheta>0$, the \textit{holomorphic multiplicative chaos $\hmc_\vartheta$} is defined as a random distribution (or functional) \begin{align}\label{defn-HMC}
    (\hmc_\vartheta,\phi):=\lim_{r\uparrow 1}\frac{1}{2\pi}\int_{-\pi}^{\pi}\exp\bigg(\sqrt{\vartheta}\sum_{k=1}^{\infty}\frac{X_k}{\sqrt{k}}(re^{i\theta})^k\bigg)\overline{\phi(r\theta)}\,\d\theta,
\end{align} where $\{X_k\}_{k\geq 1}$ is a sequence of i.i.d.~rotationally invariant unit variance (``standardized") complex random variables. The existence and uniqueness of the limit is straightforward for trigonometric polynomials since it is determined by the \textit{secular coefficients}  \begin{align}\label{eq:basic eq}
    A_N:= (\hmc_\vartheta,\theta\mapsto e^{iN\theta})=[z^N]\exp\bigg(\sqrt{\vartheta}\sum_{k=1}^{\infty}\frac{X_k}{\sqrt{k}}z^k\bigg),\quad N\in\N,
\end{align} where $[z^N]f(z)$ denotes the coefficient of $z^N$ in the power series expansion of $f$ around $z=0$, i.e., \[\exp\bigg(\sqrt{\vartheta}\sum_{k=1}^\infty\frac{X_k}{\sqrt{k}}z^k\bigg) = \sum_{N=0}^\infty A_N z^N.\] 
For instance, $A_2=\vartheta X_1^2/2+X_2\sqrt{\vartheta/2}$.

The case of a general $\vartheta$ where $X_k$ is standard complex Gaussian has been investigated by Najnudel, Paquette, and Simm \citep{najnudel2023secular} and more recently Najnudel, Paquette, Simm, and Truong \citep{najnudel2025fourier}, where the random measure \eqref{defn-HMC} is defined in (1.11) therein as a distributional limit of the characteristic polynomial of \textit{Circular Beta Ensemble} (C$\beta$E) inside the unit disc ($\beta=2/\vartheta$) and belongs to the Sobolev space $H^{s}$ almost surely for any $s<s(\vartheta)$, see Theorem 1.4 therein. The critical case of $\vartheta=1$ is of particular interest, due to its connections to the \textit{Circular Unitary Ensemble} (CUE) and random multiplicative functions. Moreover, the behavior of secular coefficients is more subtle in the critical case. We summarize the relevant literature in Section \ref{sec:intro-conn-rmt}.

In this work, our main focus will be the critical case $\vartheta=1$. 
If each $X_k$ is standard complex Gaussian, $\hmc_1$ first appeared in Appendix C of Saksman and Webb \citep{saksman2020riemann} and was shown to be $H^s$ for any $s<-1/2$ a.s.~while also a.s.~not $H^s$ for any $s>-1/2$ \citep{najnudel2023secular}. 
Recently, the work of Soundararajan and Zaman \citep{soundararajan2022model} studied the \textit{low moments} $\E[|A_N|^{2q}],\,q\in(0,1]$ of secular coefficients of $\hmc_1$ as a model problem for multiplicative chaos in number theory. 
By exploiting the connections to Harper's remarkable result \citep{harper2020moments} on ``better-than-square-root" cancellation for random multiplicative functions, they proved the following.\begin{theorem}[Theorem 2.1 of \citep{soundararajan2022model}]\label{thm:SZ-main-thm}
   For standard complex Gaussian variables $\{X_k\}_{k\geq 1}$ and $\vartheta=1$, uniformly in $q\in(0,1]$ and $N\geq 1$, \[\E[|A_N|^{2q}]\asymp \left(\frac{1}{1+(1-q)\sqrt{\log N}}\right)^{q}.\] 
\end{theorem}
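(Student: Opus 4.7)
The plan is to adapt Harper's approach for random multiplicative functions to the holomorphic setting. The central identity is the Fourier/Cauchy representation
\[
A_N = \frac{1}{2\pi}\int_{-\pi}^{\pi} F_N(e^{i\theta})\,e^{-iN\theta}\,\d\theta,\qquad F_N(z):=\exp\Bigl(\sum_{k=1}^{N}\frac{X_k}{\sqrt{k}}z^k\Bigr),
\]
which identifies $A_N$ with the $N$-th Fourier coefficient of the boundary value of a critical GMC-type random function. I would decompose the exponent dyadically as $F_N=\prod_{j=0}^{J}G_j$, where $G_j$ collects the frequencies $k\in(2^{j-1},2^j]$ and $J=\lfloor\log_2 N\rfloor$; the induced $\ell^{2}$-mass sequence
\[
M_j:=\frac{1}{2\pi}\int_{-\pi}^{\pi}\Big|\prod_{i\le j}G_i(e^{i\theta})\Big|^{2}\d\theta
\]
is a positive martingale whose logarithm is, by a Gaussian moment computation, approximately a random walk of increment-variance $\asymp\log 2$ per step. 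This brings in the ballot-problem machinery of critical multiplicative cascades.

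For the upper bound, I would combine the Parseval bound $|A_N|^{2}\le M_J$ with the layer-cake formula
\[
\E[|A_N|^{2q}]=q\int_0^{\infty}u^{q-1}\P(|A_N|^{2}>u)\,\d u
\]
and control $\P(M_J>u)$ through the Gaussian random-walk picture. The crucial Harper-type input is a barrier estimate showing that $\P(M_J>u)\lesssim(1+\log_{+}u)/\sqrt{\log N}$ in an appropriate intermediate range, obtained by tilting the walk to have negative drift and applying the classical ballot theorem. Combined with the Markov bound $\P(|A_N|^{2}>u)\le 1/u$ (from $\E|A_N|^{2}=1$) in the tail, and optimising the split of the layer-cake integral near $u_{*}\asymp(1+(1-q)\sqrt{\log N})^{-1}$, one recovers $(1+(1-q)\sqrt{\log N})^{-q}$; the factor $1-q$ emerges because the tail integral $\int u^{q-1}\cdot u^{-1}\,\d u$ produces a $(1-q)^{-1}$ prefactor that must be balanced against the $1/\sqrt{\log N}$ barrier gain.

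For the matching lower bound, the strategy is to construct a ``good'' event $E$ of probability $\asymp(1-q)/\sqrt{\log N}$ on which $|A_N|^{2}\gtrsim 1$ with positive conditional probability, so that
\[
\E[|A_N|^{2q}]\ge\E[|A_N|^{2q}\mathbf{1}_{E}]\gtrsim\P(E)\gtrsim(1-q)/\sqrt{\log N};
\]
the full $(1+(1-q)\sqrt{\log N})^{-q}$ then follows by combining this with the trivial floor $\E[|A_N|^{2q}]\ge c$ valid for bounded $(1-q)\sqrt{\log N}$, itself a consequence of concentration of $|A_N|^{2}$ around its mean $1$. The candidate for $E$ is a two-sided barrier forcing $(M_j)_{j\le J}$ to stay in a fixed compact window; on $E$ a conditional Paley--Zygmund inequality for the single coefficient $|A_N|^{2}$ gives the desired positive probability, provided one can bound the conditional fourth moment $\E[|A_N|^{4}\mid E]$ by a constant.

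The main obstacle is precisely this last step. Parseval only provides control on the aggregate $\ell^{2}$-mass $M_J=\sum_{n}|A_n^{(N)}|^{2}$, whereas the statement concerns one specific coefficient $|A_N|^{2}$, so one must rule out that the GMC mass is concentrated on frequencies far from $N$. This requires an equidistribution-in-frequency / local-limit estimate conditional on the barrier event, and the conditional fourth-moment bound calls for a delicate combinatorial analysis over pairs of partitions of $N$ matched against the Gaussian weight of $E$, which must moreover be carried out uniformly in $q\in(0,1]$ and in particular as $q\to 1^{-}$, where the barrier probability $(1-q)/\sqrt{\log N}$ degenerates. This uniformity is the technical heart of the proof.
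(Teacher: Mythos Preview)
There are genuine gaps in both directions.

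\textbf{Upper bound.} The Parseval bound $|A_N|^2\le M_J$ is far too crude: since $\E[|A_n|^2]=1$ for every $n$, one has $\E[M_J]\asymp N$, so $\E[M_J^q]\asymp (K/\sqrt{\log K})^q$ with $K\asymp N$, which is off by a factor of $N^q$. The paper (following Soundararajan--Zaman) repairs this by first decomposing $A_N$ according to the size of the \emph{largest part} $\lambda_1$ of the partition: writing $A_N=\sum_j\sum_{\lambda:\,\lambda_1\in(N/2^j,N/2^{j-1}]}a(\lambda)$ and conditioning on the high-frequency variables $X_k$ with $k\in(N/2^j,N/2^{j-1}]$, one gains a factor of $1/\lambda_1\asymp 2^j/N$ from the second moment of each such $X_k/\sqrt{k}$. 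Only after this reduction does Parseval connect the remainder to the total mass $\int|F_{N/2^j}(re^{i\theta})|^2\,\d\theta$, and the barrier/change-of-measure argument then bounds its $q$-th moment. Your martingale $(M_j)$ is also not a martingale as written; it grows like $2^j$, and one must either renormalise or work directly with the change of measure $\d\Q^{(1)}/\d\P\propto|F_K|^2$.

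\textbf{Lower bound.} Building an event $E$ with $\P(E)\asymp(1-q)/\sqrt{\log N}$ on which $|A_N|^2\gtrsim 1$ would only produce $\E[|A_N|^{2q}]\gtrsim(1-q)/\sqrt{\log N}$, which for fixed $q<1$ is $(\log N)^{-1/2}$ and falls short of the target $(\log N)^{-q/2}$. The paper instead uses two ingredients. First, a Khintchine-type inequality after peeling off the top variable: since $A_N=\sum_{N/2<n\le N}\frac{X_n}{\sqrt{n}}A_{N-n}+(\text{rest})$, conditioning on $\{X_k\}_{k\le N/2}$ and applying Khintchine yields $\E[|A_N|^{2q}]\gg\E\bigl[(N^{-1}\sum_{n<N/2}|A_n|^2)^q\bigr]$, which is now a single-coefficient-to-total-mass reduction with the correct $N^{-q}$ factor built in. Second, the $q$-th moment of the mass is bounded below via the H\"older inequality
\[
\E\Bigl[\Bigl(\int_{\mathcal L}|F|^2\Bigr)^q\Bigr]\ge\frac{\bigl(\E\int_{\mathcal L}|F|^2\bigr)^{2-q}}{\bigl(\E(\int_{\mathcal L}|F|^2)^2\bigr)^{1-q}},
\]
with $\mathcal L$ the (random) set of angles where a two-sided barrier holds; the first and second moments are estimated by one- and two-point ballot-type bounds respectively, and optimising the barrier height $A$ recovers the exponent $q$ in $(\sqrt{\log N})^{-q}$. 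Your Paley--Zygmund-on-$E$ strategy does not see this interpolation and cannot produce the $q$-dependent power.
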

We also remark that Corollary 1.1 of \citep{soundararajan2022model} also showed the equality $\E[|A_N|^2]=1$. As a consequence of Theorem \ref{thm:SZ-main-thm}, $A_N$ is typically smaller than what one expects from the central limit theorem, i.e.~applying H\"{o}lder's inequality to its second moment, indicating a complicated limiting behavior for secular coefficients at $\vartheta=1$. 
Parallel work of \citep{najnudel2023secular} also established the sharp tightness of secular coefficients.

 \begin{theorem}[Theorem 1.11 of \citep{najnudel2023secular}]\label{thm:NPS}  For standard complex Gaussian variables $\{X_k\}_{k\geq 1}$ and $\vartheta=1$, both the families $\{A_N/(\log(1+N))^{-1/4}\}_{N\in\N}$ and $\{(\log(1+N))^{-1/4}/A_N\}_{N\in\N}$ are tight.     
 \end{theorem}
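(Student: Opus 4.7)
The claim splits into an \emph{upper tightness} of $\{A_N(\log(1+N))^{1/4}\}_{N\in\N}$ and a \emph{lower tightness} of $\{(\log(1+N))^{-1/4}/A_N\}_{N\in\N}$; equivalently one needs
\[
\lim_{M\to\infty}\sup_N\mathbb{P}\bigl(|A_N|(\log(1+N))^{1/4}>M\bigr)=0\quad\text{and}\quad\lim_{\delta\downarrow 0}\sup_N\mathbb{P}\bigl(|A_N|(\log(1+N))^{1/4}<\delta\bigr)=0.
\]
The upper half is immediate from Theorem~\ref{thm:SZ-main-thm}: at $q=1/2$ it gives $\E[|A_N|]\asymp(\log(1+N))^{-1/4}$, and Markov's inequality yields $\mathbb{P}(|A_N|(\log(1+N))^{1/4}>M)\leq \E[|A_N|](\log(1+N))^{1/4}/M\lesssim 1/M$, uniformly in $N$.

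The lower half is substantively harder, and my strategy is to pass through a \emph{negative-moment bound}
\[
\E[|A_N|^{-2s}]\leq C_s(\log(1+N))^{s/2}
\]
for some fixed small $s>0$. Given such a bound, Markov yields $\mathbb{P}(|A_N|(\log(1+N))^{1/4}<\delta)\leq C_s\delta^{2s}\to 0$ as $\delta\downarrow 0$, finishing the proof. I stress that the naive Paley--Zygmund / Cauchy--Schwarz route, combining $\E[|A_N|^2]=1$ with the sharp lower bound $\E[|A_N|^{2q}]\gtrsim(\log(1+N))^{-q/2}$ from Theorem~\ref{thm:SZ-main-thm}, only produces $\mathbb{P}(|A_N|(\log(1+N))^{1/4}>\delta)\geq c>0$ with $c$ independent of $\delta$: a positive constant, but not one that tends to $1$ as $\delta\downarrow 0$, so insufficient for tightness. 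The reason is structural---the ratio $\E[|A_N|^{2q}]^2/\E[|A_N|^{4q}]$ is of order unity when $4q\leq 1$, so no second-moment method can extract a probability close to $1$.

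\textbf{Main obstacle.} Establishing the negative-moment bound is the crux, and I would attack it via the monomial expansion
\[
A_N=\sum_{\lambda\vdash N}\prod_{k\geq 1}\frac{X_k^{m_k}}{m_k!\,k^{m_k/2}},\qquad \lambda=1^{m_1}2^{m_2}\cdots,
\]
conditioning on $(X_j)_{j\neq k_0}$ for a carefully chosen $k_0$ so that the conditional law of $A_N$ becomes affine in $X_{k_0}$, of the form $A_N=c\,X_{k_0}+r$ with coefficient $|c|$ bounded below by a positive constant on a high-probability event. Rotational invariance and a density bound for $X_{k_0}$ then bound the conditional density of $A_N$ on $\C$ at the origin by $O(1/|c|^2)$, and $\int_{\C}|z|^{-2s}\,dz$ is finite near the origin for $s<1$. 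Taking expectations gives the required bound. The delicate points are (i)~selecting an index $k_0$ for which the ``leading'' coefficient $c$ does not decay with $N$ (the trivial choice $k_0=N$ gives $c=1/\sqrt{N}$, far too small), (ii)~quantifying the residual fluctuations across all partitions $\lambda\vdash N$ containing $k_0$ with $m_{k_0}\geq 2$, and (iii)~handling general, possibly heavy-tailed, rotationally invariant laws for $X_k$ in the universality regime of the paper. This is precisely the ``careful analysis of the (possibly random) leading terms in the monomial decomposition of $A_N$'' that the abstract flags as a main ingredient.
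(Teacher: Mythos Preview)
The paper does not itself prove Theorem~\ref{thm:NPS}; this is quoted as a result of Najnudel--Paquette--Simm. What the paper does prove, using its moment estimates, is the weaker Corollary~\ref{coro}: part~(i) there (tightness of $\{A_N(\log(1+N))^{1/4}\}$) coincides with the first half of Theorem~\ref{thm:NPS}, but part~(ii) only asserts \emph{non}-tightness of $\{A_N/w_N\}$ when $w_N=o((\log(1+N))^{-1/4})$, which is strictly weaker than tightness of $\{(\log(1+N))^{-1/4}/A_N\}$.

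For the upper half, your argument and the paper's are the same: Markov's inequality on a low moment of $|A_N|$, fed by Theorem~\ref{thm:SZ-main-thm} (or Theorem~\ref{thm:main}(i)). The paper takes exponent $q=\ee/3$ only to accommodate the general sub-exponential hypothesis; in the Gaussian case your $q=1/2$ works identically.

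For the lower half, your diagnosis is correct: Paley--Zygmund on $|A_N|^{2q}$ yields only
\[
\P\bigl(|A_N|>c(\log N)^{-1/4}\bigr)\geq c'>0
\]
with $c,c'$ independent of $N$. This is precisely what the paper's proof of Corollary~\ref{coro}(ii) establishes, and precisely what non-tightness at faster scales requires---but it does \emph{not} yield tightness of the reciprocal, and the paper does not claim otherwise. Your observation that this is a structural limitation of second-moment arguments is on target.

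Your proposed route via a negative-moment bound $\E[|A_N|^{-2s}]\ll(\log N)^{s/2}$ is a reasonable line of attack, and the conditioning idea is a standard opening for anti-concentration. But the sketch has a real gap you have already half-identified: for any $k_0>N/2$ the conditional law of $A_N$ given $(X_j)_{j\neq k_0}$ is affine in $X_{k_0}$ with linear coefficient $A_{N-k_0}/\sqrt{k_0}$, which is itself random with typical size $\asymp(\log(N-k_0))^{-1/4}/\sqrt{k_0}$, not bounded below by a constant; and for small $k_0$ the dependence on $X_{k_0}$ is a genuine polynomial, so a density bound is not immediate. Turning this into an honest negative-moment estimate would require either an iteration or substantially more structural input---neither of which is supplied here, nor in the paper under review. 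The full reciprocal-tightness statement is proved in \cite{najnudel2023secular} by different methods, so there is no in-paper proof to compare your sketch against for that half.
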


Note that Theorem \ref{thm:SZ-main-thm} implies the first part of Theorem \ref{thm:NPS}. 
It is worth noticing that all aforementioned studies focused on the Gaussian setting. A natural question would be to examine the universality of current results and what happens beyond that regime.
In this article, we initiate the study of non-Gaussian holomorphic multiplicative chaos and investigate the low moments $\E[|A_N|^{2q}],~q\in[0,1]$ when $|X_k|$ follows a generic standardized distribution. 
A rather surprising phenomenon is that, even for the simplest setting of $q=1$, the second moments of $A_N$ may no longer remain of constant order as in the Gaussian case. For example, our results imply that $\E[|A_N|^2]$ grows exponentially if each $|X_k|\sim \mathrm{Exp}(1)$, polynomially if $|X_k|\sim \mathrm{Exp}(2)$, and remains of constant order if $|X_k|\sim \mathrm{Exp}(3)$.\footnote{Strictly speaking, these ``exponential distributions" need to be normalized to have unit variance, e.g.~their tails have the form $\P(|X_k|>t)=\exp(-\gamma(t-c_\gamma))\wedge 1$ for $\gamma=1,2,3$.}

Our main result consists of two parts. First, Theorem \ref{thm:main} below establishes a sharp criterion for \textit{Gaussian universality regime} of the asymptotics of $\E[|A_N|^{2q}],~q\in[0,1]$. Second, in Theorem \ref{thm:main2} below we completely characterize a subtle \emph{double-layer phase transition} associated with the tail of $|X_k|$; see Figure \ref{fig:phases} below for the phase diagram. As a result, we fully establish the asymptotics for low moments $\E[|A_N|^{2q}]$ with non-Gaussian inputs $\P(|X_k|>t)\sim\exp(-\gamma t^p)$ for all triples $(q,\gamma,p)\in[0,1]\times\R^{+}\times\R^{+}$.
Our main theorems generalize Theorem \ref{thm:SZ-main-thm} and show a new phenomenon of \emph{(super-)exponential cancellation} (that is,~$(\E[|A_N|])^2/\E[|A_N|^2]$ decays (super-)exponentially as $N\to\infty$) emerges as the tail of $|X_k|$ becomes heavier.

We establish universality by utilizing the connection between secular coefficients and the total mass of critical multiplicative chaos, which resembles a delicate joint effect of all inputs $X_k$'s. The double-layered phase transition, on the other hand, is a product of an intricate interplay between the dominance of $X_1$ (which occurs when the tail of $|X_1|$ is heavy) and a similar yet different joint effect as in the universality phase. 
We expand our discussions in Section \ref{sec:tech-challenge} and a more detailed description of our approach can be found in Section \ref{sec:proof strategy}.

Several corollaries are in place. First, we obtain a \emph{universality result} for the tightness of normalized secular coefficients of non-Gaussian holomorphic chaos (Corollary \ref{coro}), extending Theorem \ref{thm:NPS}. 
Second, we derive both a universal and a non-universal regularity result for some critical non-Gaussian holomorphic chaos (Corollary \ref{coro-reg}), partially extending the regularity of Gaussian $\hmc_1$.

Finally, based on these new observations, we propose a list of future research directions of broad interest, ranging from random matrix theory, and multiplicative chaos, to probabilistic number theory.
These questions will be discussed in detail in Section \ref{sec:intro-conn-rmt}.

\subsection{Statement of main results}
Informally speaking, our main results capture the asymptotics of all $(2q)$-moments ($q\in(0,1]$) of $A_N$ for $X_k$ satisfying any \textit{stretched exponential tail}, i.e.~$\P(|X_k|>t)\sim\exp(-t^p)$ for all $p\in(0,\infty)$. For example, $p=2$ resembles the Gaussian case studied in Theorems \ref{thm:SZ-main-thm} and \ref{thm:NPS}.

We now set the stage for formalizing our theorems. Let $\{\tau_k\}$ be i.i.d.~uniformly distributed on $[-\pi,\pi)$ and $\{R_k\}$ be i.i.d.~real random variables with $\E[|R_k|^2]=1$ and independent of $\{\tau_k\}$. Let $X_k=e^{i\tau_k}R_k$. We may assume that $R_k$ is symmetric by a standard symmetrization procedure. We introduce a few short-hand notations for the cases of interest, covering (roughly speaking) i) $p\in(1,\infty)$ case, ii) $p=1$ case, and iii) $p\in(0,1)$ case:
  \begin{itemize}
    \item ($q$-UNIV) For a given $q\in(0,1]$, $R_k$ has a finite $\gamma$-exponential moment for some $\gamma>2q$. That is,
$\E[e^{\gamma|R_k|}]$ is finite for some $\gamma>2q.$
    \item (EXP) $R_k$ is a two-sided shifted \emph{exponential} random variable with unit variance. That is, $\P(|R_k|\geq u)=\exp(-\gamma(u-c_\gamma))\wedge 1$ for $u\geq 0$, where $\gamma\in(0,2q]$ and $c_\gamma:=\log({\gamma^2}/{2})/\gamma$;
    \item (SE) $R_k$ follows a (symmetric) \textit{stretched exponential} distribution with exponent $p\in(0,1)$, i.e.~$\P(|R_k|\geq u)=\exp(-(u/c_p)^{p})$ for $u\geq 0$, where $c_p:=(2\Gamma(2/p)/p)^{-1/2}$. 
\end{itemize}
Here, we have implicitly imposed the constraints on the law of $R_k$ that $R_k$ is symmetric and $\E[|R_k|^2]=1$, which explains the shift $c_\gamma$ and the scaling parameter $c_p$. 


The cases (EXP) and (SE) are introduced as prototypes, where we do not attempt to achieve the greatest generality by considering the largest class of laws. Indeed, our proofs only rely on the absolute moment asymptotics of $X_1$ (along with rotational symmetry and the unit variance property).
Our main results are as follows. We refer to Section \ref{para:notations} for asymptotic notations; in particular, we allow the asymptotic constants to depend on $q$ and the distribution of $X_k$.

\begin{theorem}[Primary phase transition]
\label{thm:main}
Let $\vartheta=1$.

\noindent(i) Fix any $q\in(0,1]$.  Suppose that condition ($q$-UNIV) holds for the i.i.d.~input $\{X_k=e^{i\tau_k}R_k\}$. We have \begin{align}
    \E[|A_N|^{2q}]\asymp \left(\frac{1}{1+(1-q)\sqrt{\log N}}\right)^q\asymp\begin{cases}
        (\log N)^{-q/2}&\text{ if }q\in(0,1);\\
        1&\text{ if }q=1.
    \end{cases}\label{eq:asymp LT}
\end{align}
(ii) Fix any $q>0$.  Suppose that (SE) holds for the i.i.d.~input $\{X_k=e^{i\tau_k}R_k\}$. It holds that 
\begin{align}
    \E[|A_N|^{2q}]=(1+o(1)) (2\pi)^{{1}/{2}-q}\sqrt{\frac{2q}{p}} \left(\frac{2qc_p^p}{pe^{1-p}}\right)^{2qN/p}N^{1/2-q+2qN(1/p-1)}.\label{eq:asymp HT}
\end{align}Note that $2qc_p^p/(pe^{1-p})<1$ for all $0<p,q<1$.
\end{theorem}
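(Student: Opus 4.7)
My approach is to isolate the monomial $L:=X_1^N/N!$ in the Bell-polynomial expansion
\begin{align*}
A_N=\sum_{\lambda\vdash N}\prod_{k}\frac{X_k^{m_k(\lambda)}}{k^{m_k(\lambda)/2}m_k(\lambda)!}
\end{align*}
(corresponding to the partition $\lambda=(1^N)$) and to show that $\E[|A_N|^{2q}]=(1+o(1))\E[|L|^{2q}]$. A direct Stirling computation on $\E[|L|^{2q}]=c_p^{2qN}\Gamma(1+2qN/p)/(N!)^{2q}$ reproduces the right-hand side of \eqref{eq:asymp HT} exactly, so everything reduces to comparing $A_N$ with $L$.

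\textbf{Upper bound.} Decompose $A_N=L+B$, write $Y:=|X_1|$ and $X_{\geq 2}:=(X_k)_{k\geq 2}$, and expand $A_N=\sum_{j=0}^N Y^j e^{ij\tau_1}h_j$, where each $h_j$ is a polynomial in $X_{\geq 2}$ (with $h_{N-1}=0$ since no partition of $N$ has $m_1=N-1$). Averaging over the uniform phase $\tau_1$ of $X_1$,
\begin{align*}
\E_{\tau_1}[|A_N|^{2}\mid Y,X_{\geq 2}]=|L|^2+\tilde B^2,\qquad \tilde B^2:=\sum_{j=0}^{N-2}Y^{2j}|h_j|^2.
\end{align*}
For $q\in(0,1]$, Jensen's inequality applied to $x\mapsto x^q$ gives $\E_{\tau_1}[|A_N|^{2q}\mid Y,X_{\geq 2}]\leq(|L|^2+\tilde B^2)^q\leq |L|^{2q}+\tilde B^{2q}$; integrating and using $\E[\tilde B^2]=\E[|B|^2]\leq\E[|A_N|^2]=1$ together with H\"older produces $\E[\tilde B^{2q}]\leq 1$, negligible against the super-exponentially large $\E[|L|^{2q}]$. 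For $q>1$ the bound $|x+y|^{2q}\leq 2^{2q-1}(|x|^{2q}+|y|^{2q})$ combined with a partition-wise $L^{2q}$-estimate on $B$ plays the same role.

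\textbf{Lower bound.} For $q\geq 1$, reverse Jensen gives $\E_{\tau_1}[|A_N|^{2q}\mid Y,X_{\geq 2}]\geq(|L|^2+\tilde B^2)^q\geq |L|^{2q}$ immediately. For $q\in(0,1]$, a Taylor expansion in $\tilde B/|L|$ yields $\E_{\tau_1}[|A_N|^{2q}\mid Y,X_{\geq 2}]=|L|^{2q}(1+q^2\tilde B^2/|L|^2+O((\tilde B/|L|)^3))$ whenever $\tilde B\ll|L|$, in particular a one-sided bound $\geq(1-o(1))|L|^{2q}$. Laplace's method on the stretched-exponential density identifies the dominant scale of $\E[|L|^{2q}]$ as $Y\sim(2qN/p)^{1/p}c_p$, at which the sum $\sum_{j=0}^{N-2}Y^{2j}\E[|h_j|^2]$ is dominated by $j=N-2$ (with $h_{N-2}=X_2/(\sqrt{2}(N-2)!)$) and gives $\tilde B^2/|L|^2=O(N^{4(1-1/p)})=o(1)$ for $p<1$, closing the lower bound.

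\textbf{Plan for part (i) (universality) and main obstacle.} For part (i), I would adapt the Soundararajan-Zaman proof of Theorem \ref{thm:SZ-main-thm} (built on Harper's multiplicative chaos technique) by identifying each use of Gaussianity with an input depending only on finitely many moments and on exponential integrability of $R_k$. The two universality inputs I expect to need are: (a) Gaussian approximation together with Chernoff large-deviation bounds for the partial logarithmic sums $\sum_{k\leq K}X_k\zeta^k/\sqrt{k}$ at Harper's critical scale $K=\exp(\sqrt{\log N})$, used in both the twisted second-moment upper bound and the Paley--Zygmund barrier-event lower bound; and (b) an Euler-product-type factorization up to subdominant error. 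Under $(q$-LT$)$, both inputs hold with constants depending on $R_k$ only through finitely many moments and through its Laplace transform at tilt $O(q)$, which is exactly the regime $\gamma>2q$ permits. I expect the main obstacle to be uniform control in $q\in(0,1]$, particularly in the boundary regime $q\uparrow 1$, where the optimal twist in the second-moment method approaches the edge $\gamma$ and the non-Gaussian Chernoff bounds become most delicate.
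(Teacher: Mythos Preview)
Your plan for part (ii) contains a fatal error in the upper bound. The claim $\E[|A_N|^2]=1$ is false in the (SE) case: it holds only for standard complex Gaussian inputs, where $\E[|X_k|^{2m}]=m!$. In the stretched exponential case, already the single term $\E[|L|^2]=\E[|X_1|^{2N}]/(N!)^2$ has $\E[|X_1|^{2N}]=(2N/p)c_p^{2N}\Gamma(2N/p)$, and by Stirling this forces $\E[|A_N|^2]\to\infty$ super-exponentially (consistent with \eqref{eq:asymp HT} at $q=1$). Even if you replace the claim by the correct value $\E[\tilde B^2]=\E[|A_N|^2]-\E[|L|^2]$, your H\"older step $\E[\tilde B^{2q}]\leq (\E[\tilde B^2])^q$ is far too crude for $q<1$: since $(\E[|L|^2])^q/\E[|L|^{2q}]\asymp (\Gamma(2N/p))^q/\Gamma(2qN/p)\asymp q^{-2qN/p}\to\infty$, the bound $(\E[\tilde B^2])^q$ is super-exponentially larger than $\E[|L|^{2q}]$, so the remainder is not negligible. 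For $q>1$, the inequality $|x+y|^{2q}\leq 2^{2q-1}(|x|^{2q}+|y|^{2q})$ only yields an $O(1)$ constant, not $1+o(1)$.

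The paper avoids all of this by the more elementary route: apply Minkowski for $q\geq 1/2$ (respectively the subadditivity of $x\mapsto x^{2q}$ for $q<1/2$) directly to the full monomial decomposition $A_N=\sum_{\lambda}a(\lambda)$ to get two-sided bounds
\[
\|a(\lambda^*)\|_{2q}-\sum_{\lambda\neq\lambda^*}\|a(\lambda)\|_{2q}\leq \|A_N\|_{2q}\leq \|a(\lambda^*)\|_{2q}+\sum_{\lambda\neq\lambda^*}\|a(\lambda)\|_{2q}
\]
(and analogously for $q<1/2$), and then shows by direct computation on $\E[|a(\lambda)|^{2q}]=\prod_k \E[|X_k|^{2qm_k}]/(k^{qm_k}(m_k!)^{2q})$ that the tail sum is $o(\E[|a(\lambda^*)|^{2q}])$. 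This bypasses the $\tau_1$-averaging entirely and handles all $q>0$ uniformly; your lower bound for $q<1$ via Taylor expansion would also require the same partition-wise estimates to control $\hat B:=\sum_{j<N}Y^j|h_j|$, so there is no real savings.

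For part (i), your plan is broadly aligned with the paper's approach, but one key structural issue is missing from your outline: since only a $(2q+\varepsilon)$-exponential moment is assumed, the Laplace functionals $\E[\exp(cX_k/\sqrt{k})]$ needed for the Girsanov tilt may diverge for small $k$. The paper resolves this by first passing to truncated secular coefficients $A_{N,M_*}$ (removing parts $<M_*$ from the partitions) via a symmetrization argument using rotational invariance, and only then running the multiplicative chaos machinery on $F_{K,M_*}$. Your mention of the twist approaching the edge $\gamma$ is the right intuition, but the fix is truncation rather than a more delicate Chernoff analysis.
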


 The asymptotics \eqref{eq:asymp LT} shows that the better-than-square-root cancellation phenomenon can still be quantitatively analyzed for non-Gaussian inputs in \eqref{eq:basic eq} under a suitable light-tailed assumption. This considerably extends the results in the Gaussian case (cf., Theorem \ref{thm:SZ-main-thm}). The result of \eqref{eq:asymp HT} in the heavy-tailed case indicates that the Gaussian-type behavior is limited to light-tailed distributions and does not apply in general to heavier tails (say, heavier than the exponential distribution). This gives rise to a phase transition in the thickness of the tail of $|R_k|$, leading naturally to the question of the \emph{primary criticality}: case (EXP). The following result completely characterizes the behavior in this critical phase, showcasing a second phase transition within this critical phase.

\begin{theorem}[Secondary phase transition]\label{thm:main2}
Fix $q\in(0,1]$ and let $\vartheta=1$. Suppose that (EXP) holds for the i.i.d.~input $\{X_k=e^{i\tau_k}R_k\}$. It holds that if $\gamma<2q$, \begin{align}
    \E[|A_N|^{2q}]\asymp 
    N^{1/2-q}\left(\frac{2q}{\gamma}\right)^{2qN};\label{eq:asymp SE}
\end{align}
 if $\gamma=2q$, 
\begin{align}\label{eq:gamma=2q result}
     \E[|A_N|^{2q}]\asymp \frac{N^{1-q+q^2/2}}{(1+(1-q)\sqrt{\log N})^q}\asymp\begin{cases}
        N^{1-q+q^2/2}(\log N)^{-q/2}&\text{ if }q\in(0,1);\\
        \sqrt{N}&\text{ if }q=1.
    \end{cases}
\end{align}
    and if $\gamma>2q$,
   \begin{align}\label{eq:gamma>2q result}
     \E[|A_N|^{2q}]\asymp \left(\frac{1}{1+(1-q)\sqrt{\log N}}\right)^q\asymp\begin{cases}
        (\log N)^{-q/2}&\text{ if }q\in(0,1);\\
        1&\text{ if }q=1.
    \end{cases}
\end{align}
\end{theorem}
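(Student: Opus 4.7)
The plan is to handle the three regimes separately, with $\gamma \neq 2q$ reducing to direct comparisons and $\gamma = 2q$ requiring a careful interpolation between two competing mechanisms.

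The regime $\gamma > 2q$ is immediate from Theorem~\ref{thm:main}(i): when $|R_k|$ has exponential tail of rate $\gamma > 2q$, the expectation $\E[e^{\gamma'|R_k|}]$ is finite for every $\gamma' \in (2q, \gamma)$, so condition ($q$-LT) holds and \eqref{eq:gamma>2q result} follows. For $\gamma < 2q$ the asymptotic is driven by a single monomial, namely $X_1^N/N!$ in the partition expansion
\[
A_N = \sum_{\vec n:\,\sum k n_k = N} \prod_{k \geq 1} \frac{1}{n_k!} \Bigl(\frac{X_k}{\sqrt{k}}\Bigr)^{n_k}.
\]
Writing $A_N = X_1^N/N! + B_N$ with $B_N$ the remaining sum, the exponential tail yields $\E[|R_1|^{2qN}] \asymp \Gamma(2qN+1)\,\gamma^{-2qN}$ and Stirling then gives $\E[|X_1^N/N!|^{2q}] \asymp N^{1/2-q}(2q/\gamma)^{2qN}$, the claimed order in \eqref{eq:asymp SE}. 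The upper bound follows from Minkowski's inequality when $q \geq 1/2$ (or subadditivity of $|\cdot|^{2q}$ when $q < 1/2$) once $\|B_N\|_{L^{2q}}$ is shown to be of strictly smaller order: reassigning a unit of $n_1$ to any position $k \geq 2$ costs a factor of $k^{-q}$ and shrinks the effective Stirling base below $2q/\gamma$, producing a geometric loss to be summed over partitions. For the matching lower bound, conditioning on $|R_1|$ near the $L^{2q}$-saddle $u^\ast = 2qN/\gamma$ and using the independent uniform phase $\tau_1$ in $X_1 = e^{i\tau_1} R_1$, together with a Paley--Zygmund estimate on $A_N/R_1^N$ viewed as a polynomial in the remaining coordinates, should show that $|X_1^N/N!|$ dominates $A_N$ with positive conditional probability.

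The critical case $\gamma = 2q$ is the main obstacle, since neither mechanism alone yields the stated asymptotic: a direct Stirling computation gives $\E[|X_1^N/N!|^{2q}] \asymp N^{1/2-q}$, strictly smaller than the claimed $N^{1-q+q^2/2}$, while the Harper-type multiplicative chaos argument behind Theorem~\ref{thm:main}(i) produces the $(1+(1-q)\sqrt{\log N})^{-q}$ factor but carries no polynomial growth. The plan is to glue the two mechanisms by truncating at a level $L_N \to \infty$ to be tuned, writing $R_k = R_k \mathbf{1}_{\{|R_k| \leq L_N\}} + R_k \mathbf{1}_{\{|R_k| > L_N\}}$, and splitting $A_N$ into a \emph{chaos part} depending only on the truncated variables (which satisfies a quantitative ($q$-LT) and thus contributes the logarithmic factor via Theorem~\ref{thm:main}(i)) and a \emph{deviation part} involving at least one coordinate in the tail. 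At the critical scale $\gamma = 2q$, the tail moment $\E[|R_k|^{2qn}\mathbf{1}_{\{|R_k| > L_N\}}]$ grows polynomially rather than exponentially in $n$, and summing over admissible partitions weighted by the number of activated coordinates should reproduce the polynomial prefactor $N^{1-q+q^2/2}$; the matching lower bound should come from a concrete test event that saturates the entropy of activated coordinates on the tail side while keeping the non-activated coordinates in the typical multiplicative-chaos regime. The delicate step will be choosing $L_N$ so that the typical and large-deviation contributions balance exactly, reproducing the exponent $1-q+q^2/2$ together with the precise logarithmic correction in \eqref{eq:gamma=2q result}.
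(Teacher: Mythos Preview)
Your treatment of $\gamma>2q$ is correct and matches the paper. For $\gamma<2q$ your outline is in the right spirit: the paper also uses Minkowski/concavity for the upper bound and, for the lower bound, restricts to the event $|R_1|\in[(2qN-\sqrt N)/\gamma,\,2qN/\gamma]$ (i.e.\ near the $L^{2q}$-saddle) and then shows a finite remainder sum is bounded below in probability via an $L^2$ argument rather than Paley--Zygmund, but your proposal is not far off.

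The genuine gap is in the critical case $\gamma=2q$. Your truncation scheme $R_k=R_k\mathbf 1_{\{|R_k|\le L_N\}}+R_k\mathbf 1_{\{|R_k|>L_N\}}$ applied symmetrically to all $k$ misses the mechanism. The paper shows that the dominant behavior comes from \emph{conditioning on $|R_1|$ alone}: on $\{|R_1|\in[m,m+1)\}$ the sum \eqref{eq:AN expression} concentrates on partitions with $m_1(\lambda)\approx m$, and the residual phase $e^{im_1\tau_1}$ turns the problem into estimating the $q$-th moment of a \emph{weighted partial mass} of truncated chaos over a $\theta$-interval of width $\asymp 1/\sqrt m$. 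The exponent $q^2/2$ is precisely the multifractal exponent $\E[\exp(2q\sum_{k\le K_*}X_kr^k/\sqrt k)]\asymp K_*^{q^2}$ evaluated at $K_*\asymp\sqrt N$ (this is the content of Propositions~\ref{prop:uniform}--\ref{prop:christmas}); it does not arise from any ``entropy of activated coordinates.'' Your deviation part, if analyzed honestly, would still need this partial-mass machinery (generic chaining to freeze the low-frequency part, change of variables, ballot estimates on the high-frequency part), and your proposed source of the polynomial factor---counting activated coordinates---would not produce $N^{1-q+q^2/2}$. In short, the truncation splitting is not the organizing principle; conditioning on $|R_1|$ is, and the $q^2/2$ comes from multiplicative-chaos scaling, not combinatorics.
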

Indeed, \eqref{eq:gamma>2q result} follows directly from Theorem \ref{thm:main} (i), and is included here for the sake of completeness. Asymptotic constants in \eqref{eq:asymp SE} and \eqref{eq:gamma>2q result} may depend on $\gamma$ which characterizes the distribution of $X_k$.
 The critical phase (EXP) showcases a further phase transition. There exists a critical moment exponent, which coincides with the parameter of the (shifted) exponential distribution $|R_k|$, which governs the asymptotics of the moments: Gaussian-type asymptotics of Theorem \ref{thm:SZ-main-thm} below such exponent ($\gamma>2q$) and exponential growth of moments above the exponent ($\gamma<2q$). At the \emph{secondary criticality}, where the moment exponent coincides with the parameter of the exponential distribution ($\gamma=2q$), the moment exhibits a regularly varying growth in $N$. 

To visualize the double-layer phase transition, we may restrict the class ($q$-UNIV) to laws satisfying 
\begin{align}
    \P(|X_k|>t)\asymp \exp(-\gamma t^p)\label{eq:repn}
\end{align}
as $t\to\infty$ for some $\gamma>0$ and $p\geq 1$. In this setting, the phases of (EXP), (SE), and restricted ($q$-UNIV) have the alternative representation \eqref{eq:repn} for some $\gamma,p>0$, and the pair $(\gamma,p)$ uniquely determines which phase the distribution of $R_k$ belongs to. Figure \ref{fig:phases} shows the phase diagram that illustrates our main results. Note that $\gamma$ is only effective in the phase diagram when $p=1$.

\begin{figure}[htbp]
    \centering
    \includegraphics[width=0.63\textwidth]{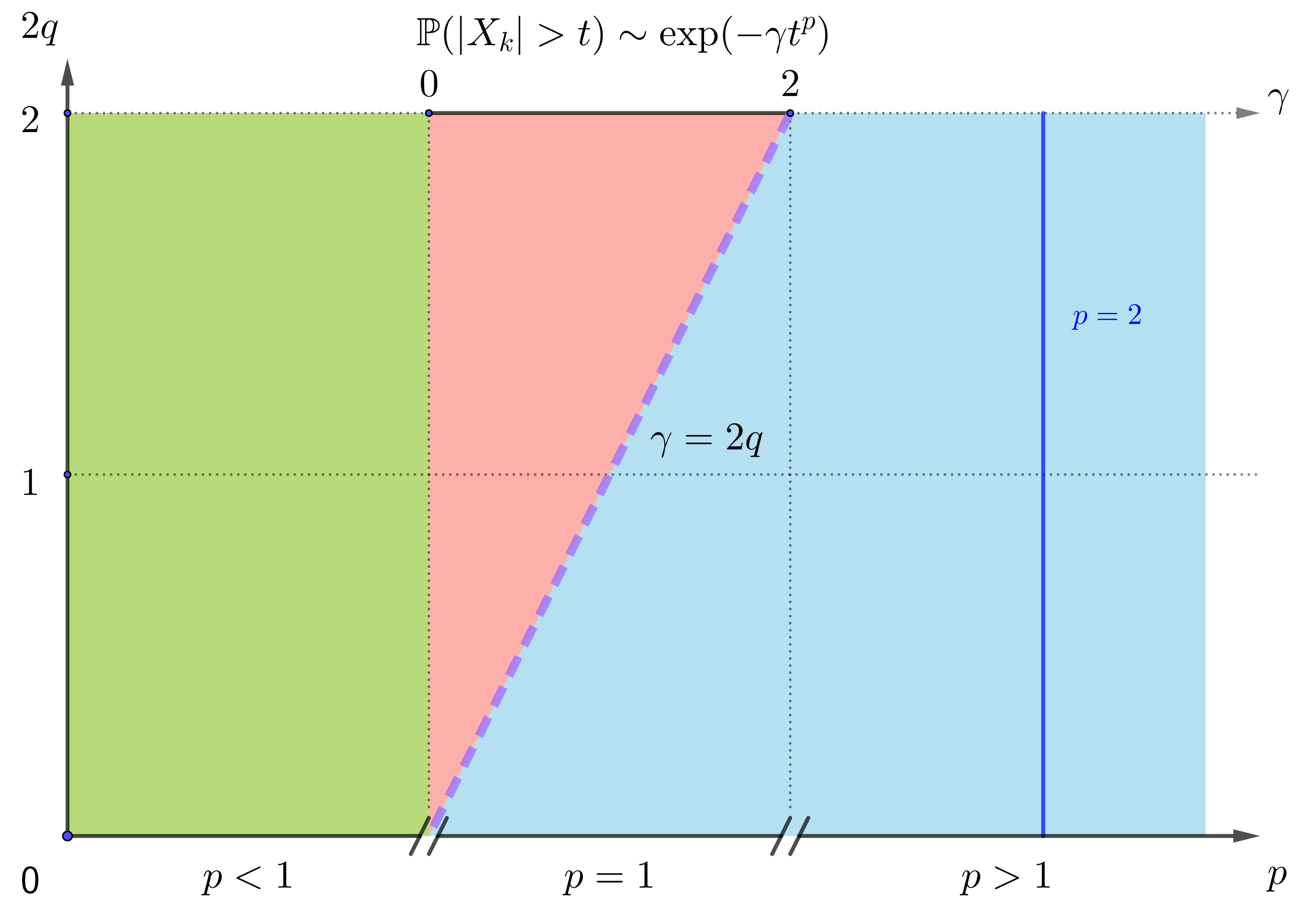}
    \caption{An illustration of the phase transitions of low moments for secular coefficients depending on the law of $|X_1|$ and the moment exponent $(2q)$. The tail satisfying $-\log\P(|X_1|>t)\asymp t^p$ is heavier on the left side and lighter on the right. The light blue regime refers to the universality phase, i.e.~($q$-UNIV) case, which considerably extends the dark blue line that represents the Gaussian case in previous studies. The green and red regimes correspond respectively to the (SE) case and (EXP) case with $\gamma<2q$. The purple dashed line refers to the secondary critical phase with $\P(|X_1|>t)\asymp\exp(-\gamma t)$ where $\gamma=2q$.}
    \label{fig:phases}
\end{figure}

\sloppy As a consequence of Theorem \ref{thm:main}, we find the typical order of magnitude of $A_N$ by considering the tightness of the family $\{A_N/w_N\}_{N\in\N}$ for some deterministic sequence $w_N$, partially generalizing Theorem \ref{thm:NPS}. Our result only requires the existence of some exponential moment; see the proof in Section \ref{subsection-coro} for details. 
\begin{corollary}\label{coro}
  Let $\vartheta=1$. Suppose that there exists $\ee>0$ such that $\E[e^{\ee|X_k|}]<\infty$ for the i.i.d.~input $\{X_k=e^{i\tau_k}R_k\}$. The followings hold: \begin{enumerate}[(i)]
        \item for $w_N=(\log (1+N))^{-1/4}$, $\{A_N/w_N\}_{N\in\N}$ is tight;
        \item for $w_N=o((\log (1+N))^{-1/4})$, $\{A_N/w_N\}_{N\in\N}$ is not tight.
    \end{enumerate}
\end{corollary}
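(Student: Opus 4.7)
My plan is to deduce the corollary directly from the two-sided asymptotic of Theorem \ref{thm:main}(i). Under the hypothesis $\E[e^{\ee|X_k|}]<\infty$, I fix $q\in(0,1)$ with $2q<\ee$, so that condition ($q$-LT) is satisfied, and work throughout with the normalised quantity $B_N := |A_N|(\log(1+N))^{1/4}$. Theorem \ref{thm:main}(i) then yields constants $c_1,c_2>0$ with
\[
c_1 (1+(1-q)\sqrt{\log N})^{-q} \le \E[|A_N|^{2q}] \le c_2 (1+(1-q)\sqrt{\log N})^{-q}
\]
for all $N\ge 1$, so $\E[B_N^{2q}] = (\log(1+N))^{q/2}\E[|A_N|^{2q}]$ lies between two positive constants uniformly in large $N$.

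For part (i), Markov's inequality immediately gives
\[
\P(|A_N|/w_N > M) \le \frac{\E[|A_N|^{2q}]}{M^{2q} w_N^{2q}} = \frac{\E[B_N^{2q}]}{M^{2q}} \le \frac{C_q}{M^{2q}}
\]
uniformly in $N$, and sending $M\to\infty$ yields tightness. For part (ii), it suffices to produce $\ep,\eta>0$ with $\P(B_N>\ep)\ge\eta$ for infinitely many $N$: if $w_N = o((\log(1+N))^{-1/4})$ then $r_N := (\log(1+N))^{-1/4}/w_N \to \infty$, so for any $M$ the event $\{|A_N/w_N|>M\}$ contains $\{B_N>\ep\}$ as soon as $\ep r_N>M$, producing the required failure of tightness.

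To extract such $\ep,\eta$, I combine the positive lower bound on $\E[B_N^{2q}]$ with uniform integrability of the family $\{B_N^{2q}\}$. For the latter I choose $\delta>0$ with $2q+\delta<\min(\ee,2)$ and $q+\delta/2<1$ (possible since $2q<\ee$ and $q<1$), and apply Theorem \ref{thm:main}(i) at the larger exponent $2q+\delta$ to obtain $\sup_N\E[B_N^{2q+\delta}]<\infty$. The standard decomposition
\[
\E[B_N^{2q}] \le \ep^{2q} + L^{2q}\P(B_N>\ep) + L^{-\delta}\E[B_N^{2q+\delta}]
\]
valid for $\ep<L$, combined with taking $L$ large and then $\ep$ small so that $\ep^{2q}+L^{-\delta}\sup_N\E[B_N^{2q+\delta}]$ is less than half of $\liminf_N\E[B_N^{2q}]$, forces $\P(B_N>\ep)\ge\eta>0$ for all large $N$. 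The only subtle point is the choice of auxiliary exponents---both $2q$ and $2q+\delta$ must lie in the regime of Theorem \ref{thm:main}(i)---but this is always achievable under the hypothesis of some exponential moment, which provides slack on both sides.
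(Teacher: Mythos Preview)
Your proof is correct and follows the same overall strategy as the paper: part (i) is identical (Markov with a moment bound from Theorem \ref{thm:main}(i)), and for part (ii) both arguments compare two different low moments of $|A_N|$ to produce a uniform lower bound on $\P\big(|A_N|>c(\log N)^{-1/4}\big)$.

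The only difference lies in how that anti-concentration is extracted. The paper applies the Paley--Zygmund inequality directly to $|A_N|^{\ee/3}$:
\[
\P\Big(|A_N|^{\ee/3}>\tfrac{1}{2}\E[|A_N|^{\ee/3}]\Big)\ge \frac{(\E[|A_N|^{\ee/3}])^2}{4\,\E[|A_N|^{2\ee/3}]}\gg 1,
\]
using Theorem \ref{thm:main}(i) at the two exponents $\ee/6$ and $\ee/3$. Your route instead decomposes $\E[B_N^{2q}]$ into three ranges and controls the tail piece by the $(2q+\delta)$-moment; this is essentially a uniform-integrability argument. Both approaches require Theorem \ref{thm:main}(i) at two exponents strictly below $\min(\ee,2)$, so the inputs are the same. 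The Paley--Zygmund route is slightly more direct (one line instead of a decomposition), while your version makes the uniform-integrability structure explicit; neither buys anything the other does not.
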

Unfortunately, proving the tightness of $\{(\log (1+N))^{-1/4}/A_N\}_{N\in\N}$ remains a difficult task based on existing techniques, mainly due to the absence of distributional convergence of the total mass of critical non-Gaussian multiplicative chaos. The convergence for critical Gaussian multiplicative chaos was shown in \citep{junnila2017uniqueness}.

We can also obtain the regularity of some critical non-Gaussian $\hmc$ from Theorem \ref{thm:main}. The proof and a remark on irregularity are deferred to Section \ref{sec:reg}. 

\begin{corollary}\label{coro-reg}
 Let $\vartheta=1$.   If there is some $\gamma>2$ such that $\E[e^{\gamma|X_k|}]<\infty$, then the corresponding $\hmc_1$ is in $H^s$ almost surely for any $s<-1/2$. Moreover, if $X_k$ satisfies (EXP) with $\gamma\in(0,2]$, i.e.~$\P(|X_k|\geq u)\asymp e^{-\gamma u}$, then the corresponding $\hmc_1$ is in $H^s$ almost surely for any $s<-1/\gamma$.
\end{corollary}

We also mention that the super-exponential growth of the second moment of secular coefficients in the (SE) case suggests an extremely irregular behavior, in the sense that $\hmc_1$ may not even be a tempered distribution almost surely.

\subsection{Technical challenges}\label{sec:tech-challenge}

In what follows, we attempt to unveil the intricacies of low moments behind the distinct phases, without involving too much technicality. The approach of \citep{soundararajan2022model} in the Gaussian case is to connect $\E[|A_N|^{2q}]$ to low moments of the total mass of a suitably constructed (finite) multiplicative chaos, of the form
\begin{align}
    \E\bigg[\bigg(\int_{-\pi}^{\pi}\Big|\prod_{k=1}^K\exp\Big(\frac{X_kr^ke^{ik\theta}}{\sqrt{k}}\Big)\Big|^2\d\theta \bigg)^q\bigg]\quad\text{ where }\quad K\in\N,~q\in(0,1],\text{ and }r\in[e^{-1/K},e^{1/K}].\label{h}
\end{align}
However, as the tail of $|X_k|$ becomes heavier, the contribution of the random variable $X_1$ in \eqref{h} becomes more prominent and will need to be isolated from other variables $\{X_k\}_{k\geq 2}$. In the cases of (SE) and (EXP) with $\gamma<2q$, our proofs depend on a delicate but direct analysis of such domination effect of $X_1$, without delving into multiplicative chaos. The secondary critical case ((EXP) with $\gamma=2q$) turns out more intricate and requires both the domination effect and the analysis of critical multiplicative chaos. In this scenario, even speculating the correct asymptotics is a non-trivial task.


Our approach is to first condition on $|X_1|$ (or equivalently, $|R_1|$) \emph{before} connecting $\E[|A_N|^{2q}]$ to multiplicative chaos. This reduces our problem into studying the low moments of a \emph{randomly weighted} total mass of a multiplicative chaos, of the form
\begin{align}
    \E\bigg[\bigg(\int_{-\pi}^{\pi}\Big|U(\theta)\times\prod_{k=2}^K\exp\Big(\frac{X_kr^ke^{ik\theta}}{\sqrt{k}}\Big)\Big|^2\d\theta \bigg)^q\bigg]\quad\text{ where }\quad K\in\N,~q\in(0,1],~r\in[e^{-1/K},e^{1/K}],\label{U}
\end{align}
and the (random) weight function $U(\theta)$ resembles a discrete Fourier transform of a random function sufficiently close to a scaled Gaussian density, evaluated at the random frequency $\theta+\tau_1$, where $\tau_1$ is uniformly distributed on $[-\pi,\pi]$.
Intuitively, $U(\theta)$ arises from the remaining randomness of $\tau_1$ after conditioning on $|R_1|$. To study the magnitude of \eqref{U}, we need both a careful analysis of $U(\theta)$ and a \emph{uniform} version of the estimates of the \emph{partial mass} of the multiplicative chaos,
$$ \E\bigg[\bigg(\int_I\Big|\prod_{k=1}^K\exp\Big(\frac{X_kr^ke^{ik\theta}}{\sqrt{k}}\Big)\Big|^2\d\theta \bigg)^q\bigg],$$
where $I$ is an interval in $[-\pi,\pi]$ whose length is of a much smaller order. That is, the integration range in \eqref{h} is restricted to an asymptotically smaller interval, and the difficulty lies in obtaining uniformity in all those intervals of suitable length. A technical highlight while tackling such difficulty is performing generic chaining under a proper change of measure.

A more detailed description of our approach can be found in Section \ref{sec:main ideas}, where we also discuss intuitively how the regularly varying formula in \eqref{eq:gamma=2q result} arises.

\subsection{Related works and outlook}\label{sec:intro-conn-rmt}

In this section, we review a few connections of holomorphic multiplicative chaos to random matrix theory and number theory. Along the way, we also discuss a few further directions of interest that are beyond the scope of this paper. 

\paragraph{Connections to random matrix theory and (complex) Gaussian multiplicative chaos.}

\sloppy $\hmc_\vartheta$ with Gaussian inputs $\{X_k\}_{k\geq 1}$ is closely related to random matrix theory, especially to the characteristic polynomial of C$\beta$E with $\vartheta=2/\beta$. A joint distribution of $N$ points with parameter $\beta>0$ is said to be C$\beta$E if \[\mathrm{C}\beta\mathrm{E}_N(\theta_1,\cdots,\theta_N)\propto \bone_{\{\forall j\in\{1,\dots,N\},\,\theta_j\in[-\pi,\pi)\}}\prod_{1\leq j<k\leq N}|e^{i\theta_j}-e^{i\theta_k}|^\beta,\] and for $\beta=1,2,4$, it is also known as circular orthogonal/unitary/symplectic ensembles (COE/CUE/CSE), respectively. For $\beta=2$ (CUE), it is the distribution of the eigenvalues of a Haar-distributed unitary matrix, and for $\beta\not=2$, \citep{killip2004matrix} constructed explicit matrix models $U_N^{(\beta)}$ whose eigenvalues have law C$\beta$E. Therefore, we can define and study the characteristic polynomial \[\chi_N^{(\beta)}(z)=\det(I-zU_N^{(\beta)})=\sum_{n=0}^N a_n^{(N,\beta)}z^n\] on the unit circle $\bT=\{z\in\C:|z|=1\}$. We may drop the superscripts if $\beta=2$.

Secular coefficients $a_{n}^{(N)}$ of $\chi_N$ were first explicitly studied, to our best knowledge, by Haake, Kus, and Sommers \citep{haake1996secular}, where they delved into various theoretical and numerical properties of secular coefficients for CUE, including an explicit formula of the second moment. Later, Diaconis and Gamburd \citep{diaconis2004random} computed $(2k)$-th absolute moments of secular coefficients $a_n^{(N)}$ of CUE in terms of \textit{magic squares}, which is the number of $N\times N$ square matrices with nonnegative integer entries summing up to $n$ in each row and column. 
For a fixed $n$, the convergence result of $a_n^{(N,\beta)}$ is established via the formula connecting it to the first $n$ power traces $T_k:=\text{Tr}((U_N^{(\beta)})^k)$: \[a_n^{(N,\beta)} = \frac{1}{n!}\det\begin{pmatrix} T_1 & 1 & 0 & \cdots & 0 \\ T_2 & T_1 & 2 & \cdots & 0 \\ \vdots & \vdots &\vdots &\ddots & \vdots \\ T_{n-1} & T_{n-2} & T_{n-3} & \cdots & n-1 \\ T_n & T_{n-1} & T_{n-2} & \cdots & T_1\end{pmatrix},\] along with the convergence result for power traces by Diaconis and Shahshahani \citep{diaconis1994eigenvalues} for $\beta=2$, and Jiang and Matsumoto \citep{jiang2015moments} for a general $\beta>0$ that jointly, \[\{T_k\}_{k=1}^n\implies \sqrt{\frac{2}{\beta}}\big\{\sqrt{k}\mathcal{N}^{\C}_k\big\}_{k=1}^n,~ N\to\infty,\] where $\{N^{\C}_k\}_{k=1}^n$ are i.i.d.~standard complex Gaussian variables. 

A natural question is whether we can find connections between other ensembles of random matrices and non-Gaussian $\hmc_\vartheta$, mirroring Theorem 1.3 of \citep{najnudel2023secular}. Some inspiring examples with non-Gaussian (but weakly dependent) coefficients are the sparse Bernoulli matrix studied by Coste \citep{coste2023sparse} and sums of random permutation matrices by Coste, Lambert, and Zhu \citep{coste2022characteristic}, which can both be considered as random regular digraph models. As a simplified model in \citep{coste2023sparse}, let $B_N$ be an $N\times N$ matrix with i.i.d.~Bernoulli($1/n$) entries, then $\det(I_N-zB_N)$ converges weakly in the open unit disk $\mathbb{D}$ to the so-called \textit{Poissonian holomorphic chaos} $F(z)=\exp(-\sum_{k=1}^{\infty}X_kz^k/\sqrt{k})$ with $X_k=\sum_{\ell\mid k}\ell Y_{\ell}/\sqrt{k}$, where $\{Y_{\ell}\}_{\ell=1}^{\infty}$ is a family of independent Poisson variables with $\E[Y_\ell]=1/\ell$. Moreover, a similar convergence in law result (and towards a similar Poissonian chaos) was established in \citep{coste2022characteristic} if $B_n$ is replaced by the average of a fixed number of i.i.d.~random permutation matrices; see Theorem 2.2 therein for details.
\begin{question}
    Find other ensembles of random matrices $V_N^{(\vartheta)}$ such that its characteristic polynomial converges (in some sense) to $\exp(\sqrt{\vartheta}\sum_{k\geq 1}X_k z^k/\sqrt{k})$ uniformly in $|z|\leq r$, for any $r\in(0,1)$.
\end{question}

Yet another connection exists between secular coefficients of random matrices (or $\hmc$) and the total mass of the \textit{Gaussian multiplicative chaos} ($\gmc$).
A $\gmc_\vartheta$ of parameter $\vartheta\in(0,1]$ can be defined as the limiting random measure (see e.g.~Appendix B of \citep{najnudel2023secular}) \[\gmc_{\vartheta}(\d\theta):=\lim_{r\to 1}(1-r^2)^{\vartheta}(-\log(1-r^2))^{\frac{1}{2}\bone_{\vartheta=1}}|e^{\sqrt{\vartheta}\sum_{k=1}^\infty\frac{\mathcal{N}^{\C}_k}{\sqrt{k}}(re^{i\theta})^k}|^2\d\theta,\] where the convergence holds in $L^q$ for any $q\in(0,1)$. Its \textit{total mass} $\mathcal{M}_\vartheta$ is then defined as $\mathcal{M}_\vartheta:=\frac{1}{2\pi}\int_0^{2\pi}\gmc_{\vartheta}(\d\theta)$ in the in-probability, weak-$\ast$ convergence sense.
Recently, Najnudel, Paquette, and Simm \citep{najnudel2023secular} and Najnudel, Paquette, Simm, and Vu \citep{najnudel2025fourier} established the distributional convergence of $a_n^{(N,\beta)}/\sqrt{\E[(a_n^{(N,\beta)})^2]}$ as $n, N\to\infty$ jointly for $\beta>2$ ($0<\vartheta<1$), and expressed the limiting distribution as $\sqrt{\mathcal{{M}_\vartheta}}$ times an independent standard complex normal. 
See also \citep{garban2023harmonic} for the distributional convergence of Fourier coefficients of $\gmc_{\vartheta}$ on the unit circle for $0<\vartheta<1/\sqrt{2}$.
Moreover, \citep{najnudel2023secular} also established tightness for a general $\beta>0$. As an example, for $\beta=2$ they showed that $\{(\log n)^{1/4}a_n^{(N)}: N\geq 2n\}$ and $\{(\log n)^{-1/4}/a_n^{(N)}:N\geq N_0(n)\}$ are both tight (for some $N_0(n)$ growing faster than $n\sqrt{\log n}\,(\log\log n)$). Although normalization and limiting behavior are not yet clear in the critical case ($\vartheta=1$), the question of universality and phase transition could be asked, reminiscent of our Theorem \ref{thm:main} and \ref{thm:main2}.
\begin{question}
    When $\vartheta=1$, does $A_N(\log N)^{1/4}$ converge in distribution to the same limiting random variable for any $X_k$ as in Corollary \ref{coro}? If the tail of $|X_k|$ becomes heavier and reaches the (SE) phase, is there a phase transition in the limiting behaviors as well?
\end{question}

The convergence result of secular coefficients stated above also indicates an impressive fact, that a single $A_N$ (when $N$ is large) already contains some information of the total mass $\mathcal{M}_\vartheta$. In fact, \citep{najnudel2023secular} used this connection and existing results on the convergence of the critical $\gmc$ to prove the irregularity of $\hmc_1$ and the sharp tightness of $\{(\log(1+N))^{-1/4}/A_N\}$. To study these properties in a non-Gaussian setting, one has to understand the critical non-Gaussian multiplicative chaos.
\begin{question}
    Prove the convergence in $L^q$ for all $q\in(0,1)$ for\[\lim_{r\to 1}(1-r^2)(-\log(1-r^2))^{\frac{1}{2}}|e^{\sum_{k=1}^\infty\frac{X_k}{\sqrt{k}}(re^{i\theta})^k}|^2\d\theta\] when $\{X_k\}_{k\in\N}$ are i.i.d.~non-Gaussian, and apply it to study the irregularity of $\hmc$ and sharp tightness of $A_N$ in more general settings.
\end{question}

Another promising future direction is to explore the connection between complex Gaussian multiplicative chaos (CGMC) and $\hmc$. Roughly speaking, the complex Gaussian field defining CGMC has independent real and imaginary parts, while $\hmc$ has uncorrelated ones, which might suggest some commonalities in between. We refer interested readers to Section 1.6 of \citep{najnudel2023secular} for a more detailed discussion on this possible connection, and to \citep{lacoin2022universality,lacoin2015complex} for references on CGMC.

Finally, we summarize some related results on characteristic polynomials of random matrices. The celebrated work of Hughes, Keating, and O'Connell \citep{hughes2001characteristic} introduced the log-correlated Gaussian field $G^{\C}(z)=\sum_{k=1}^\infty\mathcal{N}_k^{\C}z^k/\sqrt{k}$ as the limiting random distribution of the log-characteristic polynomial of CUE. For general surveys on log-correlated Gaussian fields, see \citep{duplantier2017log}. Meanwhile, convergence results towards $\gmc$ were established in \citep{ chhaibi2019circle,nikula2020multiplicative,webb2015characteristic} for characteristic polynomial of C$\beta$E on the unit circle. For general surveys on $\gmc$, see \citep{powell2020critical, rhodes2014gaussian}.
We also point interested readers to the survey by Bailey and Keating \citep{bailey2022maxima} and the most recent result by Paquette and Zeitouni \citep{paquette2022extremal} and references therein for studies towards a conjecture in \citep{fyodorov2012freezing, fyodorov2014freezing} by Fyodorov, Hiary, and Keating on the maximum of the characteristic polynomial of CUE.


\paragraph{Connections to number theory.} 
A \emph{random multiplicative function} is a completely multiplicative function $f:\N\to\C$, i.e.~$f(mn)=f(m)f(n)$ for $m,n\in\N$, where $f(p)$ are i.i.d.~random variables for primes $p$.
An important application is modeling arithmetic functions, such as Dirichlet characters
(Steinhaus case, with $f(p)$ uniformly distributed on the unit circle $\mathbb{T}$) and the M\"{o}bius function (Rademacher case, with $f(p)$ uniformly distributed on $\{\pm 1\}$ and supported on square-free numbers). 
The recent celebrated work of Harper \citep{harper2020moments} investigated the better-than-square-root cancellation phenomenon of random multiplicative functions through computing low moments of the partial sums, thus resolving Helson's conjecture \citep{helson2010hankel}. More precisely, \citep{harper2020moments} showed that 
$$\E\bigg[\Big|\sum_{n\leq x}f(n)\Big|\bigg]\asymp\frac{\sqrt{x}}{(\log\log x)^{1/4}},$$
while $\E[|\sum_{n\leq x}f(n)|^2]\asymp x$. 
Harper's approach elegantly connects random multiplicative functions and critical multiplicative chaos, which we summarize in Section \ref{sec:main ideas}. 
Similar ideas are also exploited in the studies of deterministic multiplicative functions of interest in number theory, such as better-than-square-root cancellation for typical non-principal Dirichlet characters \citep{harper2023typical}.

The recent work of Soundararajan and Zaman \citep{soundararajan2022model} proposed that the secular coefficients $A_N$ of Gaussian $\hmc_1$ constitute a model that describes the mathematical structure of (Steinhaus) random multiplicative functions. As commented in \citep{soundararajan2022model}, the same model also serves as the function field counterpart of Harper's result \citep{harper2020moments}, as follows. 
Let $\M_n$ denote the set of monic polynomials of degree $n$ over the ring $\mathbb{F}_q[t]$ where $q$ is a prime power and $\mathbb{F}_q$ is a finite field with $q$ elements. Consider a random multiplicative function $f$ on $\mathbb{F}_q[t]$ defined analogously, with irreducible monic polynomials as ``primes". It follows that if $\bar{A}_n=q^{-n/2}\sum_{F\in\M_n}f(F)$, then 
\begin{align}
    \sum_{n=0}^\infty \bar{A}_nz^n=\exp\bigg(\sum_{k=1}^\infty \frac{\bar{X}_k}{\sqrt{k}}z^k\bigg)\quad\text{ where }\quad \bar{X}_k=\frac{\sqrt{k}}{q^{k/2}}\sum_{\substack{P \text{ irred.}\\ \mathrm{deg}(P)\mid k\\ r=k/\mathrm{deg}(P)}}\frac{f(P)^r}{r}.\label{eq:connection?}
\end{align}
Therefore, the partial sums $\bar{A}_n$ (in the limit case of $q\to\infty$) mirror the secular coefficients $A_n$ defined in \eqref{eq:basic eq}. We refer to the introduction of \citep{soundararajan2022model} for related discussions around this analogy. 


We speculate that secular coefficients arising from non-Gaussian chaos may connect to certain structured sparse partial sums of random multiplicative functions with heavy-tailed inputs $\{f(P)\}$.
For instance, by considering partial sums of $f$ on smooth polynomials (that factor into primes of small degrees, say $\leq d$), the last sum of \eqref{eq:connection?} consists of a uniformly bounded number of summands (consisting of irreducible polynomials of degrees $\leq d$). In this case, heavy-tailed inputs $\{f(P)\}$ may dominate the central limit effect, and weak dependence within $\{\bar{X}_k\}$ is expected as the power $r$ increases with $k$. 
We refer to \citep{soundararajan2022central,xu2023better} for studies on sparse partial sums of random multiplicative functions.
\begin{question}Find a formal analogous model of some random multiplicative function for secular coefficients arising from non-Gaussian chaos.
\end{question}

The secular coefficients $\{A_N\}_{N\geq 0}$ (with Gaussian inputs $\{X_k\}_{k\geq 1}$) capture other aspects of random multiplicative functions as well, such as almost sure fluctuations. For instance, \citep{caich2023almost2} established the almost sure upper bound $|\sum_{n\leq x}f(n)|\ll \sqrt{x}(\log\log x)^{3/4+\ee}$ for random multiplicative functions $f$, which mirrors the result $|A_N|\ll (\log  N)^{3/4+\ee}$ in \citep{caich2023almost} for secular coefficients.  
In addition, for any function $V(x)\to\infty$, \citep{harper2023almost} proved the almost sure existence of large values $x$ satisfying $|\sum_{n\leq x}f(n)|\geq \sqrt{x}(\log\log x)^{1/4}/V(x)$. The parallel for secular coefficients was established by \citep{gerspach2022almost}:  there exist almost surely large values of $N$ such that $|A_N|\geq (\log N)^{1/4}/V(N)$. We leave the investigation of the almost sure fluctuations of secular coefficients with non-Gaussian inputs to future research. {Let us also mention the recent works  \citep{gorodetsky2024martingale,hardy2025distribution} on limit theorems of partial sums of random multiplicative functions, of which we are not aware of an analogue for secular coefficients.}
\begin{question}
    Establish almost sure upper and lower bounds for secular coefficients arising from non-Gaussian chaos. Characterize a phase transition as the tail of $|X_1|$ becomes heavier.
\end{question}

Let us also mention the work of \citep{aggarwal2022conjectural} that numerically computes the secular coefficients in polynomial time, supporting conjectures on finer asymptotics for their low moments. 
Naturally, one would also ask if the asymptotics in the ($q$-UNIV) and (EXP) phases can be improved to $1+o(1)$ asymptotics. 
\begin{question}
    Find precise asymptotics of \eqref{eq:asymp LT}, \eqref{eq:asymp SE},  and \eqref{eq:gamma=2q result}.
\end{question}
Moreover, \citep{aggarwal2022conjectural} also conjectured a similar behavior of low moments of secular coefficients for \emph{real} standard Gaussian or Rademacher inputs $\{X_k\}_{k\geq 1}$. While we do not directly resolve their conjecture, we shall illustrate in Remark \ref{rem:p} below that the asymptotics \eqref{eq:asymp HT} and \eqref{eq:asymp SE} hold also in the real case, with essentially the same proof. However, this does not apply to \eqref{eq:gamma=2q result}. 
\begin{question}
    Do \eqref{eq:asymp LT} and \eqref{eq:gamma=2q result} of Theorem \ref{thm:main2} hold if one replaces the complex inputs $\{X_k\}_{k\geq 1}$ by their real part $\{R_k\}_{k\geq 1}$?
\end{question}
Finally, in addition to random multiplicative functions, critical multiplicative chaos also connects to 
the distribution of values of the Riemann zeta function on the critical line. According to the FHK conjecture, the local maxima of $\log|\zeta(1/2+it)|$ deviate from what one would predict from Selberg's central limit theorem, due to the log-correlated structure of the zeta values. We refer to \citep{arguin2020fyodorov,arguin2023fyodorov,bailey2022maxima,soundararajan2022distribution} and the references therein for the interplay between the Riemann zeta function, multiplicative chaos, and log-correlated fields.

\paragraph{Organization.}
The rest of this paper is organized as follows. Section \ref{sec:proof strategy} is devoted to the intuition of the phase transition regime and the main ideas of the proofs. In Sections \ref{sec:supercritical}--\ref{sec:critical}, we consider respectively the ($q$-UNIV), (SE), and (EXP) phases. 
Appendix \ref{sec:ubdef} is devoted to some technical computations regarding Laplace functionals of $\{X_k\}_{k\geq 1}$ under distinct probability measures. Appendix \ref{sec:deferred proofs} collects some deterministic calculations.

\paragraph{Notation.}\label{para:notations} For quantities or functions $A,B$, We use Vinogradov's symbol $A\ll B$ (or $A=O(B)$) to denote $|A|\le CB$ with some constant $C>0$ that depends only on the distribution of $R_k$ (equivalently, $X_k$) and the moment exponent $q$. Write $A\asymp B$ if $A\ll B\ll A$. If $A(N)/B(N)\to 0$ we write $A(N)=o(B(N))$. We will denote by $L>0$ a universal constant that \emph{may not be the same on each occurrence}, where the same applies for $C>0$. Vectors are typically denoted by bold symbols in this paper. 
Denote by $\Re z$ the real part of a complex number $z$. When dealing with events (or expectations) involving $\{(R_k,\tau_k)\}_{k\ge 1}$, we will use $\P$ (or $\E$) to denote the original probability measure. We will also use $\P$ to denote the probability measure of any Gaussian random variable (or vector) with a specified mean and variance (or covariance matrix). 

\section{Intuition behind phase transitions and proof strategy}\label{sec:proof strategy}
Fix $\vartheta=1$.
To understand the mechanism behind the phase transition phenomenon for low moments of secular coefficients $A_N$, let us first rewrite it in a more tractable form in terms of partitions. By a \emph{partition} $\lambda$ we mean a non-increasing sequence of integers (parts) $\lambda_1\ge\lambda_2\ge\cdots$ with $\lambda_n=0$ from some $n$ onward. For a partition $\lambda$ and $k\in\N$, we denote by $m_k(\lambda)$ the number of parts in $\lambda$ that are equal to $k$, and $|\lambda|$ the sum of the parts in $\lambda$. For example, the all-one partition $\lambda^*:=(1,\cdots,1)$ has $|\lambda^*|=n$ and $m_k(\lambda^*)=n\bone_{\{k=1\}}$. Let $\cP_N=\{\lambda:|\lambda|=N\}$. 
For a partition $\lambda\in\cP_N$, define
\begin{align}
    a(\lambda):=\prod_{k\geq 1} \left(\frac{X_k}{\sqrt{k}}\right)^{m_k}\frac{1}{m_k!}.\label{eq:al expression}
\end{align}
It follows from \eqref{eq:basic eq} (with $\vartheta=1$) and \eqref{eq:al expression} that 
\[\exp\Bigg(\sum_{k=1}^\infty\frac{X_k}{\sqrt{k}}z^k\Bigg) = \sum_{\lambda\in\cP_N} a(\lambda)z^{|\lambda|},\]
which can be seen by applying Taylor expansion to each $\exp(X_kz^k/\sqrt{k})$. 
In particular, we arrive at the tractable form as a sum of monomials in $\{X_k\}_{k\geq 1}$:
\begin{align}
    A_N=\sum_{\lambda\in\cP_N}a(\lambda).\label{eq:AN expression}
\end{align}
We note the orthogonality relation that $\E[a(\lambda)\overline{a(\lambda')}]=0$ for $\lambda\neq\lambda'$.

\subsection{A phase transition of the domination regime in the monomial decomposition}
In different phases, the quantity $\E[|A_N|^{2q}]$ is dominated by different interactions among parts in the sum over partitions of $N$. This can be intuitively seen by comparing the magnitudes of 
\begin{align}
    \E[|a(\lambda)|^{2q}]=\prod_{k\geq 1} \frac{\E[|X_k|^{2qm_k}]}{k^{qm_k}(m_k!)^{2q}},\quad\lambda\in\cP_N,\label{eq:alambda moments}
\end{align}
using independence, moment asymptotics for $|X_k|$, and  Stirling's approximation (see Chapter 3 of \citep{artin2015gamma})
\begin{align}
     1<(2\pi)^{-1/2}x^{1/2-x}e^x\Gamma(x)<e^{1/(12x)},~x\geq 1.\label{eq:gamma}
 \end{align}
 To be more specific, in the (SE) case, by a change of variable, \begin{align}
    \begin{split}
        \E[|X_k|^{2qm_k}]&= \int_0^\infty e^{-(u^{1/(2qm_k)}/c_p)^p} \d u\\
    &=c_p^{2qm_k}\frac{2qm_k}{p}\int_0^\infty e^{-v}v^{\frac{2qm_k}{p}-1}\d v= c_p^{2qm_k}\frac{2qm_k}{p}\Gamma\Big(\frac{2qm_k}{p}\Big).
    \end{split}\label{u}
\end{align} A finer computation by estimating the Gamma function in \eqref{u} using \eqref{eq:gamma} reveals that $\E[|a(\lambda)|^{2q}]$ grows fastest in $N$ when $\lambda=\lambda^*=(1,\cdots,1)$ (i.e.~$m_1(\lambda^*)=N$), and is significantly larger than the $(2q)$-th moments of $a(\lambda')$ at any other $\lambda'\in\cP_N$ for $N$ large. Therefore, the moments of $A_N$ almost only result from $a(\lambda^*)$, which contains a single random variable $X_1$.
The same arguments apply to even heavier tails than (SE).

On the other hand, as a prototypical example in the ($q$-UNIV) case, we consider the setting where $X_k$ is a standard complex Gaussian variable. Computing the absolute moments of Gaussian variables (see e.g.~\citep{winkelbauer2012moments}) gives 
\begin{align*}
    \E[|X_k|^{2qm_k}]\asymp 2^{qm_k}\Gamma\Big(qm_k+\frac 12\Big)=o\big(k^{qm_k}(m_k!)^{2q}\big),
\end{align*}
indicating that every $\E[|a(\lambda)|^{2q}]$ is of a vanishing order and therefore the main contribution to the moments of $A_N$ does not arise from a single partition. Instead, \citep{soundararajan2022model} found that, as inspired by Harper's remarkable paper on low moments of partial sum of random multiplicative functions \citep{harper2020moments}, the main contribution to the moments of $A_N$ (in the complex Gaussian case) comes from those partitions $\lambda$ with a large part, i.e.~$\lambda_1\geq N/(\log N)^C$ for some large constant $C>0$, indicating an intricate interplay among all $X_k$. 

As the exponent $p$ of $\P(|X_k|>t)\asymp \exp(-ct^p)$ decreases from above $1$ (e.g.~Gaussian) to below $1$ (stretched exponential), the contribution from $X_1$ becomes more prominent and experiences a phase transition at $p=1$. At the critical phase of exponential distributions where $\P(|X_k|>t)\asymp \exp(-\gamma t)$, more elaborate dependence on variables $X_k$ emerges. Fix a moment exponent $q\in(0,1]$. If the exponent $\gamma>2q$, the tail of $X_k$ decays fast enough to suppress the growth of each single $\E[|a(\lambda)|^{2q}]$. As will be shown in Section \ref{sec:supercritical}, the mechanism of Gaussian variables (i.e.~($q$-UNIV) case) remains true. If $\gamma<2q$, on the other hand, the tail decays slowly enough to \emph{partially} restore the dominance of $a(\lambda^*)$ among all partitions. Now the dominant parts comprise not only the all-one partition $\lambda^*$ but also the partitions $\lambda$ with almost all ones, i.e.~$m_1(\lambda)>N-C_*$ for some large constant $C_*>0$. This hints at a second phase transition in the behavior of $A_N$ and the interplay structure among the i.i.d.~inputs $\{X_k\}_{k\geq 1}$ at $\gamma=2q$.

At the secondary criticality where $\P(|X_k|>t)\asymp\exp(-2qt)$, a blend of the two aforementioned scenarios influences the $2q$-th moment of $A_N$. The dominating terms now appear randomly and depend on the value of $|R_1|=|X_1|$. Roughly speaking, on the event that $|R_1|$ is close to a fixed $x_1\in[0,N]$, the primary contribution to $\E[|A_N|^{2q}]$ stems from the $a(\lambda)$ with $m_1(\lambda)$ close to $ x_1$. After conditioning on $R_1$ and fixing $m_1(\lambda)$, the rest terms can be described in terms of partitions without ones and will behave similarly as in the ($q$-UNIV) case.

\subsection{Main ideas of the proofs}
\label{sec:main ideas}
In the following, we delve into further details of the different cases described by our main results, and illustrate the main ideas of the proofs.

\paragraph{(SE) phase.} The orders of the moments $\E[|X_k|^{2qm_k}]$
are given by \eqref{u}. 
Using \eqref{eq:gamma} and that $0<p<1$, one observes that
the quantity \eqref{eq:alambda moments} with $\lambda=\lambda^*=(1,\dots,1)$ (i.e., $m_1(\lambda^*)=N$) is of an order larger than that with any other $\lambda\in\cP_N$ as $N\to\infty$. This can be directly quantified by Minkowski's inequality for $q>1/2$ and concavity for $q\leq 1/2$. 

\paragraph{($q$-UNIV) phase, universality.} 
We adapt the proof of \citep{soundararajan2022model} to derive the proposed universality result of \eqref{eq:asymp LT}, utilizing also the robustness of the multiplicative chaos approach in computing low moments of random multiplicative functions in \citep{harper2020moments}, together with several new observations and technical improvements. The first observation is that low moments of $A_N$ concentrate around partitions such that not all parts are small, similar to the complex Gaussian model. 
Using our assumption $\E[e^{\gamma|R_k|}]<\infty$ and Markov's inequality, we have $\E[|X_k|^{2qm_k}]\leq {\Gamma(2qm_k+1)}{\gamma^{-2qm_k}}.$ 
 Inserting in \eqref{eq:alambda moments}, it follows from \eqref{eq:gamma} that
\begin{align}
\E[|a(\lambda)|^{2q}]\leq \prod_{k\geq 1}\frac{C(2q/\gamma)^{2qm_k}m_k^{1/2-q}}{k^{qm_k}}.\label{eq:moment bound 1}
\end{align}
Since $\gamma>2q$, we expect that the contribution from $a(\lambda)$ is small unless $m_k$ is in general very small (for instance, $\lambda=(N)$).
Indeed, Proposition \ref{SZ-prop3.1} below states that it suffices to consider only partitions $\lambda$ with $\lambda_1\geq \sqrt{N}/C(q)$ for some large constant $C(q)$. 

Next, recalling that $A_N$ can be viewed as a ``Fourier coefficient" of the holomorphic multiplicative chaos, we may apply Parseval's identity to connect the sum over $\lambda$ to the moments of the \emph{total mass} of a (finite) multiplicative chaos, which is of the form
\begin{align}
    \E\left[\left(\int_{-\pi}^{\pi}|F_{K}(re^{i\theta})|^2\d\theta \right)^q\right]\quad\text{ where }\quad K\in\N,~F_K(z):=\exp\Big(\sum_{k=1}^K\frac{X_k}{\sqrt{k}}z^k\Big),\text{ and }r\in[e^{-1/K},e^{1/K}].\label{eq:chaos}
\end{align}
However, a technical difficulty arises: since we only assumed $(2q+\ee)$-th exponential moment exists, certain Laplace functionals of $X_k/\sqrt{k}$ may not be well-defined for a small $k$. Therefore, it is necessary to eliminate the dependence on $X_k$ for small $k$. Fortunately, this is possible by the rotational invariance of $X_k$ and using $\gamma>2q$.
Indeed, we show in Section \ref{sec:LT reduction} that it suffices to study the \emph{truncated} secular coefficients 
\begin{align}
    A_{N,M_*}:=\sum_{\substack{\lambda\in\cP_N\\ m_1=\dots=m_{M_*-1}=0}}a(\lambda)=\sum_{\substack{\lambda\in\cP_N\\ m_1=\dots=m_{M_*-1}=0}}\prod_{k\geq M_*}\left(\frac{X_k}{\sqrt{k}}\right)^{m_k}\frac{1}{m_k!}\label{eq:ANM}
\end{align}
of the \emph{truncated} chaos
\begin{align}
    F_{K,M_*}(z):=\exp\Big(\sum_{k=M_*}^K\frac{X_k}{\sqrt{k}}z^k\Big).\label{eq:FK}
\end{align}
Here, $M_*$ is a large constant that depends only on $q$ and the law of $X_1$.\footnote{One can show that if $X_1$ is compactly supported, $M_*=1$ is feasible.} 
After truncation at $M_*$, we can apply Parseval's identity to convert the moments of $A_{N,M_*}$ to that of the total mass of $|F_{N,M_*}|^2$ over the (scaled) unit circle; see Propositions \ref{SZ-prop3.1} and \ref{SZ-Prop-8.1} below. 

The non-trivial part of \eqref{eq:asymp LT} is that the low moments of $A_N$ (and hence of the total mass of $|F_{N,M_*}|^2$ by the aforementioned connection) are asymptotically smaller than those predicted by the second moment (a.k.a.~better-than-square-root cancellation). The fundamental reason is that on certain unlikely events, some values of $|F_{N,M_*}|^2$ become exceptionally large and dominate the second moment. 
To intuitively see this, let us consider the following random field 
\begin{align}
    G(z):=\log F_{N,M_*}(z)=\sum_{k=M_*}^N \frac{X_k}{\sqrt{k}}z^k,~ z\in\mathbb{T},\label{eq:Gz}
\end{align}
where $\mathbb{T}$ denotes the unit circle, together with a \textit{branching random walk} (BRW) analogue which also guides the intuition in (EXP) $\gamma=2q$ phase.\footnote{A BRW process with $d$ offsprings, $L$ generations, and standard Gaussian increments is a Gaussian process on a $d$-ary tree of depth $L$, assigning i.i.d.~standard Gaussian variables to each edge of the tree. The value assigned to each leaf is then the sum of independent Gaussian variables along the shortest path from the root to that leaf.}
For more information on the analogy between BRW and other log-correlated fields, see Arguin's survey \citep{arguin2016extrema} and references therein.

Since the variance of each summand in \eqref{eq:Gz} is of size $\asymp{1}/{k}$, it is natural to consider the partial sum over an exponentially growing interval, i.e.~$\sum_{k\in[e^{n-1},e^n)}X_kz^k/\sqrt{k}$ and rewrite the field as \begin{align}
    G(z)\approx \sum_{n=\floor{\log M_*}}^{\floor{\log N}} Y_n(z):=\sum_{n=\floor{\log M_*}}^{\floor{\log N}}\Bigg(\sum_{k=e^{n-1}}^{e^n-1}\frac{X_k}{\sqrt{k}}z^k\Bigg), ~z\in\mathbb{T}.\label{eq:Yndef}
\end{align} 
The first observation is that for each $z\in\mathbb{T}$, the distribution of each \textit{increment} $Y_n(z)$ approaches a standard complex Gaussian as $n$ increases. Next, if we consider two points $z,z'\in\mathbb{T}$ with $|z-z'|$ close to $ e^{-m}$, then for $n\leq m$, one expects $Y_n(z)$ to strongly correlate with $Y_n(z')$. On the other hand, for $n$ much larger than $m$, the increments $Y_n(z)$ and $Y_n(z')$ should be sufficiently decorrelated and are asymptotically independent as $n\to\infty$. From these observations, the random field $G(z)$ on $\mathbb{T}$ can be viewed as a BRW with ``$e$ offsprings" and $\floor{\log N}$ generations, with standard complex Gaussian increments $Y_n(z)$ at each step. The latest common ancestor of $z,z'\in\mathbb{T}$ is of generation $-\log|z-z'|$. In other words, the values $G(z)$ and $G(z')$ share a common part of 
\begin{align}
    \sum_{n=\floor{\log M_*}}^{-\log|z-z'|}Y_n(z)\approx\sum_{n=\floor{\log M_*}}^{-\log|z-z'|}Y_n(z')\label{eq:equal    }
\end{align}and the rest are approximately independent. In particular, if $|z-z'|=o(1/N)$, $G(z)$ is almost indistinguishable from $G(z')$.

Consequently, an exceptionally large value of \eqref{eq:equal    } will affect a spectrum of $z$ of length $\asymp e^{-m}$, leading to an exceptionally large second moment. To circumvent such an issue, the works of \citep{harper2020moments} and \citep{soundararajan2022model} borrowed a barrier argument from BRW (see e.g.~\citep{shi2015branching}), using a ballot event to discard the (unlikely) large values of \eqref{eq:equal    }. Their first step is to view the term $|F_{N,M_*}|^2$
as a Girsanov-type change of measure, under which $Y_n$ has a non-zero expectation $\mu_n$. 
Next, they set up  the barrier event
\[\mathcal{G}(A;N):=\bigg\{\forall \log M_*<n\leq \log N,~\forall z\in\mathbb{T},~\sum_{j=\floor{\log M_*}}^{n}(Y_j(z)-\mu_j)\leq A\bigg\}\] and show that $\mathcal{G}(A;N)^c$ holds with probability exponentially decaying in $A$. On the event $\mathcal{G}(A;N)$, one writes 
\[\E[\bone_{\mathcal{G}(A;N)}|F_{N,M_*}|^2]=\E[|F_{N,M_*}|^{2}]\,\mathbb{Q}(\mathcal{G}(A;N)),\] where $\d\mathbb{Q}/\d\P\asymp |F_{N,M_*}|^2$  and the ballot-type probability $\mathbb{Q}(\mathcal{G}(A;N))$ accounts for the $(\log N)^{-q/2}$ correction term arising in \eqref{eq:asymp HT}. 

The technical difficulty in carrying over the above arguments to the non-Gaussian case is twofold. First, we need to replace the precise computation of $\E[|F_{N,M_*}|^{2}]$ in the Gaussian case with a slightly more involved asymptotic computation using Taylor's expansion. This will be detailed in Appendix \ref{sec:ubdef}. Second, it is harder to quantify the dependence structure within $G(z)$ and $G(z')$ for $z,z'\in\mathbb{T}$ as discussed near \eqref{eq:equal    }. To overcome this issue, we apply the slicing argument of \citep{harper2020moments} and a two-dimensional Berry-Esseen estimate to analyze quantitatively the dependence structure via normal approximation. 
However, Harper's original proof relies on the double-exponential growth of the number of summands defining $Y_n$, which leads to handy convergence results. This approach unfortunately does not suffice as our $Y_n$ only consists of an exponential number of summands. 
Instead, we apply yet another change of measure to re-center the vector $(Y_n(z),Y_n(z'))$ for each pair of $(z,z')$, which guarantees a good enough quantitative approximation.

\paragraph{(EXP) phase with $\gamma<2q$.} 
We compute directly that
\begin{align}
    \E[|X_k|^{2qm_k}]=2\int_{c_\gamma}^\infty u^{2qm_k} \gamma e^{-\gamma(u-c_\gamma)}\d u\asymp\gamma^{-2qm_k}\Gamma(2qm_k+1),\label{eq:moments of SE}
\end{align}
which amounts to (using \eqref{eq:gamma})
$$\E[|a(\lambda)|^{2q}]\asymp \prod_{k\geq 1}\frac{\Gamma(2qm_k+1)}{\gamma^{2qm_k}k^{qm_k}(m_k!)^{2q}}\asymp \prod_{k\geq 1} m_k^{1/2-q}\Big(\frac{2q}{\gamma\sqrt{k}}\Big)^{2qm_k}.$$
Since $\gamma<2q$, large values of $m_k$ are favorable. In other words,  
partitions $\lambda\in\cP_N$ with $m_1(\lambda)$ close to $ N$ dominate. On the other hand, for those $\lambda$, the magnitudes of $\E[|a(\lambda)|^{2q}]$ are comparable (contrary to the case of (SE)), and hence a na\"{i}ve Minkowski's inequality argument (and concavity when $q<1/2$) similar to the (SE) phase suffices for the upper bound but does not conclude the lower bound.

Observe that for the all-one partition $\lambda^*$, $a(\lambda^*)=X_1^N/N!$. It follows that 
\begin{align}
    \E[|a(\lambda^*)|^{2q}]=\E\Bigg[\Big|\frac{X_1^N}{N!}\Big|^{2q}\Bigg]\asymp \frac{1}{(N!)^{2q}}\int_0^\infty x^{2qN}e^{-\gamma x}\d x.\label{eq:gamma argument}
\end{align}
The contribution to the latter integral stems mainly from the range where $x$ is close to $ 2qN/\gamma$. This motivates us to restrict the expectation $\E[|A_N|^{2q}]$ to the event $|R_1|=|X_1|\approx 2qN/\gamma$. By restricting to such an event, we gain better control of the dominance of $\lambda\in\cP_N$ with $m_1(\lambda)$ close to $ N$. 

We first use Minkowski's inequality (and concavity when $q<1/2$) to exclude the partitions $\lambda\in\cP_N$ with $m_1(\lambda)\leq N-C_*$ for some large constant $C_*>0$. For $\lambda$ such that $m_1(\lambda)> N-C_*$, we may extract a common large power of $|X_1|$, that of $|X_1|^{N-C_*}$. Let us restrict to the event $|X_1|=2qN/\gamma+O(\sqrt{N})$. For the rest power of $|X_1|^{m_1(\lambda)-(N-C_*)}$, we may substitute $|X_1|$ with $2qN/\gamma$ with a negligible error term. This leads to tractable control on  
\begin{align}
    \E\Bigg[\Big||X_1|^{N-C_*}\sum_{\substack{\lambda\in\cP_N\\ m_1(\lambda)\geq N-C_*}}{\widetilde{a}(\lambda)}\Big|^{2q}\bone_{\{|X_1|\approx 2qN/\gamma\}}\Bigg],\label{eq:control1}
\end{align}
where 
$$\widetilde{a}(\lambda):=\frac{e^{i\tau_1m_1}|2qN/\gamma|^{m_1-(N-C_*)}}{m_1!}\prod_{k\geq 2} \left(\frac{X_k}{\sqrt{k}}\right)^{m_k}\frac{1}{m_k!}.$$
The finite sum in \eqref{eq:control1}, which is independent of $R_1$, can be separated from the expectation and approximated in $L^2$ (for $C_*$ large) by an infinite series that one can offer a lower bound in probability. 

The above arguments can also be adapted to the case of real inputs $\{X_k\}_{k\geq 1}$; see Remark \ref{rem:p}.

\paragraph{(EXP) phase with $\gamma=2q$.}  
A similar computation as in the case $\gamma<2q$ using \eqref{eq:moments of SE} yields
$$\E[|a(\lambda)|^{2q}]\asymp \prod_{k\geq 1}\frac{\Gamma(2qm_k+1)}{\gamma^{2qm_k}k^{qm_k}(m_k!)^{2q}}\asymp \prod_{k\geq 1} m_k^{1/2-q}k^{-qm_k}.$$
Compared to the above phases, we observe a new phenomenon here: the main contribution in \eqref{eq:AN expression} stems randomly from a spectrum of $\lambda$ in a  different way from the other cases. The dominating term cannot be described deterministically and is governed by the random variable $|R_1|=|X_1|$. For $x_1\in[0,N]$, conditioned on $|R_1|=x_1$, we will show that the sum in \eqref{eq:AN expression} contributes mostly when $m_1(\lambda)$ is close to $ x_1$. The contributions from different values of $|R_1|$ can be roughly described by a slowly varying function in $|R_1|$. If both values of $|R_1|$ and $m_1$ are fixed, the situation is close to ($q$-UNIV): among partitions of $N-m_1$ without ones, those with a large component dominate. 

We sketch the ideas for the lower bound of \eqref{eq:gamma=2q result}, which is the harder part. For $m\in[N/6,N/3]\cap\Z$, we consider the disjoint events that $|R_1|\in[m,m+1)$. On such an event, we show that the main contribution to 
\begin{align}
    \E\Bigg[\bigg|\sum_{\substack{\lambda\in\cP_N}}\prod_{k\geq 1}\left(\frac{X_k}{\sqrt{k}}\right)^{m_k}\frac{1}{m_k!}\bigg|^{2q}\bone_{\{|X_1|\in[m,m+1)\}}\Bigg]\label{m}
\end{align}
stems from the sum over $\lambda$ such that $m_1(\lambda)=m+O(N^{9/10})$, by arguing similarly as \eqref{eq:gamma argument}. Conditioning on $|R_1|=x_1$, such quantity can be further rewritten into 
\begin{align}
    \E\Bigg[\bigg|\sum_{\substack{\lambda\in\cP_N\\ |m_1(\lambda)-m|\leq N^{9/10}}}u(m_1)\prod_{k\geq 2}\left(\frac{X_k}{\sqrt{k}}\right)^{m_k}\frac{1}{m_k!}\bigg|^{2q}\Bigg],\label{g}
\end{align}
where 
\begin{align}
    u(j)=e^{ij\tau_1}\,\frac{|x_1|^{j-m}}{j!/m!}\label{eq:ujuj}
\end{align} and $x_1\in [m,m+1)$. Note that we condition only on $|R_1|$, and $\tau_1$ is uniformly distributed on $[-\pi,\pi]$ independent of anything else.  
The quantity \eqref{g} can be viewed as a generalization of $\E[|A_N|^{2q}]$ (which is essentially $u(j)\equiv 1$, cf.~\eqref{eq:al expression} and \eqref{eq:AN expression}). To derive its asymptotics, we need to apply a finer version of the multiplicative chaos approach in the ($q$-UNIV) case. First, applying Parseval's identity allows us to reduce our problem into studying the asymptotics of the low moments of a \emph{randomly weighted mass} of truncated chaos,
\begin{align}
    \E\left[\left(\int_{-\pi}^{\pi}|\widetilde{F}_{K,M_*,m}(re^{i\theta})|^2\d\theta \right)^q\right],\label{b}
\end{align}
where
$$\widetilde{F}_{K,M_*,m}(z):=F_{K,M_*}(z)\times\Big(\sum_{j:|j-m|\leq N^{9/10}}u(j)z^j\Big),$$
cf.~\eqref{eq:chaos} and \eqref{eq:FK}. 
For $r$ close enough to $1$, the second term on the right-hand side with $z=re^{i\theta}$ can be seen as roughly the discrete Fourier transform of the function $j\mapsto r^mm^{j-m}\Gamma(m)/\Gamma(j)$ at frequency $\tau_1+\theta$, where one expects that roughly,
$$\bigg|\sum_{j:|j-m|\leq N^{9/10}}u(j)(re^{i\theta})^j\bigg|\asymp \min\left\{\frac{r^m}{|\tau_1+\theta|},\sqrt{N}r^m\right\}.$$
Therefore, it is natural to decompose the integral in \eqref{b} depending on the magnitude of $|\theta+\tau_1|$, and reduce the problem to estimating (uniformly) the low moments of the \emph{partial} mass of the truncated chaos,
$$\E\Bigg[\bigg(\int_{|\theta|\leq\frac{1}{K_*}}|{F}_{K,M_*}(re^{i\theta})|^2\d\theta \bigg)^q\Bigg],$$where $K_*\ll\sqrt{N}$.  
This is central to the proof and will be the goal of Section \ref{sec:partial mass}.  
As an example, we briefly illustrate the ideas behind estimating
\begin{align}
    \E\Bigg[\bigg(\int_{|\theta|\leq\frac{1}{\sqrt{N}}}|{F}_{K,M_*}(re^{i\theta})|^2\d\theta \bigg)^q\Bigg], \label{p}
\end{align} using the aforementioned branching random walk analogue. The integration over $\{\theta:|\theta|\leq1/\sqrt{N}\}$ can be translated as considering only those points $z'\in\mathbb{T}$ having the latest common ancestor with $z=0$ of generation no earlier than $(\log N)/2$, which form a sub-tree of the whole family. This suggests that we should 1) consider a decomposition
\begin{align}
    F_{K,M_*}(z)=\overline{F}_{K,M_*}(z)\times \underline{F}_{K,M_*}(z):=\exp\Big(\sum_{k=M_*}^{\sqrt{N}}\frac{X_k}{\sqrt{k}}z^k\Big)\times \exp\Big(\sum^K_{k=\sqrt{N}}\frac{X_k}{\sqrt{k}}z^k\Big).\label{eq:decomp ex}
\end{align} so that if $|\theta|,|\theta'|\leq 1/\sqrt{N}$, uniformly $\overline{F}_{K,M_*}(re^{i\theta})\asymp \overline{F}_{K,M_*}(re^{i\theta'})$; and 2) the integration of $\underline{F}_{K,M_*}$ over $\{|\theta|\leq1/\sqrt{N}\}$ should parallel that of $F_{K,M_*}$ over $\mathbb{T}$, since each sub-tree of a branching random walk can be considered as independently a new BRW with fewer generations. In other words, the term $\overline{F}_{K,M_*}(re^{i\theta})$ shares roughly the same value (up to multiplicative constants that are bounded in probability) for all $\theta$ with $|\theta|\leq 1/\sqrt{N}$. This can be quantified using the generic chaining technique, which gives tail bounds of
$$\sup_{|\theta|\leq 1/\sqrt{N}}\Big|\sum_{k=M_*}^{\sqrt{N}}\frac{r^k}{\sqrt{k}}\Re(X_ke^{ik\theta}-X_k)\Big|$$ under a suitable change of measure, 
allowing us to pull the term $|\overline{F}_{K,M_*}(re^{i\theta})|^2$ out from the integral in  \eqref{p}. With a change of variable $\theta\mapsto\pi\sqrt{N}\theta$, the remaining term
$$\E\Bigg[\bigg(\int_{|\theta|\leq\frac{1}{\sqrt{N}}}|\underline{F}_{K,M_*}(re^{i\theta})|^2\d\theta \bigg)^q\Bigg]\asymp N^{-q/2}\E\Bigg[\bigg(\int_{-\pi}^\pi|\underline{F}_{K,M_*}(re^{i\theta/(\pi\sqrt{N})})|^2\d\theta \bigg)^q\Bigg]$$
shares the same log-correlated structure as the ($q$-UNIV) case, and can be analyzed in a similar way. For details we refer to Proposition \ref{prop:christmas}, which establishes that roughly,
 $$ \E\left[\left(\int_{-\pi}^{\pi}|\widetilde{F}_{K,M_*,m}(re^{i\theta})|^2\d\theta \right)^q\right]\asymp N^{q^2/2}\left(\frac{r^{2m}K}{1+(1-q)\sqrt{\log K}}\right)^q.$$
Moreover, restricting to the event $|X_1|\in[m,m+1)$ in \eqref{m} loses a factor of $m^{-q}$, which can be intuitively verified by the following property of the Gamma function:
\begin{align*}
    \int_m^{m+1}e^{-x}x^{m}\d x\asymp m^{-1/2}\int_0^\infty e^{-x}x^m\d x.
\end{align*}
Altogether, we obtain 
$$\E[|A_N|^{2q}]\gg \sum_{m=N/6}^{N/3}\frac{m^{-q}m^{q^2/2}}{(1+(1-q)\sqrt{\log(N-m)})^{q}}\asymp \frac{N^{1-q+q^2/2}}{(1+(1-q)\sqrt{\log N})^q},$$
 leading to the lower bound of \eqref{eq:gamma=2q result}. 
 
 The upper bound follows a similar approach: condition on $|R_1|$, construct \eqref{g} for all $m\in\N$, use the multiplicative chaos upper bound for the ($q$-UNIV) case, and finally apply Minkowski's inequality ($q\geq 1/2$) or concavity ($q<1/2$).

\section{The universality phase}\label{sec:supercritical}

\subsection{Reducing the proof to Proposition \texorpdfstring{\ref{prop:superC-asymp-truncated-moments}}{}}
\label{sec:LT reduction}

To prove the asymptotics for $\E[|A_N|^{2q}]$ under the ($q$-UNIV) condition, we first show that, given any constant $M_*>0$, removing summands $a(\lambda)$ in $A_N$ with $m_i(\lambda)=0$ for all $i<M_*$ from \eqref{eq:AN expression} costs at most a constant factor depending only on the distribution of $X_k$ (equivalently $R_k$), $q$, and $M_*$. This will be applied later with $M_*$ picked in terms of the distribution of $X_k$ and $q$ only. The same idea will be recycled later a few times, for instance, for the (EXP) case with $\gamma=2q$ in Section \ref{sec:str}.

Recall \eqref{eq:ANM}. 
 We use the following proposition on the low moments of $A_{N,M_*}$ with a suitable $M_*$ to deduce the asymptotics of low moments of $A_N$. We will prove the upper and lower bound parts of this proposition in Sections \ref{sec:ub} and \ref{sec:lb} respectively.

\begin{proposition}\label{prop:superC-asymp-truncated-moments}
Fix an integer $M_*$ larger than some constant depending only on the distribution of $X_k$. For any large $N$ and any $q\in(0,1]$, under ($q$-UNIV) we have \begin{equation*}
        \E[|A_{N,M_*}|^{2q}]\asymp \left(\frac{1}{1+(1-q)\sqrt{\log N}}\right)^q\asymp\begin{cases}
        (\log N)^{-q/2}&\text{ if }q\in(0,1);\\
        1&\text{ if }q=1,
    \end{cases}
    \end{equation*} where the implied constants may depend on the distribution of $X_k$ (equivalently $R_k$), $q$, and $M_*$, but not on $N$. Moreover, one can take $M_*=1$ if $X_k$ (or equivalently, $|R_k|$) is compactly supported.
\end{proposition}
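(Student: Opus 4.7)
The plan is to follow the multiplicative-chaos route of \citep{soundararajan2022model, harper2020moments}, adapted to the absence of Gaussianity. Fix $K=N$ and $r\in[e^{-1/N},e^{1/N}]$. Since $A_{N,M_*}=[z^N]F_{N,M_*}(z)$ and the monomials $\{a(\lambda)\}$ indexed by partitions with $m_1=\cdots=m_{M_*-1}=0$ are pairwise orthogonal, a Parseval-type identity on the circle of radius $r$ would reduce the problem to estimating
\[
\mathcal{I} := \E\!\left[\left(\frac{1}{2\pi}\int_{-\pi}^{\pi}\bigl|F_{N,M_*}(re^{i\theta})\bigr|^{2}\,d\theta\right)^{q}\right]
\]
up to $r$-dependent factors of constant order. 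Truncating at $M_*$ is precisely what guarantees that the Laplace functional of $X_k/\sqrt{k}$ is well behaved for all $k\geq M_*$ under ($q$-LT), which is what enables the Girsanov argument below.

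For the upper bound, I would decompose $G(z):=\log F_{N,M_*}(z)$ into exponential-scale increments $Y_n(z)$ as in \eqref{eq:Yndef}, viewing $G$ as an approximate branching random walk with $\lfloor\log N\rfloor$ generations. A Girsanov-type change of measure $\Q$ with $d\Q/d\P \propto |F_{N,M_*}(re^{i\theta})|^{2}/\E[|F_{N,M_*}(re^{i\theta})|^{2}]$ tilts each $Y_n(z)$ by a drift $\mu_n$ computed from the cumulants of $X_k$. Introducing the barrier event $\mathcal{G}(A;N)$ from the introduction, one writes
\[
\E\!\left[|F_{N,M_*}|^{2}\,\bone_{\mathcal{G}(A;N)}\right]=\E[|F_{N,M_*}|^{2}]\,\Q(\mathcal{G}(A;N)),
\]
and a ballot-type estimate under $\Q$ yields $\Q(\mathcal{G}(A;N))\ll (1+(1-q)\sqrt{\log N})^{-1}$. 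Combining with Jensen's inequality (which is admissible since $q\le 1$), together with a Markov bound on the complementary event $\mathcal{G}(A;N)^{c}$, then gives the claimed upper bound on $\mathcal{I}$.

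For the lower bound I would run a Paley--Zygmund argument on the restricted mass $\int|F_{N,M_*}(re^{i\theta})|^{2}\,\bone_{\mathcal{G}(A;N)}\,d\theta$, matching first and second moments up to the ballot probability; a reverse-H\"{o}lder step then converts this into a matching lower bound on $\mathcal{I}$. The principal obstacle, and the main technical departure from the Gaussian setting, is quantitative normal approximation: to control the joint law of $(Y_n(z),Y_n(z'))$ when $|z-z'|\asymp e^{-m}$, one needs a two-dimensional Berry--Esseen estimate, and since each $Y_n$ consists of only exponentially (not doubly exponentially) many summands, the estimate used in \citep{harper2020moments} is not sharp enough here. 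The fix I have in mind is an additional re-centering change of measure for each pair $(z,z')$, combined with a careful Taylor expansion of the Laplace functional of $X_k/\sqrt{k}$; the latter is legitimate precisely because $M_*$ has been chosen large in terms of $q$ and $\E[e^{\gamma|R_1|}]$, and this re-centering is what replaces the exact Gaussian second-moment computation of \citep{soundararajan2022model}.
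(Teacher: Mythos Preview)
Your strategy matches the paper's: barrier events and ballot estimates under the tilted measure $\Q^{(1)}$ for the upper bound, and a H\"older (Paley--Zygmund) scheme on a restricted set $\mathcal L$ together with a two-point estimate under $\Q^{(2)}$ for the lower bound, where the new non-Gaussian ingredient is exactly the re-centering change of measure before Berry--Esseen that you describe. That part is right.

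There is, however, a genuine gap in the reduction step. Parseval on the circle of radius $r$ gives $\sum_n |\widetilde A_{n,N,M_*}|^2 r^{2n}$, a weighted sum of \emph{all} coefficients, not the single coefficient $A_{N,M_*}$; so ``a Parseval-type identity would reduce the problem to $\mathcal I$'' is not correct as stated. The paper handles the two directions by different and non-trivial arguments. For the upper bound (Proposition~\ref{SZ-prop3.1}) one first uses the $(q$-LT) condition, via the moment bound $\E[|a(\lambda)|^{2q}]\ll\prod_{k}C(2q/\gamma)^{2qm_k}k^{-qm_k}$, to show partitions with $\lambda_1\le \sqrt N/C$ are negligible, then decomposes dyadically in $\lambda_1$, applies Jensen in the variables $\{X_k\}_{N/2^j<k\le N/2^{j-1}}$, and only then invokes Parseval; this produces a \emph{sum} over $j$ of integrals of $|F_{N/2^j,M_*}|^2$ at radii $e^{j/N}$, not a single $\mathcal I$. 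For the lower bound (Proposition~\ref{SZ-Prop-8.1}) one isolates the largest part as a linear form in a single $X_n$, applies Khintchine's inequality conditionally, and then Parseval. Both reductions genuinely use the hypotheses and are not bypassed by orthogonality alone.

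A smaller point: the ballot probability itself does not depend on $q$; one has $\Q^{(1)}(\mathcal G_r(A;K))\asymp A/\sqrt{\log K}$ and $\P(\mathcal G_r(A;K)^c)\ll e^{-A}$. The factor $(1+(1-q)\sqrt{\log N})^{-q}$ only appears after a dyadic-in-$A$ decomposition combined with H\"older, balancing $e^{-(1-q)A}$ against $(A/\sqrt{\log K})^q$. Your line ``$\Q(\mathcal G(A;N))\ll (1+(1-q)\sqrt{\log N})^{-1}$'' and ``Markov bound on the complement'' conflate these two stages.
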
\begin{remark}
    The truncation parameter $M_*$ is chosen so that every exponential moment regarding $X_k$ used in the proof of Proposition \ref{prop:superC-asymp-truncated-moments} is finite for any $k\geq M_*$. It is possible to track down this threshold in terms of the constant $k_0$ defined in Lemma \ref{SZ-lemma2.2}, but we omit the work here. 
\end{remark}\begin{proof}[Deducing equation \eqref{eq:asymp LT} of Theorem \ref{thm:main} from Proposition \ref{prop:superC-asymp-truncated-moments}]
If $|X_k|$ is compactly supported, Proposition \ref{prop:superC-asymp-truncated-moments} directly implies \eqref{eq:asymp LT} of Theorem \ref{thm:main}. Therefore, in the rest of the proof, we may assume that $|X_k|$ is not compactly supported. 
    We first establish the upper bound of $\E[|A_N|^{2q}]$. 
Define  
\begin{align}
    \cP_{N,M_*}^{<}:= \bigg\{(m_i)_{1\le i< M_*}: \forall 1\le i<M_*,~ m_i\in\{0,\cdots,N\} \text{ and }\sum_{1\le i<M_*}im_i< N\bigg\}.\label{eq:P def}
\end{align}
For $\bm\in \cP_{N,M_*}^{<}$, we define
$$A_{\bm;N,M_*}:=\sum_{\substack{\lambda\in\cP_N\\ (m_1,\dots,m_{M_*-1})=\bm}}a(\lambda).$$ In particular, $A_{\mathbf{0}; N,M_*}=A_{N,M_*}$.
    For $q\le 1/2$, using concavity we have \begin{align*}
    \E[|A_N|^{2q}]&\leq \sum_{\bm\in\cP_{N,M_*}^{<}}\E[|A_{\bm;N,M_*}|^{2q}]+\sum_{\lambda:\lambda_1<M_*}\E[|a(\lambda)|^{2q}]\\
    &\ll \sum_{\bm\in\cP_{N,M_*}^{<}}\E\left[\big|A_{N-\sum_{j=1}^{M_*-1}jm_j,M_*}\big|^{2q}\right]\prod_{k=1}^{M_*-1}\E\left[\left|\left(\frac{X_k}{\sqrt{k}}\right)^{m_k}\frac{1}{m_k!}\right|^{2q}\right]\\
    &\hspace{2cm}+ \sum_{\lambda:\lambda_1<M_*}\prod_{k=1}^{M_*-1}\E\left[\left|\left(\frac{X_k}{\sqrt{k}}\right)^{m_k}\frac{1}{m_k!}\right|^{2q}\right].
\end{align*}
By Proposition \ref{prop:superC-asymp-truncated-moments} and \eqref{eq:moment bound 1} and since $M_*$ is fixed, the first sum is bounded by
\begin{align*}
    &\hspace{0.5cm}\sum_{\bm\in\cP_{N,M_*}^{<}}\E\left[\big|A_{N-\sum_{j=1}^{M_*-1}jm_j,M_*}\big|^{2q}\right]\prod_{k=1}^{M_*-1}\E\left[\left|\left(\frac{X_k}{\sqrt{k}}\right)^{m_k}\frac{1}{m_k!}\right|^{2q}\right]\\
    &\ll \sum_{\bm\in\cP_{N,M_*}^{<}}\frac{1}{(1+(1-q)\sqrt{\log(N-\sum_{j=1}^{M_*-1}jm_j)})^{q}}\prod_{k=1}^{ M_*-1}Cm_k^{1/2-q}\left(\frac{2q}{\gamma \sqrt{k}}\right)^{2qm_k}\\
    &\ll \sum_{\bm\in\cP_{N,M_*}^{<}}\frac{1}{(1+(1-q)\sqrt{\log(N-\sum_{j=1}^{M_*-1}jm_j)})^{q}}\prod_{k=1}^{ M_*-1}\left(\frac{1}{e^\delta \sqrt{k}}\right)^{2qm_k},
\end{align*}
where $\delta>0$ depends on $\gamma,q$. Next, since $M_*$ is fixed,
\begin{align*}
    \sum_{\lambda:\lambda_1<M_*}\prod_{k=1}^{M_*-1}\E\left[\left|\left(\frac{X_k}{\sqrt{k}}\right)^{m_k}\frac{1}{m_k!}\right|^{2q}\right]&\ll \sum_{\lambda:\lambda_1<M_*}\prod_{k=1}^{M_*-1}Cm_k^{1/2-q}\left(\frac{2q}{\gamma \sqrt{k}}\right)^{2qm_k}\ll \sum_{\lambda:\lambda_1<M_*}\prod_{k=1}^{M_*-1}e^{-\delta m_k}\ll \frac{1}{N},
\end{align*}
where we used in the last step that $\sum_{k<M_*}m_k\ge N/M_*$ for $\lambda\in\cP_N$ with $\lambda_1<M_*$, and the bound $|\cP_N|\ll e^{\pi\sqrt{2/3}\sqrt{N}}$ (see \citep{andrews1998theory}). We thus arrive at
$$\E[|A_N|^{2q}]\ll \sum_{\bm\in\cP_{N,M_*}^{<}}\frac{1}{(1+(1-q)\sqrt{\log(N-\sum_{j=1}^{M_*-1}jm_j)})^{q}}\prod_{k=1}^{ M_*-1}\left(\frac{1}{e^\delta \sqrt{k}}\right)^{2qm_k}+\frac{1}{N}.$$
Let us divide the above sum over $\bm\in\cP_{N,M_*}^{<}$ into two parts depending on whether $\sum_{j=1}^{M_*-1}jm_j\leq N/2$ or not. First,
\begin{align*}&\hspace{0.5cm}\sum_{\substack{\bm\in\cP_{N,M_*}^{<}\\ \sum_{j=1}^{M_*-1}jm_j\leq N/2}}\frac{1}{(1+(1-q)\sqrt{\log(N-\sum_{j=1}^{M_*-1}jm_j)})^{q}}\prod_{k=1}^{M_*-1}\left(\frac{1}{e^\delta \sqrt{k}}\right)^{2qm_k}\\
    &\ll \frac{1}{(1+(1-q)\sqrt{\log N})^q}\sum_{\substack{\bm\in\cP_{N,M_*}^{<}\\ \sum_{j=1}^{M_*-1}jm_j\leq N/2}}\prod_{k=1}^{M_*-1}\left(\frac{1}{e^\delta \sqrt{k}}\right)^{2qm_k}\\
    &\leq \frac{1}{(1+(1-q)\sqrt{\log N})^q} \prod_{k=1}^{M_*-1}\sum_{m_k=0}^\infty \left(\frac{1}{e^\delta \sqrt{k}}\right)^{2qm_k}\\
    &\ll \frac{1}{(1+(1-q)\sqrt{\log N})^q}.
\end{align*}
Second, if $\sum_{j=1}^{M_*-1}jm_j> N/2$, then $\sum_{j=1}^{M_*-1}m_j> N/(2M_*)$. This implies
\begin{align}
\begin{split}
    \sum_{\substack{\bm\in\cP_{N,M_*}^{<}\\ \sum_{j=1}^{M_*-1}jm_j> N/2}}&\frac{1}{(1+(1-q)\sqrt{\log(N-\sum_{j=1}^{M_*-1}jm_j)})^{q}}\prod_{k=1}^{M_*-1}\left(\frac{1}{e^\delta \sqrt{k}}\right)^{2qm_k}\\
    &\ll \sum_{\substack{\bm\in\cP_{N,M_*}^{<}\\ \sum_{j=1}^{M_*-1}m_j> N/(2M_*)}}\prod_{k=1}^{M_*-1}\left(\frac{1}{e^\delta \sqrt{k}}\right)^{2qm_k}\leq (N+1)^{M_*}\exp\left(-\frac{2\delta qN}{2M_*}\right) \ll \frac{1}{N}.
\end{split}\label{eq:second case}
\end{align}
Altogether, these yield $\E[|A_N|^{2q}]\ll (1+(1-q)\sqrt{\log N})^{-q}$ for $q\le 1/2$. The case $q\in(1/2,1]$ is similar by looking at $\E[|A_N|^{2q}]^{1/(2q)}$ and using Minkowski's inequality instead of concavity.

Next, we show a matching lower bound on $\E[|A_N|^{2q}]$ by using that of $\E[|A_{N,M_*}|^{2q}]$.
The first observation is that we can, with the cost of a constant factor depending on $L$, remove those $a(\lambda)$ such that $0<m_1(\lambda)<2^L$ for some large integer $L$ to be determined later. Denote by $\mathcal{A}_{k}\subset\cP_N$ the set of partitions of $N$ such that $2^k|m_1(\lambda)$, for $k\geq 0$. In particular, $\mathcal A_0=\cP_N$. Using $X_1\ddd -X_1$, we get 
\[A_N=\sum_{\lambda\in\mathcal{A}_1}a(\lambda)+\sum_{\lambda\in\mathcal{A}_0\setminus\A_1}a(\lambda)\ddd \sum_{\lambda\in\mathcal{A}_1}a(\lambda)-\sum_{\lambda\in\mathcal{A}_0\setminus\A_1}a(\lambda),\] and therefore, \begin{align}
\begin{split}
    \E[|A_N|^{2q}] &= \frac{1}{2}\Bigg(\E\Bigg[\Big|\sum_{\lambda\in\mathcal{A}_1}a(\lambda)+\sum_{\lambda\in\mathcal{A}_0\setminus\A_1}a(\lambda)\Big|^{2q}+\Big|\sum_{\lambda\in\mathcal{A}_1}a(\lambda)-\sum_{\lambda\in\mathcal{A}_0\setminus\A_1}a(\lambda)\Big|^{2q}\Bigg]\Bigg)\geq \frac{1}{2}\E\Bigg[\Big|\sum_{\lambda\in\mathcal{A}_1}a(\lambda)\Big|^{2q}\Bigg],
\end{split}
\label{eq:symm}
\end{align}
where we used that $\max(|w-z|,|w+z|)\geq|w|$ for complex numbers $z,w$. 
Further, to get rid of $\lambda\in \mathcal{A}_{k-1}\setminus\A_k$, we use $X_1\ddd e^{i\pi/2^{k-1}}X_1$ and apply the same argument. More precisely, for each $k\geq 1$, using $X_1\ddd e^{i\pi/2^{k-1}}X_1$, we have
$$\sum_{\lambda\in\mathcal{A}_{k}}a(\lambda)+\sum_{\lambda\in\mathcal{A}_{k-1}\setminus\A_k}a(\lambda)\ddd \sum_{\lambda\in\mathcal{A}_k}a(\lambda)-\sum_{\lambda\in\mathcal{A}_{k-1}\setminus\A_k}a(\lambda),$$
which implies 
$$\E\Bigg[\Big|\sum_{\lambda\in\mathcal{A}_{k-1}}a(\lambda)\Big|^{2q}\Bigg]\geq \frac{1}{2}\E\Bigg[\Big|\sum_{\lambda\in\mathcal{A}_k}a(\lambda)\Big|^{2q}\Bigg].$$
By induction, finally we have as claimed above \[\E[|A_N|^{2q}]\ge 2^{-L}
\E\Bigg[\Big|\sum_{\lambda\in\mathcal{A}_L}a(\lambda)\Big|^{2q}\Bigg].\] Continuing this procedure for $m_2,\cdots,m_{M_*-1}$, we get \[\E[|A_N|^{2q}]\geq 2^{-M_*L}
\E\Bigg[\Big|\sum_{\lambda\in\mathcal{B}_{M_*,L}}a(\lambda)\Big|^{2q}\Bigg]=:2^{-M_*L}\E[|\widehat{A}_N|^{2q}],\] where 
$$\mathcal{B}_{M_*,L}=\left\{\lambda\in\cP_N: \,\forall 1\le k<M_*,\,2^L\mid m_k(\lambda) \right\}\subseteq\left\{\lambda\in\cP_N: \,\forall 1\le k<M_*,\,m_k(\lambda)\not=0\implies m_k(\lambda)\ge 2^L\right\}.$$
It thus remains to show that
$${\E}[|\widehat{A}_N|^{2q}]\gg \left(\frac{1}{1+(1-q)\sqrt{\log N}}\right)^q,$$
where the implied constant does not depend on $L$. 
Using the triangle inequality and either concavity (for $q\leq 1/2$) or conditional Minkowski's inequality (for $q\geq 1/2)$ on the decomposition
$$\widehat{A}_N=\sum_{\lambda\in\mathcal{B}_{M_*,L}}a(\lambda)=A_{N,M_*}+\sum_{\substack{\lambda\in\mathcal{B}_{M_*,L}\\ \exists j\in[1,M_*),~m_j\neq 0}}a(\lambda)$$
and Proposition \ref{prop:superC-asymp-truncated-moments}, 
 it then suffices to prove that given a fixed $M_*$, for $L$ large enough,
\begin{align*}
    {\E}\Bigg[\bigg|\sum_{\substack{\lambda\in\mathcal{B}_{M_*,L}\\ \exists j\in[1,M_*),~m_j\neq 0}}a(\lambda)\bigg|^{2q} \Bigg]\leq \frac{1}{2}\E[|A_{N,M_*}|^{2q}].
\end{align*}

Since we assume the unit variance condition $\E[|X_1|^2]=\E[|R_1|^2]=1$, we may assume that there is some $\ee_0\in(0,1)$ such that $\P(|R_1|<\ee_0)>0$ (otherwise, $|R_1|\equiv 1$, so $|X_1|$ is compactly supported, which we assumed from the beginning that is not the case). The strategy is then to restrict to the event that $|X_j|=|R_j|<\ee_0$ for each $1\leq j\leq M_*-1$, that is, with $\widehat{\E}$ denoting the conditional expectation on the event $\{|R_j|<\ee_0,~1\leq j\leq M_*-1\}$,  \begin{align*}
    \E\Bigg[\bigg|\sum_{\substack{\lambda\in\mathcal{B}_{M_*,L}\\ \exists j\in[1,M_*),~m_j\neq 0}}a(\lambda)\bigg|^{2q} \Bigg]&\gg \E\Bigg[\bigg|\sum_{\substack{\lambda\in\mathcal{B}_{M_*,L}\\ \exists j\in[1,M_*),~m_j\neq 0}}a(\lambda)\bigg|^{2q} \, \mid\, |R_j|<\ee_0,~1\leq j\leq M_*-1\Bigg]\\
    &=\widehat{\E}\Bigg[\bigg|\sum_{\substack{\lambda\in\mathcal{B}_{M_*,L}\\ \exists j\in[1,M_*),~m_j\neq 0}}a(\lambda)\bigg|^{2q} \Bigg],
\end{align*}where the implied constant does not depend on $L$. 
 It then suffices to prove that given a fixed $M_*$, for $L$ large enough,
\begin{align}
    \widehat{\E}\Bigg[\bigg|\sum_{\substack{\lambda\in\mathcal{B}_{M_*,L}\\ \exists j\in[1,M_*),~m_j\neq 0}}a(\lambda)\bigg|^{2q} \Bigg]\leq \frac{1}{C}\E[|A_{N,M_*}|^{2q}]\label{eq:Cbound}
\end{align}
where $C>0$ is a fixed large constant that does not depend on $L$. 

Let $C_0>0$ be the implied constant in the upper bound part of Proposition \ref{prop:superC-asymp-truncated-moments}. Let $\cP_{N,M_*}^{<,L}$ be the subset of $\cP_{N,M_*}^{<}$ with $m_k(\lambda)\ge 2^L$ for all $1\leq k<M_*$ such that $m_k(\lambda)\not=0$. We have for $q\leq 1/2$ (and similarly using Minkowski's inequality for $q>1/2$),
\begin{align*}
    \widehat{\E}\Bigg[\bigg|\sum_{\substack{\lambda\in\mathcal{B}_{M_*,L}\\ \exists j\in[1,M_*),~m_j\neq 0}}a(\lambda)\bigg|^{2q} \Bigg]&=\widehat{\E}\Bigg[\bigg|\sum_{\substack{\lambda\in\mathcal{B}_{M_*,L}\\ \exists j\in[1,M_*),~m_j\neq 0}}\prod_{k\geq 1}\left(\frac{X_k}{\sqrt{k}}\right)^{m_k}\frac{1}{m_k!}\bigg|^{2q}\Bigg]\\
    &\leq \sum_{\substack{\bm\in\cP_{N,M_*}^{<,L}\\ \bm\neq\mathbf{0}}}\widehat{\E}\Bigg[\bigg|\sum_{\substack{\lambda\in\cP_N\\ (m_1,\dots,m_{M_*-1})=\bm}}\prod_{k\geq 1}\left(\frac{X_k}{\sqrt{k}}\right)^{m_k}\frac{1}{m_k!}\bigg|^{2q}\Bigg]\\
    &=\sum_{\substack{\bm\in\cP_{N,M_*}^{<,L}\\ \bm\neq\mathbf{0}}}\prod_{k=1}^{M_*-1}\frac{\widehat{\E}[|X_k|^{2qm_k}]}{k^{qm_k}(m_k!)^{2q}}\E\Bigg[\bigg|\sum_{\substack{\lambda\in\cP_N\\ (m_1,\dots,m_{M_*-1})=\bm}}\prod_{k\geq M_*}\left(\frac{X_k}{\sqrt{k}}\right)^{m_k}\frac{1}{m_k!}\bigg|^{2q}\Bigg]\\
    &\leq \sum_{\substack{\bm\in\cP_{N,M_*}^{<,L}\\ \bm\neq\mathbf{0}}}\frac{C_0}{(1+(1-q)\sqrt{\log(N-\sum_{j=1}^{M_*-1}jm_j)})^{q}}\prod_{k=1}^{M_*-1}\frac{\ee_0^{2qm_k}}{k^{qm_k}}.
\end{align*}
The sum over $\bm$ with $\sum_{j=1}^{M_*-1}jm_j> N/2$ can be controlled similarly as in \eqref{eq:second case}. The rest of the terms are bounded by
\begin{align*}
    \sum_{\substack{\bm\in\cP_{N,M_*}^{<,L}\\ \bm\neq\mathbf{0}\\ \sum_{j=1}^{M_*-1}jm_j\leq N/2}}\frac{C_0}{(1+(1-q)\sqrt{\log(N-\sum_{j=1}^{M_*-1}jm_j)})^{q}}\prod_{k=1}^{M_*-1}\frac{\ee_0^{2qm_k}}{k^{qm_k}}&\leq \frac{2C_0}{(1+(1-q)\sqrt{\log N})^q}\sum_{\substack{\bm\in\cP_{N,M_*}^{<,L}\\ \bm\neq\mathbf{0}}}\prod_{k=1}^{M_*-1}{\ee_0^{2qm_k}}.
\end{align*}
Note that 
\begin{align*}
  \sum_{\substack{\bm\in\cP_{N,M_*}^{<,L}\\ \bm\neq\mathbf{0}}}\prod_{k=1}^{M_*-1}{\ee_0^{2qm_k}}&\leq \prod_{k=1}^{M_*-1}\bigg(1+\sum_{m_k=2^L}^{\infty}{\ee_0^{2qm_k}}\bigg)-1\ll \frac{M_*\ee_0^{2^{L+1}q}}{1-\ee_0^{2q}}.
\end{align*}
By picking $L$ large enough, we obtain 
\begin{align*}
    \E\Bigg[\bigg|\sum_{\substack{\lambda\in\mathcal{B}_{M_*,L}\\ \exists j\in[1,M_*),~m_j\neq 0}}\prod_{k\geq 1}\left(\frac{X_k}{\sqrt{k}}\right)^{m_k}\frac{1}{m_k!}\bigg|^{2q}\mid |R_j|<\ee_0,~1\leq j\leq M_*-1\Bigg]&\leq \frac{1}{C_0C(1+(1-q)\sqrt{\log N})^q}\\
    &\leq \frac{1}{C}\E[|A_{N,M_*}|^{2q}],
\end{align*}
thus proving \eqref{eq:Cbound} and hence the desired lower bound.
\end{proof}

\subsection{Setting the stage for proving Proposition \texorpdfstring{\ref{prop:superC-asymp-truncated-moments}}{}}\label{sec:prop2.1setup}

We now proceed to the proof of Proposition \ref{prop:superC-asymp-truncated-moments}. In this short section, we prepare ourselves with a few notations. First, motivated by our discussions on page \pageref{eq:equal    }, we define two probability measures as follows.

    For any $K,M,r$ satisfying $\log M_*\le M<\log K$ and $ e^{-1/K}\leq r\leq e^{1/K}$, define the measure $\Q^{(1)}_{r,M,K}$ by
    \begin{align}
        \frac{\d\Q^{(1)}_{r,M,K}}{\d\P} := \frac{\exp(2\sum_{k=e^M}^{K-1}\frac{r^k}{\sqrt{k}}R_k\cos(\tau_k))}{\E\left[\exp(2\sum_{k=e^M}^{K-1}\frac{r^k}{\sqrt{k}}R_k\cos(\tau_k))\right]}.\label{eq:Q1 def}
    \end{align}
    In addition, let $K_r$ be such that  $\log K_r$ is the largest integer with $K_r\le\min\{\frac{-1}{4\log r},K\}$, and $M=M(r,\theta)\in\N$ to be determined that depends on $r,\theta$ (see the definitions at the beginning of Sections \ref{sec:332} and \ref{sec:342}). For $e^{-1/K}\leq r\leq e^{1/K}$ and  $\theta\in[-\pi,\pi)$, define the measure $\Q^{(2)}_{r,M,K,\theta}$ by \begin{align}
        \frac{\d\Q^{(2)}_{r,M,K,\theta}}{\d\P} := \frac{\exp(2\sum_{m=M+1}^{\log K_r}(Z_0(m)+Z_\theta(m)))}{\E[\exp(2\sum_{m=M+1}^{\log K_r}(Z_0(m)+Z_\theta(m)))]},\label{eq:dq2}
    \end{align}
    where  for any $M<m\le\log K_r$ and $\theta\in[-\pi,\pi)$,
    \begin{align}\label{eq:ztheta}
    Z_\theta(m) &:=\Re \sum_{e^{m-1}\le k<e^m}\frac{X_kr^ke^{ik\theta}}{\sqrt{k}} =\sum_{e^{m-1}\le k<e^m}\frac{r^k}{\sqrt{k}}R_k\cos(\tau_k+k\theta).
    \end{align} When the values of $r,M,K,\theta$ are clear from the context, we will drop the subscripts and write instead $\Q^{(1)}$ and $\Q^{(2)}$.

    To further understand the sums $Y_n$ defined in \eqref{eq:Yndef} under the new measures, we define and compute using Lemma \ref{lemma-Q-computations} that
    \begin{align}
    \mu_k:=\E^{\Q^{(1)}}\left[\frac{r^k}{\sqrt{k}}R_k\cos(\tau_k)\right]=\frac{r^{2k}}{k}+O(k^{-3/2})\label{eq:muk def}
\end{align} and 
\begin{align}
    \nu_k=\nu_k(\theta):=\E^{\Q^{(2)}}\left[\frac{r^k}{\sqrt{k}}R_k\cos(\tau_k)\right] = \frac{r^{2k}}{k}+\frac{\cos(k\theta)r^{2k}}{k}+O(k^{-3/2}).\label{eq:nuk def}
\end{align}

We also refresh ourselves with the truncated multiplicative chaos. Recall \eqref{eq:ANM} and \eqref{eq:FK}. It holds that
\begin{align*}
    A_{N,M_*} = [z^N]\exp\Big(\sum_{k=M_*}^\infty\frac{X_k}{\sqrt{k}}z^k\Big).
\end{align*} Moreover, inserting $K=N/2$ into \eqref{eq:ANM}, we have \begin{align}
    F_{N/2,M_*}(z)=\exp\bigg(\sum_{k=M_*}^{ N/2}\frac{X_k}{\sqrt{k}}z^k\bigg)=\sum_{n=0}^\infty \bigg(\sum_{\substack{\lambda\in\cP_n\\ \lambda_1\leq N/2\\ \forall 1\leq k<M_*,\,m_k(\lambda)=0}}a(\lambda)\bigg)z^n.\label{eq:FA relation}
\end{align}
In the next two subsections, we establish the upper and lower bounds for Proposition \ref{prop:superC-asymp-truncated-moments} respectively.

\subsection{Upper bound of Proposition \ref{prop:superC-asymp-truncated-moments}}\label{sec:ub}

By H\"older's inequality, it suffices to prove the upper bound for $ 1/2\le q\le 1$. Proposition \ref{prop:superC-asymp-truncated-moments} then follows from the two propositions below.

\begin{proposition}\label{SZ-prop3.1}Suppose that $\E[e^{\gamma|R_1|}]<\infty$ for some $\gamma>2q$. 
For $ 1/2\le q\le 1$ and $N$ large enough, we have for some $C(q)>0$,\[\E[|A_{N,M_*}|^{2q}]^{1/(2q)}\ll \frac{1}{\sqrt{N}}\sum_{j=1}^J\E\left[\left(\frac{1}{2\pi}\int_{-\pi}^{\pi}|F_{N/2^j,M_*}(\exp(j/N+i\theta))|^2 \d\theta \right)^q \right]^{1/(2q)}+\frac{1}{N},\]
where $J=\ceil{\log(C(q)\sqrt{N})/\log 2}$. For $q\in(0,1/2]$, we have
$$\E[|A_{N,M_*}|^{2q}]\ll \frac{1}{\sqrt{N}}\sum_{j=1}^J\E\left[\left(\frac{1}{2\pi}\int_{-\pi}^{\pi}|F_{N/2^j,M_*}(\exp(j/N+i\theta))|^2 \d\theta \right)^q \right]+\frac{1}{N^{2q}}.$$
\end{proposition}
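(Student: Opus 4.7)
I would decompose $A_{N,M_*}$ along a reverse filtration indexed by dyadically shrinking truncation levels. Define $K_j := N/2^{j-1}$ for $j = 1,\dots,J{+}1$ (so $K_1 = N$ and $K_{J+1}\asymp\sqrt{N}/C(q)$) and set $\F_j := \sigma((X_k)_{M_* \le k\le K_j})$, a decreasing family. Rotational invariance of the $X_k$ forces $\E[X_k^m] = 0$ for every $m\ge 1$, so integrating out $X_k$ for $k > K_j$ annihilates every $a(\lambda)$ that has a part exceeding $K_j$, yielding $\E[A_{N,M_*}\mid\F_j] = [z^N]F_{K_j,M_*}(z)$. Telescoping in $j$ then produces
\[
A_{N,M_*} \;=\; [z^N]F_{K_{J+1},M_*}(z) \;+\; \sum_{j=1}^{J}\Delta_j, \qquad \Delta_j := [z^N]F_{K_j,M_*}(z) - [z^N]F_{K_{j+1},M_*}(z),
\]
and Minkowski's inequality (valid since $2q \ge 1$) reduces the task to bounding the small-largest-part term $\|[z^N]F_{K_{J+1},M_*}(z)\|_{2q}$ and each $\|\Delta_j\|_{2q}$ separately.

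\paragraph{The two pieces.}
The small-largest-part term is a sum of $a(\lambda)$ over partitions with all parts at most $K_{J+1}\asymp\sqrt{N}/C(q)$; any such $\lambda$ must have at least $C(q)\sqrt{N}$ parts. Plugging \eqref{eq:moment bound 1} into Minkowski's inequality and summing the resulting geometric products over partitions gives a bound of $O(1/N)$ once $C(q)$ is chosen large enough in terms of $q$ and $\gamma$, mirroring the calculation already carried out to deduce Theorem~\ref{thm:main}(i) from Proposition~\ref{prop:superC-asymp-truncated-moments}. For the dyadic piece, factor $\Delta_j = [z^N]\bigl(F_{K_{j+1},M_*}(z)\,(e^{T_j(z)}-1)\bigr)$ with $T_j(z) := \sum_{K_{j+1}<k\le K_j} X_kz^k/\sqrt{k}$, expand $e^{T_j}-1 = \sum_{r\ge 1}T_j(z)^r/r!$, and isolate the principal $r = 1$ term
\[
\Delta_j^{(1)} \;=\; \sum_{K_{j+1}<K\le K_j}\frac{X_K}{\sqrt{K}}\cdot [z^{N-K}]F_{K_{j+1},M_*}(z).
\]
Conditioning on $\F_{j+1}$ and invoking Khintchine's inequality (valid since $2q\in[1,2]$ and the $X_K$ are i.i.d.\ rotationally symmetric with unit variance) gives
\[
\|\Delta_j^{(1)}\|_{2q}^{2q}\;\ll\;\E\!\left[\Big(\sum_{K_{j+1}<K\le K_j}\frac{|[z^{N-K}]F_{K_{j+1},M_*}(z)|^2}{K}\Big)^{q}\right].
\]
Using $1/K \ll 2^j/N$ on the dyadic window and then Parseval's identity with the radius $r_j := e^{j/N}\ge 1$ bounds the inner sum by $(2^j/N)\cdot\frac{1}{2\pi}\int|F_{K_{j+1},M_*}(r_j e^{i\theta})|^2\,\d\theta$; for $j\ge 2$ the supported indices satisfy $m\ge N(1-1/2^{j-1})\ge N/2$, so $r_j^{2m}\ge e^j$ there, which upon inverting Parseval absorbs the factor $2^j$ into a geometric $(2/e)^{j/2}$, making the $j$-sum summable and producing the claimed $1/\sqrt{N}$ prefactor. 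For $j = 1$ no such absorption is needed and the bound lands on the $j = 1$ term directly.

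\paragraph{Main obstacle.}
The delicate step is controlling the higher-order contributions $\Delta_j^{(r)} := [z^N]F_{K_{j+1},M_*}(z)\cdot T_j(z)^r/r!$ for $r\ge 2$: these are degree-$r$ multilinear forms in the independent rotationally symmetric variables $\{X_k:K_{j+1}<k\le K_j\}$. For $2q\in[1,2]$, the Khintchine--Kahane inequality for multilinear forms gives $\|\Delta_j^{(r)}\|_{2q}\asymp\|\Delta_j^{(r)}\|_2$; combined with the orthogonality $\E[X_{k_1}\cdots X_{k_r}\overline{X_{k_1'}\cdots X_{k_r'}}] = 0$ unless the multisets coincide, the $1/r!$ damping, the Stirling estimate \eqref{eq:gamma}, and the exponential-tail hypothesis $\gamma > 2q$ (which tames the moments $\E[|X_k|^{2qm}]$), these contributions are seen to be geometrically dominated in $r$ by the $r = 1$ bound. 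Summing the resulting geometric series over both $r$ and $j$ via Minkowski closes the argument. The hardest part will be this multilinear $L^2$ bookkeeping, and in particular its meshing with the weighted Parseval identity at the radius $r_j = e^{j/N}$, which is tuned so that the integral captures exactly the right $L^2$-mass on each dyadic scale.
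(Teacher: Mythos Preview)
Your approach is structurally the same as the paper's: decompose $A_{N,M_*}$ according to the range $(N/2^j,N/2^{j-1}]$ containing the largest part (your reverse-filtration martingale differences $\Delta_j$ are exactly these pieces), bound the small-largest-part contribution via \eqref{eq:moment bound 1} and the partition-number estimate, and for each dyadic piece pass to a conditional second moment followed by Parseval at radius $e^{j/N}$.

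Where you diverge is in the handling of $\Delta_j$, and your ``main obstacle'' is self-inflicted. There is no need to expand $e^{T_j}-1$ in powers and invoke multilinear Khintchine for each degree $r$. Since $2q\le 2$, conditional Jensen alone gives
\[
\E_j\bigl[|\Delta_j|^{2q}\bigr]^{1/q}\;\le\;\E_j\bigl[|\Delta_j|^2\bigr]\;=\;\sum_{N/2^j<n\le N}\Bigl|\sum_{\sigma\in\cP_{N-n,M_*}^0}a(\sigma)\Bigr|^2\sum_{\rho\in\cP_n}\E_j[|a(\rho)|^2],
\]
handling all $r$ simultaneously (your $r=1$ Khintchine step is this Jensen step restricted to single-part $\rho$). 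The paper then bounds the inner sum combinatorially by $\ll N^{-1}\exp(2j(N-n)/N)$, and since this weight is precisely $N^{-1}r_j^{2(N-n)}$, Parseval at radius $r_j=e^{j/N}$ applies directly with no separate absorption of the $2^j$ factor needed. By contrast, your Parseval argument via ``$m\ge N/2$ for $j\ge 2$'' is specific to $r=1$; for $r\ge 2$ the index $N-|\rho|$ can drop well below $N/2$, so that trick does not carry over and you would be forced into exactly the multilinear $L^2$ bookkeeping you flag as delicate---which is doable but entirely avoidable.

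Two smaller remarks: the proposition keeps $\sum_{j=1}^J$ on the right-hand side, so summability in $j$ is not required and your extra $(2/e)^{j/2}$ decay, while correct for $r=1$, is not needed; and the hypothesis $\gamma>2q$ is used only for the small-largest-part term (the conditional second moment of $a(\rho)$ uses $\E[|X_k|^{2m_k}]$, controlled by any finite exponential moment).
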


\begin{proof}
The main idea resembles that of \citep[Proposition 3.1]{soundararajan2022model}: separate the sum over $\lambda\in \cP_N$ according to the values of $\lambda_1$, control the contributions from partitions $\lambda$ with a small $\lambda_1$ (and hence some $m_k(\lambda)$ is large), and apply Parseval's identity to the rest $\lambda$.

Suppose first that $q\geq 1/2$. 
Denote by $\cP_{N,M_*}^0$ the subset of $\cP_N$ satisfying $m_1(\lambda)=\cdots=m_{M_*-1}(\lambda)=0$. Recall \eqref{eq:al expression} and \eqref{eq:AN expression}.  
Suppose that  $\E[e^{\gamma|R_1|}]<\infty$ for some $\gamma>2q$. Then $\E[|R_1|^\ell]\ll \gamma^{-\ell}\Gamma(\ell+1)$ for $\ell\geq 0$. 
 Therefore, 
 \begin{align}
     \E[|a(\lambda)|^{2q}]=\prod_{k:m_k>0}\frac{\E[|R_k|^{2qm_k}]}{(m_k!)^{2q}k^{qm_k}}\ll \prod_{k:m_k>0}\frac{C\gamma^{-2qm_k}\Gamma(2qm_k+1)}{(m_k!)^{2q}k^{qm_k}}\ll \prod_{k:m_k>0} \frac{C({2q}/{\gamma})^{2qm_k}}{k^{qm_k}},\label{eq:a lambda moments}
 \end{align} where we have  used \eqref{eq:gamma}. 
 By Minkowski's inequality, 
 \begin{align}
     \E\Bigg[\bigg|\sum_{\substack{\lambda\in\cP_{N,M_*}^0\\ \lambda_1\leq N/2^J}}a(\lambda)\bigg|^{2q}\Bigg]^{1/(2q)}&=\E\Bigg[\bigg|\sum_{\substack{\lambda\in\cP_{N,M_*}^0\\ \lambda_1\leq \sqrt{N}/C(q)}}a(\lambda)\bigg|^{2q}\Bigg]^{1/(2q)}\nonumber \\
     &\le \sum_{\substack{\lambda\in\cP_{N,M_*}^0\\ \lambda_1\leq \sqrt{N}/C(q)}} \E[|a(\lambda)|^{2q}]^{1/(2q)}\nonumber\\
     &\le \sum_{\substack{\lambda\in\cP_{N,M_*}^0\\ \lambda_1\leq \sqrt{N}/C(q)}}\prod_{k:m_k>0} C\left(\frac{2q}{\gamma}\right)^{m_k}\nonumber\\
     &\le |\cP_N| C^{\sqrt{N}/C(q)}\left(\frac{2q}{\gamma}\right)^{\min_\lambda\sum_{k=M_*}^{\sqrt{N}/C(q)}m_k},
     \label{eq:holder p hat}
 \end{align}where the minimum in \eqref{eq:holder p hat} is taken over $\lambda\in\cP_{N,M_*}^0$ such that $\lambda_1\leq \sqrt{N}/C(q)$. Note that for partitions $\lambda\in \cP_{N,M_*}^0$ with the largest component at most $\sqrt{N}/C(q)$, we have \begin{equation}
    \sum_{k=M_*}^{\sqrt{N}/C(q)}m_k\geq \frac{C(q)}{\sqrt{N}} \sum_{k=M_*}^{\sqrt{N}/C(q)}km_k= C(q)\sqrt{N}.\label{eq:o}
 \end{equation} Inserting back to \eqref{eq:holder p hat}, together with the classical bound $|\cP_N|\ll e^{\pi\sqrt{2/3}\sqrt{N}}$, we have
$$\E\Bigg[\bigg|\sum_{\substack{\lambda\in \cP_{N,M_*}^0\\ \lambda_1\leq N/2^J}}a(\lambda)\bigg|^{2q}\Bigg]^{1/(2q)}\ll \left(\frac{2q}{\gamma}\right)^{C(q)\sqrt{N}/2}\ll\frac{1}{N},$$ where we take $C(q)$ large enough in terms of the fixed constant $C$ and $q,\gamma$.
    The rest of the arguments follow similarly as in Section 4 of \citep{soundararajan2022model}, which we sketch below for completeness. By Minkowski's inequality,
\begin{align}
    \E[|A_{N,M_*}|^{2q}]^{1/(2q)}\ll \sum_{j=1}^J\E\Bigg[\bigg|\sum_{\substack{\lambda\in\cP_{N,M_*}^0\\ N/2^j<\lambda_1\leq N/2^{j-1}}}a(\lambda)\bigg|^{2q}\Bigg]^{1/(2q)}+\frac{1}{N}.\label{eq:SZ3.1Minkowski}
\end{align}
    For a fixed $j\in\{1,\dots,J\}$, we may decompose
    $$\sum_{\substack{\lambda\in\cP_{N,M_*}^0\\ N/2^j<\lambda_1\leq N/2^{j-1}}}a(\lambda)=\sum_{\substack{\rho,\sigma\\ |\rho|+|\sigma|=N\\ |\rho|>0}}a(\rho)a(\sigma),$$
    where the parts of $\rho$ lie in $(N/2^j,N/2^{j-1}]$ and of $\sigma$ lie in $[M_*,N/2^j]$ (here and later, we keep such constraints in the sums over $\rho$ or $\sigma$). 
    Let $\E_j$ denote the expectation in $\{X_k\}_{N/2^j<k\leq N/2^{j-1}}$.
    Note that if $\lambda\neq\lambda'$ are distinct partitions, then $\E[a(\lambda)\overline{a(\lambda')}]=0$ by independence of $\{X_k\}_{k\geq 1}$. 
    Therefore, by applying Jensen's inequality and expanding the square, we obtain
    \begin{align}
        \E_j\Bigg[\bigg|\sum_{\substack{\lambda\in\cP_{N,M_*}^0\\ N/2^j<\lambda_1\leq N/2^{j-1}}}a(\lambda)\bigg|^{2q}\Bigg]^{1/q}&\leq \sum_{N/2^j<n\leq N}\bigg|\sum_{\sigma\in \cP_{N-n,M_*}^0}a(\sigma)\bigg|^2\sum_{\rho\in\cP_n}\E_j\Big[|a(\rho)|^2\Big].\label{eq:SZ 4.5}
    \end{align}
    For a partition $\rho$ whose parts lie in $(N/2^j,N/2^{j-1}]$, we have
    $$\E_j\Big[|a(\rho)|^2\Big]\leq \prod_{N/2^j<k\leq N/2^{j-1}}\frac{C\gamma^{-2m_k}(2m_k)!}{(m_k!)^2 k^{m_k}}\leq \Big(\frac{C2^j}{N}\Big)^r,$$where $r$ is the number of parts in $\rho$. Using $r\leq 2^j\leq C(q)^2N/2^j$ for $1\leq j\leq J$, it follows that
\begin{align*}
    \sum_{\rho\in\cP_n}\E_j\Big[|a(\rho)|^2\Big]&\leq \sum_{2^{j-1}n/N\leq r<2^jn/N} \Big(\frac{C2^j}{N}\Big)^r\binom{\floor{N/2^j}+r}{r-1}\\
    &\leq  \frac{2^j}{N}\sum_{2^{j-1}n/N\leq r<2^jn/N}\frac{C^{r-1}}{(r-1)!}\ll \frac{1}{N}\exp\Big(\frac{2j(N-n)}{N}\Big).
\end{align*}
Inserting in \eqref{eq:SZ 4.5} leads to
\begin{align}\begin{split}
    \E_j\Bigg[\bigg|\sum_{\substack{\lambda\in\cP_{N,M_*}^0\\ N/2^j<\lambda_1\leq N/2^{j-1}}}a(\lambda)\bigg|^{2q}\Bigg]^{1/q}&\ll \frac{1}{N}\sum_{N/2^j<n\leq N}\bigg|\sum_{\sigma\in \cP_{N-n,M_*}^0}a(\sigma)\bigg|^2\exp\Big(\frac{2j(N-n)}{N}\Big)\\
    &\ll \frac{1}{2\pi N}\int_{-\pi}^{\pi}\Big|F_{N/2^j,M_*}\Big(\exp\Big(\frac{j}{N}+i\theta\Big)\Big)\Big|^2 \d\theta,
\end{split}\label{q}
\end{align}
where the last step follows from Parseval's identity applied to
$$F_{N/2^j,M_*}(z)=\sum_r \Big(\sum_{\sigma\in \cP_{r,M_*}^0}a(\sigma)\Big)z^r.$$
Taking expectations on the $q$-th powers of both sides of \eqref{q} completes the proof for $q\geq 1/2$. The case $q\in(0,1/2]$ follows along the same lines by using concavity instead of Minkowski's inequality.
\end{proof}

\begin{proposition}    \label{SZ-prop3.2}  Fix any $q\in(0,1]$. Suppose that $\E[e^{\gamma|R_1|}]<\infty$ for some $\gamma>2q$.\footnote{Strictly speaking, Proposition \ref{SZ-prop3.2} does not rely on $\gamma>2q$. Assuming only $\gamma>0$, the same conclusion holds if $M_*$ is large enough in terms of $\gamma$.} For any $K$ sufficiently large (in terms of $M_*$) and $1\le r\le e^{1/K}$, we have
\begin{align}
    \E\left[\left(\int_{-\pi}^{\pi}|F_{K,M_*}(re^{i\theta})|^2\d\theta \right)^q\right]\ll \left(\frac{K}{1+(1-q)\sqrt{\log K}}\right)^q\asymp\begin{cases}
        K^q(\log K)^{-q/2}&\text{ if }q\in(0,1);\\
        K^q&\text{ if }q=1,
    \end{cases}\label{eq:prop3.2}
\end{align}
where the implied constant depends on $q,\gamma$ but is independent of $K,r$.
\end{proposition}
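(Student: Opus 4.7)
The plan is to adapt the barrier/ballot strategy of Harper \citep{harper2020moments} and Soundararajan--Zaman \citep{soundararajan2022model} to the non-Gaussian setting via the change of measure $\Q^{(1)}$ introduced above. Writing
\begin{equation*}
X := \int_{-\pi}^\pi |F_{K,M_*}(re^{i\theta})|^2\,\d\theta
\end{equation*}
and $S_n(\theta):=2\sum_{m=\lceil\log M_*\rceil}^n Z_\theta(m)$, we have $|F_{K,M_*}(re^{i\theta})|^2 = \exp(S_{\log K}(\theta))$, and under $\P$ the process $n\mapsto S_n(\theta)$ is a centered random walk (for each fixed $\theta$) with per-generation variance of order one. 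The rotational invariance of $\tau_k$ together with the Taylor expansion $\E[\exp(2r^k k^{-1/2}R_k\cos\tau_k)] = 1 + r^{2k}/k + O(k^{-3/2})$ (whose finiteness for $k\ge M_*$ is precisely where the exponential moment condition ($q$-LT) enters, with the precise estimates deferred to Appendix~\ref{sec:ubdef}) yields the baseline $\E[X] = 2\pi\E[|F_{K,M_*}(r)|^2]\asymp K$.

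The next step is to introduce, for a parameter $A\ge 1$ to be optimized at the end, the global barrier event
\begin{equation*}
\G(A) := \Bigl\{\, S_n(\theta)\le h_n(\theta) + A\ \text{for every } n\in[\lceil\log M_*\rceil,\log K] \text{ and every } \theta\in\Theta_n\,\Bigr\},
\end{equation*}
where $\Theta_n\subset[-\pi,\pi)$ is an $e^{-n}$-discretization capturing the sup in $\theta$ up to uniform multiplicative constants (the standard chaining step, exploiting that $S_n(\theta)$ varies on scale $e^{-n}$ through $Z_\theta(m)$ for $m\le n$) and $h_n(\theta)$ is an approximation to the $\Q^{(1)}$-drift after the rotation aligning the tilt with $\theta$, so that on $\G(A)$ the relevant centered walks stay below a common level $A$. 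Using the subadditivity $(a+b)^q\le a^q+b^q$ for $q\in(0,1]$, I split
\begin{equation*}
\E[X^q]\le \E[X^q\bone_{\G(A)}] + \E[X^q\bone_{\G(A)^c}],
\end{equation*}
so that conditional Jensen (concavity of $x\mapsto x^q$) gives $\E[X^q\bone_{\G(A)}]\le (\E[X\bone_{\G(A)}])^q$, while H\"older's inequality on the complementary event gives $\E[X^q\bone_{\G(A)^c}]\le \E[X]^q\P(\G(A)^c)^{1-q}\ll K^q\P(\G(A)^c)^{1-q}$.

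For the first factor, Fubini combined with the change of measure to $\Q^{(1)}_{r,\lceil\log M_*\rceil,K}$ (after the rotation $\tau_k\mapsto\tau_k-k\theta$ at each $\theta$, which by rotational invariance preserves the law of the event $\G(A)$) yields
\begin{equation*}
\E[X\bone_{\G(A)}] = 2\pi\E[|F_{K,M_*}(r)|^2]\cdot \Q^{(1)}(\G(A)) \ll K\cdot \Q^{(1)}(\G(A)).
\end{equation*}
Under $\Q^{(1)}$ the walk $n\mapsto S_n(0)-h_n(0)$ is centered with order-one per-generation variance, so a ballot/reflection-principle bound adapted to the continuum of sub-walks indexed by $\theta$ (via the chaining through $\Theta_n$) produces $\Q^{(1)}(\G(A))\ll A/\sqrt{\log K}$, whence $\E[X\bone_{\G(A)}]\ll KA/\sqrt{\log K}$. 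For the second factor, a union bound over $n$ and $\theta\in\Theta_n$ combined with sub-Gaussian tail bounds for $S_n(\theta)$ under $\P$ (available because ($q$-LT) supplies the requisite exponential moment of $R_k$) gives $\P(\G(A)^c)\ll e^{-cA}$ for some $c>0$. Choosing $A\asymp 1+(1-q)\sqrt{\log K}$ balances $(A/\sqrt{\log K})^q$ against $e^{-c(1-q)A}$, producing the claimed bound $K^q/(1+(1-q)\sqrt{\log K})^q$.

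The main technical obstacle is establishing the ballot bound $\Q^{(1)}(\G(A))\ll A/\sqrt{\log K}$ \emph{uniformly} in $\theta\in\Theta_n$ for non-Gaussian inputs. Harper's original argument applies to a setting where each generation contains doubly exponentially many summands, so that the per-generation increments are automatically close to Gaussian by the classical CLT. In the present setting each generation carries only exponentially many summands, so a quantitative Berry--Esseen estimate is required to compare increments of $S_n(\theta)$ with Gaussians. Worse, to chain across two nearby points $\theta,\theta'\in\Theta_n$ so as to upgrade per-$\theta$ estimates to a uniform-in-$\theta$ estimate, a two-dimensional Berry--Esseen bound is needed, and an additional change of measure that re-centers the pair $(Z_\theta(m),Z_{\theta'}(m))$ is necessary so that the joint Gaussian approximation is sharp enough to feed into the ballot argument. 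This refinement of Harper's strategy is the principal new technical content of the proof.
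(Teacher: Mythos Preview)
Your overall architecture (barrier event $\G(A)$, change of measure to $\Q^{(1)}$, ballot estimate under $\Q^{(1)}$, exponential tail for $\G(A)^c$) is the same as the paper's. But the final optimization step is wrong, and this is not a minor algebra slip: no single choice of $A$ can close the argument as you have set it up.

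From your two bounds you get
\[
\E[X^q]\ \ll\ \Bigl(\frac{KA}{\sqrt{\log K}}\Bigr)^{q}\ +\ K^{q}\,e^{-c(1-q)A}.
\]
For the first term to be $\ll (K/\sqrt{\log K})^q$ you need $A\ll 1$; for the second term you need $A\gg (1-q)^{-1}\log\log K$. These are incompatible, and your stated choice $A\asymp 1+(1-q)\sqrt{\log K}$ makes the first term $\asymp K^q$, strictly larger than the target. The best a single $A$ can do is lose a factor of $(\log\log K)^q$. The paper remedies this by \emph{layering} the barrier events: decompose into $\G(1)$, the shells $\G(2^j)\setminus\G(2^{j-1})$ for $1\le j\le \lfloor\log\log K\rfloor$, and $\G(2^J)^c$. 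On each shell one may invoke \emph{both} the ballot bound (through $\G(2^j)$) and the exponential tail (through $\G(2^{j-1})^c$), obtaining
\[
\E\bigl[X^q\bone_{\G(2^j)\setminus\G(2^{j-1})}\bigr]\ \le\ \bigl(\E[X\bone_{\G(2^j)}]\bigr)^{q}\,\P(\G(2^{j-1})^c)^{1-q}\ \ll\ \Bigl(\frac{K}{\sqrt{\log K}}\Bigr)^{q}2^{jq}e^{-(1-q)2^{j-1}},
\]
and the sum over $j$ is $O_q(1)$.

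A second point: you misidentify where the technical difficulty lies. For the bound $\Q^{(1)}(\G(A))\ll A/\sqrt{\log K}$, since $\G(A)$ already includes the barrier constraint at $\theta=0$, the one-dimensional ballot estimate at $\theta=0$ suffices (this is the paper's Proposition~\ref{Cond-Expectation}, proved via a one-dimensional Berry--Esseen under $\Q^{(1)}$, Lemma~\ref{Lemma-1d-GaussianApprox}). No chaining in $\theta$ is needed here. The bound $\P(\G(A)^c)\ll e^{-A}$ (Proposition~\ref{SZ-prop5.2}) is a union bound over a discretization in $\theta$ together with Markov's inequality on the moment generating function; again no ballot argument is involved. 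The two-dimensional Berry--Esseen with re-centering that you describe is used only in the \emph{lower} bound (Proposition~\ref{SZ-Prop-8.2}), where one controls the second moment of the restricted integral and must track pairs $(\theta_1,\theta_2)$.
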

Inserting Proposition \ref{SZ-prop3.2} into Proposition \ref{SZ-prop3.1} then yields the upper bound of Proposition \ref{prop:superC-asymp-truncated-moments}. 
To set up the proof of Proposition \ref{SZ-prop3.2} we need the following two-sided ballot estimate for a centered Gaussian random walk, which is due to \citep{harper2020moments}.

\begin{lemma}\label{lemma:ballot}
    There is a large universal constant $L_1>0$ such that the following holds. Consider a sequence of independent centered Gaussian random variables $\{G_n\}_{n\in\N}$ with variances between ${1}/{20}$ and $20$. Then uniformly for any functions $h(m),\,g(m)$ satisfying $|h(m)|\leq 10\log m$ and $g(m)\leq -L_1m$, and for $a,n$ large enough,
    \[\P\left(\forall 1\leq m\leq n,~g(m)\leq \sum_{j=1}^m G_j\leq \min\{a,L_1m\}+h(m)\right)\asymp\min\Big\{1,\frac{a}{\sqrt{n}}\Big\}.\]
    The same conclusion holds if we replace $\min\{a,L_1 m\}$ above by $a$. 
\end{lemma}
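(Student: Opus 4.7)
The plan is to dominate the target event from above and below by cleaner ballot events for which a classical straight-barrier estimate applies. Write $S_m = \sum_{j=1}^m G_j$ and let $\mathcal{E}_n$ denote the event in the lemma.

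\textbf{Step 1: removing the linear envelope.} Since each $G_j$ has variance at most $20$, a Gaussian tail bound and a union bound give
\[
\P\bigl(\exists\,m\ge 1:\ |S_m|>Lm\bigr)\;\le\;2\sum_{m=1}^\infty \exp(-cL^2 m)
\]
for an absolute $c>0$, which is at most $1/100$ once $L$ exceeds an absolute constant. Fix $L_1$ accordingly. For the \emph{upper bound} in the lemma, simply drop the lower barrier and replace $\min\{a,L_1 m\}$ by $a$, so that $\P(\mathcal{E}_n)\le \P(\forall\,1\le m\le n:\ S_m\le a+10\log m)$. For the \emph{lower bound}, construct the more restrictive event
\[
\widetilde{\mathcal{E}}_n := \bigl\{\forall\,1\le m\le n:\ -\tfrac{L_1}{2}m\le S_m\le \min\{a,\tfrac{L_1}{2}m\}-10\log(m+1)\bigr\},
\]
which implies $\mathcal{E}_n$ because $g(m)\le -L_1 m\le -\tfrac{L_1}{2}m$ and $h(m)\ge -10\log m$. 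The linear lower barrier in $\widetilde{\mathcal{E}}_n$ will only cost a constant factor by Step~1 applied again.

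\textbf{Step 2: classical straight-barrier ballot.} The base case is the standard estimate that for centered Gaussian increments with variances in $[1/20,20]$,
\[
\P(\forall\,1\le m\le n:\ S_m\le a)\;\asymp\;\min\{1,a/\sqrt{n}\}\quad \text{for }a\ge 1,\,n\ge 1,
\]
and with the two-sided envelope $-L_1 m/2\le S_m\le a$ one gets the same order. One can prove this by Komlós--Major--Tusnády strong approximation, reducing to Brownian motion and applying the reflection principle, or more directly by a dyadic decomposition with Gaussian density bounds as in Proposition~13 of \citep{harper2020moments}. The uniformity in the variances follows from the fact that rescaling each $G_j$ to have variance~$1$ perturbs $S_m$ only by a bounded factor.

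\textbf{Step 3: absorbing the logarithmic perturbation.} The remaining, and most delicate, task is to show that shifting the barrier by $\pm 10\log(m+1)$ changes the ballot probability only by a multiplicative constant, uniformly in $a,n$. The upper bound is handled by partitioning $\{1,\dots,n\}$ into dyadic blocks $[2^k,2^{k+1})$ where $10\log m$ is essentially constant, conditioning on the endpoint values $S_{2^k}$, and summing the resulting bridge-ballot probabilities, yielding a telescoping bound of the correct order. The lower bound uses $\widetilde{\mathcal{E}}_n$ conditioned on $S_{\sqrt{n}\wedge n}$ lying a macroscopic distance (of order $(a\wedge\sqrt{n})/2$) below the barrier; this conditioning creates a ``margin'' large enough to swallow the logarithmic shift while preserving the correct ballot asymptotic via the straight-barrier estimate of Step~2.

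\textbf{Main obstacle.} The delicate point is Step~3 in the regime $a\ll \sqrt{\log n}$, where the logarithmic shift and the barrier height $a$ are of comparable size for moderate $m$, so one cannot simply enlarge $a$ by $O(\log n)$ without losing a logarithmic factor. The remedy is to exploit that the typical fluctuation scale $\sqrt{m}$ of the walk dwarfs $\log m$: inserting a margin at well-chosen dyadic times allows the log perturbation to be absorbed while only affecting the leading constant, thereby yielding the uniform $\asymp \min\{1,a/\sqrt{n}\}$ bound required.
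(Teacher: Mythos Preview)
The paper does not prove this lemma at all: its proof consists entirely of citations, quoting the first claim as \emph{Probability Result 2} of \citep{harper2020moments}, the upper bound of the second claim as \emph{Probability Result 1} of \citep{harper2020moments}, and noting that the lower bound of the second claim follows from the first. So you are attempting to reprove a known result that the authors simply import.

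Your outline is in the right spirit and indeed mirrors the structure of Harper's own arguments: reduce to a straight-barrier ballot via KMT or a direct dyadic-density computation, and then argue that the $O(\log m)$ perturbation of the barrier costs only a constant. Steps 1 and 2 are fine. However, Step 3 as written is not a proof but a description of what one hopes will work, and it is precisely where all the content lies. The sentence ``conditioning on the endpoint values $S_{2^k}$ and summing the resulting bridge-ballot probabilities, yielding a telescoping bound of the correct order'' hides the entire difficulty: one must track the joint density of the walk at dyadic times under the barrier constraint and show the resulting product of conditional ballot probabilities does not accumulate logarithmic losses, which requires either the explicit density estimates in Harper's proof or a careful coupling. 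Your ``Main obstacle'' paragraph correctly identifies the regime $a\ll\sqrt{\log n}$ as the dangerous one, but the proposed remedy (``inserting a margin at well-chosen dyadic times'') is again a statement of intent rather than an argument. In short: the strategy is right, but to make this self-contained you would need to actually carry out the dyadic bookkeeping, which is several pages of work; alternatively, do as the paper does and cite Harper.
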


\begin{proof}
    The first claim is \citep[Probability Result 2]{harper2020moments}. For the second claim, the upper bound follows from \citep[Probability Result 1]{harper2020moments}, and the lower bound follows from the first claim.
\end{proof}

Recall the definition of $\mu_k$ from \eqref{eq:muk def}.

\begin{definition}\label{def:Gr}
Fix a large universal constant $L_1>20$ as in Lemma \ref{lemma:ballot}. Let $K$ be large enough (in terms of $M_*$) and $1\leq r\leq e^{1/K}$, and  suppose that $1\le A\le\sqrt{\log K}$. Define the event\[\mathcal{G}_r(A,\theta;K):=\Bigg\{\forall \log M_*\le n\le\log K,\,-A-L_1n\le \sum_{M_*\leq k< e^n}\left(\Re \frac{X_k r^k e^{ik\theta}}{\sqrt{k}}-\mu_k\right)\le A+10\log n\Bigg\}.\]
Define also the event $\mathcal{G}_r(A;K)$ that $\mathcal{G}_r(A,\theta;K)$ holds for all $\theta\in[-\pi,\pi)$.
\end{definition}

Proposition \ref{SZ-prop3.2} then follows from two propositions below.

\begin{proposition}\label{SZ-prop5.2}
    For any $K$ sufficiently large, $1\leq r\leq e^{1/K}$, and  $1\le A\le\sqrt{\log K}$, \[\P(\mathcal{G}_r(A;K)^c)\ll \exp(-A).\]
\end{proposition}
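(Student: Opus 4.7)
The plan is to combine, at each dyadic scale $n$ with $\log M_* \le n \le \log K$, a Chernoff bound for a single $\theta$ with a discretization of $[-\pi, \pi)$ into a grid $\Theta_n$ of spacing $\asymp e^{-n}$ (and thus cardinality $\asymp e^n$). Writing $S_n(\theta) := \sum_{M_* \le k < e^n}\bigl(\Re \tfrac{X_k r^k e^{ik\theta}}{\sqrt k} - \mu_k\bigr)$, the resolution is dictated by the correlation length of the increments $Y_n(\theta)$: for $k < e^n$ and $|\theta - \theta'| \le e^{-n}$, $|e^{ik\theta} - e^{ik\theta'}| \le 1$, so $Y_n(\theta) - Y_n(\theta')$ has variance $O(1)$.

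For the upper deviation at a single grid point $\theta_n \in \Theta_n$, an exponential Markov inequality with tilt $\alpha = 2$ yields
$$\P(S_n(\theta_n) > A + 5\log n) \le e^{-2A - 10\log n}\,\E[e^{2S_n(\theta_n)}].$$
The definition of $\mu_k$ in \eqref{eq:muk def} is precisely the mean of a single increment under $\Q^{(1)}$; combined with the rotational invariance of $\tau_k$, which renders $\E[e^{2S_n(\theta_n)}]$ independent of $\theta_n$, one obtains $\E[e^{2S_n(\theta_n)}] = \exp\bigl(-\sum_{M_* \le k < e^n} r^{2k}/k + O(1)\bigr) \asymp e^{-n}$. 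The Gaussian version of this identity is direct; the general case reduces to routine Laplace-functional computations deferred to Appendix \ref{sec:ubdef}, using $k \ge M_*$ and $\gamma > 2q$ to ensure every required Laplace transform is finite. The union bound over $\Theta_n$ thus contributes $e^n \cdot e^{-2A - 10\log n - n} = e^{-2A}/n^{10}$, summable in $n$. For the lower deviation, Chernoff at a small fixed tilt $\alpha_0 > 0$ gives
$$\P(S_n(\theta_n) < -A - L_1 n) \le \exp\bigl(-\alpha_0(A + L_1 n) + Cn\bigr),$$
with $C = C(\alpha_0)$ coming from the log-Laplace of the individual increments; picking $L_1$ (the constant from Lemma \ref{lemma:ballot}) large enough that $\alpha_0 L_1 - C \ge 2$, the union bound over $\Theta_n$ and $n$ again contributes $\ll e^{-\alpha_0 A} \ll e^{-A}$.

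The remaining task is to control the chaining fluctuation $\sup_{|\theta - \theta_n| \le e^{-n}}|S_n(\theta) - S_n(\theta_n)|$ uniformly in $\theta_n \in \Theta_n$ and in $n$. Since the summand at frequency $k$ has squared increment of order $(k/e^n)^2$, refining $\Theta_n$ into geometrically finer sub-grids within each cell and iterating the exponential Markov bound (a generic-chaining step) shows the fluctuation is at most $O(\log n)$ with probability at most $e^{-n}/n^2$ per cell, which is absorbable into the barrier event by replacing $A$ with $A + O(1)$ and shrinking the constant in $e^{-A}$. The main obstacle is exactly this uniformization over $\theta$ in the non-Gaussian regime: Harper's original argument exploits joint Gaussianity of $(Y_n(\theta), Y_n(\theta'))$ to execute the chaining cleanly, whereas here a quantitative Berry--Esseen-type normal approximation (as outlined in Section \ref{sec:main ideas}) must be substituted, with careful tracking of how approximation errors propagate across dyadic scales. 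A secondary subtlety is that every exponential tilt deployed must correspond to a finite Laplace transform of $X_k$ for $k \ge M_*$, which is why $M_*$ is chosen in terms of $\gamma$; the assumption $\gamma > 2q$ ensures the principal tilt $\alpha = 2$ suffices to produce the correct rate $e^{-n}$ in the key identity $\E[e^{2 S_n(\theta)}] \asymp e^{-n}$.
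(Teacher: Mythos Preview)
Your grid-point upper-deviation bound is correct and matches the paper exactly: tilt by $2$, use $\E[e^{2S_n(\theta)}]\asymp e^{-n}$ (which is just Lemma~\ref{SZ-lemma2.2!} combined with $\sum_{k<e^n}\mu_k = n+O(1)$), and sum over the $\asymp e^n$ grid points.

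There are, however, two places where your proposal diverges from the paper and where the argument is either overcomplicated or not quite right.

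\textbf{Fluctuation control.} You propose generic chaining plus a Berry--Esseen-type normal approximation to bound $\sup_{|\theta-\theta_n|\le e^{-n}}|S_n(\theta)-S_n(\theta_n)|$, and identify this as the ``main obstacle.'' It is not: the paper uses neither chaining nor any normal approximation here. Instead, one writes
\[
S_n(\theta)-S_n(\theta_j)=\Re\int_{\theta_j}^{\theta}\sum_{M_*\le k<e^n}X_k r^k\, i\sqrt{k}\,e^{iky}\,\d y,
\]
bounds this by $\int_{\theta_j}^{\theta_{j+1}}\big|\sum_k X_k r^k\sqrt{k}\,e^{iky}\big|\,\d y$, and then applies Jensen and an exponential Markov inequality with parameter $\beta=e^{-n}$ together with Bernstein's inequality for the sub-exponential sum $\sum_k R_k\cos(\tau_k)r^k\sqrt{k}$ (which has variance $\asymp e^{2n}$). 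With grid spacing $\asymp (ne^n)^{-1}$ (the paper uses $ne^n$ points, not $e^n$), this gives the fluctuation probability $\ll e^{-A-n}/n^{10}$ directly. No Gaussianity is invoked; the Berry--Esseen machinery you mention is used only later for the ballot-type estimates (Propositions~\ref{Cond-Expectation} and~\ref{SZ-prop-11.1}), not for this proposition.

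\textbf{Lower deviation.} Your small-tilt argument with a free choice of $L_1$ is workable but requires coordinating $L_1$ with both this bound and Lemma~\ref{lemma:ballot}. The paper avoids this entirely: since $R_k$ is symmetric, replacing $R_k\mapsto -R_k$ (and $\mu_k\mapsto -\mu_k$) turns the lower-tail event into an upper-tail event with barrier $A+L_1 n\ge A+10\log n$, so the same bound applies with no extra work.
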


\begin{proposition}\label{SZ-prop5.3}
   For any $K$ sufficiently large, $1\leq r\leq e^{1/K}$, $\theta\in[-\pi,\pi)$, and  $1\le A\le\sqrt{\log K}$, \[\E[\bone_{\mathcal{G}_r(A,\theta;K)}|F_{K,M_*}(re^{i\theta})|^2]\ll\frac{AK}{\sqrt{\log K}},\]
    and therefore \[\E\left[\bone_{\mathcal{G}_r(A;K)}\int_{-\pi}^{\pi}|F_{K,M_*}(re^{i\theta})|^2\d\theta \right]\ll\frac{AK}{\sqrt{\log K}}.\]   
\end{proposition}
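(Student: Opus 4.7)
\textbf{Proof proposal for Proposition \ref{SZ-prop5.3}.} The plan is to apply a Girsanov-type change of measure that absorbs the weight $|F_{K,M_*}(re^{i\theta})|^2$ into a new probability law, reducing the weighted moment to a two-sided ballot probability for a centered random walk with independent increments. This mirrors the strategy of \citep{harper2020moments,soundararajan2022model}, but must be executed with only exponential-moment input on $R_k$.

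First I would rewrite $|F_{K,M_*}(re^{i\theta})|^2 = \exp(2\sum_{k=M_*}^K r^k R_k\cos(\tau_k+k\theta)/\sqrt{k})$ and define
\[\frac{\d\Q_\theta}{\d\P} := \frac{|F_{K,M_*}(re^{i\theta})|^2}{\E[|F_{K,M_*}(re^{i\theta})|^2]}.\]
For $M_*$ large in terms of $\gamma$ and $q$, ($q$-LT) together with $r^k/\sqrt{k}\to 0$ guarantees that every factor $\E[\exp(2r^k R_k\cos(\tau_k+k\theta)/\sqrt{k})]$ is finite, so $\Q_\theta$ is well-defined and
\[\E[\bone_{\mathcal{G}_r(A,\theta;K)}|F_{K,M_*}(re^{i\theta})|^2] = \E[|F_{K,M_*}(re^{i\theta})|^2]\cdot\Q_\theta(\mathcal{G}_r(A,\theta;K)).\]
By independence of $\{X_k\}$ and rotational invariance of $\tau_k$, the second moment factors as $\prod_{k=M_*}^K\E[\exp(2r^kR_k\cos\tau_k/\sqrt{k})]$, independent of $\theta$. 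A Taylor expansion (deferred to Appendix \ref{sec:ubdef}) using $\E[R_k^2]=1$ and $\E[\cos^2\tau_k]=1/2$ yields each factor equal to $1+r^{2k}/k+O(k^{-3/2})$, so that $\E[|F_{K,M_*}(re^{i\theta})|^2]\asymp\exp(\sum_{k=M_*}^K r^{2k}/k)\asymp K$ in the range $1\le r\le e^{1/K}$. It then remains to show $\Q_\theta(\mathcal{G}_r(A,\theta;K))\ll A/\sqrt{\log K}$.

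Because $\d\Q_\theta/\d\P$ factorizes over $k$, the family $\{(R_k,\tau_k)\}$ remains independent under $\Q_\theta$. Setting $W_n := \sum_{e^{n-1}\le k<e^n}(r^kR_k\cos(\tau_k+k\theta)/\sqrt{k}-\mu_k)$, the choice of $\mu_k$ in \eqref{eq:muk def} makes $\E^{\Q_\theta}[W_n]=0$, while a parallel Taylor computation should give $\mathrm{Var}^{\Q_\theta}(W_n)\in[1/20,20]$ uniformly for $n\ge\log M_*$. Hence $S_n:=\sum_{j\le n}W_j$ is a centered random walk with independent, bounded-variance increments, and $\mathcal{G}_r(A,\theta;K)$ is exactly the event that $S_n$ stays in the strip $[-A-L_1 n,\,A+10\log n]$ for all $\log M_*\le n\le\log K$. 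I will apply Lemma \ref{lemma:ballot} in its second form (with $a=A$, $g(m)=-A-L_1 m$, $h(m)=10\log m$) to a Gaussian walk having the same increment variances as $(W_n)$, producing probability $\asymp\min\{1,A/\sqrt{\log K}\}\ll A/\sqrt{\log K}$, and then transfer this bound to $(S_n)$ itself by a quantitative comparison. Combining with Step~1 and Step~2 delivers the pointwise bound, and the integral bound follows from Fubini and the inclusion $\mathcal{G}_r(A;K)\subseteq\mathcal{G}_r(A,\theta;K)$ for every $\theta$:
\[\E\Big[\bone_{\mathcal{G}_r(A;K)}\int_{-\pi}^\pi|F_{K,M_*}(re^{i\theta})|^2\,\d\theta\Big]\le\int_{-\pi}^\pi\E[\bone_{\mathcal{G}_r(A,\theta;K)}|F_{K,M_*}(re^{i\theta})|^2]\,\d\theta\ll\frac{AK}{\sqrt{\log K}}.\]

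The main obstacle is the Gaussianity hypothesis in Lemma \ref{lemma:ballot}: under $\Q_\theta$ the increments $W_n$ are genuinely non-Gaussian. The rescue is that each $W_n$ is a sum of $\asymp e^n$ independent terms of individual size $O(e^{-n/2})$, so a Berry--Esseen-type estimate applied block by block renders $W_n$ close to Gaussian in Kolmogorov distance. One then either couples $(S_n)$ to a Gaussian walk of matching variances, or adapts the slicing argument of \citep{harper2020moments}, to transfer the Gaussian ballot bound with only a multiplicative loss that can be absorbed into the $\ll$. The assumption $\gamma>2q$ enters decisively here: it keeps the exponential tilt integrable and all $\Q_\theta$-moments of $R_k$ uniformly bounded, which is exactly what makes the quantitative CLT on each block quantitatively effective.
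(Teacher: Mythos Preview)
Your overall strategy---change of measure to absorb $|F_{K,M_*}|^2$, reduce to a ballot probability under the tilted law, then transfer to a Gaussian walk via a block-by-block normal approximation---is exactly what the paper does. The centering claim $\E^{\Q_\theta}[W_n]=0$ is correct (the substitution $\tau_k\mapsto\tau_k-k\theta$ shows the mean under $\Q_\theta$ is $\mu_k$), and the second-moment computation $\E[|F_{K,M_*}(re^{i\theta})|^2]\asymp K$ is Lemma~\ref{SZ-lemma2.2!}.

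There is one genuine gap. Your last paragraph says the Berry--Esseen transfer ``block by block'' suffices, but for the \emph{early} blocks this fails uniformly in $A$. On the event $\mathcal{G}_r(A,\theta;K)$ one only knows $|W_n|\ll A+n$, and for $n$ near $\log M_*=O(1)$ this can be of order $A$, which may be as large as $\sqrt{\log K}$. The slicing approximation (Lemma~\ref{Lemma-1d-GaussianApprox}) needs the window location $r_m$ to satisfy $|r_m|\ll m^2$: the exponential tilt used to recenter has parameter $\lambda_m=r_m/\sigma_m^2$, and the cubic error $|S_m(-i\lambda_m)|\ll|\lambda_m|^3 e^{-m/2}$ is only negligible when $|\lambda_m|\ll m^2$. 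For $m=O(1)$ and $|r_m|\asymp A$ this breaks down. The paper handles this by splitting at $M=\max\{2\sqrt{A},20,\log M_*\}$: the contribution from $k<e^M$ is bounded trivially by $\E[\exp(2\Re\sum_{k<e^M}X_kr^k/\sqrt{k})]\asymp e^M$, and the ballot argument is run only for $n>M$, where $n\ge 2\sqrt{A}$ guarantees $|W_n|\ll A+n\ll n^2$. The two pieces recombine as $e^M\cdot\frac{K}{e^M}\cdot\frac{A+M}{\sqrt{\log(K/e^M)}}\ll\frac{AK}{\sqrt{\log K}}$, since $M\ll\sqrt{A}\ll(\log K)^{1/4}$. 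Your proposal would go through once you insert this split; without it, neither the slicing nor a direct Kolmogorov-distance Berry--Esseen bound is strong enough at the early blocks.
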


\begin{proof}[Deducing Proposition \ref{SZ-prop3.2} from Propositions \ref{SZ-prop5.2} and  \ref{SZ-prop5.3}.]  If $q=1$, the upper bound of \eqref{eq:prop3.2} is equivalent to $K^q$ (up to constant) and the desired claim follows from Lemma \ref{SZ-lemma2.2!}. 
If $q\in(0,1)$, it suffices to prove 
\[\E\left[\left(\int_{-\pi}^{\pi}|F_{K,M_*}(re^{i\theta})|^2\d\theta \right)^q\right]\ll \left(\frac{K}{\sqrt{\log K}}\right)^q.\] Next, we partition the whole probability space into the events  \[\mathcal{G}_r(1;K),~\mathcal{G}_r(2^J;K)^c,~\text{and }\mathcal{G}_r(2^j;K)\backslash \mathcal{G}_r(2^{j-1};K),\ 1\le j\le J:=\floor{\frac{1}{2}\log_2\log K}.\] Then we have by Proposition \ref{SZ-prop5.3} that\[\E\left[\bone_{\mathcal{G}_r(1;K)}\left(\int_{-\pi}^{\pi}|F_{K,M_*}(re^{i\theta})|^2\d\theta \right)^q\right]\le \left(\E\left[\bone_{\mathcal{G}_r(1;K)}\int_{-\pi}^{\pi}|F_{K,M_*}(re^{i\theta})|^2\d\theta \right]\right)^q\ll\left(\frac{K}{\sqrt{\log K}}\right)^q,\]
and by H\"older's inequality and Propositions \ref{SZ-prop5.2} and \ref{SZ-prop5.3},
\begin{align*}
    &\E\left[\bone_{\mathcal{G}_r(2^j;K)\backslash\mathcal{G}_r(2^{j-1};K)}\left(\int_{-\pi}^{\pi}|F_{K,M_*}(re^{i\theta})|^2\d\theta \right)^q\right]\\
    &\le (\P(\mathcal{G}_r(2^{j-1};K)^c))^{1-q}\left(\E\left[\bone_{\mathcal{G}_r(2^j;K)}\int_{-\pi}^{\pi}|F_{K,M_*}(re^{i\theta})|^2\d\theta \right]\right)^q\ll \Big(\frac{K}{\sqrt{\log K}}\Big)^{q}2^{jq}\exp(-(1-q)2^{j-1}).
\end{align*}
Note that
\[\sum_{j\geq 1}2^{jq}\exp(-(1-q)2^{j-1})\ll \sum_{1\leq j\leq -\log_2(1-q)}2^{jq}\exp(-(1-q)2^{j-1})\ll \left(\frac{1}{1-q}\right)^q.\]
Finally, by H\"older's inequality, Lemma \ref{SZ-lemma2.2!}, and Proposition \ref{SZ-prop5.2},
\begin{align*}
    \E\left[\bone_{\mathcal{G}_r(2^J;K)^c}\left(\int_{-\pi}^{\pi}|F_{K,M_*}(re^{i\theta})|^2\d\theta \right)^q\right]&\ll \left(\frac{K}{\sqrt{\log K}}\right)^q.
\end{align*}
Combining the above estimates concludes the proof.
\end{proof}

\subsubsection{Proof of Proposition \ref{SZ-prop5.2}}


By definition, if $\mathcal{G}_r(A;K)$ fails, then there must be some $\log M_*\le n\le\log K$ such that either \begin{align}
    \max_{\theta\in[-\pi,\pi)}\sum_{k=M_*}^{e^n-1}\left(\Re \frac{X_kr^ke^{ik\theta}}{\sqrt{k}}-\mu_k\right)> A+10\log n\label{eq:1st}
\end{align}
or \begin{align}
    \min_{\theta\in[-\pi,\pi)}\sum_{k=M_*}^{e^n-1}\left(\Re \frac{X_kr^ke^{ik\theta}}{\sqrt{k}}-\mu_k\right)< -A-L_1n.\label{eq:2nd}
\end{align}
Applying the union bound, we can bound the probability of the first event \eqref{eq:1st} by
\begin{align*}
    &\P\left(\exists \log M_*\le n\le\log K:\max_{\theta\in[-\pi,\pi)}\sum_{k=M_*}^{e^n-1}\left(\Re\frac{X_kr^ke^{ik\theta}}{\sqrt{k}}-\mu_k\right)>A+10\log n\right)\\ 
    &\hspace{3cm}\le \sum_{n= \log M_*}^{\log K}\P\left(\max_{\theta\in[-\pi,\pi)}\sum_{k=M_*}^{e^n-1}\left(\Re \frac{X_kr^ke^{ik\theta}}{\sqrt{k}}-\mu_k\right)> A+10\log n\right)=:\sum_{n=\log M_*}^{\log K}\mathcal{P}_n.
\end{align*} The quantity $\mathcal{P}_n$ is then controlled using a chaining argument. First, we discretize the interval $[-\pi,\pi)$ into $ne^n$ points. Define $\theta_j = 2\pi j/(ne^n),\,0\le j< ne^n$ (and identifying $\theta$ with $\theta-2\pi$). Then on the event in the definition of $\mathcal{P}_n$, it holds that either 
\begin{equation}
    \sum_{k=M_*}^{e^n-1}\left(\Re \frac{X_kr^ke^{ik\theta_j}}{\sqrt{k}}-\mu_k\right)\ge \frac{A}{2}+5\log n
    \label{SZ-Eq-6.2}
\end{equation}
for some $0\le j<ne^n$ or \[\Re \sum_{k=M_*}^{e^n-1}\frac{X_kr^k}{\sqrt{k}}(e^{ik\theta}-e^{ik\theta_j})=\Re \int_{\theta_j}^\theta\sum_{k=M_*}^{e^n-1}X_kr^k(i\sqrt{k}e^{iky})\d y\ge \frac{A}{2}+5\log n\]
for some $0\le j<ne^n$ and some $\theta\in[\theta_j,\theta_{j+1})$. In particular, the second case implies
\begin{equation}
    \int_{\theta_j}^{\theta_{j+1}}\Big|\sum_{k=M_*}^{e^n-1}X_kr^k\sqrt{k}e^{iky}\Big|\,\d y \ge \frac{A}{2}+5\log n
    \label{SZ-Eq-6.3}
\end{equation}
for some $0\le j< ne^n$. Therefore, by the rotational invariance of $X_k$, it holds that
\begin{equation*}
    \mathcal{P}_n\le ne^n(\mathcal{P}_n'+\mathcal{P}_n''),
\end{equation*}
where $\mathcal{P}'_n$ (resp.~$\mathcal{P}_n''$) is the probability that \eqref{SZ-Eq-6.2} (resp.~\eqref{SZ-Eq-6.3}) holds with $j=0$.

For $\mathcal{P}_n'$, using Markov's inequality and Lemma \ref{SZ-lemma2.2!} we have 
\begin{align*}
    \mathcal{P}_n' &\le \exp\left(-2\Big(\frac{A}{2}+5\log n+\sum_{k=M_*}^{e^n-1}\mu_k\Big)\right)\E\left[\exp\left(2\sum_{k=M_*}^{e^n-1}\frac{r^k}{\sqrt{k}}R_k\cos(\tau_k)\right)\right]\\
    &\ll \exp(-A-10\log n-2n)e^n= \frac{e^{-n-A}}{n^{10}}.
\end{align*}
Next, we estimate $\mathcal{P}_n''$. Applying in order Jensen's inequality and  Markov's inequality, we have for $\beta:=e^{-n}$,
\begin{align*}
    \mathcal{P}_n'' &\le \P\left(\frac{1}{\theta_1}\int_0^{\theta_1}\exp\Big(\beta\Big|\sum_{k=M_*}^{e^n-1}X_kr^k\sqrt{k}e^{iky}\Big|\Big)\d y\ge \exp\Big(\frac{A+10\log n}{2\theta_1}\beta\Big)\right)\\
    &\leq \exp\left(-\frac{A+10\log n}{2\theta_1}\beta\right)\E\left[\frac{1}{\theta_1}\int_0^{\theta_1}\exp\Big(\beta\Big|\sum_{k=M_*}^{e^n-1}X_kr^k\sqrt{k}e^{iky}\Big|\Big)\d y\right]\\
    &\le \exp\left(-\frac{A+10\log n}{2\theta_1}\beta\right)\E\left[\exp\Big(\beta\Big|\sum_{k=M_*}^{e^n-1}X_kr^k\sqrt{k}\Big|\Big)\right]\\
    &\ll \exp\left(-\frac{A+10\log n}{2\theta_1}\beta\right)\E\left[\exp\Big(2\beta\Big|\sum_{k=M_*}^{e^n-1}R_k\cos(\tau_k)r^k\sqrt{k}\Big|\Big)\right],
\end{align*}
where in the last step we have used that by separating the real and imaginary parts and the fact that the law of $X_k$ is rotational invariant:
\begin{align*}
    \E\left[\exp\Big(\beta\Big|\sum_{k=M_*}^{e^n-1}X_kr^k\sqrt{k}\Big|\Big)\right]&\leq \E\left[\exp\Big(\beta\Big|\sum_{k=M_*}^{e^n-1}R_k\cos(\tau_k)r^k\sqrt{k}\Big|+\beta\Big|\sum_{k=M_*}^{e^n-1}R_k\sin(\tau_k)r^k\sqrt{k}\Big|\Big)\right]\\
    &\leq \E\left[\exp\Big(2\beta\Big|\sum_{k=M_*}^{e^n-1}R_k\cos(\tau_k)r^k\sqrt{k}\Big|\Big)\right]+\E\left[\exp\Big(2\beta\Big|\sum_{k=M_*}^{e^n-1}R_k\sin(\tau_k)r^k\sqrt{k}\Big|\Big)\right]\\
    &\ll \E\left[\exp\Big(2\beta\Big|\sum_{k=M_*}^{e^n-1}R_k\cos(\tau_k)r^k\sqrt{k}\Big|\Big)\right],
\end{align*}
where in the second inequality we used $ab\leq a^2+b^2$. 
Since $R_k$ are i.i.d.~sub-exponential, so is $\sum_{k=M_*}^{e^n-1}R_k\cos(\tau_k)r^k\sqrt{k}$. Then by Bernstein's inequality, there is some  constant $C>0$ depending on the distribution of $R_1$ such that  \[\P\left(\Big|\sum_{k=M_*}^{e^n-1}R_k\cos(\tau_k)r^k\sqrt{k}\Big|\ge u\right)\ll \exp\left(-\frac{1}{C}\min\{\frac{u^2}{e^{2n}},\frac{u}{e^{n/2}}\}\right),\] which implies \[\E\left[\exp\Bigg(2\beta\Big|\sum_{k=M_*}^{e^n-1}R_k\cos(\tau_k)r^k\sqrt{k}\Big|\Bigg)\right]\ll \exp(C\beta^2e^{2n})+\frac{1}{\frac{1}{2\beta Ce^{n/2}}-1}.\]
Finally, plugging in $\beta=e^{-n}$ we get the desired bound  $\mathcal{P}_n''\ll e^{-A-n}/n^{10}$ (which is loose but sufficient). Combining this with the bound of $\mathcal{P}_n'$, we get $\mathcal{P}_n\ll e^{-A}/n^9$, and therefore \[\P\left(\forall 1\le n\le\log K:\max_{\theta\in[-\pi,\pi)}\sum_{k=M_*}^{e^n-1}\left(\Re\frac{X_kr^ke^{ik\theta}}{\sqrt{k}}-\mu_k\right)>A+10\log n\right)\ll e^{-A}.\]
By replacing $R_k$ with $-R_k$ and also $\mu_k$ with $-\mu_k$, the probability of the lower tail event \eqref{eq:2nd} can be bounded by the same argument. This concludes the proof of Proposition \ref{SZ-prop5.2}.

\subsubsection{Proof of Proposition \ref{SZ-prop5.3}}\label{sec:332}

Let $M =M(A)= \max\{2\sqrt{A},20,\log M_*\}$, and define \[A_\theta(M):=\sum_{k=M_*}^{e^M-1}\left(\Re \frac{X_kr^ke^{ik\theta}}{\sqrt{k}}-\mu_k\right).\] Define the event \[\mathcal{E}_r(A,\theta;M):=\{-A-L_1M\le A_\theta(M)\le A+10\log M\},\]
and for $B>0$, define the event 
\begin{align}
    \mathcal{L}_r(B,\theta;K):=\bigg\{\forall M< n\le \log K,~-B-L_1n\le\sum_{e^M\le k<e^n}\bigg(\Re \frac{X_kr^ke^{ik\theta}}{\sqrt{k}}-\mu_k\bigg)\le B+10\log n\bigg\}.\label{eq:LR}
\end{align}
 Then we can replace the event $\mathcal{G}_r(A,\theta;K)$ by the less restricted event $\mathcal{E}_r(A,\theta;M)\cap\mathcal{L}_r(2A+L_1M,\theta;K)$, where we used $10\log x\le 2x$ when $x\ge 20$. By rotational symmetry and independence,
\begin{align*}\E[\bone_{\mathcal{G}_r(A,\theta;K)}|F_{K,M_*}(re^{i\theta})|^2]    &=\E[\bone_{\mathcal{G}_r(A,0;K)}|F_{K,M_*}(r)|^2]\\
    &\le \E\left[\bone_{\mathcal{E}_r(A,0;M)}\exp\Big(2\Re \sum_{k=M_*}^{e^M-1}\frac{X_kr^k}{\sqrt{k}}\Big)\bone_{\mathcal{L}_r(2A+L_1M,0;K)}\exp\Big(2\Re \sum_{k=e^M}^{K-1}\frac{X_kr^k}{\sqrt{k}}\Big)\right]\\
    &= \E\left[\bone_{\mathcal{E}_r(A,0;M)}\exp\Big(2\Re \sum_{k=M_*}^{e^M-1}\frac{X_kr^k}{\sqrt{k}}\Big)\right] \E\left[\bone_{\mathcal{L}_r(2A+L_1M,0;K)}\exp\Big(2\Re \sum_{k=e^M}^{K-1}\frac{X_kr^k}{\sqrt{k}}\Big)\right].
\end{align*}

\begin{proposition}
    \label{Cond-Expectation}
    With the notations above and let $B:=2A+L_1M$, we have 
    \begin{align}
        \E\left[\bone_{\mathcal{L}_r(B,0;K)}\exp\Bigg(2\Re\sum_{k=e^M}^{K-1}\frac{X_kr^k}{\sqrt{k}}\Bigg)\right]\ll \frac{K}{e^M}\frac{B}{\sqrt{\log(K/e^M)}}.\label{eq:Eub}
    \end{align}
\end{proposition}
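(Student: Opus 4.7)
The plan is to recognize the exponential weight in \eqref{eq:Eub} as the Radon--Nikodym derivative (up to a normalizing constant) of the tilted measure $\Q^{(1)}_{r,M,K}$ defined in \eqref{eq:Q1 def}, thereby reducing the bound to a ballot-type probability under $\Q^{(1)}$. Writing $\Re X_k=R_k\cos(\tau_k)$ we have
\[
\exp\Big(2\Re\sum_{k=e^M}^{K-1}\frac{X_kr^k}{\sqrt{k}}\Big)
=Z\cdot\frac{\d\Q^{(1)}}{\d\P},\qquad
Z:=\E\Big[\exp\Big(2\sum_{k=e^M}^{K-1}\frac{r^k}{\sqrt{k}}R_k\cos(\tau_k)\Big)\Big],
\]
so the left-hand side of \eqref{eq:Eub} equals $Z\cdot\Q^{(1)}(\mathcal{L}_r(B,0;K))$. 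It then suffices to establish (a) $Z\ll K/e^M$ and (b) $\Q^{(1)}(\mathcal{L}_r(B,0;K))\ll B/\sqrt{\log(K/e^M)}$.

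For (a), independence gives $\log Z=\sum_{k=e^M}^{K-1}\log\E[\exp(2r^kR_k\cos(\tau_k)/\sqrt{k})]$. A Taylor expansion together with rotational invariance of $\tau_k$ (which kills all odd moments of $\cos(\tau_k)$) and the unit variance $\E[R_k^2]=1$ gives $\log\E[\exp(2r^kR_k\cos(\tau_k)/\sqrt{k})]=r^{2k}/k+O(k^{-2})$ for $k\ge e^M$, where the error term relies on exponential moment bounds on $R_k$. This is exactly the flavor of computation that is handled in Appendix \ref{sec:ubdef} (in particular yielding \eqref{eq:muk def}). Summing over $e^M\le k<K$ and using $1\le r\le e^{1/K}$ (so $r^{2k}=1+O(k/K)$) yields $\log Z=\log(K/e^M)+O(1)$, i.e.~$Z\asymp K/e^M$.

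For (b), observe that the random variables $\xi_k:=\Re X_kr^k/\sqrt{k}-\mu_k$ are independent and centered under $\Q^{(1)}$ by the very definition of $\mu_k$. By the computations of Lemma \ref{lemma-Q-computations}, each $\xi_k$ has $\Q^{(1)}$-variance of order $1/k$, so the dyadic blocks $G_n:=\sum_{e^{n-1}\le k<e^n}\xi_k$ are independent centered variables whose variances lie between two absolute positive constants. The event $\mathcal{L}_r(B,0;K)$ asks precisely that the partial sums $S_n=\sum_{M<j\le n}G_j$ (for $M<n\le\log K$) remain in the strip $[-B-L_1n,B+10\log n]$. To invoke Lemma \ref{lemma:ballot}, which is stated for Gaussian walks, I would couple $\{G_n\}$ to a sequence $\{\widetilde G_n\}$ of independent centered Gaussians with matching variances via a Sakhanenko-type strong approximation; the resulting coupling error $\max_{M<n\le\log K}|\sum_{j\le n}(G_j-\widetilde G_j)|$ is $O(\log\log K)$ with probability $1-o(1)$ under $\Q^{(1)}$ (using the uniform sub-exponential tails of the $G_n$ inherited from the exponential moment bound on $R_k$), and is comfortably absorbed into the $h(m)=10\log m$ slack in the upper barrier. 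Applying Lemma \ref{lemma:ballot} to $\{\widetilde G_n\}$ with $a=B$, $g(m)=-B-L_1m$, and length $\asymp\log(K/e^M)$ then gives $\Q^{(1)}(\mathcal{L}_r(B,0;K))\ll B/\sqrt{\log(K/e^M)}$, and combining with (a) concludes the proof.

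The main technical obstacle is the Gaussian approximation in (b): one must verify that the dyadic increments $G_n$ of the tilted random walk retain uniform sub-exponential tails under $\Q^{(1)}$ so that Sakhanenko-type coupling produces an $L^\infty$ error that is dwarfed by the logarithmic slack afforded by Lemma \ref{lemma:ballot}. Everything else reduces to bookkeeping with Laplace functionals (step (a)) and variance estimates (step (b)), both of which are covered by the appendix.
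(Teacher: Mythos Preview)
Your reduction in (a) is exactly what the paper does: the exponential is the unnormalized $\Q^{(1)}$-density, and Lemma \ref{SZ-lemma2.2!} (or the equivalent Taylor computation you sketch) gives $Z\asymp K/e^M$, so the left-hand side of \eqref{eq:Eub} equals $(K/e^M)\cdot\Q^{(1)}(\mathcal{L}_r(B,0;K))$ up to constants.

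The gap is in (b). A Sakhanenko/KMT coupling over the $n=\log K-M$ dyadic increments produces a maximal $L^\infty$ error of order $\log n=O(\log\log K)$ on the good event. This error cannot be ``comfortably absorbed'' into the $10\log m$ slack as you claim: after shifting the walk to start at step $1$, the slack at step $j$ is $10\log(j+M)$, whose minimum (at $j=1$) is $10\log(M+1)$, a fixed constant when $A=O(1)$, whereas $\log\log K$ is unbounded in $K$. The error must therefore be absorbed into the ballot parameter $a$, and Lemma \ref{lemma:ballot} then yields only $\Q^{(1)}(\mathcal{L}_r(B,0;K))\ll (B+\log\log K)/\sqrt{\log(K/e^M)}$. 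Since $B=2A+L_1M$ can be a fixed constant (this is precisely the regime $A=O(1)$ used when deducing Proposition \ref{SZ-prop3.2}), the extra $\log\log K$ propagates to an unwanted $(\log\log N)^q$ factor in the upper bound of Proposition \ref{prop:superC-asymp-truncated-moments}, destroying the matching with the lower bound.

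The paper circumvents this loss by a \emph{local} rather than global Gaussian approximation. It slices each increment $Y_m$ into intervals of width $m^{-4}$ and shows (Lemma \ref{Lemma-1d-GaussianApprox}) that on each slice $\Q^{(1)}(Y_m\in[r_m,r_m+m^{-4}])=(1+O(m^{-2}))\,\P(N_m\in[r_m,r_m+m^{-4}])$ for a matching-variance Gaussian $N_m$; the key device is a \emph{second} exponential tilt $\widetilde{\Q}^{(1)}$ that recenters $Y_m$ at $r_m$ before applying Berry--Esseen, so that the additive Berry--Esseen error is negligible against the (now $\asymp m^{-4}$) slice probability. The multiplicative $(1+O(m^{-2}))$ errors are summable over $m$, so after reassembling the slices one lands on a genuine Gaussian ballot problem with no additive penalty, and Lemma \ref{lemma:ballot} delivers the sharp $B/\sqrt{\log(K/e^M)}$. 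This per-increment technique is also what the paper reuses in two dimensions for Proposition \ref{SZ-prop-11.1}, where a strong-approximation argument for the correlated pair $(Y_{m,0},Y_{m,\theta})$ would be less standard.
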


\begin{proof}[Deducing Proposition \ref{SZ-prop5.3} from Proposition \ref{Cond-Expectation}] Note that $B=2A+L_1M=O(\sqrt{\log K})$. By definition of $M$ and Proposition \ref{Cond-Expectation}, we have
\begin{align*}
    \E[\bone_{\mathcal{G}_r(A,\theta;K)}|F_{K,M_*}(re^{i\theta})|^2] &\ll \frac{K(A+M)}{e^M\sqrt{\log(K/e^M)}}\E\left[\bone_{\mathcal{E}_r(A,0;M)}\exp\Big(2\Re\sum_{k=M_*}^{e^M-1}\frac{r^k}{\sqrt{k}}X_k\Big)\right]\\
    &\ll \frac{AK}{e^M\sqrt{\log K}}\times \E\left[\exp\Big(2\Re\sum_{k=M_*}^{e^M-1}\frac{r^k}{\sqrt{k}}X_k\Big)\right] \ll \frac{AK}{\sqrt{\log K}},
\end{align*}
where the last step is due to Lemma \ref{SZ-lemma2.2!}. This concludes the proof.
\end{proof}

\subsubsection{Proof of Proposition \ref{Cond-Expectation}}\label{sec:slicing}


Recall our definition of $\Q^{(1)}=\Q^{(1)}_{r,M,K}$ from \eqref{eq:Q1 def} and the event $\mathcal{L}_r(B,0;K)$ from \eqref{eq:LR}. The constant $M_*$ is chosen depending on the law of $R_1$ so that for all $k\geq M_*$, every Laplace functional below (i.e.~expectations of the form $\E[\exp(a_k R_k\cos(\tau_k))]$ for some $a_k$) is well defined, and the conditions in Lemmas \ref{SZ-lemma2.2} and \ref{lemma-Q-computations} hold. Using Lemma \ref{SZ-lemma2.2!}, the left-hand side of \eqref{eq:Eub} becomes \[\E\left[\exp\Bigg(\sum_{k=e^M}^{K-1}\frac{2r^k}{\sqrt{k}}R_k\cos(\tau_k)\Bigg)\right]\Q^{(1)}(\mathcal{L}_r(B,0;K))\asymp \frac{K}{e^M}\Q^{(1)}(\mathcal{L}_r(B,0;K)).\]
It then suffices to calculate the ballot-type probability $\Q^{(1)}(\mathcal{L}_r(B,0;K))$. Define \begin{align}
    Y_m:=\sum_{e^{m-1}\le k< e^m}\left(\frac{r^k}{\sqrt{k}}R_k\cos(\tau_k)-\mu_k\right),\label{eq:YM def}
\end{align} 
so each $Y_m$ is centered under $\Q^{(1)}$. 
Next, we approximate these random batches $Y_m$ with centered Gaussian variables with uniformly bounded variance, which allows us to use Lemma \ref{lemma:ballot} to compute the probability \[\Q^{(1)}\bigg(\forall M< n\le\log K,~ -B-L_1n\le\sum_{m=M+1}^n Y_m\le B+10\log n\bigg).\] 

\begin{lemma}\label{Lemma-1d-GaussianApprox}
For any $r_m$ such that $|r_m|\ll m^2$, there is some centered Gaussian random variable $N_m$ with variance $\sigma_m^2=\E^{\Q^{(1)}}[Y_m^2]=\frac{1}{2}+O(e^{-m/2})$ such that \[\Q^{(1)}(r_m\le Y_m\le r_m+m^{-4})=(1+O(m^{-2}))\P(r_m\le N_m\le r_m+m^{-4}).\]
\end{lemma}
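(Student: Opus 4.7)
First, I would note that under $\Q^{(1)}$ the pairs $\{(R_k,\tau_k)\}_{k\ge M_*}$ remain independent (since the Radon--Nikodym density in \eqref{eq:Q1 def} factorizes across $k$), so $Y_m$ is a sum of independent centered random variables $\xi_k := r^k R_k\cos(\tau_k)/\sqrt{k} - \mu_k$. A direct computation via the Laplace-functional estimates of Lemma \ref{lemma-Q-computations} gives $\mathrm{Var}^{\Q^{(1)}}(\xi_k) = r^{2k}/(2k) + O(k^{-2})$, whose sum over $e^{m-1}\le k < e^m$ equals $1/2 + O(e^{-m})$, establishing the stated $\sigma_m^2$. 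The same technique gives $\E^{\Q^{(1)}}[|\xi_k|^p]\ll k^{-p/2}$ uniformly in $k\ge M_*$ for any fixed $p$, using that $R_k$ has all exponential moments under $(q\text{-LT})$ and that the tilt factor $\exp(2r^k R_k\cos(\tau_k)/\sqrt{k})$ contributes only a bounded multiplicative correction.

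Second, because $|r_m|\ll m^2$ while $\sigma_m = O(1)$, the event $\{Y_m \approx r_m\}$ lies in the moderate-deviation regime, so I would apply an exponential tilt to recenter. Setting $\Lambda_m(t) := \log\E^{\Q^{(1)}}[e^{tY_m}]$, the moment bounds above show $\Lambda_m$ is smooth and strictly convex on a large interval, so $\Lambda_m'(t_m) = r_m$ has a unique solution with $|t_m|\ll m^2$. Define $\tilde{\Q}_m$ by $d\tilde{\Q}_m/d\Q^{(1)} := e^{t_m Y_m - \Lambda_m(t_m)}$, under which $Y_m$ has mean $r_m$ and variance $\tilde{\sigma}_m^2 = \Lambda_m''(t_m) = \sigma_m^2(1+o(1))$. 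Then
\begin{equation*}
\Q^{(1)}(r_m\le Y_m\le r_m+m^{-4}) = e^{\Lambda_m(t_m)}\int_{r_m}^{r_m+m^{-4}} e^{-t_m y}\tilde{f}_m(y)\,dy,
\end{equation*}
where $\tilde{f}_m$ is the density of $Y_m$ under $\tilde{\Q}_m$. Since $|t_m|\cdot m^{-4}\ll m^{-2}$, the integrand is essentially constant on the tiny interval, so it remains to evaluate $\tilde{f}_m$ at $r_m$, which is now the mean under $\tilde{\Q}_m$.

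I would then prove a bulk local CLT for $\tilde{f}_m$ at its mean $r_m$ with relative accuracy $O(m^{-2})$, via Fourier inversion and splitting the $t$-integral. For $|t|\le e^{m/6}$, Taylor expansion of $\log\tilde{\varphi}_k(t)$ combined with the summed third-moment bound $\sum_k \E^{\tilde{\Q}_m}[|\xi_k|^3]\ll e^{-m/2}$ yields $\prod_k\tilde{\varphi}_k(t) = e^{-t^2\tilde{\sigma}_m^2/2}(1+O(m^{-2}))$, which Fourier-inverts to the desired Gaussian density. For $|t|\ge e^{m/6}$ one needs $\prod_k|\tilde{\varphi}_k(t)|$ to decay super-polynomially, which is the main obstacle. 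The required decay comes from smoothness of $\xi_k$ induced by the near-uniform distribution of $\tau_k$: the arcsine law of $\cos(\tau_k)$ gives $|\tilde{\varphi}_k(t)|\ll (|t|r^k/\sqrt{k})^{-1/2}$ for $|t|\gtrsim \sqrt{k}$, while on the intermediate range $1\le |t|\le \sqrt{k}$ a Gaussian-type bound $|\tilde{\varphi}_k(t)|\le e^{-ct^2/k}$ applies; multiplying over the $\asymp e^m$ factors yields the desired superpolynomial decay. Substituting back and using the saddle-point expansion $\Lambda_m(t_m) = t_m r_m - r_m^2/(2\sigma_m^2) + O(m^{-2})$ reproduces $\P(r_m\le N_m\le r_m+m^{-4})$ up to the claimed $(1+O(m^{-2}))$ factor.
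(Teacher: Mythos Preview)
Your overall strategy matches the paper's: both compute $\sigma_m^2$ from Lemma~\ref{lemma-Q-computations}, then apply an exponential tilt to recenter $Y_m$ near $r_m$ and compare to a Gaussian. The difference lies in how the Gaussian comparison is carried out, and the paper's route is considerably simpler.

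The paper does \emph{not} prove a local CLT. Instead, after tilting by $\lambda_m:=r_m/\sigma_m^2$ (the Gaussian saddle, not the exact solution of $\Lambda_m'(t_m)=r_m$), it applies the Berry--Esseen smoothing inequality for CDFs (Durrett, eq.~(3.4.1)) with truncation parameter $T=e^{m/9}$. This requires only the characteristic-function estimate $|S_m(t)|\ll|t|^3e^{-m/2}$ on the \emph{bounded} interval $|t|\le e^{m/9}$, and the truncation error $O(e^{-m/9})$ is absorbed because the target probability is $\asymp m^{-4}$. No control of $\tilde{\varphi}_m(t)$ for large $|t|$ is ever needed.

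Your Fourier-inversion route forces you to control $\int_{|t|\ge e^{m/6}}|\tilde{\varphi}_m(t)|\,dt$, which is genuinely harder. Your arcsine bound $|\tilde{\varphi}_k(t)|\ll(|t|r^k/\sqrt{k})^{-1/2}$ is not quite right as stated: conditioning on $R_k$ the Bessel-function bound gives $(|t|r^k|R_k|/\sqrt{k})^{-1/2}$, and averaging over $R_k$ requires $\E^{\tilde{\Q}_m}[|R_k|^{-1/2}]<\infty$, which is not assumed. One can salvage this by arguing that a positive proportion of the $\asymp e^m$ summands have $|R_k|\ge\delta$ with overwhelming probability and then multiplying the resulting decay factors, but this is an extra layer of work that the paper's CDF approach avoids entirely. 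Your argument is repairable, but the paper's is the cleaner path.
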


\begin{proof}[Deducing Proposition \ref{Cond-Expectation} from Lemma \ref{Lemma-1d-GaussianApprox}] We use a slicing argument as in \citep{harper2020moments}. Note that on the event $\cL_r(B,\theta;K)$, it holds $-B-L_1(n-1)\le \sum_{m=M+1}^{n-1}Y_m\le B+10\log(n-1)$ and $-B-L_1n\le\sum_{m=M+1}^n Y_m\le B+10\log n$, then there must be \[|Y_n|\le 2B+L_1n+10\log n\ll A+n,\ M< n\le \log K.\]
Since $M\geq \max\{2\sqrt{A},20\}$, we know further that $|Y_n|\ll n^2$ uniformly in $M< n\le \log K$. Then there must be some $r_n\in\mathcal{R}_n:=\{r\in n^{-4}\Z:|r|\ll n^2\}$ such that $r_n\le Y_n< r_n+n^{-4}$. In particular, \begin{align} -B-L_1n-\sum_{m=M+1}^n\frac{1}{m^4}\le \sum_{m=M+1}^n r_m\le B+10\log n,~M< n\leq \log K.\label{eq:rncondition}\end{align}
Denote by $\mathcal{D}(M,K)$ the set of all possible vectors $(r_n)_{M< n\le\log K},~r_n\in\mathcal{R}_n$ that satisfy \eqref{eq:rncondition}, then Lemma \ref{Lemma-1d-GaussianApprox} yields
\begin{align*}
    &\hspace{0.5cm}\Q^{(1)}\left(\forall M< n\le\log K: -B-L_1n\le\sum_{m=M+1}^n Y_m\le B+10\log n\right)\\
    &\le \sum_{(r_n)\in\mathcal{D}(M,K)}\prod_{m=M+1}^{\log K}\Q^{(1)}\Big(r_m\le Y_m<r_m+\frac{1}{m^4}\Big)\\
    &= \sum_{(r_n)\in\mathcal{D}(M,K)}\prod_{m=M+1}^{\log K}(1+O(m^{-2}))\,\P\Big(r_m\le N_m<r_m+\frac{1}{m^4}\Big)\\
    &\ll \sum_{(r_n)\in\mathcal{D}(M,K)}\prod_{m=M+1}^{\log K}\P\Big(r_m\le N_m<r_m+\frac{1}{m^4}\Big)\\
    &\le \P\left(\forall M< n\le \log K: -B-L_1n-\sum_{m=M+1}^n\frac{1}{m^4}\le \sum_{m=M+1}^n N_m\le B+10\log n+\sum_{m=M+1}^n\frac{1}{m^4}\right)\\
    &\ll \frac{B+10\log M}{\sqrt{\log(K/e^M)}}\ll \frac{B}{\sqrt{\log(K/e^M)}},
\end{align*}
where the last line follows from   Lemma \ref{lemma:ballot} (applied with the shift $n\mapsto n-M$ and $a=B+10\log M$) and that $B=2A+L_1M\gg M$. This completes the proof of Proposition \ref{Cond-Expectation}.\end{proof}

\begin{proof}[Proof of Lemma \ref{Lemma-1d-GaussianApprox}] That $\sigma_m^2=\E^{\Q^{(1)}}[Y_m^2]={1}/{2}+O(e^{-m/2})$ follows from Lemmas \ref{lemma:basic} and \ref{lemma-Q-computations}. We first compute the characteristic function of $Y_m$ under $\Q$. Using Lemma 3.3.19 of \citep{durrett2019probability} (with $n=2$ therein) and Lemma \ref{lemma-Q-computations}, we have
\begin{align*}
    \E^{\Q^{(1)}}\left[\exp\left(it\Big(\frac{r^k}{\sqrt{k}}R_k\cos(\tau_k)-\mu_k\Big)\right)\right] &= 1-\frac{t^2}{2}\var^{{\Q}^{(1)}}\left[\frac{r^k}{\sqrt{k}}R_k\cos(\tau_k)\right]+D_k(t)\\
    &=1-\left(\frac{r^{2k}}{4k}+O(k^{-3/2})\right)t^2+D_k(t)
\end{align*}
with some $D_k(t)$ satisfying \[|D_k(t)|\ll |t|^3\E^{\Q^{(1)}}\left[\Big|\frac{r^k}{\sqrt{k}}R_k\cos(\tau_k)-\mu_k\Big|^3\right]\ll |t|^3 k^{-3/2},\]and similarly $|D'_k(t)|\ll |t|^2 k^{-3/2}$ for $|t|\leq \sqrt{k}/C$ for some large constant $C>0$ (that depends only on the law of $X_1$). Rewriting this in exponential form, we have \[\E^{\Q^{(1)}}\left[\exp\left(it(\frac{r^k}{\sqrt{k}}R_k\cos(\tau_k)-\mu_k)\right)\right] = \exp\left(-\frac{t^2}{2}\var^{{\Q}^{(1)}}\left[\frac{r^k}{\sqrt{k}}R_k\cos(\tau_k)\right]+T_k(t)\right)\]
with again $|T_k(t)|\ll |t|^3 k^{-3/2}$ and $|T'_k(t)|\ll |t|^2 k^{-3/2}$ for $|t|\leq \sqrt{k}/C$. By independence, we then have \[\E^{\Q^{(1)}}\left[\exp(itY_m)\right] =\exp\left(-\frac{t^2\sigma_m^2}{2}+S_m(t)\right)= \exp\left(-\frac{t^2}{2}\sum_{e^{m-1}\le k<e^m}\left(\frac{r^{2k}}{2k}+O(k^{-3/2})\right)+S_m(t)\right)\]
with $|S_m(t)|\ll |t|^3 e^{-m/2}$ and $|S'_m(t)|\ll |t|^2 e^{-m/2}$ for $|t|\leq e^{m/2}/C$. Let  $\lambda_m:=r_m/\sigma_m^2\ll m^2$ by our assumption that $|r_m|\ll m^2$. Define a new measure $\widetilde{\Q}^{(1)}$ by \[\frac{\d\widetilde{\Q}^{(1)}}{\d\Q^{(1)}} = \prod_{m=M+1}^{\log K}\frac{\exp(\lambda_m Y_m)}{\E^{\Q^{(1)}}[\exp(\lambda_m Y_m)]},\]
and the characteristic function of $Y_m$ under $\widetilde{\Q}^{(1)}$ is \[\E^{\widetilde{\Q}^{(1)}}\left[\exp(itY_m)\right] = \exp\left(ir_m t-\frac{\sigma_m^2}{2}t^2+S_m(t-i\lambda_m)-S_m(-i\lambda_m)\right).\]
The intuition of why $\widetilde{\Q}^{(1)}$ is needed is explained in the last paragraph of the ($q$-UNIV) phase in Section \ref{sec:main ideas}: compared to Harper's approach in \citep{harper2020moments}, we have fewer summands in $Y_m$ for the normal approximation, and a change of measure guarantees a better quantitative normal approximation. 
Note that when $Y_m\in[r_m,r_m+m^{-4}]$, \[|\lambda_mY_m-\lambda_mr_m|\le\frac{|r_m|}{\sigma_m^2 m^4}\ll m^{-2}\qquad\text{a.s.}\] Together with the estimates on $S_m(t)$, we have
\begin{align}\label{Ym-Q-Exp-Tilting}\begin{split}
    &\hspace{0.5cm}\Q^{(1)}(r_m\le Y_m\le r_m+m^{-4})\\
    &= \E^{\Q^{(1)}}\left[e^{\lambda_mY_m}\right]\E^{\widetilde{\Q}^{(1)}}\left[e^{-\lambda_mY_m}\bone_{\{r_m\le Y_m\le r_m+m^{-4}\}}\right] \\
    &= \exp\Big(\frac{\sigma_m^2}{2}\lambda_m^2+S_m(-i\lambda_m)\Big)\exp(-r_m\lambda_m+O(m^{-2}))\,\widetilde{\Q}^{(1)}(r_m\le Y_m\le r_m+m^{-4})\\
    &= (1+O(m^{-2}))\exp\Big(-\frac{r_m^2}{2\sigma_m^2}\Big)\,\widetilde{\Q}^{(1)}(r_m\le Y_m\le r_m+m^{-4}).
\end{split}
\end{align}
For each $m$, let $\widetilde{N}_m$ be a Gaussian  random variable with mean $r_m$ and variance $\sigma_m^2$ under $\P$, i.e., with characteristic function \[\E[\exp(it\widetilde{N}_m)] = \exp\Big(ir_mt-\frac{\sigma_m^2}{2}t^2\Big).\]Applying the Berry-Esseen bound (see equation (3.4.1) of \citep{durrett2019probability}), we obtain
\begin{align*}
  &|\widetilde{\Q}^{(1)}(r_m\le Y_m\le r_m+m^{-4})-\P(r_m\le \widetilde{N}_m\le r_m+m^{-4})|\ll \int_{-e^{m/9}}^{e^{m/9}}\frac{|\E^{\widetilde{\Q}^{(1)}}[e^{itY_m}]-\E[e^{it\widetilde{N}_m}]|}{|t|}\d t+e^{-m/9}.
\end{align*}
Further, since $|S_m(t)|\ll|t|^3e^{-m/2}\ll 1$,  we have the estimate \
\begin{align*}
    \Big|\E^{\widetilde{\Q}^{(1)}}[e^{itY_m}]-\E[e^{it\widetilde{N}_m}]\Big| &\le e^{-{\sigma_m^2}t^2/2}|\exp(S_m(t-i\lambda_m)-S_m(-i\lambda_m))-1|\\
    &\ll e^{-{\sigma_m^2}t^2/2}|S_m(t-i\lambda_m)-S_m(-i\lambda_m)|\\
    &\ll e^{-{\sigma_m^2}t^2/2}\int_0^{|t|}|S'_m(s-i\lambda_m)|\,\d s\\
    &\ll e^{-{\sigma_m^2}t^2/2}e^{-m/2}(|\lambda_m|^2|t|+|t|^3).
\end{align*}
Therefore,
\begin{align*}
  |\widetilde{\Q}^{(1)}(r_m\le Y_m\le r_m+m^{-4})-\P(r_m\le \widetilde{N}_m\le r_m+m^{-4})| &\ll e^{-m/2}\int_{-e^{m/9}}^{e^{m/9}}(|\lambda_m|^2+|t|^2)e^{-{\sigma_m^2}t^2/2}\d t+e^{-m/9}\\
  &\ll (1+|r_m|^2)e^{-m/2}+e^{-m/9}\ll e^{-m/9}
\end{align*}
for any $|r_m|\ll m^2$. Note that $\P(r_m\le \widetilde{N}_m\le r_m+m^{-4})\gg m^{-4}$, we have \[\widetilde{\Q}^{(1)}(r_m\le Y_m\le r_m+m^{-4}) = (1+O(e^{-m/10}))\P(r_m\le \widetilde{N}_m\le r_m+m^{-4}),\]
Finally, a standard Gaussian computation yields
\[\P(r_m\le \widetilde{N}_m\le r_m+m^{-4})=(1+O(m^{-2}))\exp\Big(\frac{r_m^2}{2\sigma_m^2}\Big)\P(r_m\le N_m\le r_m+m^{-4}).\]
Combining with \eqref{Ym-Q-Exp-Tilting} yields the proof.
\end{proof}

\subsection{Lower bound of Proposition \ref{prop:superC-asymp-truncated-moments}}\label{sec:lb}

In this section, we prove the lower bound of Proposition \ref{prop:superC-asymp-truncated-moments}.\footnote{The proof of the lower bound does not strictly rely on the ($q$-UNIV) condition, but the bound may not be tight for the other cases.} 
We first reduce the proof to the following two propositions. 

\begin{proposition} \label{SZ-Prop-8.1} 
 Fix $q\in(0,1]$. Suppose that $\E[e^{\gamma|R_1|}]<\infty$ for some $\gamma>2q$, i.e.~($q$-UNIV) holds. Then for any $0<r<1$, \[\E[|A_{N,M_*}|^{2q}]\gg \frac{1}{N^q}\left(\E\left[\left(\int_{-\pi}^{\pi}|F_{N/2,M_*}(re^{i\theta})|^2\d\theta \right)^q\right]-\E\left[r^{Nq}\left(\int_{-\pi}^{\pi}|F_{N/2,M_*}(e^{i\theta})|^2\d\theta  \right)^q\right]\right).\]
\end{proposition}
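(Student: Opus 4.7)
The plan is to combine Parseval's identity with the algebraic relation between $A_{N,M_*}$ and the Taylor coefficients of $F_{N/2,M_*}$. Writing $I_r:=\int_{-\pi}^\pi|F_{N/2,M_*}(re^{i\theta})|^2\d\theta$ and $b_n:=[z^n]F_{N/2,M_*}(z)$, Parseval yields
\[I_r = 2\pi\sum_{n\ge 0}|b_n|^2 r^{2n}.\]
A partition-level check shows $b_n=A_{n,M_*}$ for all $n\le N/2$, since any $\lambda\in\cP_n$ with $n\le N/2$ automatically has $\lambda_1\le N/2$. Moreover, factoring $\exp(\sum_{k\ge M_*}X_kz^k/\sqrt{k})=F_{N/2,M_*}(z)\cdot\exp(\sum_{k>N/2}X_kz^k/\sqrt{k})$ and extracting $[z^N]$ gives the decomposition
\[A_{N,M_*}=b_N+\sum_{N/2<\ell\le N}\frac{X_\ell}{\sqrt{\ell}}\,A_{N-\ell,M_*},\]
in which $b_N$ and the individual tail terms $X_\ell A_{N-\ell,M_*}/\sqrt{\ell}$ (for $\ell>N/2$) are pairwise orthogonal in $L^2$.

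Next, I would use the subadditivity inequality $a^q-b^q\le (a-b)_+^q$ for $a,b\ge 0$ and $q\in(0,1]$, together with Parseval, to obtain pointwise
\[I_r^q-(r^N I_1)^q\le \Bigl(2\pi\sum_n|b_n|^2(r^{2n}-r^N)\Bigr)_+^q.\]
Since $r^{2n}-r^N\ge 0$ iff $n\le N/2$, the positive part is dominated by $2\pi\sum_{n<N/2}|A_{n,M_*}|^2(r^{2n}-r^N)$. Taking expectations and applying Minkowski's inequality (for $q\ge 1/2$) or concavity (for $q<1/2$), the task reduces to bounding
\[\sum_{n<N/2}\E[|A_{n,M_*}|^{2q}]\,(r^{2n}-r^N)^q\]
by a constant multiple of $N^q\E[|A_{N,M_*}|^{2q}]$.

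The final step is the transfer to the $n=N$ moment, combining the elementary estimate $\sum_{n<N/2}(r^{2n}-r^N)\le N(1-r^N)\le N$ with the recursion above. Leveraging the independence of $\{X_\ell\}_{\ell>N/2}$ from the $b_n$'s and the symmetrization trick in the spirit of \eqref{eq:symm} (based on $X_\ell\ddd e^{i\phi}X_\ell$), the recursion shows that $|A_{N,M_*}|^{2q}$ already ``contains'' the lower-order quantities $|A_{N-\ell,M_*}|^{2q}$ weighted by independent tail factors $X_\ell/\sqrt{\ell}$. By conditioning on $\{X_k\}_{k\le N/2}$ and integrating out the tail, these lower-order moments can be absorbed into $\E[|A_{N,M_*}|^{2q}]$, producing the desired factor of $N^q$ via the above summation estimate.

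The main obstacle is the sharpness of this transfer. A naive term-by-term Minkowski bound would yield $\sum_{n<N/2}\E[|A_{n,M_*}|^{2q}]\ll N\sup_n\E[|A_{n,M_*}|^{2q}]$, losing an additional factor of $N^{1-q}$ because $\E[|A_{n,M_*}|^{2q}]$ is (ultimately) decreasing in $n$ for the $(q\text{-LT})$ regime. Avoiding this loss requires exploiting the recursive decoupling of $A_{N,M_*}$ into $b_N$ plus orthogonal tail increments, so that the lower-index moments appear \emph{inside} the expectation $\E[|A_{N,M_*}|^{2q}]$ rather than as separately summed terms; this is analogous to --- and complements --- the multiplicative-chaos machinery developed for the upper bound in Proposition \ref{prop:superC-asymp-truncated-moments}.
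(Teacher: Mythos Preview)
Your Parseval reduction is correct and matches the paper: one arrives at the pointwise inequality
\[
\Bigl(\tfrac{1}{2\pi}I_r\Bigr)^q - \Bigl(\tfrac{r^N}{2\pi}I_1\Bigr)^q \le \Bigl(\sum_{n<N/2}|A_{n,M_*}|^2\Bigr)^q,
\]
so the task is exactly to show $\E[|A_{N,M_*}|^{2q}]\gg N^{-q}\,\E\bigl[(\sum_{n<N/2}|A_{n,M_*}|^2)^q\bigr]$.

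The gap is your next move: applying concavity/Minkowski to split $\E[(\sum_n|A_{n,M_*}|^2)^q]$ into $\sum_n\E[|A_{n,M_*}|^{2q}]$. This throws away precisely the structure you need. Once the sum is broken, you face $\sim N/2$ terms each of comparable size, and there is no way to recover the factor $N^q$ rather than $N$ --- as you yourself diagnose. Your proposed repair (``lower-index moments appear inside the expectation'') is the right instinct, but it cannot be applied \emph{after} the split; it must replace it.

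The paper's argument does exactly this. Using the symmetrization you mention (to drop $b_N$), one has $\E[|A_{N,M_*}|^{2q}]\gg\E[|B_{N,M_*}|^{2q}]$ with $B_{N,M_*}=\sum_{N/2<\ell\le N}\frac{X_\ell}{\sqrt{\ell}}A_{N-\ell,M_*}$. Conditioning on $\{X_k\}_{k\le N/2}$, the $A_{N-\ell,M_*}$ are fixed and the $X_\ell$ are independent sub-exponential; \emph{Khintchine's inequality} then gives, conditionally,
\[
\E\bigl[|B_{N,M_*}|^{2q}\,\big|\,\{X_k\}_{k\le N/2}\bigr]\gg\Bigl(\sum_{N/2<\ell\le N}\frac{|A_{N-\ell,M_*}|^2}{\ell}\Bigr)^q\gg\Bigl(\frac{1}{N}\sum_{n<N/2}|A_{n,M_*}|^2\Bigr)^q.
\]
Taking expectation finishes. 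The point is that Khintchine controls the $2q$-th moment of a random linear form directly by the $q$-th power of the $\ell^2$ norm of its coefficients, keeping the sum intact inside the expectation; your outline gestures at this (``integrating out the tail'') but never names or invokes the inequality that makes it work.
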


\begin{remark}
    Proposition \ref{SZ-Prop-8.1} mirrors Proposition 8.1 of \citep{soundararajan2022model}, which focuses on $q\in[1/2,1]$ instead of $q\in(0,1]$, together with a H\"{o}lder's inequality argument for $q\in(0,1/2)$, in order to obtain uniformity of the constants in $q$. Here, we do not attempt to have the asymptotic constants in \eqref{eq:asymp LT} independent of $q$.
\end{remark}

\begin{proof}
We mainly follow the arguments in Section 9 of \citep{soundararajan2022model}. First, it follows from the same symmetrization procedure as \eqref{eq:symm} that $\E[|A_{N,M_*}|^{2q}]\geq \E[|B_{N,M_*}|^{2q}]/2$, where 
$$B_{N,M_*}:=\sum_{N/2<n\leq N}\frac{X_n}{\sqrt{n}}A_{N-n,M_*}.$$
By Khintchine's inequality, in the form of Lemma 4.1 of \citep{ledoux1991probability},\footnote{The proof therein is stated for Rademacher random variables, but the same arguments work for sub-exponential random variables, using a standard concentration bound (e.g., Lemma 8.2.1 of \citep{talagrand2014upper}).
} we have 
$$\E\left[|B_{N,M_*}|^{2q}\mid \{A_{n,M_*}\}_{1\leq n\leq N/2}\right]\gg \bigg(\sum_{N/2<n\leq N}\frac{|A_{N-n,M_*}|^2}{n}\bigg)^q\gg \bigg(\frac{1}{N}\sum_{n<N/2}|A_{n,M_*}|^2\bigg)^q.$$
    Taking expectation yields
    $$\E[|A_{N,M_*}|^{2q}]\gg \E\bigg[\bigg(\frac{1}{N}\sum_{n<N/2}|A_{n,M_*}|^2\bigg)^q\bigg].$$
Recall from \eqref{eq:FA relation} that with 
$$\widetilde{A}_{n,N,M_*}:=\sum_{\substack{\lambda\in\cP_n\\ \lambda_1\leq N/2\\ \forall 1\leq k<M_*,\,m_k(\lambda)=0}}a(\lambda),$$
it holds that $F_{N/2,M_*}(z)=\sum_{n\geq 0}\widetilde{A}_{n,N,M_*}z^n$, and hence by Parseval's identity, for $r\in(0,1]$,
\begin{align}\begin{split}
    \sum_{n<N/2}|\widetilde{A}_{n,N,M_*}|^2&\geq \sum_{n=0}^\infty |\widetilde{A}_{n,N,M_*}|^2r^{2n}-r^N\sum_{n=0}^\infty |\widetilde{A}_{n,N,M_*}|^2\\
    &=\frac{1}{2\pi}\int_{-\pi}^\pi|F_{N/2,M_*}(re^{i\theta})|^2\d\theta-\frac{r^N}{2\pi}\int_{-\pi}^\pi|F_{N/2,M_*}(e^{i\theta})|^2\d\theta.
\end{split}\label{eq:pars1}
\end{align}
Moreover, by definition $\widetilde{A}_{n,N,M_*}={A}_{n,M_*}$ for $n< N/2$. The claim then follows by applying the inequality $|z+w|^q\leq|z|^q+|w|^q$ for $q\in[0,1]$ to \eqref{eq:pars1}, and taking expectation.
\end{proof}

\begin{proposition}\label{SZ-Prop-8.2}
  Fix any $q\in(0,1]$.  Suppose that ($q$-UNIV) holds. Let $K$ be large enough. For any $e^{-1/400}\le r<1$, we have \[\E\left[\left(\int_{-\pi}^{\pi}|F_{K,M_*}(re^{i\theta})|^2\d\theta \right)^q\right]\gg \left(\frac{K_r}{1+(1-q)\sqrt{\log K_r}}\right)^q\asymp\begin{cases}
      K_r^q  (\log K_r)^{-q/2}&\text{ if }q\in(0,1);\\
        K_r^q&\text{ if }q=1,
    \end{cases}\]
    where $\log K_r$ is the largest integer such that $K_r\le\min\{{-1}/({4\log r}),K\}$.
\end{proposition}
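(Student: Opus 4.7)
The plan is to adapt the multiplicative-chaos lower bound strategy of Harper (for random multiplicative functions) and of Soundararajan--Zaman (for the Gaussian secular coefficient) to the general input $\{X_k\}$ under ($q$-LT). Write $X:=\int_{-\pi}^{\pi}|F_{K,M_*}(re^{i\theta})|^2\,\d\theta$. A preliminary reduction disposes of the high modes $K_r\le k<K$: since $K_r\le 1/(4|\log r|)$ by definition, the log-variance of the tail factor $\sum_{K_r\le k<K} X_k(re^{i\theta})^k/\sqrt{k}$ is uniformly bounded in $\theta$, so under ($q$-LT) the multiplier $|F_{K,M_*}/F_{K_r,M_*}|^2$ satisfies $\inf_\theta |F_{K,M_*}/F_{K_r,M_*}|^2\ge e^{-2S}$ for some $S$ with bounded moments of every order. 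Jensen then gives $\E[X^q]\gg \E[X_r^q]$ with $X_r:=\int|F_{K_r,M_*}(re^{i\theta})|^2\,\d\theta$, reducing the problem to the case $K=K_r$.

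The core inequality is Cauchy--Schwarz: for any event $\mathcal{G}$,
\begin{align*}
\E[X_r^q]\ge\E[X_r^q\bone_\mathcal{G}]\ge\frac{(\E[X_r\bone_\mathcal{G}])^{2}}{\E[X_r^{2-q}\bone_\mathcal{G}]}.
\end{align*}
I would take $\mathcal{G}$ to be (a variant of) the uniform ballot event $\mathcal{G}_r(A;K_r)$ of Definition~\ref{def:Gr} with barrier height $A\asymp 1/(1-q)$, tuned to the phase transition as $q\uparrow 1$. For the numerator, rotational symmetry and Fubini reduce the task to lower bounding $\E[|F_{K_r,M_*}(r)|^2\bone_{\mathcal{G}_r(A,0;K_r)}]$. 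Inserting the Girsanov tilt $\Q^{(1)}$ from~\eqref{eq:Q1 def} and using Lemma~\ref{SZ-lemma2.2!} gives the identity $\asymp K_r\cdot\Q^{(1)}(\mathcal{G}_r(A,0;K_r))$. Under $\Q^{(1)}$, the slicing-plus-Berry--Esseen machinery already developed in Section~\ref{sec:slicing} (Lemma~\ref{Lemma-1d-GaussianApprox}) approximates the increments $Y_m$ of~\eqref{eq:YM def} by centered Gaussians of variance $\approx 1/2$, and the two-sided ballot estimate of Lemma~\ref{lemma:ballot} then yields $\Q^{(1)}(\mathcal{G}_r(A,0;K_r))\gg A/\sqrt{\log K_r}$. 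Hence $\E[X_r\bone_\mathcal{G}]\gg K_r/((1-q)\sqrt{\log K_r})$.

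For the denominator, the single-point barrier is insufficient. The plan is to first control $\E[X_r^2\bone_\mathcal{G}]$ through the two-point expansion
\begin{align*}
\E[X_r^2\bone_\mathcal{G}]=\int_{-\pi}^{\pi}\!\!\int_{-\pi}^{\pi}\E\bigl[|F_{K_r,M_*}(re^{i\theta_1})|^2|F_{K_r,M_*}(re^{i\theta_2})|^2\bone_\mathcal{G}\bigr]\,\d\theta_1\,\d\theta_2,
\end{align*}
applying, for each pair $(\theta_1,\theta_2)$, the two-point Girsanov tilt $\Q^{(2)}_{r,M_*,K_r,\theta_2-\theta_1}$ of~\eqref{eq:dq2}. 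Under $\Q^{(2)}$, the vector process $(Y_n(\theta_1),Y_n(\theta_2))$ acquires drifts governed by~\eqref{eq:nuk def} and its covariance decorrelates on the scale $|\theta_2-\theta_1|^{-1}$, so that a two-dimensional ballot estimate for this correlated walk should produce a bound on the integrand that interpolates between $\asymp A^2/\log K_r$ when $|\theta_1-\theta_2|\asymp 1$ and $\asymp A/\sqrt{\log K_r}$ when $|\theta_1-\theta_2|\ll K_r^{-1}$. Integrating over $(\theta_1,\theta_2)$ and then interpolating via H\"older's inequality with exponents $(2/(2-q),2/q)$ between $\E[X_r\bone_\mathcal{G}]$ and $\E[X_r^2\bone_\mathcal{G}]$ should give $\E[X_r^{2-q}\bone_\mathcal{G}]\ll K_r^{2-q}((1-q)\sqrt{\log K_r})^{q-2}$, which closes the loop and yields the claimed lower bound.

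The main obstacle will be the two-point Gaussian approximation and ballot estimate just mentioned: the one-dimensional Berry--Esseen recentering of Lemma~\ref{Lemma-1d-GaussianApprox} must be upgraded to a joint recentering of $(Y_n(\theta_1),Y_n(\theta_2))$, whose covariance depends on the angular separation through $\cos(k(\theta_2-\theta_1))$ terms in~\eqref{eq:nuk def}. This requires a careful two-dimensional Taylor expansion of the joint Laplace functional under $\Q^{(2)}$ in the spirit of Appendix~\ref{sec:ubdef}, together with a ballot estimate for the resulting correlated Gaussian walk that is uniform in $|\theta_2-\theta_1|$. Once those ingredients are in place, the remainder of the argument is a routine (if somewhat lengthy) adaptation of the calculations in Section~8 of~\cite{soundararajan2022model} to the present non-Gaussian setting.
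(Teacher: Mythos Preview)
Your overall strategy---H\"older against a barrier-restricted quantity, Girsanov tilts $\Q^{(1)},\Q^{(2)}$, one- and two-dimensional Gaussian approximation, and ballot estimates---is exactly what the paper does. The two-step Cauchy--Schwarz followed by H\"older interpolation is equivalent to the single H\"older inequality \eqref{SZ-Eq-10.1} the paper uses. But there is a genuine gap in how you set up the barrier restriction, and it breaks the numerator lower bound.

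You take $\mathcal{G}=\mathcal{G}_r(A;K_r)$ to be the \emph{event} that the ballot holds simultaneously for every $\theta$, and then write that ``rotational symmetry and Fubini reduce the task to lower bounding $\E[|F_{K_r,M_*}(r)|^2\bone_{\mathcal{G}_r(A,0;K_r)}]$.'' That reduction is in the wrong direction: rotational invariance of $\mathcal{G}$ gives $\E[X_r\bone_\mathcal{G}]=2\pi\,\E[|F(r)|^2\bone_{\mathcal{G}}]$, and since $\mathcal{G}\subsetneq\mathcal{G}_r(A,0;K_r)$, passing to the single-angle indicator only yields an \emph{upper} bound. After tilting you would actually need $\Q^{(1)}(\mathcal{G}_r(A;K_r))\gg A/\sqrt{\log K_r}$, a uniform-in-$\theta$ ballot lower bound under the single-angle tilt, which you have not argued and which is not available from the one-dimensional machinery. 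The paper's device (Definition~\ref{def:event L}) is to replace the event by the \emph{random set} $\mathcal{L}=\{\theta:\mathcal{L}(A,\theta;K)\text{ holds}\}$ and apply H\"older to $\int_{\mathcal{L}}|F|^2\,\d\theta$ rather than to $X_r\bone_\mathcal{G}$; then Fubini genuinely gives $\E[\int_{\mathcal{L}}|F|^2]=2\pi\,\E[|F(r)|^2\bone_{\mathcal{L}(0)}]$, a single-angle quantity. A second, related point: the upper barrier in $\mathcal{L}$ is $A-5\log n$, not the $A+10\log n$ of $\mathcal{G}_r$. The decreasing barrier is essential---it forces $e^{2A_\theta(M)}\le e^{2A}M^{-10}$ on the restricted event, producing the $(\log\min\{K_r,2\pi/|\theta|\})^{-7}$ factor in Proposition~\ref{SZ-prop-10.2}; with $A+10\log n$ the $\theta$-integral of the two-point bound acquires extra powers of $\log K_r$ and the final lower bound is off for any fixed $q<1$. (Your preliminary reduction to $K=K_r$ via a uniform-in-$\theta$ tail bound is also not needed and not obviously valid; the paper keeps $F_{K,M_*}$ throughout and absorbs the modes $k\ge K_r$ directly inside the first- and second-moment computations.)
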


\begin{remark}
    Here, $K_r$ is the threshold below which the variances of $Y_m$ (defined in \eqref{eq:YM def}) are comparable (say, between ${1}/{20}$ and $20$). This appears necessary to make sense of the random walk analog of \eqref{eq:Yndef} proposed in the proof sketch.
\end{remark}

\begin{proof}[Deducing the lower bound of Proposition \ref{prop:superC-asymp-truncated-moments} from Propositions \ref{SZ-Prop-8.1} and \ref{SZ-Prop-8.2}]
Consider $r=e^{-C/N}$ for a large constant $C$ to be determined. Then Proposition \ref{SZ-Prop-8.2} gives that for $N$ large enough,
\[\frac{1}{N^q}\E\left[\left(\int_{-\pi}^{\pi}|F_{N/2,M_*}(re^{i\theta})|^2\d\theta \right)^q\right]\geq \frac{1}{C_1} \left(\frac{1}{C(1+(1-q)\sqrt{\log N})}\right)^q.\]
On the other hand, applying Proposition \ref{SZ-prop3.2} gives
\[\frac{1}{N^q}\E\left[r^{Nq}\left(\int_{-\pi}^{\pi}|F_{N/2,M_*}(e^{i\theta})|^2\d\theta  \right)^q\right]\leq C_2 \left(\frac{e^{-C}}{1+(1-q)\sqrt{\log N}}\right)^q.\]
Therefore, by picking $C>0$ large enough depending on the constants $C_1,C_2$, Proposition \ref{SZ-Prop-8.1} yields
\begin{align*}
    \E[|A_{N,M_*}|^{2q}]\gg \left(\frac{1}{1+(1-q)\sqrt{\log N}}\right)^q
\end{align*}for $N$ large enough. By adjusting constants suitably, the conclusion stands for all $N$.
\end{proof}

\subsubsection{Proof of Proposition \ref{SZ-Prop-8.2}}

For any (random) subset $\mathcal{L}$ of $[-\pi,\pi)$, we use H\"older's inequality to obtain
\begin{equation}
    \E\left[\left(\int_{-\pi}^{\pi}|F_{K,M_*}(re^{i\theta})|^2\d\theta \right)^q\right]\gg \frac{\left(\E\left[\int_{\mathcal{L}}|F_{K,M_*}(re^{i\theta})|^2\d\theta \right]\right)^{2-q}}{\left(\E\left[\left(\int_{\mathcal{L}}|F_{K,M_*}(re^{i\theta})|^2\d\theta \right)^2\right]\right)^{1-q}}.
    \label{SZ-Eq-10.1}
\end{equation}
We then carefully choose this random set $\mathcal{L}$ as inspired by \citep{soundararajan2022model}. Recall \eqref{eq:muk def}.

\begin{definition}\label{def:event L}
    Fix again a universal constant $L_1>20$ from Lemma \ref{lemma:ballot}. Let $A$ be a real number with $1\le A\le\sqrt{\log K_r}$. Define $\cL(\theta)=\cL(A,\theta;K)$ as the event that for each $\log M_*\le n\le \log K_r$, one has  \[-A-L_1n\le \sum_{k=M_*}^{e^n-1}\left(\Re \frac{X_kr^ke^{ik\theta}}{\sqrt{k}}-\mu_k\right)\le A-5\log n.\]
     Also, let $\cL=\cL(A;K)$ be the random subset of $\theta\in [-\pi,\pi)$ such that $\cL(\theta)$ holds.
\end{definition}

First, we give a lower bound of the numerator of \eqref{SZ-Eq-10.1}.

\begin{lemma}
\label{lemma:lbnumerator}
For any $1\leq A\leq \sqrt{\log K_r}$ and $e^{-1/400}\le r<1$, we have 
\begin{equation*}
    \E\left[\int_{\cL}|F_{K,M_*}(re^{i\theta})|^2\d\theta \right]\gg \frac{AK_r}{\sqrt{\log K_r}}.
\end{equation*}    
\end{lemma}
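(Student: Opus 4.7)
The plan is to combine rotational invariance of $\{X_k\}$ with a Girsanov-type change of measure and a two-sided ballot estimate, in direct parallel to the upper-bound argument of Section~\ref{sec:slicing} but driven in the opposite direction.

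First, by Fubini's theorem and the rotational invariance of $\{X_k\}$, we have $\E[\bone_{\cL(\theta)}|F_{K,M_*}(re^{i\theta})|^2]=\E[\bone_{\cL(0)}|F_{K,M_*}(r)|^2]$ for every $\theta\in[-\pi,\pi)$, hence
$$\E\bigg[\int_\cL |F_{K,M_*}(re^{i\theta})|^2\,\d\theta\bigg]=2\pi\,\E[\bone_{\cL(0)}|F_{K,M_*}(r)|^2].$$
Since $|F_{K,M_*}(r)|^2=\exp(2\sum_{k=M_*}^{K-1}\tfrac{r^k}{\sqrt{k}}R_k\cos\tau_k)$, applying the change of measure $\Q^{(1)}=\Q^{(1)}_{r,\log M_*,K}$ from \eqref{eq:Q1 def} factors this as
$$2\pi\,\E\bigg[\exp\bigg(2\sum_{k=M_*}^{K-1}\frac{r^k}{\sqrt{k}}R_k\cos\tau_k\bigg)\bigg]\,\Q^{(1)}(\cL(0)).$$
The first factor is $\asymp K_r$ by Lemma~\ref{SZ-lemma2.2!}: the Taylor expansion $\E[\exp(2\tfrac{r^k}{\sqrt{k}}R_k\cos\tau_k)]=\exp(r^{2k}/k+O(k^{-3/2}))$ gives the product in closed form, and the assumption $r\geq e^{-1/400}$ together with $K\geq K_r$ yields $\sum_{k=M_*}^{K-1}r^{2k}/k=\log K_r+O(1)$.

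It remains to show $\Q^{(1)}(\cL(0))\gg A/\sqrt{\log K_r}$. Under $\Q^{(1)}$, by Lemma~\ref{lemma-Q-computations} the batches $Y_m$ from \eqref{eq:YM def} are centered with variances $\sigma_m^2=1/2+O(e^{-m})$, and $\cL(0)$ coincides, up to a bounded boundary term, with the ballot event
$$-A-L_1n\;\leq\;\sum_{m=\log M_*+1}^{n}Y_m\;\leq\;A-5\log n,\qquad \log M_*<n\leq \log K_r.$$
Now I mimic the slicing argument of Section~\ref{sec:slicing} in the reverse direction: restrict to admissible endpoint vectors $(r_m)\in\prod_m(m^{-4}\Z)$ such that $r_m\leq Y_m<r_m+m^{-4}$ (so $|r_m|\ll m$ on the ballot event), and apply the two-sided multiplicative form of Lemma~\ref{Lemma-1d-GaussianApprox} to replace each factor $\Q^{(1)}(r_m\leq Y_m<r_m+m^{-4})$ by $(1+O(m^{-2}))\P(r_m\leq N_m<r_m+m^{-4})$, where $\{N_m\}$ are independent centered Gaussians of variance $\sigma_m^2\in[1/20,20]$. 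Summing over all admissible slicing paths reduces the problem to the Gaussian ballot probability
$$\P\bigg(\forall\,\log M_*<n\leq \log K_r,~-A-L_1n-\sum_m m^{-4}\leq \sum_{m}N_m\leq A-5\log n+\sum_m m^{-4}\bigg),$$
which by Lemma~\ref{lemma:ballot} (applied with $h(m)=-5\log m+O(1)$ and $g(m)=-A-L_1m-O(1)$, both admissible since $|h(m)|\leq 10\log m$ and $g(m)\leq -L_1 m$) is $\gg \min\{1,A/\sqrt{\log K_r}\}=A/\sqrt{\log K_r}$.

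The main technical obstacle is the slicing step: one has to verify that when the two-sided multiplicative Gaussian approximation of Lemma~\ref{Lemma-1d-GaussianApprox} is multiplied across all $\log M_*<m\leq \log K_r$ batches, the accumulated error $\prod_m(1+O(m^{-2}))$ stays $\Theta(1)$. This is precisely what $\sum_m m^{-2}<\infty$ guarantees, so no logarithmic loss is incurred and the Gaussian ballot lower bound transfers back to $\Q^{(1)}$ with matching constants. Combining the two factors yields
$$\E[\bone_{\cL(0)}|F_{K,M_*}(r)|^2]\;\gg\;K_r\cdot\frac{A}{\sqrt{\log K_r}}\;=\;\frac{AK_r}{\sqrt{\log K_r}},$$
which gives the claimed lower bound.
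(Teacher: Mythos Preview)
Your overall strategy---rotational invariance, the change of measure to $\Q^{(1)}$, and then a slicing/Gaussian-approximation reduction to Lemma~\ref{lemma:ballot}---is exactly the paper's approach. However, the slicing step as you wrote it contains two genuine slips that break the inequality chain.

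First, the parenthetical ``so $|r_m|\ll m$ on the ballot event'' is false for $\cL(0)$. On $\cL(0)$ one only has $|Y_m|\ll A+m$, and since $m$ starts at the fixed value $\log M_*$ while $A$ may be as large as $\sqrt{\log K_r}$, this can exceed $m^2$ and Lemma~\ref{Lemma-1d-GaussianApprox} is not applicable to those slices. The paper repairs this by first passing to the \emph{more} restrictive sub-event
\[
\widehat{\cL}(0):=\Big\{\forall\,\log M_*<n\le\log K_r,~-L_1n\le\textstyle\sum_m Y_m\le \min\{A,L_1n\}-5\log n\Big\}\subseteq\cL(0),
\]
on which consecutive partial sums are $O(n)$ apart, so genuinely $|Y_m|\ll m$. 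Since you are proving a lower bound, shrinking the event is free.

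Second, and more seriously, your $\pm\sum_m m^{-4}$ adjustment is in the wrong direction. For a lower bound, the admissible set $\mathcal{D}'$ must consist of slicing vectors $(r_m)$ whose slices $\{Y_m\in[r_m,r_m+m^{-4}]\}$ are \emph{contained} in the ballot event; this forces $\sum_m r_m\ge(\text{lower bound})$ and $\sum_m(r_m+m^{-4})\le(\text{upper bound})$, i.e.\ \emph{narrowed} constraints on $\sum_m r_m$. After Gaussian replacement, the disjoint union of Gaussian slices over $\mathcal{D}'$ \emph{contains} the Gaussian ballot event with bounds narrowed by $\sum_m m^{-4}$ on both sides, giving the desired lower bound
\[
\Q^{(1)}(\cL(0))\gg \P\Big(\forall n,~-L_1n+\textstyle\sum_m m^{-4}\le\sum_m N_m\le \min\{A,L_1n\}-5\log n-\sum_m m^{-4}\Big).
\]
Your widened version only yields $\sum_{\mathcal{D}'}\prod_m\P(\cdot)\le\P(\text{widened ballot})$, which is the wrong direction and gives no lower bound. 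With both fixes in place, Lemma~\ref{lemma:ballot} (after absorbing the $O(1)$ shift into $L_1$) delivers $\gg A/\sqrt{\log K_r}$ as required.
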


\begin{proof}
The proof is similar to the proof of Proposition \ref{SZ-prop5.3} in Section \ref{sec:ub}, so we only sketch the key steps. First, observe that by rotational symmetry,
\[\E\left[\int_{\mathcal{L}}|F_{K,M_*}(re^{i\theta})|^2\d\theta \right]=\E\left[\int_{-\pi}^{\pi}\bone_{\mathcal{L}(\theta)}|F_{K,M_*}(re^{i\theta})|^2\d\theta \right]=2\pi\, \E[\bone_{\mathcal{L}(0)}|F_{K,M_*}(r)|^2].\]
It follows from Lemmas \ref{lemma:basic} and \ref{SZ-lemma2.2!}, and definition of $K_r$ that 
\[\E[\bone_{\mathcal{L}(0)}|F_{K,M_*}(r)|^2]=\E\bigg[\exp\bigg(\sum_{M_*\leq k\le K}\frac{2r^k}{\sqrt{k}}R_k\cos(\tau_k)\bigg)\bigg]\Q^{(1)}(\mathcal{L}(0))\asymp  K_r\Q^{(1)}(\mathcal{L}(0)).\]
Next, recalling the definition of $Y_m$ in \eqref{eq:YM def}, we replace ${\mathcal{L}}(0)$ by the more restrictive event 
\[\widehat{\mathcal{L}}(0):=\bigg\{\forall \log M_*<n\le\log K_r,~-L_1n\le\sum_{m=\log M_*}^n Y_m\le \min\{A,L_1n\}-5\log n\bigg\}.\]
On the event $\widehat{\mathcal{L}}(0)$, we have
\[ \bigg|\sum_{e^m\leq k<e^{m+1}}\bigg(\Re \frac{X_kr^ke^{ik\theta}}{\sqrt{k}}-\mu_k\bigg)\bigg|\ll m,\ \log M_*\leq m\leq \log K_r.\]
It then follows from Lemma \ref{Lemma-1d-GaussianApprox} and a slicing argument as in Section \ref{sec:slicing} that
\begin{align*}
    &\hspace{0.5cm}\Q^{(1)}(\mathcal{L}(0))\\
    &\ge \Q^{(1)}\left(\forall \log M_*<n\le\log K_r:-L_1n\le\sum_{m=\log M_*}^n Y_m\le \min\{A,L_1n\}-5\log n\right)\\
    &\gg \P\left(\forall \log M_*< n\leq \log K_r:-L_1n+\sum_{m=\log M_*}^n m^{-4}\leq \sum_{m=\log M_*}^n N_m\leq \min\{A,L_1n\}-5\log n-\sum_{m=\log M_*}^n m^{-4} \right)\\
    &\gg \frac{A}{\sqrt{\log K_r}},
\end{align*}where the last step follows from Lemma \ref{lemma:ballot} and by adjusting the constant $L_1$ suitably, while noting that $M_*$ is a fixed constant. This completes the proof.
\end{proof}

For the upper bound of the denominator in \eqref{SZ-Eq-10.1}, we first expand the square to get\begin{align}\begin{split}
\E\left[\left(\int_{\mathcal{L}}|F_{K,M_*}(re^{i\theta})|^2\d\theta \right)^2\right] &= \E\left[\int_{-\pi}^{\pi}\int_{-\pi}^{\pi}\bone_{\cL(\theta_1)}|F_{K,M_*}(re^{i\theta_1})|^2\bone_{\cL(\theta_2)}|F_{K,M_*}(re^{i\theta_2})|^2\d\theta _1 \d\theta _2\right]\\
    &= \int_{-\pi}^{\pi}\E\left[\bone_{\cL(0)\cap\cL(\theta)}|F_{K,M_*}(r)|^2|F_{K,M_*}(re^{i\theta})|^2\right]\d\theta .
\end{split}\label{SZ-Eq-10.2}
\end{align}

\begin{proposition}
    \label{SZ-prop-10.2}
With notations as above, for any $\theta\in[-\pi,\pi)$ and $e^{-1/400}\le r<1$, we have  \[\E\left[\bone_{\cL(0)\cap\cL(\theta)}|F_{K,M_*}(r)|^2|F_{K,M_*}(re^{i\theta})|^2\right]\ll A^2e^{2A}\frac{K_r^2}{\log K_r}\frac{\min\{K_r,2\pi/|\theta|\}}{(\log\min\{K_r,2\pi/|\theta|\})^7}.\]
\end{proposition}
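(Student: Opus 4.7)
The strategy parallels Proposition \ref{SZ-prop5.3}, but replaces the one-point change of measure $\Q^{(1)}$ by the two-point $\Q^{(2)}$ from \eqref{eq:dq2}, and the one-dimensional ballot/Berry-Esseen analysis by its two-dimensional analogue. The key geometric input is that the scale $e^M \asymp \min\{10^3/|\theta|, K_r/e\}$ acts as a decorrelation threshold for the points $r$ and $re^{i\theta}$: for $k \geq e^M$ the phase $e^{ik\theta}$ oscillates sufficiently fast that the two evaluations essentially decouple, while for $k < e^M$ they behave as a single log-correlated field.

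I would first split the integrand into inner ($k < e^{M+1}$) and outer ($k \geq e^{M+1}$) scales, relaxing $\cL(0)\cap\cL(\theta)$ to the intersection of an inner barrier event $\cE_{\text{in}}$ and an outer barrier event $\mathcal{L}_{\text{out}}$ with widened threshold $B = 2A + L_1 M = O(\sqrt{\log K_r})$, analogously to the decomposition near \eqref{eq:LR}. By independence across disjoint ranges of $k$, the expectation factors into an inner and outer part. For the inner part, I would drop $\cE_{\text{in}}$ and use the identity $\cos\tau_k + \cos(\tau_k+k\theta) = 2\cos(k\theta/2)\cos(\tau_k+k\theta/2)$ together with rotational symmetry of $\tau_k$; Lemmas \ref{SZ-lemma2.2!} and \ref{lemma-Q-computations} then reduce the estimate to a Laplace functional of order $\exp(\sum_{k<e^{M+1}}\tfrac{2r^{2k}}{k}(1+\cos(k\theta)))$, which by standard Abel summation of $\sum_k\cos(k\theta)/k$ is of order $e^{2M}\cdot\min\{K_r,2\pi/|\theta|\}^2$.

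For the outer part, I would apply the change of measure $\Q^{(2)}$: the normalizing Laplace functional evaluates to $\asymp(K_r/e^M)^2$, since the self-contributions $\sum_{k\geq e^M}2r^{2k}/k \approx 2\log(K_r/e^M)$ dominate while the cross-contributions $\sum_{k\geq e^M}2r^{2k}\cos(k\theta)/k$ are $O(1)$ by Abel summation (as $k|\theta|\gg 1$ on this range). The remaining task is to estimate the two-point ballot probability $\Q^{(2)}[\mathcal{L}_{\text{out}}]$. I would construct a bivariate Gaussian approximation for the joint increments $(Y_m(0)-\nu_m(0), Y_m(\theta)-\nu_m(\theta))$ under $\Q^{(2)}$: each component has uniformly bounded variance by \eqref{eq:nuk def} and Lemma \ref{lemma-Q-computations}, while the off-diagonal covariance $\asymp\sum_{e^{m-1}\leq k<e^m}r^{2k}\cos(k\theta)/k = O(1/(|\theta|e^{m-1}))$ is negligible for $m > M$, rendering the two components effectively independent on the outer scales. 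A slicing argument coupled with a two-dimensional Berry-Esseen bound (replacing Lemma \ref{Lemma-1d-GaussianApprox}) then reduces the problem to two essentially independent Gaussian random walks constrained by the outer barriers, and Lemma \ref{lemma:ballot} applied coordinate-wise yields the desired bound on $\Q^{(2)}[\mathcal{L}_{\text{out}}]$, with the $A^2 e^{2A}$ prefactor absorbing the two ballot windows and the $-5\log n$ polylogarithmic corrections from the upper barrier in $\cL(\theta)$.

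The principal technical difficulty is establishing the two-dimensional Gaussian approximation with a quantitative rate sharp enough for the subsequent ballot analysis: this requires carefully controlling the evolution of the off-diagonal covariance across the transition at the decorrelation scale $e^M$ and producing a multivariate Berry-Esseen estimate that pairs uniformly with the ballot argument on each coordinate. Combining the inner bound $\asymp e^{2M}\min\{K_r,2\pi/|\theta|\}^2$ with the normalizing factor $(K_r/e^M)^2$ and the two-point ballot probability, one obtains the stated estimate, with the $(\log\min\{K_r,2\pi/|\theta|\})^{-7}$ factor emerging from compounded polylogarithmic corrections.
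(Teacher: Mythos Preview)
Your overall architecture---split at the decorrelation scale $e^M$, change of measure $\Q^{(2)}$ on the outer range, two-dimensional Gaussian approximation followed by a coordinate-wise ballot argument---matches the paper. However, there is a genuine gap: dropping the inner event $\cE_{\text{in}}$ loses a factor of $e^M \asymp \min\{K_r, 2\pi/|\theta|\}$, and your final bound is off by exactly this.

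Concretely, your inner Laplace functional is indeed $\asymp e^{4M}$ (your expression $e^{2M}\min\{K_r,2\pi/|\theta|\}^2$ equals this, since $e^M\asymp\min\{K_r,2\pi/|\theta|\}$). Combined with the outer normalizer $(K_r/e^M)^2$ and the two-point ballot bound $\ll B^2/\log(K_r/e^M)$ with $B\asymp A+M$, you obtain at best
\[
e^{2M}K_r^2 \cdot \frac{(A+M)^2}{\log(K_r/e^M)},
\]
whereas the target is $\asymp e^M K_r^2 \cdot A^2e^{2A}/(M^7 \log K_r)$. The discrepancy is $\gg e^M$, which can be as large as $K_r$; when you integrate in $\theta$ for \eqref{SZ-Eq-10.2}, this extra power makes the denominator blow up and destroys Proposition~\ref{SZ-Prop-8.2}.

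The paper does \emph{not} drop the inner event. Writing $|F_{K,M_*}(r)|^2|F_{K,M_*}(re^{i\theta})|^2 = e^{4\sum_{k<e^M}\mu_k}\cdot e^{2A_0(M)+2A_\theta(M)}\cdot(\text{outer})$, it uses the barrier constraint $A_\theta(M)\leq A-5\log M$ from $\widetilde{\cL}$ to bound $e^{2A_\theta(M)}\leq e^{2A}/M^{10}$ deterministically, and then bounds $\E[e^{2A_0(M)}]\ll e^{-M}$ (since $A_0(M)$ is centered at roughly $-M$ under $\P$ after subtracting the $\Q^{(1)}$-drift $\mu_k$). These two steps supply precisely the missing factor $e^{2A}/(M^{10}e^M)$: the $e^{-M}$ corrects the exponent of $\min\{K_r,2\pi/|\theta|\}$, and the $M^{-10}$, after combining with $(A+M)^2\ll A^2M^2$ and $(1+\log(K_r/e^{M'}))M\gg\log K_r$, yields the $(\log\min\{K_r,2\pi/|\theta|\})^{-7}$. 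That polylog factor thus comes from the \emph{single} use of the barrier at scale $M$, not from ``compounded polylogarithmic corrections'' across scales; Lemma~\ref{lemma:ballot} gives no polylog saving from the $-5\log n$ in the running barrier.

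Your analogy with Proposition~\ref{SZ-prop5.3} is what misleads here: in that proof $M=\max\{2\sqrt{A},20,\log M_*\}$ is a bounded constant, so dropping the inner event is harmless. In the two-point setting $M$ depends on $\theta$ and can be as large as $\log K_r$, so the inner barrier must be retained and exploited.
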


\begin{proof}[Deducing Proposition \ref{SZ-Prop-8.2} from Proposition \ref{SZ-prop-10.2}] As in \citep{soundararajan2022model}, applying equations \eqref{SZ-Eq-10.1}, \eqref{SZ-Eq-10.2}, Lemma \ref{lemma:lbnumerator}, and Proposition \ref{SZ-prop-10.2} with $A:=\sqrt{\log K_r}/(1+(1-q)\sqrt{\log K_r})$ completes the proof. \end{proof}

\subsubsection{Proof of Proposition \ref{SZ-prop-10.2}}\label{sec:342}

Let us define $M=M(r,\theta)$ to be the smallest integer such that $e^M\ge\max\{\min\{10^3/|\theta|,K_r/e\}, M_*\}$. Set \begin{align*}
    A_\theta(M) &:=\Re \sum_{k=M_*}^{e^M-1}\left(\frac{X_kr^ke^{ik\theta}}{\sqrt{k}}-\mu_k\right)= \sum_{k=M_*}^{e^M-1}\left(\frac{r^k}{\sqrt{k}}R_k\cos(\tau_k+k\theta)-\mu_k\right),\ \theta\in[-\pi,\pi).
\end{align*}
Similarly as in the proof of Proposition \ref{SZ-prop5.3}, we replace the event $\cL(0)\cap\cL(\theta)$ with a less restricted event $\widetilde{\cL}$, defined by the constraints that \begin{align}
    -A-L_1M\le A_0(M),A_\theta(M)\le A-5\log M, \label{eq:tildeL1}
\end{align}
and for any $M<n\le\log K_r$, \begin{align*}
   &-A-L_1n-\max\{A_0(M),A_\theta(M),0\}\\
   &\hspace{1cm}\le \sum_{e^M\le k<e^n}\left(\Re \frac{X_kr^k}{\sqrt{k}}-\mu_k\right),\sum_{e^M\le k<e^n}\left(\Re \frac{X_kr^ke^{ik\theta}}{\sqrt{k}}-\mu_k\right)\le A-\min\{A_0(M),A_\theta(M),0\}.
\end{align*}
Also recalling our definition \eqref{eq:ztheta}, and using $\mu_k={r^{2k}}/{k}+O(k^{-3/2})$, we get
\begin{align*}
    &\hspace{0.5cm}\E\left[\bone_{\cL(0)\cap\cL(\theta)}|F_{K,M_*}(r)|^2|F_{K,M_*}(re^{i\theta})|^2\right]\\
    &\ll\exp\Big(4\sum_{k=M_*}^{e^M-1}\frac{r^{2k}}{k}\Big)\E\Bigg[\bone_{\widetilde{\mathcal L}}\,e^{2A_0(M)+2A_\theta(M)}\prod_{m=M+1}^{\log K_r}e^{2Z_0(m)+2Z_\theta(m)}\exp\Big(2\sum_{k=K_r}^{K}\frac{r^k}{\sqrt{k}}\Re(X_k+X_ke^{ik\theta})\Big)\Bigg]\\
    &\ll e^{4M}\E\left[\bone_{\widetilde{\cL}}\,e^{2A_0(M)+2A_\theta(M)}\prod_{m=M+1}^{\log K_r}e^{2Z_0(m)+2Z_\theta(m)}\right]\E\Bigg[\exp\Big(2\sum_{k=K_r}^{K}\frac{r^k}{\sqrt{k}}\Re(X_k+X_ke^{ik\theta})\Big)\Bigg].
\end{align*}
Using the inequality $ab\leq a^2+b^2$, the rotational invariance of the law of $X_k$, Lemma \ref{SZ-lemma2.2} (i), and definition of $K_r$, we arrive at
\begin{align*}
    \E\Bigg[\exp\Big(2\sum_{k=K_r}^{K}\frac{r^k}{\sqrt{k}}\Re(X_k+X_ke^{ik\theta})\Big)\Bigg] &\leq \E\Bigg[\exp\Big(4\sum_{k=K_r}^{K}\frac{r^k}{\sqrt{k}}\Re X_k\Big)\Bigg]+\E\Bigg[\exp\Big(4\sum_{k=K_r}^{K}\frac{r^k}{\sqrt{k}}\Re (X_ke^{ik\theta})\Big)\Bigg]\\
    &\ll \E\Bigg[\exp\Big(4\sum_{k=K_r}^{K}\frac{r^k}{\sqrt{k}}\Re X_k\Big)\Bigg]\\
    &\ll \prod_{k=K_r}^{K}\left(1+\frac{2r^{2k}}{k}+O(k^{-3/2})\right)\ll 1.
\end{align*}
We conclude that\begin{equation}
    \E\left[\bone_{\cL(0)\cap\cL(\theta)}|F_{K,M_*}(r)|^2|F_{K,M_*}(re^{i\theta})|^2\right]\ll e^{4M}\E\left[\bone_{\widetilde{\cL}}\,e^{2A_0(M)+2A_\theta(M)}\prod_{m=M+1}^{\log K_r}e^{2Z_0(m)+2Z_\theta(m)}\right].\label{eq:?}
\end{equation}
We now state a two-dimensional version of Proposition \ref{Cond-Expectation}, which suffices for proving Proposition \ref{SZ-prop-10.2}.

\begin{proposition}\label{SZ-prop-11.1}
    Notations as above, and let $M'=\max\{M,A\}$ and $e^{-1/K}\leq r\leq e^{1/K}$ (recall that $K\geq K_r$). Given any $B,B'$ satisfying $B'\leq 0\leq B$ and $B,-B'\le LM'$ with some absolute constant $L>0$, define the event  
    \begin{align}
        \mathcal{E}:=\Bigg\{\forall M<n\le\log K_r,\,B'-L_1n\le \sum_{e^M\le k<e^n}\left(\Re \frac{X_kr^k}{\sqrt{k}}-\nu_k\right),\sum_{e^M\le k<e^n}\left(\Re \frac{X_kr^ke^{ik\theta}}{\sqrt{k}}-\nu_k\right)\le B\Bigg\}.\label{eq:event E}
    \end{align}
    Then \[\E\bigg[\bone_{\mathcal{E}}\prod_{m=M+1}^{\log K_r}\exp(2Z_0(m)+2Z_\theta(m))\bigg]\ll \frac{K_r^2}{e^{2M}}\left(\frac{M'}{\sqrt{1+\log(K_r/e^{M'})}}\right)^2.\]
\end{proposition}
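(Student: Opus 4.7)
The plan is to perform a Girsanov-type change of measure to $\Q^{(2)}=\Q^{(2)}_{r,M,K,\theta}$ from \eqref{eq:dq2} and decouple the left-hand side into a (deterministic) normalization and a two-dimensional ballot probability. Writing $\mathcal{Z}:=\E[\exp(2\sum_{m=M+1}^{\log K_r}(Z_0(m)+Z_\theta(m)))]$, the quantity to bound becomes $\mathcal{Z}\cdot\Q^{(2)}(\mathcal{E})$, and it suffices to prove (a) $\mathcal{Z}\asymp (K_r/e^M)^2$ and (b) $\Q^{(2)}(\mathcal{E})\ll (M'/\sqrt{1+\log(K_r/e^{M'})})^2$.

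For (a), by independence in $k$ and the identity $\cos(\tau_k)+\cos(\tau_k+k\theta)=2\cos(\tau_k+k\theta/2)\cos(k\theta/2)$, a second-order Taylor expansion as in Lemma \ref{SZ-lemma2.2!} gives $\log\mathcal{Z}=2\sum_{k=e^M}^{K_r-1}r^{2k}(1+\cos(k\theta))/k+O(1)$. Abel summation bounds the oscillatory piece $\sum_k r^{2k}\cos(k\theta)/k$ by $O(1/(e^M|\theta|))=O(10^{-3})$ since $e^M\ge 10^3/|\theta|$ by the definition of $M$, while $\sum_k r^{2k}/k\asymp\log(K_r/e^M)$. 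Hence $\mathcal{Z}\asymp(K_r/e^M)^2$.

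For (b), introduce the $\Q^{(2)}$-centered increments $Y_m^{(\vartheta)}:=\sum_{e^{m-1}\le k<e^m}((r^k/\sqrt{k})R_k\cos(\tau_k+k\vartheta)-\nu_k(\vartheta))$ for $\vartheta\in\{0,\theta\}$, which have variance $\tfrac12+O(e^{-m})$ by Lemma \ref{lemma-Q-computations}. The crucial observation is the decorrelation estimate
\[\cov^{\Q^{(2)}}\!\bigl(Y_m^{(0)},Y_m^{(\theta)}\bigr)=\sum_{e^{m-1}\le k<e^m}\frac{r^{2k}\cos(k\theta)}{2k}+O(e^{-m/2})\ll e^{-(m-M)},\]
again by Abel summation using $e^m|\theta|\ge 10^3$ for $m>M$; i.e., the two walks are asymptotically independent over all relevant scales. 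With this in hand, I would run a two-dimensional analogue of the slicing argument from Lemma \ref{Lemma-1d-GaussianApprox}: discretize each pair $(Y_m^{(0)},Y_m^{(\theta)})$ on the lattice $m^{-4}\Z^2$, tilt every admissible slicing point $(r_m^{(0)},r_m^{(\theta)})$ with $|r_m^{(\vartheta)}|\ll m^2$ to the origin by an exponential change of measure, and apply a two-dimensional Berry-Esseen estimate to compare the tilted distribution to a centered 2D Gaussian with covariance $\Sigma_m$ whose off-diagonal entries are $O(e^{-(m-M)})$; the total error is $\ll e^{-m/9}$ per step and summable. Since the entries of $\Sigma_m-\tfrac12 I_2$ are absolutely summable in $m$, the Gaussian walk couples, up to an $O(1)$ perturbation absorbable in the barriers, to a pair of independent 1D Gaussian random walks, so the 2D ballot event factorizes and each factor is bounded by $\ll M'/\sqrt{1+\log(K_r/e^{M'})}$ via the version of Lemma \ref{lemma:ballot} with upper barrier $\min\{a,L_1 n\}+h(n)$ and $a=O(M')$.

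The main obstacle is handling the \emph{initial offsets} of magnitude $M'=\max\{M,A\}$ introduced via \eqref{eq:tildeL1}, which force the ballot bound to be of the form $M'/\sqrt{\log(K_r/e^{M'})}$ rather than the naive $1/\sqrt{\log K_r}$; this is exactly what necessitates the $\min\{a,L_1 m\}$ variant of Lemma \ref{lemma:ballot} and the dual role of $M'$ (being $\ge M$ to absorb the first $M$ steps of linear growth of the lower barrier, and $\ge A$ to absorb the top offsets contributed by $A_0(M),A_\theta(M)$). The 2D Berry-Esseen with small but nonzero correlation also requires care, but only surfaces in negligible error terms because of the exponential decay of the off-diagonal entries of $\Sigma_m$ granted by $e^M\ge 10^3/|\theta|$.
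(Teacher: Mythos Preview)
Your strategy---change to $\Q^{(2)}$, identify $\mathcal{Z}\asymp(K_r/e^M)^2$, then bound $\Q^{(2)}(\mathcal{E})$ via a two-dimensional tilted Berry--Esseen plus slicing, decorrelation, and a Gaussian ballot---is exactly the paper's route. The paper's decorrelation step uses the pointwise density-ratio inequality $\P((N_{m,1},N_{m,2})\in B)\le\sqrt{(1+|\rho_m|)/(1-|\rho_m|)}\,\P((\widetilde N_{m,1},\widetilde N_{m,2})\in B)$ with independent $\widetilde N_{m,i}$ rather than your coupling, but either works because $\sum_m|\rho_m|<\infty$.

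There is, however, one genuine technical gap. Your claim that the admissible slicing points satisfy $|r_m^{(\vartheta)}|\ll m^2$ for all $M<m\le\log K_r$ is not justified: on $\mathcal{E}$ consecutive partial sums only give $|Y_m^{(\vartheta)}|\le(B-B')+L_1 m\ll M'+m$, which for the early steps $M<m\le M'$ is merely $\ll M'$ and can exceed $m^2$ (this happens precisely when $A>M$, i.e.\ $\theta$ is not small enough to push $M$ above $A$). The tilted Berry--Esseen comparison needs $|r_m|$ controlled by a power of $m$ so that both the tilting error $|\lambda_m(Y_m-r_m)|$ and the cubic remainder $S_m(-i\lambda_m)$ are $O(m^{-2})$; without this the multiplicative errors $\prod_m(1+O(m^{-2}))$ do not close. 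The paper resolves this by \emph{conditioning on the first $M'-M$ increments} $\{Y_{m,0},Y_{m,\theta}\}_{M<m\le M'}$ (bounding that piece trivially by $1$) and running the slicing only over $m>M'$, where the uniform bound $|Y_m^{(\vartheta)}|\ll m$ does hold. This conditioning, not the $\min\{a,L_1 m\}$ variant of Lemma~\ref{lemma:ballot}, is what produces the denominator $\sqrt{1+\log(K_r/e^{M'})}$: it is simply the length of the \emph{remaining} walk.
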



\begin{proof}[Deducing Proposition \ref{SZ-prop-10.2} from Proposition \ref{SZ-prop-11.1}.]
\sloppy Our plan is to  condition on $\{X_k\}_{1\leq k<e^M}$ and insert Proposition \ref{SZ-prop-11.1} into \eqref{eq:?}. First we claim that, with a constant $L_0>0$ large enough, setting $B=A-\min\{A_0(M),A_\theta(M),0\}+L_0$ and $B'=-A-\max\{A_0(M),A_\theta(M),0\}-L_0$  in the definition of event $\mathcal{E}$ gives that $\widetilde{\cL}\subseteq\mathcal{E}$. Indeed, this is a consequence of \eqref{eq:A52} of Lemma \ref{lemma:basic 2}, with 
$$L_0=\sum_{m\geq M}\bigg|\sum_{e^{m-1}\leq k<e^m}(\mu_k-\nu_k)\bigg|\ll 1.$$
Moreover, we have by \eqref{eq:tildeL1}  that on the event $\widetilde{\cL}$, $B-B'\le L M'$ and $M'\ll A+M$ with some large enough absolute constant $L>0$. Together with \eqref{eq:?}, this yields that
\begin{align*}
    \E\left[\bone_{\cL(0)\cap\cL(\theta)}|F_{K,M_*}(r)|^2|F_{K,M_*}(re^{i\theta})|^2\right]
    &\ll \frac{K_r^2e^{2M}}{1+\log(K_r/e^{M'})}\E\left[\bone_{\widetilde{\cL}}\,e^{2(A_0(M)+A_\theta(M))}(A+M)^2\right]\\
    &\ll \frac{K_r^2e^{2M}}{1+\log(K_r/e^{M'})}\E\left[\bone_{\widetilde{\cL}}\,e^{2(A_0(M)+A_\theta(M))}A^2M^2\right]\\
    &\ll \frac{K_r^2e^{2M}}{1+\log(K_r/e^{M'})}\frac{e^{2A}A^2}{M^{8}}\E\left[\bone_{\{A_0(M)\leq A-5\log M\}}\,e^{2A_0(M)}\right],
\end{align*}
where in the last step we used the definition \eqref{eq:tildeL1}. On the other hand, Lemma \ref{SZ-lemma2.2} yields 
\begin{align}
    \E\left[\bone_{\{A_0(M)\leq A-5\log M\}}\,e^{2A_0(M)}\right]\leq \E[e^{2A_0(M)}]\ll e^{-M}.\label{eq:A0M}
\end{align}
Next, using $M'=\max\{M,A\}\leq \max\{M,\sqrt{\log K_r}\}$ we obtain $(1+\log(K_r/e^{M'}))M\gg\log K_r$.
Combining the above and using the definition of $M$ completes the proof.
\end{proof}

\subsubsection{Proof of Proposition \ref{SZ-prop-11.1}}

Suppose first that $\log(K_r/e^M)\le 10$. Using rotational symmetry, \eqref{eq:ztheta}, and Lemma \ref{SZ-lemma2.2} (i),
\begin{align*}
    \E\bigg[\bone_{\mathcal{E}}\prod_{m=M+1}^{\log K_r}\exp(2Z_0(m)+2Z_\theta(m))\bigg]&\ll \E\bigg[\prod_{m=M+1}^{\log K_r}\exp(4Z_0(m))\bigg]=\E\bigg[\exp\Big(4\Re\sum_{e^M\leq k< K_r}\frac{X_kr^k}{\sqrt{k}}\Big)\bigg]\ll 1.
\end{align*}
Now, if $K_r> e^{M+10}$, we can assume that $\theta$ satisfies $10^3/|\theta|\le K_r/e$ and $e^M|\theta|\ge 10^3$. 
Recall \eqref{eq:dq2}. In view of Lemma \ref{lemma:basic3},  it suffices to bound $\Q^{(2)}(\mathcal{E})$. Recall from \eqref{eq:nuk def} that \[\nu_k=\E^{\Q^{(2)}}\left[\Re \frac{X_kr^k}{\sqrt{k}}\right] = \frac{r^{2k}}{k}+\frac{\cos(k\theta)r^{2k}}{k}+O(k^{-3/2}),\]
and define \[X_{k,0}:=\Re \frac{X_kr^k}{\sqrt{k}}-\nu_k\qquad\text{ and }\qquad X_{k,\theta}:=\Re \frac{X_kr^ke^{ik\theta}}{\sqrt{k}}-\nu_k,\qquad k\in\N.\] 
We apply the same strategy as in the proof of Proposition \ref{Cond-Expectation}: approximate the batched sums of $X_{k,\theta}$ using Gaussians, and apply Lemma \ref{lemma:ballot}. We first use Lemma \ref{lemma-Q-computations} to compute the joint characteristic function of $(X_{k,0},X_{k,\theta})$  as 
\begin{align*}
&\hspace{0.5cm}\E^{\Q^{(2)}}[\exp(iuX_{k,0}+ivX_{k,\theta})]\\
&=1+\frac{-u^2\E^{\Q^{(2)}}[X_{k,0}^2]-v^2\E^{\Q^{(2)}}[X_{k,\theta}^2]-2uv \E^{\Q^{(2)}}[X_{k,0}X_{k,\theta}]}{2}+\sum_{j=3}^\infty\frac{1}{j!}\E^{\Q^{(2)}}[(iu X_{k,0}+ivX_{k,\theta})^j]\\
&=1-\frac{(u^2+v^2)r^{2k}}{4k}-\frac{uvr^{2k}\cos(k\theta)}{2k}+O\Big(\frac{(u^2+v^2)r^{3k}}{k^{3/2}}\Big)+\sum_{j=3}^\infty\frac{1}{j!}\E^{\Q^{(2)}}[(iu X_{k,0}+ivX_{k,\theta})^j]\\
&=: 1-\frac{(u^2+v^2)r^{2k}}{4k}-\frac{uvr^{2k}\cos(k\theta)}{2k}+D_{k}(u,v),
\end{align*}
and using Lemma 3.3.19 of \citep{durrett2019probability} and Lemma \ref{lemma-Q-computations}, we have for $u,v$ satisfying $|u|,|v|\leq \sqrt{k}/C$,
\begin{enumerate}[(a)]
    \item $|D_{k}(u,v)|\ll (u^2+v^2+|u|^3+|v|^3){r^{3k}}{k^{-3/2}}$;
    \item $|\frac{\partial D_{k}(u,v)}{\partial u}|,|\frac{\partial D_{k}(u,v)}{\partial v}|\ll (|u|+|v|+|u|^2+|v|^2){r^{3k}}{k^{-3/2}} $;
    \item $|\frac{\partial^2 D_{k}(u,v)}{\partial u\partial v}|\ll (1+|u|+|v|){r^{3k}}{k^{-3/2}}$.
\end{enumerate}
Before using independence to form a product of the characteristic functions over $k$, we need to transform our expression into an exponential form. We have
$$\E^{\Q^{(2)}}[\exp(iuX_{k,0}+ivX_{k,\theta})]=\exp\left(-\frac{(u^2+v^2)r^{2k}}{4k}-\frac{uvr^{2k}\cos(k\theta)}{2k}+T_k(u,v)\right),$$
where the above (a)--(c) hold with $D_k(u,v)$ replaced by $T_k(u,v)$.

Next, we group the random variables $X_{k,0},X_{k,\theta}$ and approximate their sums using Gaussian distributions. Define
\[Y_{m,0}=\sum_{e^{m-1}\leq k<e^m}X_{k,0}\ ~~\text{ and }~~\ Y_{m,\theta}=\sum_{e^{m-1}\leq k<e^m}X_{k,\theta},\qquad m\in\N.\]After summing over $e^{m-1}\leq k<e^m$,
\begin{align}
\E^{\Q^{(2)}}\left[\exp\left(iuY_{m,0}+ivY_{m,\theta}\right)\right]&=\exp\left(\sum_{e^{m-1}\leq k<e^m}\Big(-\frac{(u^2+v^2)r^{2k}}{4k}-\frac{uvr^{2k}\cos(k\theta)}{2k}\Big)+S_m(u,v)\right),\label{eq:xm0char}
\end{align}
where for $u,v$ satisfying $|u|,|v|\leq e^{m/2}/C$,
\begin{enumerate}[(a)]
    \item $|S_m(u,v)|\ll (u^2+v^2+|u|^3+|v|^3)e^{-m/2}\ll (1+|u|^3+|v|^3)e^{-m/2}$;
    \item $|\frac{\partial S_m(u,v)}{\partial u}|,|\frac{\partial S_m(u,v)}{\partial v}|\ll (|u|+|v|+|u|^2+|v|^2)e^{-m/2} $;
    \item $|\frac{\partial^2 S_m(u,v)}{\partial u\partial v}|\ll (1+|u|+|v|)e^{-m/2}$.
\end{enumerate}
Our Gaussian approximation will have the same covariance structure of $(Y_{m,0},Y_{m,\theta})$ under $\Q^{(2)}$, which we compute first. 
Define the covariance matrix \[\Sigma_m:=\begin{pmatrix}\sigma_m^2 & \rho_m\sigma_m^2 \\ \rho_m\sigma_m^2 & \sigma_m^2\end{pmatrix},\]
where $\sigma_m,\rho_m$ are defined such that $\Sigma_m$ is the covariance matrix of $(Y_{m,0},Y_{m,\theta})$ under $\Q^{(2)}$. It follows from Lemmas \ref{lemma-Q-computations} and \eqref{eq:A51} of \ref{lemma:basic 2} that for $m\in[M,\log K_r]$,  \begin{align}
    \sigma_m^2=\sum_{e^{m-1}\leq k<e^m}\left(\frac{r^{2k}}{2k}+O(k^{-3/2})\right)=\frac{1}{2}+O(e^{-m/2})\label{eq:sm2}
\end{align} and 
\begin{align}
    \rho_m\sigma_m^2=\sum_{e^{m-1}\leq k<e^m}\left(\frac{r^{2k}\cos(k\theta)}{2k}+O(k^{-3/2})\right)=O(e^{M-m}+e^{-m/2}).\label{eq:rm2}
\end{align}
In particular, for $m\in[M,\log K_r]$,
\begin{align}
    \rho_m\ll \frac{e^{M-m}+e^{-m/2}}{\frac{1}{2}+O(e^{-m/2})}\ll e^{M-m}+e^{-m/2}.\label{eq:rhom}
\end{align}
Let $\mathbf{N}_m :=(N_{m,1},N_{m,2})$ be a two-dimensional centered Gaussian vector with covariance matrix $\Sigma_m$. That is,
\begin{align}
    \E [e^{i\mathbf{x}\cdot{\mathbf{N}}_m }] = \exp\left(-\frac{1}{2}\mathbf{x}^\top\Sigma_m\mathbf{x}\right),\ \mathbf{x}\in\R^2.\label{eq:Nm1}
\end{align}

\begin{lemma}\label{2pt-GaussApprox}
    For any $|u_m|,|v_m|\ll m$, it holds that
\begin{align*}
&\hspace{0.5cm}\Q^{(2)}(u_m\leq Y_{m,0}\leq u_m+m^{-3},\ v_m\leq Y_{m,\theta}\leq v_m+m^{-3})\\
&\hspace{3cm}=(1+O(m^{-2}))\P(u_m\leq N_{m,1}\leq u_m+m^{-3},\ v_m\leq N_{m,2}\leq v_m+m^{-3}).
\end{align*}
\end{lemma}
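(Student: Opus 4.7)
The plan is to extend the one-dimensional argument of Lemma \ref{Lemma-1d-GaussianApprox} to the two-dimensional setting: perform a vector exponential tilting that recenters $(Y_{m,0}, Y_{m,\theta})$ at $(u_m, v_m)$, compare the tilted distribution to the matching shifted Gaussian via a two-dimensional Fourier smoothing (Berry--Esseen-type) argument, and finally undo the tilting to transfer back to $\mathbf{N}_m$.

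First I would set $\bm{\lambda}_m := \Sigma_m^{-1}(u_m, v_m)^\top$. In the regime of Proposition \ref{SZ-prop-11.1} we have $e^M|\theta|\ge 10^3$, which together with an Abel-summation refinement of \eqref{eq:rhom} yields $|\rho_m|\ll e^{M-m}+e^{-m/2}$ uniformly in $m\ge M+1$; hence $\Sigma_m^{-1}$ has entries bounded by an absolute constant and $\|\bm\lambda_m\|_\infty\ll m$. Define
\[
\frac{\d\widetilde\Q^{(2)}}{\d\Q^{(2)}}:=\frac{\exp(\lambda_{m,1}Y_{m,0}+\lambda_{m,2}Y_{m,\theta})}{\E^{\Q^{(2)}}[\exp(\lambda_{m,1}Y_{m,0}+\lambda_{m,2}Y_{m,\theta})]}.
\]
Evaluating \eqref{eq:xm0char} at $(u-i\lambda_{m,1},v-i\lambda_{m,2})$ and dividing by the value at $(-i\lambda_{m,1},-i\lambda_{m,2})$ gives
\[
\E^{\widetilde\Q^{(2)}}[e^{iuY_{m,0}+ivY_{m,\theta}}]=\exp\Big(iuu_m+ivv_m-\tfrac12(u,v)\Sigma_m(u,v)^\top+R_m(u,v)\Big),
\]
where $R_m(u,v):=S_m(u-i\lambda_{m,1},v-i\lambda_{m,2})-S_m(-i\lambda_{m,1},-i\lambda_{m,2})$ satisfies $|R_m(u,v)|\ll e^{-m/2}(|u|^3+|v|^3+m^2(|u|+|v|))$ by properties (a)--(c) of $S_m$, valid for $|u|,|v|\ll\sqrt{e^m}$ by sub-exponentiality of $R_k$.

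Next, I would compare $\widetilde\Q^{(2)}$ on the rectangle $R_m:=[u_m,u_m+m^{-3}]\times[v_m,v_m+m^{-3}]$ with the shifted Gaussian $\widetilde{\mathbf N}_m\sim\mathcal N((u_m,v_m),\Sigma_m)$ via a two-dimensional Fourier smoothing argument: test against a smooth bump approximating $\bone_{R_m}$ whose Fourier transform is concentrated at frequencies $\ll m^3$, pair it with the characteristic-function difference above truncated at $|u|,|v|\le e^{m/9}$, and exploit the Gaussian damping $e^{-\sigma_m^2(u^2+v^2)/4}$ (valid since $|\rho_m|\le 1/2$ makes $\Sigma_m$ uniformly well-conditioned) to obtain an $L^1$ error of order $e^{-m/10}$. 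Since $\det\Sigma_m\asymp 1$ and $\widetilde{\mathbf N}_m$ has density bounded below near its mean, $\P(\widetilde{\mathbf N}_m\in R_m)\asymp m^{-6}$, so $\widetilde\Q^{(2)}(R_m)=(1+O(m^{-2}))\,\P(\widetilde{\mathbf N}_m\in R_m)$.

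Finally, I would invert the tilting. Write
\[
\Q^{(2)}(R_m)=Z_m\cdot\E^{\widetilde\Q^{(2)}}\big[e^{-\lambda_{m,1}Y_{m,0}-\lambda_{m,2}Y_{m,\theta}}\bone_{R_m}\big],
\]
with $Z_m=\exp\big(\tfrac12\bm\lambda_m^\top\Sigma_m\bm\lambda_m+S_m(-i\bm\lambda_m)\big)=(1+O(m^{-2}))\exp\big(\tfrac12(u_m,v_m)\Sigma_m^{-1}(u_m,v_m)^\top\big)$ (using $|S_m(-i\bm\lambda_m)|\ll m^3 e^{-m/2}$), and observe that the integrand's oscillation on $R_m$ is at most $\|\bm\lambda_m\|_\infty\cdot m^{-3}\ll m^{-2}$. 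Combined with the elementary Gaussian identity $\P(\widetilde{\mathbf N}_m\in R_m)=(1+O(m^{-2}))\exp\big(\tfrac12(u_m,v_m)\Sigma_m^{-1}(u_m,v_m)^\top\big)\P(\mathbf N_m\in R_m)$ (since $\|\Sigma_m^{-1}(u_m,v_m)^\top\|\ll m$ makes the log-density vary by $O(m^{-2})$ across $R_m$), the quadratic exponentials cancel and the claim follows. The main obstacle is the two-dimensional Berry--Esseen step: unlike the clean univariate Esseen inequality used in Lemma \ref{Lemma-1d-GaussianApprox}, the multivariate case demands a delicate choice of smooth test function whose Fourier transform has a controlled $L^1$ decay, so as to trade off bias against the exponentially small characteristic-function difference; the argument also critically relies on $\Sigma_m$ being uniformly non-degenerate, which is exactly ensured by the regime constraint $e^M|\theta|\ge 10^3$ in Proposition \ref{SZ-prop-11.1}.
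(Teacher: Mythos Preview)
Your proposal is correct and follows essentially the same architecture as the paper: tilt by $\boldsymbol\lambda=\Sigma_m^{-1}(u_m,v_m)^\top$, compare the tilted law to the matching shifted Gaussian via characteristic functions, then undo the tilt. The only substantive difference is the tool for the two-dimensional comparison step: rather than building a smooth bump and controlling its Fourier tail by hand, the paper invokes Sadikova's two-dimensional Berry--Esseen inequality \citep{sadikova1966two}, which bounds the distribution-function discrepancy by integrals of $|\Delta(s,t)/(st)|$ (where $\Delta$ is a specific bilinear combination of joint and marginal characteristic functions) plus marginal terms, directly yielding the $e^{-m/9}$ error and sidestepping the ``delicate choice of smooth test function'' you flag as the main obstacle.
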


\begin{proof}[Deducing Proposition \ref{SZ-prop-11.1} from Lemma \ref{2pt-GaussApprox}]We use a slicing argument  to bound $\Q^{(2)}(\mathcal{E})$. A direct argument as in the proof of Proposition \ref{Cond-Expectation} would not work properly because here, we do not have a lower bound of $M$ in terms of $A$ (cf.~the condition $M\geq 2\sqrt{A}$ in the setting of Section \ref{sec:332}), which implies that we do not have a bound similar to $|Y_m|\ll m^2,~m\in(M,\log K_r]$ therein. For this reason, we condition on the values $\{Y_{m,0},Y_{m,\theta}\}_{M<m\leq M'}$ and consider the ballot event with partial sums of $\{Y_{m,0},Y_{m,\theta}\}_{M'<m\leq \log K_r}$ where $M'=\max\{M,A\}$. Observe that by definition \eqref{eq:event E}, on the event $\mathcal{E}$, 
\[B'-L_1M'\leq\sum_{M<j\leq M'}Y_{j,0},\sum_{M<j\leq M'} Y_{j,\theta}\leq B,\] and $|Y_{m,0}|,|Y_{m,\theta}|\ll m$ for  $M'<m\leq \log K_r$, using $M'\gg  B-B'$. Therefore,
\begin{align}
    \Q^{(2)}(\mathcal{E})&=\E\Big[\Q^{(2)}(\mathcal{E}\mid \{Y_{m,0},Y_{m,\theta}\}_{M<m\leq M'})\Big]\nonumber\\
    &\leq \Q^{(2)}\Bigg(\forall M'<m\leq\log K_r,~B'-L_1m-B\leq \sum_{M'<j\leq m}Y_{j,0},\sum_{M'<j\leq m} Y_{j,\theta}\leq B-B'+L_1M'\nonumber\\
    &\hspace{5cm}\text{and}\quad\forall M'<m\leq \log K_r,~|Y_{m,0}|,|Y_{m,\theta}|\ll m\Bigg).\label{eq:QM'}
\end{align}
Given a set of values $\{Y_{m,0},Y_{m,\theta}\}_{M<m\leq \log K_r}$ satisfying the event in \eqref{eq:QM'}, there exist numbers $u_m,v_m\in \mathcal{S}_m:=\{t\in(1/m^3)\Z:|t|\ll m\},\, M'< m\leq \log K_r$ such that for all $M'< m\leq \log K_r$,
\begin{align*}
    u_m&\leq Y_{m,0}<u_m+m^{-3};\ v_m\leq Y_{m,\theta}<v_m+m^{-3},
\end{align*} and in particular,
\begin{align}
     B'-L_1m-B-\sum_{M'<j\leq m}m^{-3}&\leq \sum_{M'<j\leq m}u_j,\,\sum_{M'<j\leq m}v_j\leq B-B'+L_1M'.\label{eq:umvm2}
\end{align}Denote by $\mathcal{C}(M',K_r)$ the set of all possible vectors $(u_m,v_m)_{M'\leq m\leq \log K_r},~ u_m,v_m\in \mathcal{S}_m$ satisfying  \eqref{eq:umvm2}. Using Lemma \ref{2pt-GaussApprox}  and \eqref{eq:QM'}, we have 
\begin{align*}
     \Q^{(2)}(\mathcal{E})&\leq \Q^{(2)}\Bigg(\forall M'<m\leq\log K_r,~B'-L_1m-B\leq \sum_{M'<j\leq m}Y_{j,0},\sum_{M'<j\leq m} Y_{j,\theta}\leq B-B'+L_1M'\\
    &\hspace{3cm}\text{and }\forall M'<m\leq \log K_r,~|Y_{m,0}|,|Y_{m,\theta}|\ll m\Bigg)\\
    &\leq \sum_{(u_m,v_m)\in \mathcal{C}(M',K_r)}\Q^{(2)}\left(Y_{m,0}\in[u_m,u_m+\frac{1}{m^3}],\ Y_{m,\theta}\in[v_m,v_m+\frac{1}{m^3}]\text{ for all }M'<m\leq \log K_r\right)\\
    &\leq \sum_{(u_m,v_m)\in \mathcal{C}(M',K_r)}\prod_{M'<m\leq \log K_r}(1+O(m^{-2}))\\
    &\hspace{3cm}\P\left(N_{m,1}\in[u_m,u_m+\frac{1}{m^3}],\ N_{m,2}\in[v_m,v_m+\frac{1}{m^3}]\text{ for all }M'<m\leq \log K_r\right)\\
    &\ll     \P\Bigg(B'-L_1m-2\sum_{j=1}^m \frac{1}{j^3}-B\le \sum_{M'< \ell\leq m}N_{\ell,1},\ \sum_{M'< \ell\leq m}N_{\ell,2}\leq B+2\sum_{j=1}^m \frac{1}{j^3}-B'+L_1M'\\
    &\hspace{6cm}\text{ for all }M'<m\leq \log K_r\Bigg)\\
    &\leq \P\left(-Lm\le \sum_{M'< \ell\leq m}N_{\ell,1},\ \sum_{M'< \ell\leq m}N_{\ell,2}\leq LM'\text{ for all }M'<m\leq \log K_r\right),
\end{align*}where in the last step we used that $B,B'\ll M'<m$ and $L$ is some other absolute constant.

Before applying the Gaussian ballot theorem, we shall decorrelate each pair of random variables  $(N_{m,1},N_{m,2})$. 
Recall from \citep[Section 12]{soundararajan2022model} that for any Borel set $B\subseteq\R^2$,
$$\P((N_{m,1},N_{m,2})\in B)\leq\sqrt{\frac{1+|\rho_m|}{1-|\rho_m|}}\,\P((\widetilde{N}_{m,1},\widetilde{N}_{m,2})\in B),$$
where 
 $\widetilde{N}_{m,1},\widetilde{N}_{m,2}$ are i.i.d.~$N(0,\sigma_m^2(1+|\rho_m|))$ distributed. Using \eqref{eq:rhom}, it is straightforward to see that \begin{align}
     \prod_{M'<m\leq \log K_r}\sqrt{\frac{1+|\rho_m|}{1-|\rho_m|}}\ll 1.\label{eq:decor}
 \end{align}
By Lemma \ref{lemma:ballot}, we conclude that 
\begin{align*}
    \Q^{(2)}(\mathcal{E})&\ll\P\left(-Lm\le\sum_{M'< \ell\leq m}N_{\ell,1},\ \sum_{M'< \ell\leq m}N_{\ell,2}\leq LM'\text{ for }M'<m\leq \log K_r\right)\\
    &\ll \P\left(-Lm\le \sum_{M'< \ell\leq m}\widetilde{N}_{\ell,1},\ \sum_{M'< \ell\leq m}\widetilde{N}_{\ell,2}\leq LM'\text{ for }M'<m\leq \log K_r\right)\\
    &= \left(\P\left(-Lm\le\sum_{M'< \ell\leq m}\widetilde{N}_{\ell,1}\leq LM'\text{ for }M'<m\leq \log K_r\right)\right)^2\\
    &\ll \left(\frac{M'}{\sqrt{1+\log(K_r/e^{M'})}}\right)^2.
\end{align*}
This yields the desired result.\end{proof}

\begin{proof}[Proof of Lemma \ref{2pt-GaussApprox}]  Let $\mathbf{Y}_m :=(Y_{m,0},Y_{m,\theta})^\top,\mathbf{u}:=(u_m,v_m)^\top$, and $\boldsymbol{\lambda}:=\Sigma_m^{-1}\mathbf{u}$, where we recall that $\Sigma_m$ is the covariance matrix of $(Y_{m,0},Y_{m,\theta})$ under $\Q^{(2)}$. First, we apply an exponential tilt of the measure so that $(Y_{m,0},Y_{m,\theta})$ is centered at $(u_m,v_m)$. Define the tilted measure $\widetilde{\Q}^{(2)}$ by \[\frac{\d\widetilde{\Q}^{(2)}}{\d\mathbb{Q}^{(2)}}:=\frac{\exp(\boldsymbol{\lambda}\cdot\mathbf{Y}_m )}{\Q^{(2)}(\exp(\boldsymbol{\lambda}\cdot\mathbf{Y}_m ))}.\]
Let
$\widehat{\mathbf{N}}_m :=(\widehat{N}_{m,1},\widehat{N}_{m,2})$ be a two-dimensional Gaussian vector with the same mean and covariance matrix as $\mathbf{Y}_m $ under $\widetilde{\Q}^{(2)}$. By \eqref{eq:Nm1} and since exponential tilts preserves the covariance for Gaussians, we have \[\E [e^{i\mathbf{x}\cdot\widehat{\mathbf{N}}_m }] = \exp\left(-\frac{1}{2}\mathbf{x}^\top\Sigma_m\mathbf{x}+i\mathbf{u}\cdot\mathbf{x}\right),\ \mathbf{x}\in\R^2.\]
The corresponding characteristic function of $\mathbf{Y}_m $ under the tilted measure $\widetilde{\Q}^{(2)}$ is given by
\begin{equation}
    \E^{\widetilde{\Q}^{(2)}}[e^{i\mathbf{x}\cdot\mathbf{Y}_m }] = \exp\left(-\frac{1}{2}\mathbf{x}^\top\Sigma_m\mathbf{x}+i\mathbf{u}\cdot\mathbf{x}+S_m(\mathbf{x}-i\boldsymbol{\lambda})-S_m(-i\boldsymbol{\lambda})\right),\ \mathbf{x}\in\R^2,
    \label{Qym0-char}
\end{equation}where we recall $S_m$ from \eqref{eq:xm0char}. 
Following \citep{sadikova1966two}, we have 
\begin{align}
\begin{split}
&\hspace{0.5cm}\Big|\widetilde{\Q}^{(2)}(u_m\leq Y_{m,0}\leq u_m+m^{-3},\ v_m\leq Y_{m,\theta}\leq v_m+m^{-3})\\
&\hspace{3cm}-\P(u_m\leq \widehat{N}_{m,1}\leq u_m+m^{-3},\ v_m\leq \widehat{N}_{m,2}\leq v_m+m^{-3})\Big|\\
&\ll \int_{-e^{m/9}}^{e^{m/9}}\int_{-e^{m/9}}^{e^{m/9}}\left|\frac{\Delta(s,t)}{st}\right|\,\d s\,\d t+\int_{-e^{m/9}}^{e^{m/9}}\bigg|\frac{\E^{\widetilde{\Q}^{(2)}}[\exp(isY_{m,0})]-\E[\exp({is\widehat{N}_{m,1}})]}{s}\bigg|\,\d s\\
&\hspace{2cm}+\int_{-e^{m/9}}^{e^{m/9}}\bigg|\frac{\E^{\widetilde{\Q}^{(2)}}[\exp(itY_{m,\theta})]-\E[\exp({it\widehat{N}_{m,2}})]}{t}\bigg|\,\d t+e^{-m/9},
\end{split}\label{eq:BerryEsseen}
\end{align}
where \begin{align*}
\Delta(s,t)&:=\E^{\widetilde{\Q}^{(2)}}\left[\exp\left(isY_{m,0}+itY_{m,\theta}\right)\right]-\E[\exp(is\widehat{N}_{m,1}+it\widehat{N}_{m,2})]\\
&\hspace{1cm}-\E^{\widetilde{\Q}^{(2)}}[\exp(isY_{m,0})]\,\E^{\widetilde{\Q}^{(2)}}[\exp(itY_{m,\theta})]+\E[\exp({is\widehat{N}_{m,1}})]\,\E[\exp({it\widehat{N}_{m,2}})].
\end{align*}
For $\mathbf{x}_1=(s,0)$, we estimate
\begin{align}\label{S}
\begin{split}    
    |S_m(\mathbf{x}_1-i\boldsymbol{\lambda})-S_m(-i\boldsymbol{\lambda})| &= \left|\int_0^{s}\frac{\d S_m(t-i\lambda_1,-i\lambda_2)}{\d t}\d t\right|\\
    &\ll e^{-m/2}\int_0^{|s|}(1+|t|^2+\n{\boldsymbol{\lambda}}^2)\d t\ll e^{-m/2}(1+\n{\mathbf{u}}^2)(|s|+|s|^3),
\end{split}
\end{align}
where we used $\n{\boldsymbol{\lambda}}\le \n{\Sigma_m^{-1}}_{\mathrm{op}}\n{\mathbf{u}}=(\sigma_m^2-|\rho_m\sigma_m^2 |)^{-1}\n{\mathbf{u}}\ll\n{\mathbf{u}}$, since $\sigma_m^2\gg 1 $ and $ \rho_m\sigma_m^2 =o(1)$ by \eqref{eq:sm2} and \eqref{eq:rm2}.
Inserting back into \eqref{Qym0-char}, the second integral of \eqref{eq:BerryEsseen} can be bounded by
\begin{align*}
    \int_{-e^{m/9}}^{e^{m/9}}\bigg|\frac{\E^{\widetilde{\Q}^{(2)}}[\exp(isY_{m,0})]-\E[\exp({is\widehat{N}_{m,1}})]}{s}\bigg|\,\d s&\leq \int_{-e^{m/9}}^{e^{m/9}}\left|\frac{e^{S_m(\mathbf{x}_1-i\boldsymbol{\lambda})-S_m(-i\boldsymbol{\lambda})}-1}{s}\right|e^{-{\sigma_m^2}s^2/2}\d s\\
    &\ll \int_{-e^{m/9}}^{e^{m/9}}\left|\frac{S_m(\mathbf{x}_1-i\boldsymbol{\lambda})-S_m(-i\boldsymbol{\lambda})}{s}\right|e^{-{\sigma_m^2}s^2/2}\d s\\
    &\leq e^{-m/2}(1+u_m^2+v_m^2)\int_{-e^{m/9}}^{e^{m/9}}(1+s^2)e^{-{\sigma_m^2}s^2/2}\d s\\
    &\ll e^{-m/2}m^2\ll e^{-m/9}.
\end{align*} 
A similar estimate holds for the third term. We now bound the first integral of \eqref{eq:BerryEsseen}, where we consider $\mathbf{x}_2=(0,t)$ and $\mathbf{x}_3=(s,t)$ in \eqref{Qym0-char}:
\begin{align*}
|\Delta(s,t)|&= \Big|\exp\Big(-\frac{\sigma_m^2}{2}\n{\mathbf{x}_3}^2+i\mathbf{u}\cdot\mathbf{x}_3\Big)\Big[\exp(-\rho_m\sigma_m^2  st+S_m(\mathbf{x}_3-i\boldsymbol{\lambda})-S_m(-i\boldsymbol{\lambda}))+1 \\
&\qquad -\exp\Big(-\rho_m\sigma_m^2  st\Big)-\exp(S_m(\mathbf{x}_1-i\boldsymbol{\lambda})+S_m(\mathbf{x}_2-i\boldsymbol{\lambda})-2S_m(-i\boldsymbol{\lambda}))\Big]\Big|\\
&\le \exp\Big(-\frac{\sigma_m^2}{2}\n{\mathbf{x}_3}^2\Big)\Bigg[\Big|e^{-\rho_m\sigma_m^2  st}-1\Big|\Big|e^{S_m(\mathbf{x}_3-i\boldsymbol{\lambda})-S_m(-i\boldsymbol{\lambda})}-1\Big|\\
&\qquad +\Big|e^{S_m(\mathbf{x}_3-i\boldsymbol{\lambda})-S_m(-i\boldsymbol{\lambda})}\Big|\Big|e^{S_m(\mathbf{x}_1-i\boldsymbol{\lambda})+S_m(\mathbf{x}_2-i\boldsymbol{\lambda})-S_m(\mathbf{x}_3-i\boldsymbol{\lambda})-S_m(-i\boldsymbol{\lambda})}-1\Big|\Bigg]\\
&\ll \exp\Big(-\frac{\sigma_m^2}{2}\n{\mathbf{x}_3}^2\Big)\Big[|st||S_m(\mathbf{x}_3-i\boldsymbol{\lambda})-S_m(-i\boldsymbol{\lambda})|\\
&\hspace{2cm}+|S_m(\mathbf{x}_1-i\boldsymbol{\lambda})+S_m(\mathbf{x}_2-i\boldsymbol{\lambda})-S_m(\mathbf{x}_3-i\boldsymbol{\lambda})-S_m(-i\boldsymbol{\lambda})|\Big]
\end{align*}
where we used \eqref{eq:rm2} in the last step. Similarly to \eqref{S}, we have 
\begin{align*}
    |S_m(\mathbf{x}_3-i\boldsymbol{\lambda})-S_m(-i\boldsymbol{\lambda})| &\le |S_m(\mathbf{x}_3-i\boldsymbol{\lambda})-S_m(\mathbf{x}_1-i\boldsymbol{\lambda})|+|S_m(\mathbf{x}_1-i\boldsymbol{\lambda})-S_m(-i\boldsymbol{\lambda})|\\
    &\ll e^{-m/2}(1+s^2+\n{\mathbf{u}}^2)(|t|+|t|^3)+e^{-m/2}(1+\n{\mathbf{u}}^2)(|s|+|s|^3)
\end{align*}
and also
\begin{align*}
    |S_m(\mathbf{x}_1-i\boldsymbol{\lambda})+S_m(\mathbf{x}_2-i\boldsymbol{\lambda})-S_m(\mathbf{x}_3-i\boldsymbol{\lambda})-S_m(-i\boldsymbol{\lambda})| &= \left|\int_0^s\int_0^t\frac{\partial^2 S_m(u-i\lambda_1,v-i\lambda_2)}{\partial u \partial v}\d u\d v\right|\\
    &\ll e^{-m/2}(1+\n{\boldsymbol{\lambda}}_1)(1+|s|+|t|)|st|\\
    &\ll m e^{-m/2}(1+|s|+|t|)|st|.
\end{align*}
Therefore,
\begin{align*}
    \int_{-e^{m/9}}^{e^{m/9}}\int_{-e^{m/9}}^{e^{m/9}}\left|\frac{\Delta(s,t)}{st}\right|\,\d s\d t &\ll e^{-m/2} \int_{-e^{m/9}}^{e^{m/9}}\int_{-e^{m/9}}^{e^{m/9}}\exp\Big(-\frac{\sigma_m^2}{2}\n{\mathbf{x}_3}^2\Big)\Big[(1+s^2+\n{\mathbf{u}}^2)(|t|+|t|^3)\\
    &\qquad +(1+\n{\mathbf{u}}^2)(|s|+|s|^3)+m(1+|s|+|t|)|st|\Big]\d s\d t\\
    &\ll m^2 e^{-m/2}\ll e^{-m/9}.
\end{align*}
Plugging back to \eqref{eq:BerryEsseen}, we obtain
\begin{align*}
    &\hspace{0.5cm}\Big|\widetilde{\Q}^{(2)}(u_m\leq Y_{m,0}\leq u_m+m^{-3},\ v_m\leq Y_{m,\theta}\leq v_m+m^{-3})\\
&\hspace{3cm}-\P(u_m\leq \widehat{N}_{m,1}\leq u_m+m^{-3},\ v_m\leq \widehat{N}_{m,2}\leq v_m+m^{-3})\Big|\ll e^{-m/9}
\end{align*}
uniformly for all $|u_m|,|v_m|\ll m,\, M\le m\le\log K_r$. We may replace the absolute error by a multiplier of $(1+O(e^{-m/10}))$, since $(\widehat{N}_{m,1},\widehat{N}_{m,2})$ is centered at $(u_m,v_m)$ and has constant order variances with vanishing correlation. This gives that for $|u_m|,|v_m|\ll m$ and $M\le m\le\log K_r$,
\begin{align*}
&\hspace{0.5cm}\widetilde{\Q}^{(2)}(u_m\leq Y_{m,0}\leq u_m+m^{-3},\ v_m\leq Y_{m,\theta}\leq v_m+m^{-3})\\
&\hspace{3cm}=(1+O(e^{-m/10}))\P(u_m\leq \widehat{N}_{m,1}\leq u_m+m^{-3},\ v_m\leq \widehat{N}_{m,2}\leq v_m+m^{-3}).
\end{align*}
Now note that if $Y_{m,0}\in[u_m,u_m+m^{-3}] $ and $Y_{m,\theta}\in[v_m,v_m+m^{-3}]$, it holds that $|\boldsymbol{\lambda}\cdot\boldsymbol{Y}_m-\boldsymbol{\lambda}\cdot\boldsymbol{u}|\ll {\n{\boldsymbol{u}}}/{m^3}\ll m^{-2}.$ Therefore, we have
\begin{align*}
    &\Q^{(2)}(u_m\leq Y_{m,0}\leq u_m+m^{-3},\ v_m\leq Y_{m,\theta}\leq v_m+m^{-3})\\
    = &\E^{\Q^{(2)}}[e^{\boldsymbol{\lambda}\cdot\mathbf{Y}_m }]\,\widetilde{\Q}^{(2)}\Big(e^{-\boldsymbol{\lambda}\cdot\mathbf{Y}_m }\bone_{\{u_m\leq Y_{m,0}\leq u_m+m^{-3},\ v_m\leq Y_{m,\theta}\leq v_m+m^{-3}\}}\Big)\\
    = &\exp\Big(-\frac{1}{2}\mathbf{u}^\top\Sigma_m^{-1}\mathbf{u}+S_m(-i\boldsymbol{\lambda})\Big)(1+O(m^{-2}))\,\widetilde{\Q}^{(2)}(u_m\leq Y_{m,0}\leq u_m+m^{-3},\ v_m\leq Y_{m,\theta}\leq v_m+m^{-3})\\
    = &(1+O(m^{-2}))\exp\Big(-\frac{1}{2}\mathbf{u}^\top\Sigma_m^{-1}\mathbf{u}\Big)(1+O(e^{-m/10}))\,\P(u_m\leq \widehat{N}_{m,1}\leq u_m+m^{-3},\ v_m\leq \widehat{N}_{m,2}\leq v_m+m^{-3}).
\end{align*}
Thus,
\begin{align*}
    &\Q^{(2)}(u_m\leq Y_{m,0}\leq u_m+m^{-3},\ v_m\leq Y_{m,\theta}\leq v_m+m^{-3})\\
    &\hspace{1cm}=(1+O(m^{-2}))\exp\Big(-\frac{1}{2}\mathbf{u}^\top\Sigma_m^{-1}\mathbf{u}\Big)\,\P(u_m\leq \widehat{N}_{m,1}\leq u_m+m^{-3},\ v_m\leq \widehat{N}_{m,2}\leq v_m+m^{-3}).
\end{align*}
By a standard Gaussian computation,
\begin{align*}
    &\P(u_m\leq N_{m,1}\leq u_m+m^{-3},\ v_m\leq N_{m,2}\leq v_m+m^{-3})\\
    &\hspace{1cm}=(1+O(m^{-2}))\exp\Big(-\frac{1}{2}\mathbf{u}^\top\Sigma_m^{-1}\mathbf{u}\Big)\,\P(u_m\leq \widehat{N}_{m,1}\leq u_m+m^{-3},\ v_m\leq \widehat{N}_{m,2}\leq v_m+m^{-3}).
\end{align*} The proof is then complete.
\end{proof}

\section{The stretched exponential phase}
\label{sec:subcritical}
\sloppy Recall in the stretched exponential case (SE), $\P(|R_k|\ge u)=\exp(-(u/c_p)^p)$ with $0<p<1$ and $c_p=(2\Gamma(2/p)/p)^{-1/2}$. 
Denoting by  $\lambda^*=(1,\cdots,1)$ the all-one partition of $N$, Theorem \ref{thm:main} in the (SE) case follows from the following proposition, which shows that the contribution from $\lambda^*$ alone is the main term in low moments of $A_N$.
\begin{proposition}
    For all $N$ large enough and any $q>0$, \begin{equation}
        \E\left[|A_N|^{2q}\right]=(1+o(1)) \E\left[|a(\lambda^*)|^{2q}\right]=(1+o(1)) (2\pi)^{{1}/{2}-q}\sqrt{\frac{2q}{p}}C_{p,q}^{{2qN}/{p}}N^{2q({1}/{p}-1)N+{1}/{2}-q},\label{eq:sub-critical}
    \end{equation} where $C_{p,q}=2qc_p^p/(pe^{1-p})$.
\end{proposition}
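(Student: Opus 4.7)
The plan is to show that $A_N = \sum_{\lambda\in\cP_N} a(\lambda)$ is dominated by the single term coming from the all-one partition $\lambda^*=(1,\ldots,1)$, for which $a(\lambda^*) = X_1^N/N!$. The proof splits into two parts: (i) a direct Stirling computation of $\E[|a(\lambda^*)|^{2q}]$, matching the explicit right-hand side of \eqref{eq:sub-critical}; and (ii) a combinatorial-analytic argument showing that every other $\E[|a(\lambda)|^{2q}]$ is negligibly smaller, which is then promoted to an asymptotic for $\E[|A_N|^{2q}]$ via Minkowski's inequality or subadditivity.

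For part (i), combining \eqref{u} with $a(\lambda^*) = X_1^N/N!$ gives
\[
\E[|a(\lambda^*)|^{2q}] = \frac{(2qN/p)\,c_p^{2qN}\,\Gamma(2qN/p)}{(N!)^{2q}}.
\]
Applying Stirling's approximation \eqref{eq:gamma} to both $\Gamma(2qN/p)$ and $N!$, grouping the pure powers of $N$ into $N^{1/2-q+2qN(1/p-1)}$ and consolidating the constant-base exponentials into $C_{p,q}^{2qN/p}$, reproduces the claimed expression. This is routine bookkeeping.

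For part (ii), write $A_N = a(\lambda^*) + E_N$ with $E_N:=\sum_{\lambda\neq\lambda^*}a(\lambda)$. The upper bound follows from Minkowski (if $q\geq 1/2$) or the subadditivity $|x+y|^{2q}\leq|x|^{2q}+|y|^{2q}$ (if $q\leq 1/2$); symmetrically, the reverse triangle inequality (for $q\geq 1/2$) or the identity $|a(\lambda^*)|^{2q}=|A_N-E_N|^{2q}\leq|A_N|^{2q}+|E_N|^{2q}$ (for $q\leq 1/2$) yields the matching lower bound. In all cases the problem reduces to the smallness estimate
\[
\sum_{\lambda\neq\lambda^*}\E[|a(\lambda)|^{2q}]^{\min(1,1/(2q))} = o\Bigl(\E[|a(\lambda^*)|^{2q}]^{\min(1,1/(2q))}\Bigr).
\]
Using \eqref{eq:alambda moments}, \eqref{u}, and Stirling, I obtain for each $\lambda\in\cP_N$
\[
\log\E[|a(\lambda)|^{2q}] = \sum_k m_k\bigl[\alpha(p,q)+2q(1/p-1)\log m_k - q\log k\bigr] + O\Bigl(\sum_{k:m_k>0}\log m_k\Bigr)
\]
with an explicit constant $\alpha(p,q)$. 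Setting $n := N - m_1(\lambda)\geq 2$, using $\sum_{k\geq 2}m_k\leq n/2$, the elementary bound $\sum_k m_k\log m_k\leq S\log S$ with $S=\sum_k m_k$, and a Taylor expansion of $(N-n)\log(N-n)-N\log N$, a direct manipulation gives the uniform ratio bound
\[
\log\frac{\E[|a(\lambda)|^{2q}]}{\E[|a(\lambda^*)|^{2q}]} \leq -n\bigl[q(1/p-1)\log N - O(1)\bigr],
\]
where the crucial sign comes from $1/p-1>0$, i.e.~$p<1$. Since partitions of $N$ with $N-m_1(\lambda)=n$ are in bijection with partitions of $n$ avoiding the part $1$, their number is at most $p_n\leq \exp(C\sqrt{n})$, and a geometric-type summation yields the required $o(1)$ smallness.

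The main obstacle is to verify the ratio bound uniformly across the full range $2\leq n\leq N$. The small- and moderate-$n$ regime is tame, but when $n$ is of order $N$ the partition may contain many distinct parts and the elementary bound $\sum m_k\log m_k\leq S\log S$ becomes loose; one must check that the exponential savings $N^{-cn}$ still dominates the sub-exponential partition count $e^{C\sqrt{n}}$ in that regime. The whole argument depends essentially on $p<1$: in the ($q$-$\mathrm{LT}$) phase $1/p-1$ changes sign, the dominance of $\lambda^*$ breaks down, and an entirely different (multiplicative-chaos-driven) mechanism is required, consistent with Theorem~\ref{thm:main}(i).
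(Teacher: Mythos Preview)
Your proposal is correct and follows essentially the same approach as the paper's proof: both isolate $a(\lambda^*)=X_1^N/N!$, reduce via Minkowski (for $q\geq 1/2$) or subadditivity/concavity (for $q<1/2$) to showing $\sum_{\lambda\neq\lambda^*}\E[|a(\lambda)|^{2q}]^{\min(1,1/(2q))}=o(\E[|a(\lambda^*)|^{2q}]^{\min(1,1/(2q))})$, stratify by $n=N-m_1(\lambda)$, and use Stirling together with $\sum_{k\geq 2}m_k\leq n/2$ and the partition bound $p_n\ll e^{C\sqrt n}$ to obtain a geometric-type decay in $n$. The paper organizes the estimate slightly differently (writing $a(\lambda)=\tfrac{X_1^{N-k}}{(N-k)!}a(\lambda')$ and bounding $\E[|a(\lambda')|^{2q}]$ directly), but the content is the same; your concern about the large-$n$ regime is not a genuine obstacle since the savings $N^{-cn}$ trivially dominates $e^{C\sqrt n}$.
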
\begin{proof}
Recall that $a(\lambda^*)=X_1^N/N!$.     The asymptotic formula of $\E\left[|a(\lambda^*)|^{2q}\right]$ then follows from \eqref{u}. For the first asymptotic relation, by concavity, we have for $0<q<1/2$, \begin{equation*}
        \E\left[|a(\lambda^*)|^{2q}\right]-\sum_{\substack{|\lambda|=N\\ m_1<N}}\E\left[|a(\lambda)|^{2q}\right]\le \E\left[|A_N|^{2q}\right]\le \E\left[|a(\lambda^*)|^{2q}\right]+\sum_{\substack{|\lambda|=N\\ m_1<N}}\E\left[|a(\lambda)|^{2q}\right],
    \end{equation*} and by Minkowski's inequality, for $q\geq 1/2$,  \begin{align*}
        \E\left[|a(\lambda^*)|^{2q}\right]^{1/(2q)}-\sum_{\substack{|\lambda|=N\\ m_1<N}}\E\left[|a(\lambda)|^{2q}\right]^{1/(2q)}&\le \E\left[|A_N|^{2q}\right]^{1/(2q)}\le \E\left[|a(\lambda^*)|^{2q}\right]^{1/(2q)}+\sum_{\substack{|\lambda|=N\\ m_1<N}}\E\left[|a(\lambda)|^{2q}\right]^{1/(2q)}.
    \end{align*} Therefore it suffices to show for $0<q<1/2$, \begin{equation}
        \sum_{\substack{|\lambda|=N\\ m_1<N}}\E\left[|a(\lambda)|^{2q}\right]=o\Big(\E\left[|a(\lambda^*)|^{2q}\right]\Big)\label{eqn-HT-q-small-o}
    \end{equation} and for $q\geq 1/2$, \begin{equation}
        \sum_{\substack{|\lambda|=N\\ m_1<N}}\E\left[|a(\lambda)|^{2q}\right]^{1/(2q)}=o\Big(\E\left[|a(\lambda^*)|^{2q}\right]^{1/(2q)}\Big)\label{eqn-HT-q-large-o}
    \end{equation} as $N\to\infty$. Fix $q\in(0,1/2)$, by \eqref{eq:gamma} we have \begin{align}
        \sum_{\substack{|\lambda|=N\\ m_1<N}}\E\left[|a(\lambda)|^{2q}\right] &\leq \sum_{k=1}^N\sum_{\substack{|\lambda'|=k\\m_1(\lambda')=0}}\E\left[\left|\frac{X_1^{N-k}}{(N-k)!}a(\lambda')\right|^{2q}\right]\nonumber\\
        &\ll \sum_{k=1}^NC_{p,q}^{{2q}(N-k)/p}(N-k)^{2q({1}/{p}-1)(N-k)+1/2-q}\sum_{\substack{|\lambda'|=k\\m_1(\lambda')=0}}\E\left[|a(\lambda')|^{2q}\right].\label{ineqn:HT-q-small-subterm}
    \end{align} For each $\lambda'$ as a partition of $k\in\{1,\dots,N\}$ with $m_1(\lambda')=0$, we have with some constant $c=c_{p,q}>0$ (which may vary from line to line) that \begin{align*}
        \E\left[|a(\lambda')|^{2q}\right] &\ll \prod_{j=2}^k cC_{p,q}^{{2q}m_j/p}m_j^{2q({1}/{p}-1)m_j+1}\ll c^{{k}}\Big(\frac{k}{2}\Big)^{q({1}/{p}-1)k+\sqrt{2k}},
    \end{align*} where in the last inequality we use the fact that $|\lambda'|=k=\sum_{j=2}^k jm_j\ge 2\sum_{j=2}^k m_j$, and that $\sum_{j=2}^k\bone_{\{m_j>0\}}\le \sqrt{2k}$. Recall the partition number $p_k\ll e^{\pi\sqrt{2/3}\sqrt{k}}$, we can bound \eqref{ineqn:HT-q-small-subterm} from above by \begin{align*}
        \sum_{\substack{|\lambda|=N\\ m_1<N}}\E\left[|a(\lambda)|^{2q}\right] &\ll \sum_{k=1}^N C_{p,q}^{{2q}(N-k)/p}(N-k)^{2q({1}/{p}-1)(N-k)+1/2-q}e^{\pi\sqrt{2/3}\sqrt{k}}c^{{k}}\Big(\frac{k}{2}\Big)^{q({1}/{p}-1)k+\sqrt{2k}}\\
        &\ll \E\left[|a(\lambda^*)|^{2q}\right]\sum_{k=1}^{\infty}C_{p,q}^{-{2q}k/p}e^{c{k}}\frac{(N-k)^{2q({1}/{p}-1)(N-k)+1/2-q}(k/2)^{q({1}/{p}-1)k+\sqrt{2k}}}{N^{2q({1}/{p}-1)N+{1}/{2}-q}}\\
        &\ll \E\left[|a(\lambda^*)|^{2q}\right]\sum_{k=1}^{\infty}\exp\left(ck-q\Big(\frac{1}{p}-1\Big)(2N\log N-2(N-k)\log(N-k)-k\log k)\right)\\
        &\ll \E\left[|a(\lambda^*)|^{2q}\right]\sum_{k=1}^{\infty}\exp\left(ck-q\Big(\frac{1}{p}-1\Big)k\log N\right)\\
        &\ll \E\left[|a(\lambda^*)|^{2q}\right]\sum_{k=1}^{\infty}N^{-ck}=o\left(\E\left[|a(\lambda^*)|^{2q}\right]\right),
    \end{align*} establishing \eqref{eqn-HT-q-small-o} and hence \eqref{eq:sub-critical}. A similar computation proves \eqref{eqn-HT-q-large-o} and hence the result follows.
\end{proof}

\section{The exponential phase}\label{sec:critical}
Let us recall that in the exponential phase (EXP), $\P(|R_k|\geq u)=\exp(-\gamma(u-c_\gamma))\wedge 1$ for $u\geq 0$, where $\gamma\in(0,2q]$ and $c_\gamma=\log({\gamma^2}/{2})/\gamma$. Our goal is to provide asymptotics for $\E[|A_N|^{2q}]$ as $N\to\infty$ where $q\in(0,1]$. To this end, we split into two cases: $\gamma<2q$ and $\gamma=2q$.

\subsection{The case \texorpdfstring{$\gamma<2q$}{}}
In this section, we prove Theorem \ref{thm:main2} for $\gamma<2q$, that
\begin{align}
     \E[|A_N|^{2q}]\asymp \phi(N)=\phi_{q,\gamma}(N):=
    N^{1/2-q}\left(\frac{2q}{\gamma}\right)^{2qN},\quad q\in(0,1],~\gamma<2q.\label{eq:to show gamma<2q}
\end{align}

\subsubsection{Proof of the upper bound}\label{sec:critical-gamma<2q-UB}
A direct application of Minkowski's inequality together with \eqref{eq:moments of SE} yields for $q\geq 1/2$,
\begin{align*}
    \E[|A_N|^{2q}]^{1/(2q)}&\leq \sum_{\lambda\in\cP_N}\E\Bigg[\bigg|\prod_{k\geq 1}\left(\frac{X_k}{\sqrt{k}}\right)^{m_k}\frac{1}{m_k!}\bigg|^{2q}\Bigg]^{1/(2q)} \\
    &\ll \sum_{\lambda\in\cP_N}\prod_{\substack{k\geq 1\\ m_k\neq 0}}\frac{C\gamma^{-m_k}\Gamma(2m_kq+1)^{1/(2q)}}{k^{m_k/2}m_k!}\\
    &\ll \frac{\Gamma(2Nq+1)^{1/(2q)}}{\gamma^{N}N!}+ \sum_{j=2}^N \sum_{\substack{\lambda\in \cP_N\\ m_1(\lambda)=N-j}}\frac{\Gamma(2(N-j)q+1)^{1/(2q)}}{\gamma^{N-j}(N-j)!}\prod_{\substack{k\geq 2\\ m_k\neq 0}}\frac{C\Gamma(2m_kq+1)^{1/(2q)}}{\gamma^{m_k}k^{m_k/2}m_k!}.
\end{align*}
By \eqref{eq:gamma},
$$\frac{\Gamma(2m_kq+1)^{1/(2q)}}{m_k!}\leq C(2q)^{m_k}m_k^{1/(4q)-1/2}.$$
On the other hand, 
$$\prod_{\substack{k\geq 2\\ m_k\neq 0}}\frac{Cm_k^{1/(4q)-1/2}}{k^{m_k/2}}\ll1, $$
\sloppy since the product on $k\geq C^2$ can be bounded by one and the product on $2\leq k\leq C^2$ is bounded by a constant, where $C$ may depend on $q$. Therefore, for $\lambda\in \cP_N$ with $m_1(\lambda)=N-j$, we have
$$\prod_{\substack{k\geq 2\\ m_k\neq 0}}\frac{C\gamma^{-m_k}\Gamma(2m_kq+1)^{1/(2q)}}{k^{m_k/2}m_k!}\ll \prod_{\substack{k\geq 2\\ m_k\neq 0}}\left(\frac{2q}{\gamma}\right)^{m_k}\leq \left(\frac{2q}{\gamma}\right)^{j/2}.$$
Combining the above leads to 
\begin{align*}
    &\hspace{0.5cm}\E[|A_N|^{2q}]^{1/(2q)}\\
    &\ll \frac{\gamma^{-N}\Gamma(2Nq+1)^{1/(2q)}}{N!}+\sum_{j=2}^N \sum_{\substack{\lambda\in \cP_N\\ m_1(\lambda)=N-j}}\frac{\gamma^{-(N-j)}\Gamma(2(N-j)q+1)^{1/(2q)}(2q/\gamma)^{j/2}}{(N-j)!}\\
    &\ll {\gamma^{-N}(2q)^NN^{1/(4q)-1/2}}+\sum_{j=2}^N p_j{\gamma^{-(N-j/2)}(2q)^{N-j/2}(N-j)^{1/(4q)-1/2}}\\
    &\ll {\gamma^{-N}(2q)^NN^{1/(4q)-1/2}}\left(1+\sum_{j=2}^N e^{C\sqrt{j}}{\Big(\frac{\gamma}{2q}\Big)^{j/2}\Big(\frac{N-j}{N}\Big)^{1/(4q)-1/2}}\right)\\
    &\ll {\left(\frac{2q}{\gamma}\right)^NN^{1/(4q)-1/2}}=\phi(N)^{1/(2q)},
\end{align*}where we have used \eqref{eq:gamma} again. This leads to the upper bound in \eqref{eq:to show gamma<2q}. 
The case $q\in(0,1/2)$ is similar by using concavity instead of Minkowski's inequality.

\subsubsection{Proof of the lower bound}\label{sec:critical-gamma<2q-LB}
Recall from \eqref{eq:to show gamma<2q} that our goal is to show $\E[|A_N|^{2q}]\gg\phi(N)$. We may always assume that $N$ is large enough. Observe the decomposition
\begin{align}
    A_N=\sum_{\substack{\lambda\in \cP_N\\ m_1(\lambda)\geq  N-C}}\prod_{k\geq 1}\left(\frac{X_k}{\sqrt{k}}\right)^{m_k}\frac{1}{m_k!}+\sum_{\substack{\lambda\in \cP_N\\ m_1(\lambda)< N-C}}\prod_{k\geq 1}\left(\frac{X_k}{\sqrt{k}}\right)^{m_k}\frac{1}{m_k!}\label{eq:AN decomp}
\end{align}for $C\geq 0$. 
A careful examination of the proof of the upper bound yields the following lemma, giving an upper bound for the second term in \eqref{eq:AN decomp}.
\begin{lemma}\label{lemma:tail small}
    For any $\ee>0$, there exists $C=C(\ee)>0$ such that
    $$\E\Bigg[\Big|\sum_{\substack{\lambda\in \cP_N\\ m_1(\lambda)< N-C}}\prod_{k\geq 1}\left(\frac{X_k}{\sqrt{k}}\right)^{m_k}\frac{1}{m_k!}\Big|^{2q}\Bigg]\leq \ee \phi(N).$$
\end{lemma}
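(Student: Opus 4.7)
The plan is to adapt the proof of the upper bound in Section~\ref{sec:critical-gamma<2q-UB} by restricting the sum over partitions to those with $m_1(\lambda) < N-C$, and to observe that this restriction forces a geometric tail factor of the form $(\gamma/(2q))^{j/2}$ with $j=N-m_1(\lambda)\ge C+1$, which can be made arbitrarily small by choosing $C$ large.

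More concretely, I would split into the cases $q\ge 1/2$ (Minkowski) and $q<1/2$ (concavity), treating the former in detail as the latter is analogous. For $\lambda\in\cP_N$ with $m_1(\lambda)=N-j$, the computation in Section~\ref{sec:critical-gamma<2q-UB} gives, uniformly over such $\lambda$,
\begin{equation*}
    \E[|a(\lambda)|^{2q}]^{1/(2q)}\ll (2q/\gamma)^{N-j/2}(N-j)^{1/(4q)-1/2},
\end{equation*}
where the factor $(N-j)^{1/(4q)-1/2}$ is to be read as $1$ when $j=N$ (i.e.\ no $X_1$ factor is present). Since the number of partitions $\lambda\in\cP_N$ with $m_1(\lambda)=N-j$ equals the number of partitions of $j$ with no part equal to $1$, and is therefore bounded above by $p_j\ll e^{c\sqrt{j}}$, Minkowski's inequality yields
\begin{equation*}
    \E\Bigg[\Big|\sum_{\substack{\lambda\in\cP_N\\ m_1(\lambda)<N-C}}a(\lambda)\Big|^{2q}\Bigg]^{1/(2q)}\ll \phi(N)^{1/(2q)}\sum_{j=C+1}^{N} e^{c\sqrt{j}}\,(\gamma/(2q))^{j/2}\bigl((N-j)/N\bigr)^{1/(4q)-1/2},
\end{equation*}
where $\phi(N)^{1/(2q)}=(2q/\gamma)^N N^{1/(4q)-1/2}$.

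The final step is to show the series factor is $o(1)$ as $C\to\infty$, uniformly in $N$. I would split it at $j=N/2$. For $C+1\le j\le N/2$, the ratio $(N-j)/N\in[1/2,1]$, so $((N-j)/N)^{1/(4q)-1/2}$ is bounded by a constant depending only on $q$; since $\gamma<2q$, the series $\sum_{j\ge C+1}e^{c\sqrt{j}}(\gamma/(2q))^{j/2}$ converges and its tail vanishes as $C\to\infty$. For $N/2<j\le N$, the geometric factor gives $(\gamma/(2q))^{j/2}\le (\gamma/(2q))^{N/4}$, which is exponentially small in $N$ and dominates any polynomial factor in $N$ coming from $((N-j)/N)^{1/(4q)-1/2}\le N^{|1/(4q)-1/2|}$ (this bound is trivial for $q\le 1/2$ and needs the explicit polynomial estimate when $q>1/2$). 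Combining the two pieces and raising to the $(2q)$-th power gives the desired bound by $\ee\,\phi(N)$ for $C=C(\ee)$ large enough. The analogous argument for $q<1/2$ uses concavity of $x\mapsto x^{2q}$ directly on $\E[|a(\lambda)|^{2q}]$ and $\phi(N)$, and since $1/(4q)-1/2>0$ in this regime, the factor $((N-j)/N)^{1/(4q)-1/2}$ is automatically bounded by $1$, making the argument even cleaner.

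I do not anticipate any serious obstacle: the key mechanism is the geometric decay $(\gamma/(2q))^{j/2}$, which is available precisely because $\gamma<2q$, and all other factors (subexponential $e^{c\sqrt{j}}$, slowly varying $((N-j)/N)^{1/(4q)-1/2}$) are easily absorbed by either the geometric series for $j$ bounded or the overall exponential smallness for $j$ close to $N$.
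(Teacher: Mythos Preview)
Your proposal is correct and follows precisely the approach the paper intends: the paper does not write out a separate proof but states that the lemma follows from ``a careful examination of the proof of the upper bound'' in Section~\ref{sec:critical-gamma<2q-UB}, and you have carried out exactly that examination, restricting the sum in the final display of that section to $j\ge C+1$ and observing that the resulting tail $\sum_{j\ge C+1}e^{c\sqrt{j}}(\gamma/(2q))^{j/2}((N-j)/N)^{1/(4q)-1/2}$ tends to zero as $C\to\infty$. Your handling of the case $q>1/2$ (where the exponent $1/(4q)-1/2$ is negative) via the split at $j=N/2$ is the right way to make the paper's implicit ``$\ll 1$'' explicit.
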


We will focus on the sum over partitions that have a large number of ones, and use Lemma \ref{lemma:tail small} to show that the remaining terms are negligible. To this end, let us fix a large constant $C_*>0$ to be determined. For a given $\lambda\in\cP_N$ with $m_1(\lambda)\geq N-C_*$, we let  $\lambda_*$ denote the partition after removing $N-C_*$ ones from $\lambda$. In particular, $\lambda_*\in\cP_{C_*}$ and there is a bijective correspondence between such $\lambda$ and $\lambda_*$.

Consider $M=M(\gamma,q)\in\N$ such that \begin{align}
    \Big(\frac{2q}{\gamma}\Big)^{2M}>\Big(\frac{2}{\gamma}\Big)^2.\label{M}
\end{align} We will restrict to the event that $|X_1|\in[(2qN-\sqrt{N})/\gamma, 2qN/\gamma]$ and $|X_2|,\dots,|X_M|\leq 1$. More precisely, we have
\begin{align}
\begin{split}
    \E[|A_N|^{2q}]&\geq \E\Big[|A_N|^{2q}\bone_{\{|R_1|\in [(2qN-\sqrt{N})/\gamma, 2qN/\gamma],\, |R_j|\leq 1,\, 2\leq j\leq M\}}\Big]\\
    &\gg \E\left[|A_N|^{2q}\bone_{\{|R_1|\in [(2qN-\sqrt{N})/\gamma, 2qN/\gamma]\}}\mid {\{|R_j|\leq 1,\, 2\leq j\leq M\}}\right].
\end{split}    \label{eq:conditioning}
\end{align}
Therefore, in the following, we may without loss of generality condition on the event $\{|R_j|\leq 1,\, 2\leq j\leq M\}$. The resulting probability measure and expectation operator will be denoted by $\widetilde{\P}$ and $\widetilde{\E}$ respectively.

We first make a few simplifications to the first sum of \eqref{eq:AN decomp}. It holds that
\begin{align}
    \Bigg|\sum_{\substack{\lambda\in \cP_N\\ m_1(\lambda)\geq  N-C_*}}\prod_{k\geq 1}\left(\frac{X_k}{\sqrt{k}}\right)^{m_k(\lambda)}\frac{1}{m_k(\lambda)!}\Bigg|=|X_1|^{N-C_*}\Big|\sum_{\lambda_*\in \cP_{C_*}}\frac{m_1(\lambda_*)!}{(m_1(\lambda_*)+N-C_*)!}\prod_{k\geq 1}\left(\frac{X_k}{\sqrt{k}}\right)^{m_k(\lambda_*)}\frac{1}{m_k(\lambda_*)!}\Big|.\label{eq:extract X_1}
\end{align}Here, with the bijective correspondence between $\lambda$ and $\lambda_*$ described above, we have $m_k(\lambda)=m_k(\lambda_*)$ except when $k=1$. When the situation is clear, we omit writing the dependence on $\lambda$ or $\lambda_*$. 
With $|X_1|\approx 2qN/\gamma$, we expect that 
$$\sum_{\lambda_*\in \cP_{C_*}}\frac{m_1!}{(m_1+N-C_*)!}\prod_{k\geq 1}\left(\frac{X_k}{\sqrt{k}}\right)^{m_k}\frac{1}{m_k!}\approx \sum_{\lambda_*\in \cP_{C_*}}\left(\frac{2qNe^{i\tau_1}}{\gamma}\right)^{m_1}\frac{1}{(m_1+N-C_*)!}\prod_{k\geq 2}\left(\frac{X_k}{\sqrt{k}}\right)^{m_k}\frac{1}{m_k!}.$$
The goal of the next lemma is to make this precise.
\begin{lemma}\label{lemma:X1 approx}
On the event $|R_1|=|X_1|\in [(2qN-\sqrt{N})/\gamma, 2qN/\gamma]$,    it holds that
  \begin{align*}
    &\hspace{0.5cm}\Big|\sum_{\lambda_*\in \cP_{C_*}}\frac{m_1!}{(m_1+N-C_*)!}\prod_{k\geq 1}\left(\frac{X_k}{\sqrt{k}}\right)^{m_k}\frac{1}{m_k!}- \sum_{\lambda_*\in \cP_{C_*}}\left(\frac{2qNe^{i\tau_1}}{\gamma}\right)^{m_1}\frac{1}{(N-C_*)!N^{m_1}}\prod_{k\geq 2}\left(\frac{X_k}{\sqrt{k}}\right)^{m_k}\frac{1}{m_k!}\Big|\\
    &\ll \sum_{\lambda_*\in \cP_{C_*}}\frac{(2q/\gamma)^{m_1}m_1C_*}{\sqrt{N}(N-C_*)!}\,\prod_{k\geq 2}\left(\frac{X_k}{\sqrt{k}}\right)^{m_k}\frac{1}{m_k!}.
\end{align*}
\end{lemma}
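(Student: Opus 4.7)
The plan is to establish the bound pointwise in $\lambda_*\in\cP_{C_*}$ and then sum. Fix $\lambda_*$ and write $X_1=|R_1|e^{i\tau_1}$. Since the factor $m_1!$ in the first sum cancels the $1/m_1!$ coming from the $k=1$ term of $\prod_{k\ge 1}(X_k/\sqrt k)^{m_k}/m_k!$, and since $(2qN/\gamma)^{m_1}/N^{m_1}=(2q/\gamma)^{m_1}$, the $\lambda_*$-term of the first (resp.\ second) sum is
\[
\frac{X_1^{m_1}}{(m_1+N-C_*)!}\prod_{k\ge 2}\!\Big(\frac{X_k}{\sqrt k}\Big)^{m_k}\frac{1}{m_k!}\quad\text{resp.}\quad\frac{e^{im_1\tau_1}(2q/\gamma)^{m_1}}{(N-C_*)!}\prod_{k\ge 2}\!\Big(\frac{X_k}{\sqrt k}\Big)^{m_k}\frac{1}{m_k!}.
\]
Factoring out $e^{im_1\tau_1}\prod_{k\ge 2}(X_k/\sqrt k)^{m_k}/m_k!$ from their difference and applying the triangle inequality, the lemma reduces to the pointwise estimate
\[
\Delta(\lambda_*):=\left|\frac{|R_1|^{m_1}}{(m_1+N-C_*)!}-\frac{(2q/\gamma)^{m_1}}{(N-C_*)!}\right|\ll \frac{(2q/\gamma)^{m_1}\,m_1 C_*}{\sqrt N\,(N-C_*)!}
\]
for each $\lambda_*\in\cP_{C_*}$.

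Set $a:=2q/\gamma$ and $R:=|R_1|$, so that on the prescribed event $R/a\in[N-\sqrt N/(2q),\,N]$. Factoring $a^{m_1}/(N-C_*)!$ out of $\Delta(\lambda_*)$, the task further reduces to
\[
\left|\frac{(R/a)^{m_1}}{\prod_{j=1}^{m_1}(N-C_*+j)}-1\right|\ll \frac{m_1 C_*}{\sqrt N}.
\]
For the denominator, $\log\bigl(N^{-m_1}\prod_{j=1}^{m_1}(N-C_*+j)\bigr)=\sum_{j=1}^{m_1}\log(1+(j-C_*)/N)=O(m_1 C_*/N)$, each summand being $O(C_*/N)$. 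For the numerator, $\log\bigl((R/a)^{m_1}N^{-m_1}\bigr)=m_1\log(1-(N-R/a)/N)=O(m_1/\sqrt N)$ since $0\le N-R/a\le\sqrt N/(2q)$. Exponentiating and combining gives that the ratio in question differs from $1$ by $O(m_1/\sqrt N+m_1 C_*/N)\ll m_1 C_*/\sqrt N$ (using $C_*\ge 1$), as required.

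Summing the pointwise inequality over $\lambda_*\in\cP_{C_*}$ yields the lemma. The whole argument is a short chain of elementary Taylor expansions, so no genuine obstacle arises: the constraint $\lambda_*\in\cP_{C_*}$ forces $m_1\le C_*$ and thus keeps all perturbations of size $O(C_*^2/\sqrt N)$, uniformly small for $N$ large. The only point requiring care is the initial bookkeeping when identifying the common factor and rewriting the $k=1$ contributions of the two sums in parallel form, which is where the cancellation $(2qN/\gamma)^{m_1}/N^{m_1}=(2q/\gamma)^{m_1}$ and the redistribution of $m_1!$ across the factorials become essential.
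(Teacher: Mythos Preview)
Your argument is correct and follows essentially the same approach as the paper: both reduce to the pointwise bound $\big|\frac{|R_1|^{m_1}}{(m_1+N-C_*)!}-\frac{(2q/\gamma)^{m_1}}{(N-C_*)!}\big|\ll \frac{(2q/\gamma)^{m_1}m_1 C_*}{\sqrt N\,(N-C_*)!}$ and then sum over $\lambda_*\in\cP_{C_*}$. The only cosmetic difference is that the paper inserts the intermediate term $N^{-m_1}|X_1|^{m_1}$ and applies the triangle inequality to split into a factorial error and a mean-value error, whereas you take the logarithm of the full ratio directly; both routes are the same elementary Taylor estimate.
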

\begin{proof}
Consider $\lambda_*\in\cP_{C_*}$. By triangle inequality,
\begin{align*}
    &\hspace{0.5cm}\left|\frac{(N-C_*)!}{(m_1+N-C_*)!}\,X_1^{m_1}-N^{-m_1}e^{i\tau_1m_1}\left(\frac{2qN}{\gamma}\right)^{m_1}\right|\\
    &\leq \left|\frac{(N-C_*)!}{(m_1+N-C_*)!}-N^{-m_1}\right|\cdot|X_1|^{m_1}+N^{-m_1}\Big||R_1|^{m_1}-\left(\frac{2qN}{\gamma}\right)^{m_1}\Big|.
\end{align*}
Using  Taylor's expansion and that $m_1\leq C_*$, we have for $N$ large that
$$\bigg|1-\prod_{j=1}^{m_1}\frac{N}{N-C_*+j}\bigg|\leq \exp\left(\sum_{j=1}^{m_1}\frac{C_*-j}{N-C_* +j}\right)-1\ll \frac{m_1C_*}{N-C_*}. $$
In addition, by the mean-value theorem
$$\Bigg||R_1|^{m_1}-\left(\frac{2qN}{\gamma}\right)^{m_1}\Bigg|\ll m_1\sqrt{N}\left(\frac{2qN}{\gamma}\right)^{m_1-1}.$$
Altogether, we have
\begin{align}
    \left|\frac{(N-C_*)!}{(m_1+N-C_*)!}\,X_1^{m_1}-N^{-m_1}e^{i\tau_1m_1}\left(\frac{2qN}{\gamma}\right)^{m_1}\right|\ll \frac{(2q/\gamma)^{m_1}m_1C_*}{\sqrt{N}}.\label{eq:triangle-bound}
\end{align}
Next, we use \eqref{eq:triangle-bound} and the triangle inequality to obtain
\begin{align*}
    &\hspace{0.5cm}\Big|\sum_{\lambda_*\in \cP_{C_*}}\frac{m_1!}{(m_1+N-C_*)!}\prod_{k\geq 1}\left(\frac{X_k}{\sqrt{k}}\right)^{m_k}\frac{1}{m_k!}- \sum_{\lambda_*\in \cP_{C_*}}\left(\frac{2qNe^{i\tau_1}}{\gamma}\right)^{m_1}\frac{1}{(N-C_*)!N^{m_1}}\prod_{k\geq 2}\left(\frac{X_k}{\sqrt{k}}\right)^{m_k}\frac{1}{m_k!}\Big|\\
    &\leq \sum_{\lambda_*\in \cP_{C_*}}\Big|\frac{X_1^{m_1}}{(m_1+N-C_*)!}- \left(\frac{2qNe^{i\tau_1}}{\gamma}\right)^{m_1}\frac{1}{(N-C_*)!N^{m_1}}\Big|\,\prod_{k\geq 2}\left(\frac{X_k}{\sqrt{k}}\right)^{m_k}\frac{1}{m_k!}\\
    &\ll \sum_{\lambda_*\in \cP_{C_*}}\frac{(2q/\gamma)^{m_1}m_1C_*}{\sqrt{N}(N-C_*)!}\,\prod_{k\geq 2}\left(\frac{X_k}{\sqrt{k}}\right)^{m_k}\frac{1}{m_k!},
\end{align*}
    as desired.
\end{proof}

\begin{lemma}\label{lemma:uniformly large}
  There exists $\delta>0$ such that for $C$ large enough,
    \begin{align}
        \widetilde{\P}\left(\left|\sum_{\lambda_*\in\cP_C}\prod_{k\geq 2}\left(\frac{X_k}{\sqrt{k}(2qe^{i\tau_1}/\gamma)^k}\right)^{m_k}\frac{1}{m_k!}\right|>\delta\right)>\delta.\label{eq:unilarge}
    \end{align}
\end{lemma}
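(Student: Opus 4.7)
The plan is to identify the sum inside $|\cdot|$ in \eqref{eq:unilarge}, call it $S_C$, as a partial sum of the power-series coefficients of a generating function, and then prove its convergence in $L^2(\widetilde{\P})$ to a nonvanishing random limit $T$. Setting $\alpha:=(2q/\gamma)e^{i\tau_1}$ (so that $|\alpha|=2q/\gamma>1$) and $Y_k:=X_k/(\sqrt{k}\alpha^k)$, consider
\[
F(z):=\exp\Bigg(\sum_{k\geq 2}Y_k z^k\Bigg)=\sum_{n=0}^\infty b_n z^n,\qquad T:=F(1).
\]
Expanding the exponential by partitions gives $b_n=\sum_{(m_k)_{k\geq 2}:\sum_{k\geq 2}km_k=n}\prod_{k\geq 2}Y_k^{m_k}/m_k!$. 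In \eqref{eq:unilarge}, the sum over $\lambda_*\in\cP_C$ is parameterized by $(m_k(\lambda_*))_{k\geq 2}$ with $\sum_{k\geq 2}km_k\leq C$ (the leftover absorbed into $m_1$, which does not appear in the integrand), so $S_C=\sum_{n=0}^C b_n$.

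The main step is to show $\widetilde{\E}[|T|^2]<\infty$. Since under $\widetilde{\P}$ the phases $\tau_k$ remain i.i.d.\ uniform on $[-\pi,\pi)$ and independent of everything else (the conditioning only constrains $|R_j|$ for $2\leq j\leq M$), writing $Y_k=R_k e^{i(\tau_k-k\tau_1)}/(\sqrt{k}(2q/\gamma)^k)$ and integrating out $\tau_k$ in each factor yields
\[
\widetilde{\E}\big[|F(e^{i\theta})|^2\big]=\prod_{k\geq 2}\widetilde{\E}\big[I_0(t_k|R_k|)\big],\qquad t_k:=\frac{2}{\sqrt{k}}\Big(\frac{\gamma}{2q}\Big)^k,
\]
independently of $\theta$, where $I_0$ denotes the modified Bessel function of the first kind. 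Using $I_0(x)\leq\cosh(x)$ together with the symmetry and unit variance of $R_k$, one obtains $\widetilde{\E}[I_0(t_k|R_k|)]=1+O(t_k^2)$ as soon as $t_k$ is small, while for the finitely many $2\leq k\leq M$ the factor is trivially bounded by $I_0(t_k)$ since $|R_k|\leq 1$ under $\widetilde{\P}$. The geometric decay $t_k=O((\gamma/2q)^k/\sqrt{k})$ gives $\sum_k t_k^2<\infty$, so the product converges.

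The remaining steps are standard. Orthogonality $\widetilde{\E}[b_n\overline{b_{n'}}]=0$ for $n\neq n'$ follows at once from the uniform distribution of the $\tau_k$, and combined with Parseval yields
\[
\sum_{n\geq 0}\widetilde{\E}[|b_n|^2]=\frac{1}{2\pi}\int_{-\pi}^\pi\widetilde{\E}\big[|F(e^{i\theta})|^2\big]\,d\theta=\widetilde{\E}[|T|^2]<\infty,
\]
so $\widetilde{\E}[|S_C-T|^2]=\sum_{n>C}\widetilde{\E}[|b_n|^2]\to 0$ as $C\to\infty$. Since $\sum_{k\geq 2}Y_k$ converges absolutely almost surely (same geometric decay), $T=\exp(\sum_{k\geq 2}Y_k)\neq 0$ almost surely, so one can pick $\delta_0>0$ with $\widetilde{\P}(|T|>2\delta_0)\geq 3/4$. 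By Markov's inequality applied to $\widetilde{\E}[|S_C-T|^2]$, for $C$ large enough $\widetilde{\P}(|S_C-T|>\delta_0)<1/4$, whence $\widetilde{\P}(|S_C|>\delta_0)\geq 1/2$; taking $\delta:=\min(\delta_0,1/2)$ concludes. The main obstacle is the second-moment bound $\widetilde{\E}[|T|^2]<\infty$, which is the only place where the strict inequality $\gamma<2q$ enters essentially, through the geometric dominance of $(\gamma/2q)^k$ over the exponential tail of $R_k$.
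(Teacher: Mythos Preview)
Your proof is correct and follows the same core strategy as the paper: establish $L^2(\widetilde{\P})$ convergence of the partial sums to a nontrivial limit, using the orthogonality of the monomials that comes from the uniform phases $\tau_k$. Your presentation is in fact somewhat cleaner in two respects. First, you identify the limit explicitly as $T=\exp\big(\sum_{k\ge 2}Y_k\big)$, which is manifestly nonzero almost surely (absolute convergence of $\sum Y_k$), whereas the paper only argues that the $L^2$ limit $\xi$ satisfies $\widetilde{\E}[|\xi|^2]>0$. Second, you package the second-moment computation via the Bessel identity $\E_{\tau_k}[\exp(a\cos(\tau_k+\phi))]=I_0(|a|)$ and Parseval, while the paper expands the square directly and bounds $\widetilde{\E}[|X_k|^{2m_k}]$ term by term before summing over $\bm$.

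One small point deserves a line of justification. Your asymptotic $\widetilde{\E}[I_0(t_k|R_k|)]=1+O(t_k^2)$ ``as soon as $t_k$ is small'' handles large $k$, and you dispatch $2\le k\le M$ via $|R_k|\le 1$ under $\widetilde{\P}$; but for the finitely many $k$ just above $M$, $t_k$ need not be small, and one still needs $\E[I_0(t_k|R_k|)]\le\E[\cosh(t_kR_k)]<\infty$, i.e.\ $t_k<\gamma$. This is exactly what the choice of $M$ in \eqref{M} guarantees: $(2q/\gamma)^M>2/\gamma$ gives $(\gamma/2q)^k<\gamma/2\le\gamma\sqrt{k}/2$ for all $k\ge M$, hence $t_k<\gamma$. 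The paper uses \eqref{M} at the analogous step \eqref{d}; in your write-up it would be worth stating this explicitly, since it is the precise place where both $\gamma<2q$ and the definition of $M$ enter.
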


\begin{proof}
    Denote by
    $$\xi_C:=\sum_{\lambda_*\in\cP_C}\prod_{k\geq 2}\left(\frac{X_k}{\sqrt{k}(2qe^{i\tau_1}/\gamma)^k}\right)^{m_k}\frac{1}{m_k!}.$$
    We show that $\xi_C\overset{L^2}{\to}\xi$ for some random variable $\xi\neq 0$ as $C\to\infty$. Since a partition $\lambda_*\in\cP_C$ is uniquely determined by the vector $\bm=\bm(\lambda_*):=(m_2(\lambda_*),\dots,m_C(\lambda_*))$, we can rewrite $\xi_C$ as
$$\xi_C=\sum_{\substack{\bm\in \N_0^{\{2,\dots,C\}}\\ \sum_{j=2}^Cjm_j\leq C}}\prod_{k\geq 2}\left(\frac{X_k}{\sqrt{k}(2qe^{i\tau_1}/\gamma)^k}\right)^{m_k}\frac{1}{m_k!},$$where $\N_0=\{0,1,2,\dots\}$. 
This motivates us to compute the tail $L^2$ norm using \eqref{eq:moments of SE}, that
\begin{align}\widetilde{\E}\Bigg[\bigg|\sum_{\substack{\bm\in \N_0^{\{2,3,\dots\}}\\ \sum_{j=2}^\infty jm_j> C}}\prod_{k\geq 2}\left(\frac{X_k}{\sqrt{k}(2qe^{i\tau_1}/\gamma)^k}\right)^{m_k}\frac{1}{m_k!}\bigg|^2\Bigg]    &=\sum_{\substack{\bm\in \N_0^{\{2,3,\dots\}}\\ \sum_{j=2}^\infty jm_j> C}}\prod_{k\geq 2}\left(\frac{1}{\sqrt{k}(2q/\gamma)^k}\right)^{2m_k}\frac{1}{(m_k!)^2}\widetilde{\E}[|X_k|^{2m_k}]\label{f}\\
&\leq \sum_{\substack{\bm\in \N_0^{\{2,3,\dots\}}\\ \sum_{j=2}^\infty jm_j> C}}\prod_{\substack{k\geq M\\ m_k\neq 0}}\frac{L'\gamma^{-2m_k}}{k^{m_k}(2q/\gamma)^{2km_k}} \binom{2m_k}{m_k}\prod_{2\leq k<M}\frac{1}{(m_k!)^2},\nonumber
\end{align}
where we recall that we conditioned on the event $\{|R_j|\leq 1,\, 2\leq j\leq M\}$ and that $\E[e^{im\tau_k}\overline{e^{in\tau_k}}]=0$ for $m\neq n$. 
Suppose we have proved that
\begin{align}
    \sum_{\substack{\bm\in \N_0^{\{2,3,\dots\}}\\ \sum_{j=2}^\infty jm_j> C}}\prod_{\substack{k\geq M\\ m_k\neq 0}}\frac{L'\gamma^{-2m_k}}{k^{m_k}(2q/\gamma)^{2km_k}} \binom{2m_k}{m_k}\prod_{2\leq k<M}\frac{1}{(m_k!)^2}\to 0.\label{eq:L2 limit}
\end{align}
Then it follows that $\xi_C\overset{L^2}{\to}\xi$ where 
$$\widetilde{\E}[|\xi|^2]=\widetilde{\E}\Bigg[\bigg|\sum_{\substack{\bm\in \N_0^{\{2,3,\dots\}}\\ }}\prod_{\substack{k\geq M\\ m_k\neq 0}}\left(\frac{X_k}{\sqrt{k}(2qe^{i\tau_1}/\gamma)^k}\right)^{m_k}\frac{1}{m_k!}\bigg|^2\Bigg]\geq \frac{1}{C(q)}>0. $$Since $L^2$ convergence implies convergence in probability, \eqref{eq:unilarge} follows. 
Therefore, it suffices to establish \eqref{eq:L2 limit}. Using \eqref{M} and \eqref{eq:gamma}, we have for some $\ee=\ee(\gamma,q)>0$,
\begin{align}\sum_{\substack{\bm\in \N_0^{\{2,3,\dots\}}\\ \sum_{j=2}^\infty jm_j> C}}\prod_{\substack{k\geq M\\ m_k\neq 0}}\frac{L'\gamma^{-2m_k}}{k^{m_k}(2q/\gamma)^{2km_k}} \binom{2m_k}{m_k}\prod_{2\leq k<M}\frac{1}{(m_k!)^2}
\ll
\sum_{\substack{\bm\in \N_0^{\{2,3,\dots\}}\\ \sum_{j=2}^\infty jm_j> C}}\prod_{\substack{k\geq M\\ m_k\neq 0}}\frac{1}{e^{\ee km_k}}\prod_{2\leq k<M}\frac{1}{e^{\ee m_k}},\label{d}
\end{align}
where we may without loss of generality  replace $M$ by $\max(M,L')$. Note that for each $\bm\in \N_0^{\{2,3,\dots\}}$ with $\sum_{j=2}^\infty jm_j> C$, either $m_k\geq C^{1/3}$ for some $k\in[2,C^{1/3})$, or $m_k\geq 1$ for some $k\geq C^{1/3}$. Therefore, for $C>M^3$,
\begin{align*}
   \sum_{\substack{\bm\in \N_0^{\{2,3,\dots\}}\\ \sum_{j=2}^\infty jm_j> C}}\prod_{\substack{k\geq M\\ m_k\neq 0}}\frac{1}{e^{\ee km_k}}\prod_{2\leq k<M}\frac{1}{e^{\ee m_k}} &\leq  e^{-\ee C^{1/3}}\sum_{\substack{\bm\in \N_0^{\{2,3,\dots\}}}}\prod_{\substack{k\geq M}}\frac{1}{e^{\ee km_k}}\prod_{2\leq k<M}\frac{1}{e^{\ee m_k}}\\
   &\leq e^{-\ee C^{1/3}}\prod_{2\leq k<M}\left(\sum_{m_k\geq 0}\frac{1}{e^{\ee m_k}}\right)\prod_{\substack{k\geq M}}\left(\sum_{m_k\geq 0}\frac{1}{e^{\ee km_k}}\right)\ll e^{-\ee C^{1/3}}.
\end{align*}
This proves \eqref{eq:L2 limit} and hence \eqref{eq:unilarge}.
\end{proof}

In the rest of this subsection, we prove the lower bound of Theorem \ref{thm:main2} in the case $\gamma<2q$. First, combining \eqref{eq:extract X_1} and Lemma \ref{lemma:X1 approx} yields
\begin{align}
    &\hspace{0.5cm}\widetilde{\E}\Bigg[\Bigg|\sum_{\substack{\lambda\in \cP_N\nonumber\\ m_1(\lambda)\geq  N-C_*}}\prod_{k\geq 1}\left(\frac{X_k}{\sqrt{k}}\right)^{m_k(\lambda)}\frac{1}{m_k(\lambda)!}\Bigg|^{2q}\bone_{\{|R_1|\in [(2qN-\sqrt{N})/\gamma, 2qN/\gamma]\}}\Bigg]\nonumber\\
    &=\widetilde{\E}\Bigg[|X_1|^{2q(N-C_*)}\Big|\sum_{\lambda_*\in \cP_{C_*}}\frac{m_1(\lambda_*)!}{(m_1(\lambda_*)+N-C_*)!}\prod_{k\geq 1}\left(\frac{X_k}{\sqrt{k}}\right)^{m_k(\lambda_*)}\frac{1}{m_k(\lambda_*)!}\Big|^{2q}\bone_{\{|R_1|\in [(2qN-\sqrt{N})/\gamma, 2qN/\gamma]\}}\Bigg]\nonumber\\
    &\geq \widetilde{\E}\Bigg[|X_1|^{2q(N-C_*)}\bigg|\sum_{\lambda_*\in \cP_{C_*}}\left(\frac{2qNe^{i\tau_1}}{\gamma}\right)^{m_1}\frac{1}{(N-C_*)!N^{m_1}}\prod_{k\geq 2}\left(\frac{X_k}{\sqrt{k}}\right)^{m_k}\frac{1}{m_k!}\bigg|^{2q}\bone_{\{|R_1|\in [(2qN-\sqrt{N})/\gamma, 2qN/\gamma]\}}\Bigg]\nonumber\\
    &\hspace{1.8cm}-C\widetilde{\E}\Bigg[|X_1|^{2q(N-C_*)}\bigg|\sum_{\lambda_*\in \cP_{C_*}}\frac{(2q/\gamma)^{m_1}m_1C_*}{\sqrt{N}(N-C_*)!}\,\prod_{k\geq 2}\left(\frac{X_k}{\sqrt{k}}\right)^{m_k}\frac{1}{m_k!}\bigg|^{2q}\bone_{\{|R_1|\in [(2qN-\sqrt{N})/\gamma, 2qN/\gamma]\}}\Bigg],\label{eq:two terms}
\end{align}where $C$ is the implied constant in Lemma \ref{lemma:X1 approx} and in the last line we assumed $q\leq 1/2$, and the case $q\in(1/2,1]$ follows similarly by Minkowski's inequality. 
In the first expectation of \eqref{eq:two terms}, the term
$$\bigg|\sum_{\lambda_*\in \cP_{C_*}}\left(\frac{2qNe^{i\tau_1}}{\gamma}\right)^{m_1}\frac{1}{(N-C_*)!N^{m_1}}\prod_{k\geq 2}\left(\frac{X_k}{\sqrt{k}}\right)^{m_k}\frac{1}{m_k!}\bigg|^{2q}$$
is independent of the rest. This yields
\begin{align}
    &\hspace{0.5cm}\widetilde{\E}\Bigg[\bigg|\sum_{\lambda_*\in \cP_{C_*}}\left(\frac{2qNe^{i\tau_1}}{\gamma}\right)^{m_1}\frac{1}{(N-C_*)!N^{m_1}}\prod_{k\geq 2}\left(\frac{X_k}{\sqrt{k}}\right)^{m_k}\frac{1}{m_k!}\bigg|^{2q}|X_1|^{2q(N-C_*)}\bone_{\{|R_1|\in [(2qN-\sqrt{N})/\gamma, 2qN/\gamma]\}}\Bigg]\nonumber\\
    &=\widetilde{\E}\Bigg[\bigg|\sum_{\lambda_*\in \cP_{C_*}}\left(\frac{2qNe^{i\tau_1}}{\gamma}\right)^{m_1}\frac{1}{(N-C_*)!N^{m_1}}\prod_{k\geq 2}\left(\frac{X_k}{\sqrt{k}}\right)^{m_k}\frac{1}{m_k!}\bigg|^{2q}\Bigg]\E\Bigg[|X_1|^{2q(N-C_*)}\bone_{\{|R_1|\in [(2qN-\sqrt{N})/\gamma, 2qN/\gamma]\}}\Bigg]\nonumber\\
    &=\frac{1}{(N-C_*)!^{2q}}\widetilde{\E}\Bigg[\bigg|\sum_{\lambda_*\in \cP_{C_*}}\left(\frac{2qe^{i\tau_1}}{\gamma}\right)^{m_1}\prod_{k\geq 2}\left(\frac{X_k}{\sqrt{k}}\right)^{m_k}\frac{1}{m_k!}\bigg|^{2q}\Bigg]\E\Bigg[|X_1|^{2q(N-C_*)}\bone_{\{|R_1|\in [(2qN-\sqrt{N})/\gamma, 2qN/\gamma]\}}\Bigg].\label{eq:split expectation}
\end{align}
Here, we supply a simple lower bound for the latter expectation in \eqref{eq:split expectation}:
\begin{align*}
    \E\Bigg[|X_1|^{2q(N-C_*)}\bone_{\{|R_1|\in [(2qN-\sqrt{N})/\gamma, 2qN/\gamma]\}}\Bigg]&\gg \int_{(2qN-\sqrt{N})/\gamma}^{2qN/\gamma} x^{2q(N-C_*)}e^{-\gamma x}\d x\\
    &= \gamma^{-2q(N-C_*)}\int_{2qN-\sqrt{N}}^{2qN} y^{2q(N-C_*)}e^{-y}\d y\\
    &\gg \sqrt{N}\left(\frac{2qN}{\gamma}\right)^{2q(N-C_*)}e^{-2qN},
\end{align*}
where in the last step we use the inequality $y^{2q(N-C_*)}e^{-y}\gg (2qN)^{2q(N-C_*)}e^{-2qN}$ which is a consequence of  Taylor's expansion  when $N$ is large enough compared to $C_*$.

We next bound the first expectation in \eqref{eq:split expectation}. Since for $\lambda_*\in \cP_{C_*}$, $m_1=C_*-\sum km_k$, we have
$$\widetilde{\E}\Bigg[\bigg|\sum_{\lambda_*\in \cP_{C_*}}\left(\frac{2qe^{i\tau_1}}{\gamma}\right)^{m_1}\prod_{k\geq 2}\left(\frac{X_k}{\sqrt{k}}\right)^{m_k}\frac{1}{m_k!}\bigg|^{2q}\Bigg]
=
\left(\frac{2q}{\gamma}\right)^{2qC_*}\widetilde{\E}\Bigg[\bigg|\sum_{\lambda_*\in \cP_{C_*}}\prod_{k\geq 2}\left(\frac{X_k}{\sqrt{k}(2qe^{i\tau_1}/\gamma)^{km_k}}\right)^{m_k}\frac{1}{m_k!}\bigg|^{2q}\Bigg].$$
In view of Lemma \ref{lemma:uniformly large}, there exists $\delta_0>0$ (independent of $C_*$) such that $$\widetilde{\E}\Bigg[\bigg|\sum_{\lambda_*\in \cP_{C_*}}\prod_{k\geq 2}\left(\frac{X_k}{\sqrt{k}(2qe^{i\tau_1}/\gamma)^{km_k}}\right)^{m_k}\frac{1}{m_k!}\bigg|^{2q}\Bigg]>\delta_0$$ for any $C_*$ large enough. In this case,
\begin{align*}
    &\hspace{0.5cm}\widetilde{\E}\Bigg[\bigg|\sum_{\lambda_*\in \cP_{C_*}}\left(\frac{2qNe^{i\tau_1}}{\gamma}\right)^{m_1}\frac{1}{(N-C_*)!N^{m_1}}\prod_{k\geq 2}\left(\frac{X_k}{\sqrt{k}}\right)^{m_k}\frac{1}{m_k!}\bigg|^{2q}|X_1|^{2q(N-C_*)}\bone_{\{|R_1|\in [(2qN-\sqrt{N})/\gamma, 2qN/\gamma]\}}\Bigg]\\
    &\gg\frac{1}{(N-C_*)!^{2q}} \left(\frac{2q}{\gamma}\right)^{2qC_*}\E\Bigg[|X_1|^{2q(N-C_*)}\bone_{\{|R_1|\in [(2qN-\sqrt{N})/\gamma, 2qN/\gamma]\}}\Bigg]\\
    &\gg\frac{1}{(N-C_*)!^{2q}} \left(\frac{2q}{\gamma}\right)^{2qC_*}\sqrt{N}\left(\frac{2qN}{\gamma}\right)^{2q(N-C_*)}e^{-2qN}\\&\gg \phi(N).
\end{align*}
On the other hand, for a fixed $C_*$, the second term of \eqref{eq:two terms} is bounded by 
\begin{align*}
    &\hspace{0.5cm}\widetilde{\E}\Bigg[|X_1|^{2q(N-C_*)}\bigg|\sum_{\lambda_*\in \cP_{C_*}}\frac{(2q/\gamma)^{m_1}m_1C_*}{\sqrt{N}(N-C_*)!}\,\prod_{k\geq 2}\left(\frac{X_k}{\sqrt{k}}\right)^{m_k}\frac{1}{m_k!}\bigg|^{2q}\bone_{\{|R_1|\in [(2qN-\sqrt{N})/\gamma, 2qN/\gamma]\}}\Bigg]\\
    &\ll \left(\frac{2qN}{\gamma}\right)^{2q(N-C_*)}\widetilde{\E}\left[\bigg|\sum_{\lambda_*\in \cP_{C_*}}\frac{(2q/\gamma)^{m_1}m_1C_*}{\sqrt{N}(N-C_*)!}\,\prod_{k\geq 2}\left(\frac{X_k}{\sqrt{k}}\right)^{m_k}\frac{1}{m_k!}\bigg|^{2q}\bone_{\{|R_1|\in [(2qN-\sqrt{N})/\gamma, 2qN/\gamma]\}}\right]\\
    &\ll_{C_*}\left(\frac{2qN}{\gamma}\right)^{2q(N-C_*)}\left(\frac{(2q/\gamma)^{C_*}C_*^2}{\sqrt{N}(N-C_*)!}\right)^{2q}\\
    &=o_{C_*}(\phi(N))
\end{align*}
for $N$ large enough.\footnote{The term $o_{C^*}(\varphi(N))$ means that it can be bounded by the product of a constant depending on $C^*$ and a term that is $o(\varphi(N))$.} 

 Altogether, we conclude from \eqref{eq:two terms}  that for some constant $M'=M'(\gamma,q)$ independent of $C_*$ and $N$,
 $$\widetilde{\E}\Bigg[\Bigg|\sum_{\substack{\lambda\in \cP_N\nonumber\\ m_1(\lambda)\geq  N-C_*}}\prod_{k\geq 1}\left(\frac{X_k}{\sqrt{k}}\right)^{m_k}\frac{1}{m_k!}\Bigg|^{2q}\bone_{\{|R_1|\in [(2qN-\sqrt{N})/\gamma, 2qN/\gamma]\}}\Bigg]\geq \frac{\phi(N)}{M'}-o_{C_*}(\phi(N)).$$
Pick $\ee=1/(2M'e^{LM})$ in Lemma \ref{lemma:tail small} and let $C_*$ be the implied constant $C$ therein. Recalling \eqref{eq:conditioning}, we have
\begin{align*}
    \E[|A_N|^{2q}]&\geq e^{-LM}\left(\frac{\phi(N)}{M'}-o_{C_*}(\phi(N))\right)-\ee\phi(N)\gg \phi(N)
\end{align*}
as $N\to\infty$. This completes the proof of the lower bound.

\begin{remark}
    \label{rem:p}
    The same arguments in this section carry through if we replace the rotationally invariant complex inputs $\{X_k\}_{k\geq 1}$ with real ones. 
    Suppose that $\{X_k\}_{k\geq 1}$ forms a sequence of i.i.d.~two-sided symmetric shifted {exponential} random variables with unit variance. That is, $\P(|X_k|\geq u)=\exp(-\gamma(u-c_\gamma))\wedge 1$ for $u\geq 0$, where $\gamma\in(0,2q)$ and $c_\gamma=\log({\gamma^2}/{2})/\gamma$. Note that the same moment asymptotics \eqref{eq:moments of SE} still applies. The only argument in the current section that depends on the rotation invariance is \eqref{f}, whereas in the real case, cross-terms may exist. Nevertheless, the cross-terms can be estimated similarly: the left-hand side of \eqref{f} can be bounded by
\begin{align*}
    \widetilde{\E}\Bigg[\bigg|\sum_{\substack{\bm\in \N_0^{\{2,3,\dots\}}\\ \sum_{j=2}^\infty jm_j> C}}\prod_{k\geq 2}\left(\frac{X_k}{\sqrt{k}(2q/\gamma)^k}\right)^{m_k}\frac{1}{m_k!}\bigg|^2\Bigg]  &\ll \sum_{\substack{\bm,\bn\in \N_0^{\{2,3,\dots\}}\\ \min\{\sum_{j=2}^\infty jm_j,\sum_{j=2}^\infty jn_j\}> C\\ \forall j,\,m_j+n_j\in 2\N_0}}\prod_{\substack{k\geq M\\ m_k\neq 0}}\frac{1}{e^{\ee k(m_k+n_k)}}\prod_{2\leq k<M}\frac{1}{e^{\ee (m_k+n_k)}}\\
    &\ll e^{-2\ee C^{1/3}},
\end{align*} for $C>M^3$ (possibly with a smaller $\ee$ and a larger $M$ than \eqref{d}), 
    which replaces \eqref{d} and the arguments that follow.
\end{remark}

\subsection{The case \texorpdfstring{$\gamma=2q$}{}}\label{sec:gamma=2q}

Fix $q\in(0,1]$. Our goal in this section is to prove \eqref{eq:gamma=2q result} that under the case (EXP) with $\gamma=2q$, 
$$     \E[|A_N|^{2q}]\asymp \frac{N^{1-q+q^2/2}}{(1+(1-q)\sqrt{\log N})^q}\asymp\begin{cases}
        N^{1-q+q^2/2}(\log N)^{-q/2}&\text{ if }q\in(0,1);\\
        \sqrt{N}&\text{ if }q=1.
    \end{cases}.$$
Let us recall the philosophy for the case $\gamma=2q$ that the main contribution to $A_N$ from the sum over partitions \eqref{eq:AN expression} is random and depends on the value of $|X_1|$. More precisely, on the event that $|X_1|$ is close to $ m$, we expect that the partitions $\lambda\in\cP_N$ with $m_1(\lambda)=m+O(\sqrt{N})$ dominate $A_N$. After applying the decomposition
$$\E[|A_N|^{2q}]=\sum_{m=0}^\infty \E[|A_N|^{2q}\bone_{|X_1|\in[m,m+1)}],$$
we will show that for $m\leq N-2$,
\begin{align}
    \E\big[|A_N|^{2q}\bone_{|X_1|\in[m,m+1)}\big]\asymp \frac{m^{-q+q^2/2}}{(1+(1-q)\sqrt{\log(N-m)})^{q}}.\label{s}
\end{align} The desired asymptotics then follows from Karamata's theorem (Theorem 1.5.11 of \citep{bingham1989regular}):
\begin{align}
    \sum_{m=0}^{N-2}\frac{m^{-q+q^2/2}}{(1+(1-q)\sqrt{\log(N-m)})^{q}}\asymp \frac{N^{1-q+q^2/2}}{(1+(1-q)\sqrt{\log N})^q}.\label{eq:sum formula}
\end{align}

As illustrated in Section \ref{sec:main ideas}, a key step towards \eqref{s} is reproducing the same multiplicative chaos approach from ($q$-UNIV) case to estimate \eqref{g} after conditioning on $R_1$. For this reason, we first prepare ourselves with a stronger version of Proposition \ref{prop:superC-asymp-truncated-moments}, which is the focus of Section \ref{sec:str}. When performing the multiplicative chaos analysis, an essential ingredient is establishing uniform estimates of a weighted mass of truncated multiplicative chaos (cf.~\eqref{b}). This will be the goal of Sections \ref{sec:partial mass} and \ref{sec: weighted partial mass}.

In this section, we will use $C_1,C_2,\dots$ to denote large positive constants (greater than one) that only depend on $q$ and each other, \emph{which may be different from constants defined in other sections}. Throughout, we fix $q\in(0,1]$, and the notation $\ll$ will always denote asymptotic constants that depend only on $q$.

\subsubsection{Strengthening Proposition \texorpdfstring{\ref{prop:superC-asymp-truncated-moments}}{}}\label{sec:str}
In this section, we show that Proposition \ref{prop:superC-asymp-truncated-moments} also extends to the (EXP) case with $\gamma=2q$, if we restrict to partitions without ones (i.e., $m_1(\lambda)=0$). Recall \eqref{eq:ANM}.

\begin{proposition}\label{prop:M*2}
Fix an integer $M_*\geq 2$ larger than some constant depending only on the distribution of $X_k$. For any large $N$ and any $q\in(0,1]$, under (EXP) with $\gamma=2q$ we have \begin{equation*}\E[|A_{N,M_*}|^{2q}]\asymp \left(\frac{1}{1+(1-q)\sqrt{\log N}}\right)^q\asymp\begin{cases}
        (\log N)^{-q/2}&\text{ if }q\in(0,1);\\
        1&\text{ if }q=1.
    \end{cases}
    \end{equation*}    
\end{proposition}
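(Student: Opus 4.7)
The plan is to recycle the proof scheme of Proposition \ref{prop:superC-asymp-truncated-moments}, checking step by step that every ingredient remains valid when ($q$-LT) is replaced by (EXP) with $\gamma = 2q$, at the price of taking $M_*$ somewhat larger (still depending only on $q$ and the law of $X_k$). The proof of Proposition \ref{prop:superC-asymp-truncated-moments} is modular: the upper bound combines Proposition \ref{SZ-prop3.1} (reducing $\E[|A_{N,M_*}|^{2q}]$ to moments of truncated chaos integrals) with Proposition \ref{SZ-prop3.2} (bounding those moments), while the lower bound combines Proposition \ref{SZ-Prop-8.1} (an analogous reduction) with Proposition \ref{SZ-Prop-8.2}. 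Propositions \ref{SZ-prop3.2} and \ref{SZ-Prop-8.2}, together with the barrier, change-of-measure, and Berry--Esseen machinery they invoke, rely only on Laplace functionals of the form $\E[\exp(2r^k R_k \cos\tau_k/\sqrt{k})]$ (and close variants) being finite for $k \ge M_*$. Under (EXP) with $\gamma = 2q$, this amounts to $2r^k/\sqrt{k} < 2q$; since $r \le e^{1/K}$, it suffices to take $M_* > e^2/q^2$, and these two propositions then apply verbatim.

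The delicate point is Proposition \ref{SZ-prop3.1}, whose truncation step uses the per-part gain $(2q/\gamma)^{m_k}$ to discard partitions with $\lambda_1 \le \sqrt{N}/C(q)$. At $\gamma = 2q$ this factor collapses to $1$, so the bound written in the original proof becomes too weak. My plan is to retain the algebraic factor $k^{-m_k/2}$ that was absorbed into a constant in the original proof and to use instead
\[
\E[|a(\lambda)|^{2q}]^{1/(2q)} \ll \prod_{k:\, m_k > 0} \frac{C^{1/(2q)}}{k^{m_k/2}}.
\]
For $\lambda \in \cP_{N,M_*}^0$ with $\lambda_1 \le \sqrt{N}/C(q)$, every part satisfies $k \ge M_*$, and the pigeonhole argument of \eqref{eq:o} gives $\sum_k m_k \ge C(q)\sqrt{N}$; hence the product is bounded by $M_*^{-C(q)\sqrt{N}/2}$. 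Using $p_N \ll e^{L\sqrt{N}}$ and the crude count of at most $\sqrt{N}/C(q)$ distinct parts, the whole sum over truncated partitions is then controlled by
\[
p_N \, C^{\sqrt{N}/C(q)} \, M_*^{-C(q)\sqrt{N}/2} \asymp \left(\frac{e^{O(1)}}{M_*^{C(q)/2}}\right)^{\sqrt{N}},
\]
which is $\ll 1/N$ once $M_*$ is chosen large enough in terms of $q$. The remainder of Proposition \ref{SZ-prop3.1} (the Parseval step handling each dyadic range $N/2^j < \lambda_1 \le N/2^{j-1}$) uses only the crude second moment bound $\E[|X_k|^{2m}] \ll (2m)!/(2q)^{2m}$, which holds for any fixed $\gamma > 0$ and therefore transfers without change. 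The analogous reduction in Proposition \ref{SZ-Prop-8.1} is based on a Khintchine-type inequality for sub-exponential random variables, which also remains applicable.

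The main obstacle is precisely the truncation step just described: in the ($q$-LT) regime, the strict inequality $\gamma > 2q$ produces immediate exponential suppression per part, whereas at the boundary $\gamma = 2q$ one must instead exploit the finer algebraic decay $k^{-m_k/2}$ together with the structural fact that partitions in $\cP_{N,M_*}^0$ with small largest part necessarily contain many parts of size at least $M_*$. Once this bookkeeping is handled and $M_*$ is enlarged accordingly, all remaining estimates transfer directly, and the asymptotic $(1 + (1-q)\sqrt{\log N})^{-q}$ of Proposition \ref{prop:superC-asymp-truncated-moments} is recovered, with implicit constants now depending also on $q$ through the choice of $M_*$.
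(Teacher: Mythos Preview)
Your proposal is correct and follows essentially the same route as the paper. Both arguments observe that the only step in the proof of Proposition~\ref{prop:superC-asymp-truncated-moments} requiring modification is the truncation in Proposition~\ref{SZ-prop3.1}, and both replace the lost exponential factor $(2q/\gamma)^{m_k}$ by the algebraic decay $k^{-m_k/2}$ combined with the pigeonhole bound $\sum_k m_k \ge C(q)\sqrt{N}$ for partitions in $\cP^0_{N,M_*}$ with $\lambda_1\le\sqrt{N}/C(q)$.

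The only cosmetic difference is in how the free constant is allocated: the paper keeps $M_*\ge 2$ and bounds $k^{-qm_k}\le 2^{-qm_k}$, then takes the dyadic cutoff constant $C_1$ large to beat $p_N\cdot C_2^{\sqrt{N}/C_1}$; you instead keep $C(q)$ fixed and take $M_*$ large so that $M_*^{-C(q)\sqrt{N}/2}$ dominates. Both are valid and fit within the stated hypothesis on $M_*$. One small point: your displayed bound $\E[|a(\lambda)|^{2q}]^{1/(2q)}\ll\prod C^{1/(2q)}/k^{m_k/2}$ silently absorbs the factor $m_k^{1/(4q)-1/2}$, which is only bounded when $q\ge 1/2$; this is harmless since the upper bound argument begins with the H\"older reduction to $q\in[1/2,1]$ (as in the opening line of Section~\ref{sec:ub}), but it is worth making explicit.
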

\begin{proof}
The proof is very similar to the proof of Proposition \ref{prop:superC-asymp-truncated-moments}, and the only modification needed lies in the upper bound of Proposition \ref{SZ-prop3.1}, where we denote by $J=\ceil{\log(C_1\sqrt{N})/\log 2}$. Indeed, \eqref{eq:a lambda moments} becomes
$$\E[|a(\lambda)|^{2q}]\ll \prod_{k:m_k>0} \frac{C_2}{k^{qm_k}}.$$
Since $k\geq M_*\geq 2$, \eqref{eq:holder p hat} becomes 
$$\E\Bigg[\bigg|\sum_{\substack{|\lambda|=N\\ \lambda_1\leq N/2^J}}a(\lambda)\bigg|^{2q}\Bigg]^{1/(2q)}\le \sum_{\substack{|\lambda|=N\\ \lambda_1\leq \sqrt{N}/C_1}}\prod_{k:m_k>0} \frac{C_2}{k^{qm_k}} \le |\cP_N| C_2^{\sqrt{N}/C_1}2^{-q\sum_{k=M_*}^{\sqrt{N}/C_1}m_k}.$$
Using \eqref{eq:o}, we obtain
$$\E\Bigg[\bigg|\sum_{\substack{|\lambda|=N\\ \lambda_1\leq N/2^J}}a(\lambda)\bigg|^{2q}\Bigg]^{1/(2q)}\ll \frac{1}{N}$$
for $C_1$ picked large enough.
    The rest follows line by line as in the proof of Proposition \ref{prop:superC-asymp-truncated-moments}.
\end{proof}

\begin{proposition}\label{prop:strengthened}
Suppose that (EXP) holds with $\gamma=2q$. It holds that
    $$\E\Bigg[\bigg|\sum_{\substack{\lambda\in\cP_{N}\\ m_1(\lambda)=0}}\prod_{k\geq 2}\left(\frac{X_k}{\sqrt{k}}\right)^{m_k}\frac{1}{m_k!}\bigg|^{2q}\Bigg]\asymp \frac{1}{(1+(1-q)\sqrt{\log N})^q}\asymp\begin{cases}
        (\log N)^{-q/2}&\text{ if }q\in(0,1);\\
        1&\text{ if }q=1.
    \end{cases}$$
\end{proposition}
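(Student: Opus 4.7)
The plan is to deduce Proposition \ref{prop:strengthened} from Proposition \ref{prop:M*2} via an argument that closely parallels Section \ref{sec:LT reduction}'s deduction of Theorem \ref{thm:main}\,(i) from Proposition \ref{prop:superC-asymp-truncated-moments}. The key point is that the sum in Proposition \ref{prop:strengthened} already excludes partitions with $m_1(\lambda) \neq 0$, so we only need to reduce the additional contributions from the small indices $m_2, \dots, m_{M_*-1}$. Under (EXP) with $\gamma = 2q$, the moment asymptotic \eqref{eq:moments of SE} yields $\E[|a(\lambda)|^{2q}] \ll \prod_{k:m_k>0} C m_k^{1/2-q} k^{-q m_k}$, which replaces \eqref{eq:moment bound 1} now that the factor $(2q/\gamma)^{2q m_k} = 1$ trivializes.

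For the upper bound, I would decompose the sum according to the values $\bm := (m_2, \dots, m_{M_*-1})$ in the natural analog of $\cP^{<}_{N, M_*}$ from \eqref{eq:P def} (with index range $2 \leq k < M_*$), apply concavity when $q \leq 1/2$ or Minkowski's inequality when $q > 1/2$, and factor the expectation using independence. Inserting Proposition \ref{prop:M*2} on the residual $A_{N - \sum_{j=2}^{M_*-1} j m_j, M_*}$ term yields a sum that I would split depending on whether $\sum_{j=2}^{M_*-1} j m_j \leq N/2$ or not. The first regime provides the main term through the convergence
$$\prod_{k=2}^{M_*-1} \sum_{m_k \geq 0} \frac{m_k^{1/2-q}}{k^{q m_k}} < \infty \qquad (\text{since } k \geq 2),$$
and the second regime is negligible by an argument analogous to \eqref{eq:second case}.

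For the lower bound, I would apply the symmetrization trick of \eqref{eq:symm} iteratively for $k = 2, 3, \dots, M_* - 1$ via the rotation invariance $X_k \ddd e^{i \pi / 2^L} X_k$ (rather than for $k = 1, \dots, M_* - 1$ as in the proof of Proposition \ref{prop:superC-asymp-truncated-moments}), retaining only partitions where $2^L \mid m_k(\lambda)$ for each $2 \leq k < M_*$. Next, I would condition on the event $\{|R_j| < \ee_0 : 2 \leq j < M_*\}$, where $\ee_0 \in (\max\{0, c_\gamma\}, 1)$ is chosen small enough so that this event has positive probability (possible since $|R_j|$ has a non-degenerate distribution under (EXP)). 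On this conditional event, the contribution from partitions with some $m_k \neq 0$ for $2 \leq k < M_*$ is controlled by $\prod_k \ee_0^{2 q m_k}$ with each nonzero $m_k \geq 2^L$, which can be made strictly smaller than $C^{-1} \E[|A_{N, M_*}|^{2q}]$ by choosing $L$ sufficiently large, leaving the $\bm = \mathbf{0}$ contribution $A_{N, M_*}$ as the dominant term.

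The main obstacle is mostly book-keeping: verifying the tail-sum convergence uniformly in $N$ in the upper bound, and aligning the constants in the lower bound so as to absorb the off-diagonal partitions into a small multiple of $\E[|A_{N, M_*}|^{2q}]$. Since $(2q/\gamma)^{2q m_k} = 1$ at this criticality, all the decay must come from the $k^{-q m_k}$ factors, so the cutoff $k \geq M_* \geq 2$ is essential — but this is already compatible with the exclusion of $m_1 \neq 0$ partitions built into the statement of Proposition \ref{prop:strengthened}.
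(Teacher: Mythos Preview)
Your proposal is correct and follows essentially the same approach as the paper's proof, which likewise reduces to Proposition \ref{prop:M*2} by adapting the argument of Section \ref{sec:LT reduction} with the observation that, since $m_1(\lambda)=0$ is already imposed, all remaining small indices satisfy $k\geq 2$ and hence $k^{-qm_k}$ alone furnishes the geometric decay. The paper's upper bound presentation is marginally terser (absorbing the $C\,m_k^{1/2-q}$ prefactor into $k^{-qm_k/2}$ before summing), and for the lower bound it simply states that ``the lower bound follows from precisely the same proof in Section \ref{sec:LT reduction}'', which is exactly the symmetrization-and-conditioning argument you outline.
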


\begin{proof}Recall \eqref{eq:P def}. 
    We follow a similar argument that deduced Theorem \ref{thm:main} equation \eqref{eq:asymp LT} from Proposition \ref{prop:superC-asymp-truncated-moments} in Section \ref{sec:LT reduction}, using now Proposition \ref{prop:M*2} instead. For the upper bound when $q\leq 1/2$ (and similarly for $q>1/2$ using instead Minkowski's inequality), by Proposition \ref{prop:M*2},
\begin{align*}
    &\hspace{0.5cm}\E\Bigg[\bigg|\sum_{\substack{\lambda\in\cP_{N}\\ m_1(\lambda)=0}}\prod_{k\geq 2}\left(\frac{X_k}{\sqrt{k}}\right)^{m_k}\frac{1}{m_k!}\bigg|^{2q}\Bigg]\\
    &\ll \sum_{\bm\in\cP_{N,M_*}^{<}}\E\left[\big|A_{N-\sum_{j=1}^{M_*-1}jm_j,M_*}\big|^{2q}\right]\prod_{k=1}^{M_*-1}\E\left[\left|\left(\frac{X_k}{\sqrt{k}}\right)^{m_k}\frac{1}{m_k!}\right|^{2q}\right]+ \sum_{\lambda:\lambda_1<M_*}\prod_{k=1}^{M_*-1}\E\left[\left|\left(\frac{X_k}{\sqrt{k}}\right)^{m_k}\frac{1}{m_k!}\right|^{2q}\right].\\
    &\ll \sum_{\bm\in\cP_{N,M_*}^{<}}\frac{1}{(1+(1-q)\sqrt{\log(N-\sum_{j=1}^{M_*-1}jm_j)})^{q}}\prod_{k=2}^{ M_*-1}C_3m_k^{1/2-q}k^{-qm_k}+\sum_{\lambda:\lambda_1<M_*}\prod_{k=2}^{M_*-1}C_3m_k^{1/2-q}k^{-qm_k}.
\end{align*}
Since the product ranges in $k\geq 2$, the second sum and the first sum restricted to $\lambda$ such that $\sum_{j=1}^{M_*-1}jm_j> N/2$ can be controlled in the same way as before by $1/N$. The rest is then 
\begin{align*}&\hspace{0.5cm}\sum_{\substack{\bm\in\cP_{N,M_*}^{<}\\ \sum_{j=1}^{M_*-1}jm_j\leq N/2}}\frac{1}{(1+(1-q)\sqrt{\log(N-\sum_{j=1}^{M_*-1}jm_j)})^{q}}\prod_{k=2}^{M_*-1}C_3m_k^{1/2-q}k^{-qm_k}\\
&\ll \frac{1}{(1+(1-q)\sqrt{\log N})^q}\sum_{\substack{\bm\in\cP_{N,M_*}^{<}\\ \sum_{j=1}^{M_*-1}jm_j\leq N/2}}\prod_{k=2}^{M_*-1}k^{-qm_k/2}\\
    &\leq \frac{1}{(1+(1-q)\sqrt{\log N})^q} \prod_{k=2}^{M_*-1}\sum_{m_k=0}^\infty k^{-qm_k/2}\ll \frac{1}{(1+(1-q)\sqrt{\log N})^q}.
\end{align*}
The lower bound follows from precisely the same proof in Section \ref{sec:LT reduction}.
\end{proof}

\subsubsection{Low moments of partial mass of truncated chaos}\label{sec:partial mass}
The goal of this section is to establish the following two propositions. Fix large constants $C_4,C_5$ to be determined. The reason why we introduce these constants will become apparent later in Section \ref{sec: weighted partial mass}.

\begin{proposition}\label{prop:uniform}
    It holds that uniformly for $r\in[e^{-C_5/K},e^{C_5/K}]$, 
    $$ \E\bigg[\bigg(\int_{|\theta|\leq \frac{1}{C_4\sqrt{N}}}|{F}_{K,M_*}(re^{i\theta})|^2\d\theta \bigg)^q\bigg]\gg \frac{K_r^qN^{-q+q^2/2}}{(1+(1-q)\sqrt{\log N})^{q}}.$$
\end{proposition}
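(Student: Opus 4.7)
The plan is to exploit a two-scale decomposition $F_{K,M_*}(z)=\overline{F}(z)\underline{F}(z)$, where
\begin{align*}
\overline{F}(z):=\exp\bigg(\sum_{k=M_*}^{\sqrt{N}}\frac{X_kz^k}{\sqrt{k}}\bigg),\qquad \underline{F}(z):=\exp\bigg(\sum_{k=\sqrt{N}+1}^{K}\frac{X_kz^k}{\sqrt{k}}\bigg)
\end{align*}
are independent. On the small interval $|\theta|\le 1/(C_4\sqrt{N})$, the phases $e^{ik\theta}$ for $k\le\sqrt{N}$ vary only by $O(1/C_4)$, so $|\overline{F}(re^{i\theta})|^2$ should be essentially constant in $\theta$ and can be pulled out of the integral, while $\underline{F}$ retains a genuine log-correlated structure at scales $k>\sqrt{N}$ which, after a suitable rescaling, is of the ($q$-LT) type.

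Setting $G(\theta):=2\sum_{M_*\le k\le\sqrt{N}}r^k R_k(\cos(\tau_k+k\theta)-\cos\tau_k)/\sqrt{k}$, so that $|\overline{F}(re^{i\theta})|^2=e^{G(\theta)}|\overline{F}(r)|^2$, I would introduce the tilted measure $\Q$ with $\d\Q/\d\P\propto|\overline{F}(r)|^{2q}$. A direct computation gives $\var^{\P}[G(\theta)]=8\sum_{k\le\sqrt{N}}r^{2k}\sin^2(k\theta/2)/k\ll\theta^2N=O(C_4^{-2})$ on the range considered, and the same bound holds up to constants under $\Q$. A generic chaining argument applied to the sub-exponential increments of $G$ under $\Q$, in the spirit of the proof of Proposition \ref{SZ-prop5.2}, then yields, for $C_4$ large enough,
\begin{align*}
\Q\Big(\sup_{|\theta|\le 1/(C_4\sqrt{N})}|G(\theta)|\le 1\Big)\ge \tfrac{1}{2}.
\end{align*}
Calling this event $\mathcal{A}$ (which depends only on the variables in $\overline{F}$), one has $|\overline{F}(re^{i\theta})|^2\ge e^{-1}|\overline{F}(r)|^2$ on $\mathcal{A}$ uniformly in $\theta$, and by the independence of $\overline{F}$ and $\underline{F}$,
\begin{align*}
\E\bigg[\bigg(\int_{|\theta|\le 1/(C_4\sqrt{N})}|F_{K,M_*}|^2\d\theta\bigg)^q\bigg]\ge e^{-q}\,\E[\bone_{\mathcal{A}}|\overline{F}(r)|^{2q}]\cdot\E\bigg[\bigg(\int_{|\theta|\le 1/(C_4\sqrt{N})}|\underline{F}(re^{i\theta})|^2\d\theta\bigg)^q\bigg].
\end{align*}
The choice of $\Q$ ensures $\E[\bone_{\mathcal{A}}|\overline{F}(r)|^{2q}]\ge\tfrac{1}{2}\E[|\overline{F}(r)|^{2q}]$, and a Laplace functional computation using Lemma \ref{SZ-lemma2.2} gives $\E[|\overline{F}(r)|^{2q}]=\prod_{k=M_*}^{\sqrt{N}}\E[\exp(2qr^kR_k\cos\tau_k/\sqrt{k})]\asymp\exp(\sum q^2r^{2k}/k)\asymp N^{q^2/2}$.

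For the second factor, I would apply the change of variable $\phi=\pi C_4\sqrt{N}\,\theta$ to rewrite
\begin{align*}
\int_{|\theta|\le 1/(C_4\sqrt{N})}|\underline{F}(re^{i\theta})|^2\d\theta=\frac{1}{\pi C_4\sqrt{N}}\int_{-\pi}^{\pi}\big|\underline{F}(re^{i\phi/(\pi C_4\sqrt{N})})\big|^2\d\phi.
\end{align*}
Since $\underline{F}$ involves only scales $k>\sqrt{N}$, the rescaled frequencies $k/(\pi C_4\sqrt{N})$ run from an $O(1)$ value up to $\asymp K_r/\sqrt{N}$, so the rescaled chaos inherits a ($q$-LT)-type log-correlated structure of effective depth $\log(K_r/\sqrt{N})\asymp\log N$ (in the regime of interest where $K_r\asymp K$ is of polynomial order in $N$). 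Adapting the Hölder-based lower bound of Propositions \ref{SZ-Prop-8.1}--\ref{SZ-Prop-8.2} to this setting, with the barrier event defined over scales $n\in(\tfrac{1}{2}\log N,\log K_r]$ and with Proposition \ref{prop:strengthened} replacing Proposition \ref{prop:superC-asymp-truncated-moments} as the strengthened two-sided chaos estimate, then yields
\begin{align*}
\E\bigg[\bigg(\int_{-\pi}^{\pi}|\underline{F}(re^{i\phi/(\pi C_4\sqrt{N})})|^2\d\phi\bigg)^q\bigg]\gg\bigg(\frac{K_r/\sqrt{N}}{1+(1-q)\sqrt{\log N}}\bigg)^q.
\end{align*}
Combining this with the $N^{q^2/2}$ bound from the preceding paragraph produces the claimed lower bound $\gg K_r^qN^{-q+q^2/2}/(1+(1-q)\sqrt{\log N})^q$.

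The main obstacle is the uniform generic chaining estimate in the first step. Under (EXP) with $\gamma=2q$, the tails of $R_k$ are only just-barely exponentially integrable, so even bounding moments of the sub-exponential increments of $G$ under the tilted measure $\Q$ requires care: one must ensure that the tilt by $|\overline{F}(r)|^{2q}$ does not destroy the relevant Bernstein-type tail bounds, and the choice of $C_4$ must be made large enough to absorb the resulting constants. A secondary technical challenge lies in verifying that the multi-scale ballot arguments of Propositions \ref{SZ-Prop-8.1}--\ref{SZ-Prop-8.2} transfer cleanly to the rescaled chaos $\underline{F}(re^{i\phi/(\pi C_4\sqrt{N})})$, carefully tracking how the cutoff at scale $\sqrt{N}$ interacts both with the barrier constraints and with the uniformity in $r\in[e^{-C_5/K},e^{C_5/K}]$.
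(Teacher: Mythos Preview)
Your proposal is correct and follows essentially the same route as the paper: split $F_{K,M_*}$ at scale $\asymp\sqrt{N}$, control the low-frequency factor via a tilted-measure generic chaining bound (the paper packages this as Proposition~\ref{prop:chaining}, with an auxiliary constant $C_6$ in the split point $C_4\sqrt{N}/C_6$ to absorb the chaining constants), and handle the high-frequency factor by rescaling $\theta\mapsto\pi C_4\sqrt{N}\,\theta$ and rerunning the H\"older/ballot lower bound of Proposition~\ref{SZ-Prop-8.2} over scales $(\tfrac{1}{2}\log N,\log K_r]$. Your references to Propositions~\ref{SZ-Prop-8.1} and~\ref{prop:strengthened} are slightly off---neither is invoked here, as the argument works directly at the chaos-integral level---but this does not affect the substance.
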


\begin{proposition}\label{prop:uniform2}
    Uniformly in $1\leq j\leq \log(\pi C_4\sqrt{N})$, $K\gg \sqrt{N}$, and $r\in[e^{-C_5/K},e^{C_5/K}]$,
    \begin{align}
        \E\bigg[\bigg(\int_{ \frac{e^{j-1}}{C_4\sqrt{N}}\leq|\theta|\leq \frac{e^{j}}{C_4\sqrt{N}}}|{F}_{K,M_*}(re^{i\theta})|^2\d\theta \bigg)^q\bigg]\ll    \Big(\frac{e^{j-1}}{\sqrt{N}}\Big)^{2q-q^2} \frac{C_4^{q^2}K^q}{(1+(1-q)\sqrt{\log N})^{q}}.\label{eq:jub1}
    \end{align}
    Moreover,
\begin{align}
    \E\bigg[\bigg(\int_{|\theta|\leq \frac{1}{C_4\sqrt{N}}}|{F}_{K,M_*}(re^{i\theta})|^2\d\theta \bigg)^q\bigg]\ll \frac{C_4^{q^2}N^{-q+q^2/2}K^q}{(1+(1-q)\sqrt{\log N})^{q}}.\label{eq:jub2}
\end{align}
\end{proposition}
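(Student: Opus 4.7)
The plan is to reduce both estimates of Proposition \ref{prop:uniform2} to Proposition \ref{SZ-prop3.2} via a multiplicative splitting of $F_{K,M_*}$ at a frequency scale matched to the angular resolution of the integration region. For each annulus index $j$, set $K_j := C_4\sqrt N/e^j$, so that $K_j\asymp 1/|\theta|$ on the $j$-th annulus, and write
\[F_{K,M_*}(re^{i\theta}) = F^{<}_j(re^{i\theta})\cdot F^{>}_j(re^{i\theta}),\]
where $F^{<}_j$ collects the summands with $M_*\le k\le K_j$ and $F^{>}_j$ those with $K_j<k\le K$. The highest frequency of $F^{<}_j$ is $K_j$, so $F^{<}_j$ varies only by a bounded random multiplicative factor across any interval of diameter $\lesssim 1/K_j$; conversely, $F^{>}_j$ oscillates rapidly on that scale. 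Since $F^{<}_j$ and $F^{>}_j$ depend on disjoint subsets of $\{X_k\}$, they are independent, and
\[\E\Bigl[\Bigl(\int_{E_j}|F_{K,M_*}(re^{i\theta})|^2\,\d\theta\Bigr)^q\Bigr] \le \E\Bigl[\sup_{\theta\in E_j}|F^{<}_j(re^{i\theta})|^{2q}\Bigr]\cdot\E\Bigl[\Bigl(\int_{E_j}|F^{>}_j(re^{i\theta})|^2\,\d\theta\Bigr)^q\Bigr],\]
with $E_j:=\{\theta: e^{j-1}/(C_4\sqrt N)\le|\theta|\le e^j/(C_4\sqrt N)\}$ for $j\ge 1$, and $E_0:=\{|\theta|\le 1/(C_4\sqrt N)\}$ for the central case.

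The two factors are then estimated independently. For the supremum factor, a generic chaining argument under the tilted measure $\Q^{(1)}_{r,\log M_*,K_j}$ of \eqref{eq:Q1 def}, combined with the pointwise log-normal moment computation carried out via the Laplace-functional asymptotics in Appendix \ref{sec:ubdef}, yields
\[\E\Bigl[\sup_{\theta\in E_j}|F^{<}_j(re^{i\theta})|^{2q}\Bigr]\ll C_4^{q^2}K_j^{q^2};\]
the exponent $q^2$ reflects that under the tilted measure $\log|F^{<}_j(r)|^2$ has variance $\asymp\log K_j$, and the factor $C_4^{q^2}$ absorbs the chaining overhead on the interval of length $\asymp 1/K_j$. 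For the integral factor, the substitution $\theta=\phi/K_j$ identifies $\int_{E_j}|F^{>}_j(re^{i\theta})|^2\,\d\theta$ with $K_j^{-1}$ times an integral of the chaos $\phi\mapsto F^{>}_j(re^{i\phi/K_j})$ over an interval of length $\asymp 1$, which can be extended to $[-\pi,\pi]$ by positivity. The rescaled chaos inherits the log-correlated structure of a truncated chaos with $K/K_j$ effective frequencies, so Proposition \ref{SZ-prop3.2} (whose proof depends only on the log-correlated second-moment structure and hence adapts verbatim to the non-integer frequencies $k/K_j$) yields
\[\E\Bigl[\Bigl(\int_{E_j}|F^{>}_j(re^{i\theta})|^2\,\d\theta\Bigr)^q\Bigr]\ll \frac{1}{K_j^q}\cdot\frac{(K/K_j)^q}{(1+(1-q)\sqrt{\log(K/K_j)})^q}.\]

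Multiplying the two estimates and using the identity $K_j^{q^2-2q}=(e^j/\sqrt N)^{2q-q^2}C_4^{q^2-2q}$ together with $\log(K/K_j)\asymp\log N$ (valid since $K\gg\sqrt N$ and $K_j\le C_4\sqrt N$) reproduces the bound $(e^j/\sqrt N)^{2q-q^2}C_4^{q^2}K^q/(1+(1-q)\sqrt{\log N})^q$ claimed in \eqref{eq:jub1}; the second bound \eqref{eq:jub2} follows by the same argument with $j=0$ and $K_j=C_4\sqrt N$. The main obstacle is extracting the sharper exponent $K_j^{q^2}$ for the supremum factor: the naive bound $\E[\sup|F^{<}_j|^{2q}]\le(\E[\sup|F^{<}_j|^2])^q$ from H\"older's inequality loses the multifractal exponent and only gives $K_j^q$, which would destroy the target scaling $(e^j/\sqrt N)^{2q-q^2}$. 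Capturing the correct $K_j^{q^2}$ requires redoing the ballot/barrier analysis of Section \ref{sec:ub} with the random walk indexed by the frequency levels of $F^{<}_j$ alone, together with a one-dimensional Berry--Esseen approximation (as in Lemma \ref{Lemma-1d-GaussianApprox}) identifying the Gaussian moment at the correct exponent.
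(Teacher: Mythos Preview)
Your overall strategy matches the paper's: split $F_{K,M_*}$ at frequency $\asymp K_j:=C_4\sqrt N/e^j$, use independence to separate the supremum of the low-frequency piece from the integral of the high-frequency piece, bound the former by $\ll K_j^{q^2}$ via a tilted-measure-plus-chaining argument, and bound the latter by adapting the barrier analysis behind Proposition \ref{SZ-prop3.2}. Two points need correction.

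First, the tilt should be $\widehat{\Q}$ from \eqref{eq:hat Q}, not $\Q^{(1)}$ from \eqref{eq:Q1 def}. The density of $\widehat{\Q}$ is proportional to $|F^{<}_j(r)|^{2q}=\exp\big(2q\,\Re\sum_k X_kr^k/\sqrt k\big)$, so its normalizing constant is exactly $\E[|F^{<}_j(r)|^{2q}]\asymp K_j^{q^2}$ (cf.~\eqref{l}); the measure $\Q^{(1)}$ carries the factor $2$ rather than $2q$ and does not produce this exponent. The paper also splits at $K_j/C_6$ rather than $K_j$, with $C_6$ a large constant: that extra slack is what makes the chaining in Proposition \ref{prop:chaining} close.

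Second, your final paragraph misidentifies where the work lies. No ballot/barrier analysis is needed for the supremum factor: writing
\[\E\Big[\sup_\theta|F^{<}_j(re^{i\theta})|^{2q}\Big]=\E\big[|F^{<}_j(r)|^{2q}\big]\cdot\E^{\widehat\Q}\Big[\sup_\theta\big|F^{<}_j(re^{i\theta})/F^{<}_j(r)\big|^{2q}\Big],\]
the first factor is $\asymp K_j^{q^2}$ by the Laplace-functional estimate \eqref{l}, and the second is $\ll 1$ directly from the chaining bound \eqref{eq:chaining2}. The ballot/barrier machinery (and the Berry--Esseen step) is needed only for the \emph{integral} factor, to extract the $(1+(1-q)\sqrt{\log N})^{-q}$ correction; your description of that part---adapting Proposition \ref{SZ-prop3.2} after rescaling---is fine. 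The paper handles that integral slightly differently: instead of the substitution $\theta\mapsto\phi/K_j$, it re-runs the deduction of Proposition \ref{SZ-prop3.2} from Propositions \ref{SZ-prop5.2}--\ref{SZ-prop5.3} with the barrier event $\mathcal G_r$ unchanged but the auxiliary threshold $M$ shifted to $\log K_j$. The content is the same.
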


A key strategy of proving Propositions \ref{prop:uniform} and \ref{prop:uniform2} is to decompose $F_{K,M_*}$ into two terms depending on the region of integration of $\theta$; cf.~\eqref{eq:decomp ex}. Consider a large constant $C_6$ to be determined and our goal is to show that with high probability,
$$\bigg|\exp\Big(\sum_{k=M_*}^{K_*/C_6}\frac{X_k}{\sqrt{k}}(re^{i\theta})^k\Big)\bigg|\approx \bigg|\exp\Big(\sum_{k=M_*}^{K_*/C_6}\frac{X_k}{\sqrt{k}}r^k\Big)\bigg|$$
uniformly for $|\theta|\leq 1/K_*$ and $K_*\leq\sqrt{N}$.
To this end, we take the logarithm and compute the difference
$$\Re \Big(\sum_{k=M_*}^{K_*/C_6}\frac{X_k}{\sqrt{k}}(re^{i\theta})^k\Big)-\Re \Big(\sum_{k=M_*}^{K_*/C_6}\frac{X_k}{\sqrt{k}}r^k\Big)=\sum_{k=M_*}^{K_*/C_6}\frac{r^k}{\sqrt{k}}\Re(X_ke^{ik\theta}-X_k).$$
It then suffices to understand the magnitudes of
$$\sup_{|\theta|\leq 1/K_*}\sum_{k=M_*}^{K_*/C_6}\frac{r^k}{\sqrt{k}}\Re(X_ke^{ik\theta}-X_k)\quad\text{ and }\quad \inf_{|\theta|\leq 1/K_*}\sum_{k=M_*}^{K_*/C_6}\frac{r^k}{\sqrt{k}}\Re(X_ke^{ik\theta}-X_k).$$
However, bounds under the original probability $\P$ turn out not sufficient for our purpose. 
To this end, we define a new probability measure $\widehat{\Q}_{K_*/C_6,M_*}$, where
\begin{align}
    \frac{\d\widehat{\Q}_{K_*/C_6,M_*}}{\d \P}=\frac{\exp(2q\sum_{k=M_*}^{K_*/C_6}\frac{r^k}{\sqrt{k}}\Re(X_k))}{\E[\exp(2q\sum_{k=M_*}^{K_*/C_6}\frac{r^k}{\sqrt{k}}\Re(X_k))]}.\label{eq:hat Q}
\end{align}
If the situation is clear we write instead $\widehat{\Q}$. The denominator in \eqref{eq:hat Q} can be estimated using Lemma \ref{SZ-lemma2.2} (in a similar way that derives Lemma \ref{SZ-lemma2.2!}), which gives 
\begin{align}
    \E\bigg[\exp\Big(2q\sum_{k=M_*}^{K_*/C_6}\frac{r^k}{\sqrt{k}}\Re(X_k)\Big)\bigg]\asymp \Big(\frac{K_*}{C_6M_*}\Big)^{q^2}\label{l}
\end{align}
uniformly in $K_*$ large enough. 

\begin{proposition}\label{prop:chaining}
  There exists a constant $C_7>0$ depending only on $q$, independent of $K_*\leq C_4\sqrt{N}$, such that uniformly in $r\in[e^{-C_5/N},e^{C_5/N}]$ and $C_6,N$ large enough,
    \begin{align}
        {\widehat{\Q}}\bigg(\sup_{|\theta|\leq 1/K_*}\Big|\sum_{k=M_*}^{K_*/C_6}\frac{r^k}{\sqrt{k}}\Re(X_ke^{ik\theta}-X_k)\Big|\geq u\bigg)\leq C_7 e^{-10u},\quad u\geq 0.\label{eq:chaining1}
    \end{align}
  In particular, uniformly in $r\in[e^{-C_5/N},e^{C_5/N}]$ and $C_6,N$ large enough,
    \begin{align}
        \E^{\widehat{\Q}}\bigg[\exp\Big(2q\sup_{|\theta|\leq 1/K_*}\sum_{k=M_*}^{K_*/C_6}\frac{r^k}{\sqrt{k}}\Re(X_ke^{ik\theta}-X_k)\Big)\bigg]\ll 1\label{eq:chaining2}
    \end{align}and
    \begin{align}
        \E^{\widehat{\Q}}\bigg[\exp\Big(2q\inf_{|\theta|\leq 1/K_*}\sum_{k=M_*}^{K_*/C_6}\frac{r^k}{\sqrt{k}}\Re(X_ke^{ik\theta}-X_k)\Big)\bigg]\gg 1.\label{eq:chaining3}
    \end{align}
\end{proposition}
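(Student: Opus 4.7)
The plan is to implement a generic chaining argument under the tilted measure $\widehat{\Q}=\widehat{\Q}_{K_*/C_6,M_*}$ for the real-valued process
$$W(\theta):=\sum_{k=M_*}^{K_*/C_6}\frac{r^k}{\sqrt{k}}\Re(X_ke^{ik\theta}-X_k),\quad\theta\in[-1/K_*,1/K_*],$$
which satisfies $W(0)=0$. (If $K_*/C_6<M_*$ the sum is empty and there is nothing to prove, so we may assume $K_*\geq M_*C_6$.) First I would record the basic properties of $\widehat{\Q}$: since the Radon--Nikodym density in \eqref{eq:hat Q} factors over $k$, the variables $\{X_k\}_{k\geq M_*}$ remain independent under $\widehat{\Q}$, and because the tilt parameter $2qr^k/\sqrt{k}$ stays strictly below the tail rate $\gamma=2q$ of $R_k$ uniformly in $k\geq M_*$ (with $r\in[e^{-C_5/N},e^{C_5/N}]$), each tilted $X_k$ retains a uniform exponential moment. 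The variance and cumulant estimates needed below then follow by adapting the computations in Lemma \ref{lemma-Q-computations} to $\widehat{\Q}$ in place of $\Q^{(1)}$.

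The heart of the argument is the increment estimate. For $\theta_1,\theta_2\in[-1/K_*,1/K_*]$, the bound $|\cos(k\theta_1+\tau_k)-\cos(k\theta_2+\tau_k)|\leq k|\theta_1-\theta_2|$ combined with $k\leq K_*/C_6$ shows that each summand of $W(\theta_1)-W(\theta_2)$ is bounded in absolute value by $r^kR_k\sqrt{k}|\theta_1-\theta_2|$, which is sub-exponential under $\widehat{\Q}$ with scale $\ll|\theta_1-\theta_2|\sqrt{K_*/C_6}$; moreover
$$\var^{\widehat{\Q}}(W(\theta_1)-W(\theta_2))\ll|\theta_1-\theta_2|^2\sum_{k=M_*}^{K_*/C_6}r^{2k}k\ll\Big(\frac{|\theta_1-\theta_2|K_*}{C_6}\Big)^2.$$
Bernstein's inequality for sums of independent sub-exponentials then yields, uniformly in $r,K_*,N$,
$$\widehat{\Q}(|W(\theta_1)-W(\theta_2)|\geq t)\ll\exp\bigg(-c\min\Big\{\frac{C_6^2 t^2}{K_*^2|\theta_1-\theta_2|^2},\,\frac{\sqrt{C_6/K_*}\,t}{|\theta_1-\theta_2|}\Big\}\bigg),$$
so that the sub-Gaussian diameter of $[-1/K_*,1/K_*]$ is $\ll 1/C_6$ and the Orlicz (sub-exponential) diameter is $\ll 1/\sqrt{K_*C_6}$.

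With this two-regime increment bound, generic chaining along the dyadic nets $\theta_j^{(\ell)}=j/(K_*2^\ell)$ (whose $\ell$-th scale has covering number $O(2^\ell)$) produces a Dirksen-type tail inequality in which the exponential rate is controlled by the reciprocal of the Orlicz diameter, i.e.~by a constant times $\sqrt{K_*C_6}\gg C_6$ under our running assumption $K_*\geq M_*C_6$. Choosing $C_6$ large enough depending only on $q$ therefore gives
$$\widehat{\Q}\Big(\sup_{|\theta|\leq 1/K_*}|W(\theta)|\geq u\Big)\leq C_7e^{-10u},\quad u\geq 0,$$
which is \eqref{eq:chaining1}. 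Then \eqref{eq:chaining2} follows by the layer-cake identity $\E^{\widehat{\Q}}[e^{2q\sup W}]=1+\int_0^\infty 2qe^{2qu}\widehat{\Q}(\sup W>u)\,du\leq 1+2qC_7/(10-2q)$, and \eqref{eq:chaining3} follows by observing $\inf_\theta W\leq W(0)=0$ and applying the tail bound at $u=1$ to estimate $\E^{\widehat{\Q}}[e^{2q\inf W}]\geq e^{-2q}\widehat{\Q}(\inf W\geq-1)\geq e^{-2q}(1-C_7e^{-10})\gg 1$. The principal technical obstacle is making the Orlicz-diameter bound (and hence the effective sub-exponential rate after chaining) genuinely proportional to $\sqrt{K_*C_6}$ uniformly in $r$ and $N$, which requires carefully tracking how the tilt $\widehat{\Q}$ affects the sub-exponential constant of $R_k$ through the moment estimates in Lemma \ref{lemma-Q-computations}.
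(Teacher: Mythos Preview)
Your overall strategy---generic chaining under the tilted measure $\widehat{\Q}$---matches the paper's. The paper, however, works with a \emph{single} sub-exponential metric: it fixes $d(\theta,\theta')=K_*|\theta-\theta'|/C_8$, verifies $\widehat{\Q}(|W(\theta)-W(\theta')|\geq u\,d(\theta,\theta'))\leq 2e^{-u}$ directly by Markov's inequality with an explicit exponential tilt parameter $\beta\asymp (K_*|\theta-\theta'|)^{-1}$ and a Taylor expansion of the Laplace functional, then invokes Dirksen's sub-exponential chaining theorem. Your route through Bernstein's inequality and a mixed sub-Gaussian/sub-exponential tail is also viable but slightly heavier: you need a two-metric version of Dirksen and must argue that both the sub-Gaussian regime (rate $\asymp C_6^2 u^2$) and the sub-exponential regime (rate $\asymp\sqrt{K_*C_6}\,u$) eventually dominate $10u$, with the small-$u$ range absorbed into $C_7$. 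This works, but the paper's one-metric approach avoids the case split. A point you should make explicit is that the increments are not centered under $\widehat{\Q}$; the mean of $W(\theta_1)-W(\theta_2)$ is $O(|\theta_1-\theta_2|K_*/C_6)\ll 1/C_6$, which is harmless but must be subtracted before Bernstein.

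There is one genuine slip in your derivation of \eqref{eq:chaining3}. You apply \eqref{eq:chaining1} at $u=1$ and write $\widehat{\Q}(\inf W\geq -1)\geq 1-C_7e^{-10}$, but nothing guarantees $C_7e^{-10}<1$: indeed $C_7$ absorbs the chaining offsets and can be large (in the paper $C_7=e^{20}$). The fix is immediate---apply the bound at $u_0:=\tfrac{1}{10}\log(2C_7)$ so that $\widehat{\Q}(\sup|W|\leq u_0)\geq 1/2$, then $\E^{\widehat{\Q}}[e^{2q\inf W}]\geq \tfrac12 e^{-2qu_0}\gg 1$---but as written the step does not go through.
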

To prove Proposition \ref{prop:chaining}, we need the following preliminary result on generic chaining that offers tail bounds arising from \eqref{eq:chaining1}. For a thorough treatment on chaining, we refer to \citep{talagrand2014upper}.
\begin{lemma}[Theorem 3.2 of \citep{dirksen2015tail}]\label{lemma:chaining}
    Consider a continuous stochastic process $\{Z_t\}_{t\in T}$ indexed by a bounded metric space $(T,d)$ satisfying
    \begin{align}
        \forall s,t\in T,~\forall u\geq 0,~\P(|Z_s-Z_t|\geq u\,d(s,t))\leq 2e^{-u}.\label{eq:tail cond}
    \end{align}
    Then for any $t_0\in T$,
    \begin{align}
        \P\Bigg(\sup_{t\in T}|Z_t-Z_{t_0}|\geq L\Big(\inf_\T\sup_{t\in T}\sum_{n\geq 1}2^{n}d(t,T_n)+u\sup_{s,t\in T}d(s,t)\Big)\Bigg)\leq e^{-u},\label{eq:tail bound chaining}
    \end{align}
    where $\T$ is the set of all admissible sequence of subsets $\{T_n\}_{n\geq 0}$ of $T$ satisfying $|T_0|=1$ and $|T_n|\leq 2^{2^n}$ for $n\geq 1$. 
\end{lemma}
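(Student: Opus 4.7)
The plan is to run Talagrand's generic chaining argument with a careful head/tail split at a cutoff level tuned to $u$. Fix $\ee>0$ and choose an admissible sequence $\{T_n\}_{n\ge 0}$ with $|T_0|=1$, $|T_n|\le 2^{2^n}$, $T_n\subseteq T_{n+1}$, and $\sup_{t\in T}\sum_{n\ge 1}2^n d(t,T_n)\le \gamma_1(T,d)+\ee$, where $\gamma_1(T,d):=\inf_{\T}\sup_t\sum_{n\ge 1}2^n d(t,T_n)$. Let $\pi_n(t)$ denote a closest point to $t$ in $T_n$, so that $\pi_0(t)=t_0$ and $d(t,\pi_n(t))=d(t,T_n)$. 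By continuity of $Z$ and separability of $T$, it suffices to work over a countable dense subset, on which $\pi_n(t)\to t$ and
$$Z_t-Z_{t_0}=\bigl(Z_{\pi_{N_u}(t)}-Z_{t_0}\bigr)+\sum_{n>N_u}\bigl(Z_{\pi_n(t)}-Z_{\pi_{n-1}(t)}\bigr),$$
where I set the cutoff level $N_u:=\lceil\log_2(1\vee u)\rceil$.

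For the \emph{head}, apply the tail assumption \eqref{eq:tail cond} with threshold $u_0:=u+2^{N_u}\log 2$ and union-bound over the $|T_{N_u}|\le 2^{2^{N_u}}$ endpoints: the event $\{\exists s\in T_{N_u}:|Z_s-Z_{t_0}|>u_0\, d(s,t_0)\}$ has probability at most $2\cdot 2^{2^{N_u}}e^{-u_0}=2e^{-u}$, and on its complement $|Z_{\pi_{N_u}(t)}-Z_{t_0}|\le u_0\operatorname{diam}(T)\le Lu\operatorname{diam}(T)$ since $2^{N_u}\le 2u$. For the \emph{tail}, at each level $n>N_u$ take the threshold $u_n:=\alpha 2^n$ for an absolute constant $\alpha>2\log 2+\tfrac12$ and union-bound over the at most $|T_n|\cdot|T_{n-1}|\le 2^{2^{n+1}}$ distinct pairs $(\pi_n(t),\pi_{n-1}(t))$: the bad event at level $n$ has probability $\le 2e^{-(\alpha-2\log 2)\,2^n}$, which summed for $n>N_u$ is dominated by $L\,e^{-(\alpha-2\log 2)\cdot 2^{N_u+1}}\le Le^{-u}$. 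On the intersection of all good events, using the triangle inequality $d(\pi_n(t),\pi_{n-1}(t))\le 2d(t,T_{n-1})$ together with $T_{n-1}\subseteq T_n$ and telescoping,
$$|Z_t-Z_{\pi_{N_u}(t)}|\le\sum_{n>N_u}\alpha\,2^n\cdot 2d(t,T_{n-1})\le 4\alpha\sum_{n\ge 1}2^n d(t,T_n)\le L(\gamma_1(T,d)+\ee).$$
Combining head and tail via the triangle inequality, enlarging $L$ to absorb the probability constants, and letting $\ee\downarrow 0$ yields \eqref{eq:tail bound chaining}.

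The main obstacle, and the reason for the head/tail split precisely at $N_u=\lceil\log_2 u\rceil$, is producing the sharp $u\operatorname{diam}(T)$ term rather than the weaker $u\gamma_1(T,d)$ term one would obtain from a naive chaining. Indeed, with a uniform threshold $u_n=\alpha 2^n+u$ across all levels, the extra $u$ would produce a contribution $u\sum_n d(\pi_n(t),\pi_{n-1}(t))$ of order $u\sum_n d(t,T_n)$, which may match $\gamma_1(T,d)$ and is not $u\operatorname{diam}(T)$. The stopping level $N_u$ is exactly where the entropy $|T_{N_u}|\le 2^{2^{N_u}}\le 2^{2u}$ balances the tail decay $e^{-u_0}\le e^{-u-2^{N_u}\log 2}$, so that a single direct tail bound across a set of diameter $\operatorname{diam}(T)$ absorbs the full $u$-dependence while the pure chaining above level $N_u$ supplies the $\gamma_1(T,d)$ piece.
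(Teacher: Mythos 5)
The paper itself does not prove this lemma: it is imported verbatim as Theorem 3.2 of Dirksen's paper, so there is no internal argument to compare against. What you wrote is, in substance, the standard proof of that cited result — Talagrand's chained union bound with the chain truncated at the entropy level $2^{N_u}\asymp u$, which is precisely the device that turns the naive loss $u\,\gamma_1(T,d)$ into $u\sup_{s,t\in T}d(s,t)$ — specialized from Dirksen's mixed ($\gamma_2+\gamma_1$) setting to the pure exponential-tail case. Your bookkeeping is right: the head threshold $u_0=u+2^{N_u}\log 2$ exactly cancels the cardinality $2^{2^{N_u}}$, and the tail thresholds $u_n=\alpha 2^n$ with $\alpha-2\log 2>\tfrac12$ sum to $O(e^{-u})$ because $2^{N_u+1}\ge 2u$.

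Two caveats. First, the closing step ``enlarge $L$ to absorb the probability constants'' (i.e.\ replace $u$ by $u+\log C$) only works when $u\ge 1$, and in fact the statement as quoted cannot hold for all $u\ge 0$: take $T=\{t_0,t_1\}$ with $d(t_0,t_1)=D$, the admissible sequence $T_n=T$ for $n\ge 1$ (so the functional vanishes), and $Z_{t_1}-Z_{t_0}$ with $\P(|Z_{t_1}-Z_{t_0}|\ge uD)=\min\{1,2e^{-u}\}$; then the probability in \eqref{eq:tail bound chaining} equals $1>e^{-u}$ whenever $0<u<\log 2/L$. Dirksen's theorem carries the restriction $u\ge 1$, and your proof should state it (this is harmless for the paper's application, where an extra multiplicative constant $C_7$ is allowed anyway). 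Second, you assume without comment that the near-optimal admissible sequence is nested and that $T_0=\{t_0\}$; neither is automatic, but both can be arranged at the cost of an absolute constant (pass to $T_n'=\bigcup_{k\le n}T_k$ with an index shift, which at most doubles the functional, and note that $T_0$ does not enter $\sum_{n\ge 1}2^n d(t,T_n)$), and nestedness is genuinely used in your bound $d(\pi_n(t),\pi_{n-1}(t))\le 2d(t,T_{n-1})$, so a sentence to this effect is needed; likewise, when $N_u=0$ the first link involves $d(t,T_0)\le\sup_{s,t}d(s,t)$, which is charged to the $u$-term since then $u\le 1\le 2^{N_u}$. With these repairs the argument is complete and matches the cited proof.
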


\begin{proof}[Proof of Proposition \ref{prop:chaining}]
Consider large constants $C_8,C_9$ to be determined. 
To apply Lemma \ref{lemma:chaining}, we let $T=\{\theta:|\theta|\leq 1/K_*\}$ with $d(\theta,\theta')=K_*|\theta-\theta'|/C_8$, $t_0=0$, and 
$$Z_\theta=\sum_{k=M_*}^{K_*/C_6}\frac{r^k}{\sqrt{k}}\Re (X_ke^{ik\theta}).$$
Note that the chaining is performed under the tilted probability measure $\widehat{\Q}$. 
We first verify \eqref{eq:tail cond}, with $\P=\widehat{\Q}$ therein. Consider $\theta,\theta'\in T$ and define $\beta=10C_9(K_*|\theta-\theta'|)^{-1}$. We have
\begin{align}\begin{split}
    &\hspace{0.5cm}\widehat{\Q}\left(\Big|\sum_{k=M_*}^{K_*/C_6}\frac{r^k}{\sqrt{k}}\Re (X_k(e^{ik\theta}-e^{ik\theta'}))\Big|\geq \frac{uK_*|\theta-\theta'|}{C_8}\right)\\
    &=\widehat{\Q}\left(\sum_{k=M_*}^{K_*/C_6}\frac{r^k}{\sqrt{k}}\Re (X_k(e^{ik\theta}-e^{ik\theta'}))\geq \frac{uK_*|\theta-\theta'|}{C_8}\right)+\widehat{\Q}\left(\sum_{k=M_*}^{K_*/C_6}\frac{r^k}{\sqrt{k}}\Re (X_k(e^{ik\theta'}-e^{ik\theta}))\geq \frac{uK_*|\theta-\theta'|}{C_8}\right)\\
    &\leq \frac{\E^{\widehat{\Q}}[\exp(2\beta \sum_{k=M_*}^{K_*/C_6}\frac{r^k}{\sqrt{k}}\Re (X_k(e^{ik\theta}-e^{ik\theta'})))]}{\exp(2\beta uK_*|\theta-\theta'|/C_8)}+\frac{\E^{\widehat{\Q}}[\exp(2\beta \sum_{k=M_*}^{K_*/C_6}\frac{r^k}{\sqrt{k}}\Re (X_k(e^{ik\theta'}-e^{ik\theta})))]}{\exp(2\beta uK_*|\theta-\theta'|/C_8)}.
\end{split}\label{eq:2exp}
\end{align}
To offer an upper bound for the numerators, we apply the same asymptotic computation in Lemma \ref{lemma-Q-computations}, which gives\begin{align*}
    \E^{\widehat{\Q}}\bigg[\exp\Big(2\beta \frac{r^k}{\sqrt{k}}\Re (X_k(e^{ik\theta}-e^{ik\theta'}))\Big)\bigg]&=\frac{\E\bigg[\exp\Big(2\beta \frac{r^k}{\sqrt{k}}\Re (X_k(e^{ik\theta}-e^{ik\theta'}))+2q\frac{r^k}{\sqrt{k}}\Re X_k\Big)\bigg]}{\E[\exp(2q\frac{r^k}{\sqrt{k}}\Re X_k)]}.
\end{align*}
Recalling $X_k=e^{i\tau_k}R_k$, we have by Taylor's expansion that
\begin{align}
    &\hspace{0.5cm}\E\bigg[\exp\Big(2\beta \frac{r^k}{\sqrt{k}}\Re (X_k(e^{ik\theta}-e^{ik\theta'}))+2q\frac{r^k}{\sqrt{k}}\Re X_k\Big)\bigg]\nonumber\\
    \begin{split}
        &=1+2\frac{r^{2k}}{k}\E[|R_k(q\cos(\tau_k)+\beta(\cos(\tau_k+k\theta)-\cos(\tau_k+k\theta')))|^2]\\
    &\hspace{0.7cm}+\sum_{j=3}^\infty \frac{(2r^k/\sqrt{k})^j}{j!}\E[|R_k(q\cos(\tau_k)+\beta(\cos(\tau_k+k\theta)-\cos(\tau_k+k\theta')))|^j].\label{eq:jth power}
    \end{split}
\end{align} For the second moment in \eqref{eq:jth power}, write $\Delta_k:=k(\theta-\theta')/2,\alpha_k:=k(\theta+\theta')/2$. Using independence of $R_k$ and $\tau_k$, a direct computation gives 
\begin{align*}
    &2\frac{r^{2k}}{k}\E[|R_k(q\cos(\tau_k)+\beta(\cos(\tau_k+k\theta)-\cos(\tau_k+k\theta')))|^2]\\
    =&2\frac{r^{2k}}{k}\E[|q\cos(\tau_k)+\beta(\cos(\tau_k+k\theta)-\cos(\tau_k+k\theta'))|^2]\\
    =&(q^2+4\beta^2\sin^2(\Delta_k)-4q\beta\sin(\Delta_k)\sin(\alpha_k))\frac{r^{2k}}{k}.
\end{align*}
For the remainder of \eqref{eq:jth power}, the inequality $|a+b|^j\leq 2^j(|a|^j+|b|^j)$ implies \begin{align*}
    |q\cos(\tau_k)+\beta(\cos(\tau_k+k\theta)-\cos(\tau_k+k\theta'))|^j &\leq (2q)^j+(4\beta\Delta_k)^j.
\end{align*} Similarly as in the proof of Lemma \ref{SZ-lemma2.2} (i), we have 
\begin{align*}
    &\Big|\sum_{j=3}^\infty \frac{(2r^k/\sqrt{k})^j}{j!}\E[|R_k(q\cos(\tau_k)+\beta(\cos(\tau_k+k\theta)-\cos(\tau_k+k\theta')))|^j]\Big|\\
    &\hspace{1cm}\leq \sum_{j=3}^\infty \frac{(2r^k/\sqrt{k})^j}{j!}\E[|R_k|^j]\big((2q)^j+(4\beta\Delta_k)^j\big) \ll k^{-3/2}\left((2q)^3+(4\beta\Delta_k)^3+\E[e^{\frac{6q}{k^{1/8}}|R_1|}]+\E[e^{\frac{12\beta\Delta_k}{k^{1/8}}|R_1|}]\right),
\end{align*}where in the last step the asymptotic constant is absolute given that $N$ is large enough (in terms of constants $C_4,C_5$ that depend only on $q$), where we recall that $k\leq K_*\leq C_4\sqrt{N}$ while $|\log r|\leq C_5/N$. Since $\beta\Delta_k=10C_9k/K_*\leq 10C_9$, $k\geq M_*\geq (120C_9/q)^8$ will suffice to ensure $k^{-1/8}\max\{6q,120C_9\}<\gamma=2q$. Therefore,
$$\Big|\sum_{j=3}^\infty \frac{(2r^k/\sqrt{k})^j}{j!}\E[|R_k(q\cos(\tau_k)+\beta(\cos(\tau_k+k\theta)-\cos(\tau_k+k\theta')))|^j]\Big|\ll C_9^3k^{-3/2}.$$
Inserting these estimates back into \eqref{eq:jth power}, we have
$$\E\bigg[\exp\Big(2\beta \frac{r^k}{\sqrt{k}}\Re (X_k(e^{ik\theta}-e^{ik\theta'}))+2q\frac{r^k}{\sqrt{k}}\Re X_k\Big)\bigg]=1+(q^2+4\beta^2\sin^2(\Delta_k)-4q\beta\sin(\Delta_k)\sin(\alpha_k))\frac{r^{2k}}{k}+C_9^3O(k^{-3/2}).$$
Taking products over $k$, and using \eqref{eq:hat Q} and \eqref{l}, we have \begin{align*}
    \E^{\widehat{\Q}}\bigg[\exp\Big(2\beta \sum_{k=M_*}^{K_*/C_6}\frac{r^k}{\sqrt{k}}\Re (X_k(e^{ik\theta}-e^{ik\theta'}))\Big)\bigg]& \ll \exp\left(\sum_{k=M_*}^{K_*/C_6}\frac{4\beta^2\sin^2(\Delta_k)r^{2k}}{k}+\sum_{k=M_*}^{K_*/C_6}\frac{4q\beta|\sin(\Delta_k)|r^{2k}}{k}\right)\\
    &\ll\exp\left(\beta^2|\theta-\theta'|^2\sum_{k=M_*}^{K_*/C_6}kr^{2k}+2q\beta|\theta-\theta'|\sum_{k=M_*}^{K_*/C_6}r^{2k}\right)\\
    &\ll\exp\left(\frac{50C_9^2}{C_6^{2}}+\frac{20C_9}{C_6}\right).
\end{align*} Note that the asymptotic constants do not depend on $C_9$ since we assumed $k\geq M_*\geq (120C_9/q)^8$. Therefore, there exists a constant $C_{10}$ depending only on $q$, we may pick $C_6$ large enough depending on $C_9$ so that 
$$\E^{\widehat{\Q}}\bigg[\exp\Big(2\beta \sum_{k=M_*}^{K_*/C_6}\frac{r^k}{\sqrt{k}}\Re (X_k(e^{ik\theta}-e^{ik\theta'}))\Big)\bigg]\leq C_{10}.$$
Inserting into \eqref{eq:2exp} while interchanging $\theta,\theta'$ yields
\begin{align*}
    \widehat{\Q}\left(\Big|\sum_{k=M_*}^{K_*/C_6}\frac{r^k}{\sqrt{k}}\Re (X_k(e^{ik\theta}-e^{ik\theta'}))\Big|\geq \frac{uK_*|\theta-\theta'|}{C_8}\right)\leq 2C_{10}e^{-5C_9u/C_8}.
\end{align*}
Picking $C_9$ large depending on $C_8,C_{10}$ gives that $\min\{1,2C_{10}e^{-5C_9u/C_8}\}\leq 2e^{-u}$, as desired.

Applying \eqref{eq:tail bound chaining} with $t_0=0$ leads to 
$$\widehat{\Q}\Bigg(\sup_{\theta\in T}\Big|\sum_{k=M_*}^{K_*/C_6}\frac{r^k}{\sqrt{k}}\Re(X_ke^{ik\theta}-X_k)\Big|\geq L\Big(\inf_\T\sup_{t\in T}\sum_{n\geq 1}2^{n}d(t,T_n)+u\sup_{s,t\in T}d(s,t)\Big)\Bigg)\leq e^{-u}.$$
Recalling that $d(\theta,\theta')=K_*|\theta-\theta'|/C_8$ and $K_*|\theta-\theta'|\leq 2$, we have $\sup_{s,t\in T}d(s,t)\leq 2/C_8$. Next, we claim that 
$$\inf_\T\sup_{t\in T}\sum_{n\geq 1}2^{n}d(t,T_n)\ll \frac{1}{C_8}.$$
Indeed, this follows by taking uniform nets of $\{T_n\}$, so that $\sup_{t\in T}d(t,T_n)\ll 2^{-2^n}/C_8$ for $n\geq 1$. Therefore, by picking $C_8$ large enough (depending only on the universal constant $L$), we have
\begin{align*}
    \widehat{\Q}\Bigg(\sup_{\theta\in T}\Big|\sum_{k=M_*}^{K_*/C_6}\frac{r^k}{\sqrt{k}}\Re(X_ke^{ik\theta}-X_k)\Big|\geq 1+u\Bigg)\leq e^{-20 u}.
\end{align*}This establishes \eqref{eq:chaining1} by picking e.g.~$C_7=e^{20}$. 
Note that by our construction, $M_*$ depends only on $C_8,C_9$, and hence only on $q$.
Directly integrating \eqref{eq:chaining1} yields \eqref{eq:chaining2}, since $q\leq 1$.
Moreover, by taking $u=\log C_7$ in \eqref{eq:chaining1}, we have
$${\widehat{\Q}}\bigg(\inf_{|\theta|\leq 1/K_*}\sum_{k=M_*}^{K_*/C_6}\frac{r^k}{\sqrt{k}}\Re(X_ke^{ik\theta}-X_k)\geq -\log C_7\bigg)\geq {\widehat{\Q}}\bigg(\sup_{|\theta|\leq 1/K_*}\Big|\sum_{k=M_*}^{K_*/C_6}\frac{r^k}{\sqrt{k}}\Re(X_ke^{ik\theta}-X_k)\Big|\leq \log C_7\bigg)\geq \frac{1}{2}.$$
Since $C_7$ is a constant, \eqref{eq:chaining3} follows.
\end{proof}

 \begin{proof}[Proof of Proposition  {\ref{prop:uniform}}]
 We introduce the decomposition
$$F_{K,M_*}(z)=\overline{F}_{K,M_*}(z)\times \underline{F}_{K,M_*}(z):=\exp\Big(\sum_{k=M_*}^{C_4\sqrt{N}/C_6}\frac{X_k}{\sqrt{k}}z^k\Big)\times \exp\Big(\sum^K_{k=C_4\sqrt{N}/C_6}\frac{X_k}{\sqrt{k}}z^k\Big).$$
By independence,
\begin{align*}
    \E\left[\left(\int_{|\theta|\leq \frac{1}{C_4\sqrt{N}}}|{F}_{K,M_*}(re^{i\theta})|^2\d\theta \right)^q\right]    &\geq \E\left[\left(\int_{|\theta|\leq \frac{1}{C_4\sqrt{N}}}|\underline{F}_{K,M_*}(re^{i\theta})|^2\d\theta \right)^q\right]\times \E\left[\inf_{|\theta|\leq \frac{1}{C_4\sqrt{N}}}|\overline{F}_{K,M_*}(re^{i\theta})|^{2q}\right].
\end{align*}
We first analyze the second term. Using \eqref{eq:hat Q} and \eqref{l}, we write
\begin{align*}
   \E\bigg[\inf_{|\theta|\leq \frac{1}{C_4\sqrt{N}}}|\overline{F}_{K,M_*}(re^{i\theta})|^{2q}\bigg]&=\E\bigg[|\overline{F}_{K,M_*}(r)|^{2q}\inf_{|\theta|\leq \frac{1}{C_4\sqrt{N}}}\Big|\frac{\overline{F}_{K,M_*}(re^{i\theta})}{\overline{F}_{K,M_*}(r)}\Big|^{2q}\bigg]\\
   &=\Big(\frac{C_4\sqrt{N}}{C_6M_*}\Big)^{q^2}\E^{\widehat{\Q}}\bigg[\inf_{|\theta|\leq \frac{1}{C_4\sqrt{N}}}\Big|\frac{\overline{F}_{K,M_*}(re^{i\theta})}{\overline{F}_{K,M_*}(r)}\Big|^{2q}\bigg]\\
   &=\Big(\frac{C_4\sqrt{N}}{C_6M_*}\Big)^{q^2}\E^{\widehat{\Q}}\bigg[\exp\Big(2q\inf_{|\theta|\leq \frac{1}{C_4\sqrt{N}}}\sum_{k=M_*}^{C_4\sqrt{N}/C_6}\frac{r^k}{\sqrt{k}}\Re(X_ke^{ik\theta}-X_k)\Big)\bigg].
\end{align*}
As a consequence of \eqref{eq:chaining3} and since $C_6$ and $M_*$ depend only on $q$, we have
$$\E\bigg[\inf_{|\theta|\leq \frac{1}{C_4\sqrt{N}}}|\overline{F}_{K,M_*}(re^{i\theta})|^{2q}\bigg]\gg \Big(\frac{C_4\sqrt{N}}{C_6M_*}\Big)^{q^2}\gg N^{q^2/2}.$$
It remains to show 
\begin{align}
    \E\Bigg[\bigg(\int_{|\theta|\leq \frac{1}{C_4\sqrt{N}}}|\underline{F}_{K,M_*}(re^{i\theta})|^2\d\theta \bigg)^q\Bigg]\gg \frac{K_r^qN^{-q}}{(1+(1-q)\sqrt{\log N})^{q}}.\label{eq:Flow1}
\end{align}
The case of $q=1$ follows by a direction computation in the form of Lemma \ref{SZ-lemma2.2!}.
Let us assume $q<1$. The approach is to perform a change of variable and then adapt the proof of Proposition \ref{SZ-Prop-8.2}. Using the change of variable $\theta\mapsto \pi C\theta \sqrt{N}$, we have 
\begin{align}
    \E\left[\left(\int_{|\theta|\leq \frac{1}{C_4\sqrt{N}}}|\underline{F}_{K,M_*}(re^{i\theta})|^2\d\theta \right)^q\right]\gg N^{-q/2}\E\left[\left(\int_{-\pi}^\pi|\underline{F}_{K,M_*}(re^{i\theta/(\pi C_4\sqrt{N})})|^2\d\theta \right)^q\right].\label{eq:change of variable}
\end{align}
Define the tilted measures
 \begin{align*}
        \frac{\d\Q^{(1)}}{\d\P} := \frac{\exp(2\sum_{k=C_4\sqrt{N}/C_6}^{K-1}\frac{r^k}{\sqrt{k}}R_k\cos(\tau_k))}{\E\left[\exp(2\sum_{k=C_4\sqrt{N}/C_6}^{K-1}\frac{r^k}{\sqrt{k}}R_k\cos(\tau_k))\right]}
    \end{align*}
    and
    \begin{align*}
        \frac{\d\Q^{(2)}_{r,M,K,\theta}}{\d\P} := \frac{\exp(2\sum_{m=\log(C_4\sqrt{N}/C_6)+1}^{\log K_r}(Z_0(m)+Z_\theta(m)))}{\E[\exp(2\sum_{m=\log(C_4\sqrt{N}/C_6)+1}^{\log K_r}(Z_0(m)+Z_\theta(m)))]},
    \end{align*}
    where
    \begin{align*}
    Z_\theta(m) &:=\Re \sum_{e^{m-1}\le k<e^m}\frac{X_kr^ke^{ik\theta/(\pi C_4\sqrt{N})}}{\sqrt{k}} =\sum_{e^{m-1}\le k<e^m}\frac{r^k}{\sqrt{k}}R_k\cos\Big(\tau_k+\frac{k\theta}{\pi C_4\sqrt{N}}\Big).
    \end{align*}
    The definition \eqref{eq:muk def} still stands, and \eqref{eq:nuk def} is replaced by
    $$\nu_k=\nu_k(\theta):=\E^{\Q^{(2)}}\left[\frac{r^k}{\sqrt{k}}R_k\cos(\tau_k)\right] = \frac{r^{2k}}{k}+\frac{\cos(k\theta/(\pi C_4\sqrt{N}))r^{2k}}{k}+O(k^{-3/2}).$$
Next, we need to adapt Definition \ref{def:event L} by the following.

\begin{definition}\label{def:event L2}
    Fix $L_1>20$. Let $A$ be a real number with $1\le A\le\sqrt{\log K_r}$. Define $\cL(\theta)=\cL(A,\theta;K)$ as the event that for each $\log (C_4\sqrt{N}/C_6)\le n\le \log K_r$, one has  \[-A-L_1n\le \sum_{k=C_4\sqrt{N}/C_6}^{e^n-1}\left(\Re \frac{X_kr^ke^{ik\theta/(\pi C_4\sqrt{N})}}{\sqrt{k}}-\mu_k\right)\le A-5\log \Big(n-\log\Big(\frac{\sqrt{N}}{C_6}\Big)\Big).\]
     Also, let $\cL=\cL(A;K)$ be the random subset of $\theta\in [-\pi,\pi)$ such that $\cL(\theta)$ holds.
\end{definition}

We apply H\"{o}lder's inequality to obtain 
\begin{align}
    \E\left[\left(\int_{-\pi}^{\pi}|\underline{F}_{K,M_*}(re^{i\theta/(\pi C_4\sqrt{N})})|^2\d\theta \right)^q\right]\gg \frac{\left(\E\left[\int_{\mathcal{L}}|\underline{F}_{K,M_*}(re^{i\theta/(\pi C_4\sqrt{N})})|^2\d\theta \right]\right)^{2-q}}{\left(\E\left[\left(\int_{\mathcal{L}}|\underline{F}_{K,M_*}(re^{i\theta/(\pi C_4\sqrt{N})})|^2\d\theta \right)^2\right]\right)^{1-q}}.\label{eq:holder 2}
\end{align}
The lower bound of the numerator follows in exactly the same way as Lemma \ref{lemma:lbnumerator} (the normal approximation argument remains the same, since the angle $\theta$ has not come into play yet). The factor arising from the change of measure is now $K_r/(C_4\sqrt{N}/C_6)\asymp K_r/\sqrt{N}$ (instead of $K_r$), and thus
\begin{align}
    \E\left[\int_{\mathcal{L}}|\underline{F}_{K,M_*}(re^{i\theta/(\pi C_4\sqrt{N})})|^2\d\theta \right]\gg \frac{AK_r}{\sqrt{N\log (K_r/\sqrt{N})}}.\label{eq:lb 2}
\end{align}

\sloppy To give an upper bound of the denominator, we follow the proof of Proposition \ref{SZ-prop-10.2} and indicate the necessary changes. First, we redefine the threshold $M$ as the smallest integer such that $e^M\ge\max\{\min\{10^3C_4\sqrt{N}/(C_6|\theta|),K_r/e\}, M_*\}$. In the definition of the event $\widetilde{\cL}$, we replace \eqref{eq:tildeL1} by 
$$ -A-L_1M\le A_0(M),A_\theta(M)\le A-5\log \Big(M-\log\Big(\frac{C_4\sqrt{N}}{C_6}\Big)\Big).$$
As the constant arising from the change of measure alters, \eqref{eq:?} is replaced by 
$$\E\left[\bone_{\cL(0)\cap\cL(\theta)}|\underline{F}_{K,M_*}(r)|^2|\underline{F}_{K,M_*}(re^{i\theta/(\pi C_4\sqrt{N})})|^2\right]\ll \frac{e^{4M}}{N^2}\E\left[\bone_{\widetilde{\cL}}\,e^{2A_0(M)+2A_\theta(M)}\prod_{m=M+1}^{\log K_r}e^{2Z_0(m)+2Z_\theta(m)}\right].$$
Next, the form of Proposition \ref{SZ-prop-11.1} remains the same, whose proof can be adapted with $\theta$ replaced by $\theta/(\pi C_4\sqrt{N})$, as long as we show the equivalent of decorrelation step \eqref{eq:decor}. For this, let us first compute using Lemma \ref{lemma-Q-computations} and \eqref{eq:A51} of Lemma \ref{lemma:basic 2} that for $m\in[M,\log K_r]$ (recalling that $e^{-C_5/K}\leq r\leq e^{C_5/K}$ and $K_r\leq K$), \[\sigma_m^2=\sum_{e^{m-1}\leq k<e^m}\left(\frac{r^{2k}}{2k}+O(k^{-3/2})\right)=\frac{1}{2}+O(e^{-m/2})\] and \[\rho_m\sigma_m^2=\sum_{e^{m-1}\leq k<e^m}\left(\frac{r^{2k}\cos(k\theta/(\pi C_4\sqrt{N}))}{2k}+O(k^{-3/2})\right)=O(\sqrt{N}e^{-m}+e^{-m/2}).\]
In particular, 
$$\rho_m\ll \sqrt{N}e^{-m}+e^{-m/2},$$
and hence
\begin{align*}
    \prod_{\log(C_4\sqrt{N}/C_6)<m\leq \log K_r}\sqrt{\frac{1+|\rho_m|}{1-|\rho_m|}}\ll 1,
\end{align*}
which serves as the equivalent of \eqref{eq:decor} and suffices for our purpose.

In the proof that deduces Proposition \ref{SZ-prop-10.2} from Proposition \ref{SZ-prop-11.1}, the only change is in \eqref{eq:A0M}, which is now replaced by
$$\E\left[\bone_{\{A_0(M)\leq A-5\log M\}}\,e^{2A_0(M)}\right]\leq \E[e^{2A_0(M)}]=\frac{\E[\exp({2\Re\sum_{k=C_4\sqrt{N}/C_6}^{e^M}X_kr^k/\sqrt{k}})]}{\exp({2\sum_{k=C_4\sqrt{N}/C_6}^{e^M}\mu_k})}\ll \sqrt{N}e^{-M}.$$
These altogether result in the following equivalent of Proposition \ref{SZ-prop-10.2}:
\begin{align*}
&\hspace{0.5cm}\E\left[\bone_{\cL(\theta_1)}|\underline{F}_{K,M_*}(re^{i\theta_1/(\pi C_4\sqrt{N})})|^2\bone_{\cL(\theta_2)}|\underline{F}_{K,M_*}(re^{i\theta_2/(\pi C_4\sqrt{N})})|^2\right]\\
&\ll A^2e^{2A}\frac{K_r^2/N^{3/2}}{\log K_r}\frac{\min\{K_r,2\pi\sqrt{N}/|\theta_1-\theta_2|\}}{(\log(\min\{K_r,2\pi\sqrt{N}/|\theta_1-\theta_2|\})-\log(C_4\sqrt{N}/C_6))^7}\\
&\ll A^2e^{2A}\frac{K_r^2/{N}}{\log N}\frac{\min\{\sqrt{N},2\pi/|\theta_1-\theta_2|\}}{(\log(\min\{K_r,2\pi\sqrt{N}/|\theta_1-\theta_2|\})-\log(C_4\sqrt{N}/C_6))^7}.
\end{align*} 
Integrating with respect to $\theta_1,\theta_2$ yields
\begin{align}
    \E\left[\left(\int_{\mathcal{L}}|\underline{F}_{K,M_*}(re^{i\theta/(\pi C_4\sqrt{N})})|^2\d\theta \right)^2\right]\ll A^2e^{2A}\frac{K_r^2/N}{\log N}.\label{eq:ub 2}
\end{align}

\sloppy Finally, inserting \eqref{eq:lb 2} and \eqref{eq:ub 2} into \eqref{eq:holder 2} leads to (note that we apply with $A\asymp 1$ for $q<1$)
$$\E\left[\left(\int_{-\pi}^{\pi}|\underline{F}_{K,M_*}(re^{i\theta/(\pi C_4\sqrt{N})})|^2\d\theta \right)^q\right]\gg \frac{(K_r/\sqrt{N\log K_r})^{2-q}}{(K_r^2/(N\log N)^{1-q}}\gg \frac{K_r^qN^{-q/2}}{(1+(1-q)\sqrt{\log N})^{q}}.$$
Combined with \eqref{eq:change of variable} proves \eqref{eq:Flow1}, and hence completing the proof of Proposition \ref{prop:uniform}.
\end{proof}

\begin{proof}[Proof of Proposition {\ref{prop:uniform2}}]
    For $1\leq j\leq \log(\pi C_4\sqrt{N})$, we introduce the decomposition
$$F_{K,M_*}(z)=\overline{F}_{K,M_*;j}(z)\times \underline{F}_{K,M_*;j}(z):=\exp\Big(\sum_{k=M_*}^{C_4\sqrt{N}/(C_6e^j)}\frac{X_k}{\sqrt{k}}z^k\Big)\times \exp\Big(\sum^K_{k=C_4\sqrt{N}/(C_6e^j)}\frac{X_k}{\sqrt{k}}z^k\Big).$$
Applying independence and rotational symmetry, we have
\begin{align*}
    &\hspace{0.5cm}\E\left[\left(\int_{ \frac{e^{j-1}}{C_4\sqrt{N}}\leq\theta\leq \frac{e^{j}}{C_4\sqrt{N}}}|{F}_{K,M_*}(re^{i\theta})|^2\d\theta \right)^q\right]\\
   &\leq \E\left[\left(\int_{ \frac{e^{j-1}}{C_4\sqrt{N}}\leq\theta\leq \frac{e^{j}}{C_4\sqrt{N}}}|\underline{F}_{K,M_*;j}(re^{i\theta})|^2\d\theta \right)^q\right]\times \E\bigg[\sup_{ \frac{e^{j-1}}{C_4\sqrt{N}}\leq\theta\leq \frac{e^{j}}{C_4\sqrt{N}}}|\overline{F}_{K,M_*;j}(re^{i\theta})|^{2q}\bigg]\\
   &=\E\left[\left(\int_{ \frac{e^{j-1}}{C_4\sqrt{N}}\leq\theta\leq \frac{e^{j}}{C_4\sqrt{N}}}|\underline{F}_{K,M_*;j}(re^{i\theta})|^2\d\theta \right)^q\right]\times \E\bigg[\sup_{ |\theta|\leq \frac{e^{j}+e^{j-1}}{2C_4\sqrt{N}}}|\overline{F}_{K,M_*;j}(re^{i\theta})|^{2q}\bigg].
\end{align*}
We decompose 
$$\sup_{ |\theta|\leq \frac{e^{j}+e^{j-1}}{2C_4\sqrt{N}}}|\overline{F}_{K,M_*;j}(re^{i\theta})|^{2q}=|\overline{F}_{K,M_*;j}(r)|^{2q}\times \exp\bigg(2q\sup_{ |\theta|\leq \frac{e^{j}+e^{j-1}}{2C_4\sqrt{N}}}\sum_{k=M_*}^{C_4\sqrt{N}/(C_6e^j)}\frac{r^k}{\sqrt{k}}\Re(X_ke^{ik\theta}-X_k)\bigg).$$
The change of measure formula \eqref{eq:hat Q} together with \eqref{l} yield
\begin{align*}
    \E\bigg[\sup_{ |\theta|\leq \frac{e^{j}+e^{j-1}}{2C_4\sqrt{N}}}|\overline{F}_{K,M_*;j}(re^{i\theta})|^{2q}\bigg]&=\Big(\frac{C_4\sqrt{N}}{C_6M_*e^j}\Big)^{q^2}\E^{\widehat{\Q}}\bigg[\exp\bigg(2q\sup_{ |\theta|\leq \frac{e^{j}+e^{j-1}}{2C_4\sqrt{N}}}\sum_{k=M_*}^{C_4\sqrt{N}/(C_6e^j)}\frac{r^k}{\sqrt{k}}\Re(X_ke^{ik\theta}-X_k)\bigg)\bigg].
\end{align*}
Moreover, by \eqref{eq:chaining2},
$$ \E\bigg[\sup_{ |\theta|\leq \frac{e^{j}+e^{j-1}}{2C_4\sqrt{N}}}|\overline{F}_{K,M_*;j}(re^{i\theta})|^{2q}\bigg]\ll \Big(\frac{C_4\sqrt{N}}{C_6M_*e^j}\Big)^{q^2}\ll \Big(\frac{C_4\sqrt{N}}{e^j}\Big)^{q^2}.$$
Therefore,
$$\E\left[\left(\int_{ \frac{e^{j-1}}{C_4\sqrt{N}}\leq\theta\leq \frac{e^{j}}{C_4\sqrt{N}}}|{F}_{K,M_*}(re^{i\theta})|^2\d\theta \right)^q\right]\leq \E\left[\left(\int_{ \frac{e^{j-1}}{C_4\sqrt{N}}\leq\theta\leq \frac{e^{j}}{C_4\sqrt{N}}}|\underline{F}_{K,M_*;j}(re^{i\theta})|^2\d\theta \right)^q\right]\times \Big(\frac{C_4\sqrt{N}}{e^j}\Big)^{q^2}.$$
To show \eqref{eq:jub1}, it remains to prove 
\begin{align}
    \E\left[\left(\int_{ \frac{e^{j-1}}{C_4\sqrt{N}}\leq\theta\leq \frac{e^{j}}{C_4\sqrt{N}}}|\underline{F}_{K,M_*;j}(re^{i\theta})|^2\d\theta \right)^q\right]\ll \frac{e^{2qj}(K/N)^q}{(1+(1-q)\sqrt{\log N})^{q}},\label{eq:Flow2}
\end{align}
where the asymptotic constant does not depend on $j,K,N$. 
The case of $q=1$ being a consequence of Lemma \ref{SZ-lemma2.2!}, we assume that $q<1$.
To this end, we adapt the proof of Proposition \ref{SZ-prop3.2}. Recall Definition \ref{def:Gr}. We first claim that
\begin{align}
    \E\left[\bone_{\mathcal{G}_r(A;K)}\int_{ \frac{e^{j-1}}{C_4\sqrt{N}}\leq\theta\leq \frac{e^{j}}{C_4\sqrt{N}}}|\underline{F}_{K,M_*;j}(re^{i\theta})|^2\d\theta \right]\ll\frac{AKe^{2j}}{N\sqrt{\log N}}.\label{eq:theta ub}
\end{align}
\sloppy The only change compared to Proposition \ref{SZ-prop5.3} is that we replace the definition of $M$ therein by $\max\{M,\log(C_4\sqrt{N}/(C_6e^j))\}$. This leads to 
$$\E\left[\bone_{\mathcal{G}_r(A,\theta;K)}|\underline{F}_{K,M_*;j}(re^{i\theta})|^2 \right]\ll\frac{AKe^j}{\sqrt{N\log K}}
.$$
Integration with respect to $\theta$ then gives \eqref{eq:theta ub}. Applying the same argument that deduces Proposition \ref{SZ-prop3.2} from Propositions \ref{SZ-prop5.2} and  \ref{SZ-prop5.3} then leads to \eqref{eq:Flow2} and hence \eqref{eq:jub1}. A similar argument establishes \eqref{eq:jub2} and can be omitted. 
\end{proof}

\subsubsection{Low moments of weighted mass of truncated chaos}\label{sec: weighted partial mass}

Recall \eqref{eq:ujuj}. 
Define 
$$\widetilde{F}_{K,M_*,m}(z):=\exp\Big(\sum_{M_*\leq k\leq K}\frac{X_k}{\sqrt{k}}z^k\Big)\times\Big(\sum_{j:|j-m|\leq N^{9/10}}u(j)z^j\Big),$$
our goal is to show the following.

\begin{proposition}
    \label{prop:christmas}Let $K_r$ be such that  $\log K_r$ is the largest integer with $K_r\le\min\{\frac{-1}{4\log r},K\}$. Uniformly for  $r\in[e^{-C_5/K},e^{C_5/K}]$ and $K\gg\sqrt{N}$,
\begin{align}
   N^{q^2/2}\left(\frac{r^{2m}K_r}{1+(1-q)\sqrt{\log K_r}}\right)^q\ll  \E\left[\left(\int_{-\pi}^{\pi}|\widetilde{F}_{K,M_*,m}(re^{i\theta})|^2\d\theta \right)^q\right]\ll N^{q^2/2}\left(\frac{r^{2m}K}{1+(1-q)\sqrt{\log K}}\right)^q,\label{eq:real82replication}
\end{align}
where the asymptotic constants depend on $q$ only. In particular, with the choice $K=N/2$,
$$N^{q^2/2}\left(\frac{r^{2m}K_r}{1+(1-q)\sqrt{\log K_r}}\right)^q\ll  \E\left[\left(\int_{-\pi}^{\pi}|\widetilde{F}_{K,M_*,m}(re^{i\theta})|^2\d\theta \right)^q\right]\ll N^{q^2/2}\left(\frac{r^{2m}N}{1+(1-q)\sqrt{\log N}}\right)^q.$$
\end{proposition}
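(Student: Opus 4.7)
The plan is to exploit the factorization $\widetilde{F}_{K,M_*,m}(re^{i\theta}) = F_{K,M_*}(re^{i\theta})\cdot h_{\tau_1}(\theta)$, where
\[
h_{\tau_1}(\theta):=\sum_{|j-m|\leq N^{9/10}} u(j)(re^{i\theta})^j = e^{im\tau_1}(re^{i(\theta+\tau_1)})^m \sum_{|k|\leq N^{9/10}} w_k (re^{i(\theta+\tau_1)})^k,
\]
with $w_k:=(m!/(m+k)!)\,|x_1|^k$ for $k\geq 0$ (and the analogous product of $|k|$ terms for $k<0$). Since $\{X_k\}_{k\geq M_*}$ is rotationally invariant and independent of $\tau_1$, conditioning on $\tau_1$ and changing variables $\theta\mapsto\theta-\tau_1$ (combined with $2\pi$-periodicity) shows that it suffices to estimate
\[
\E\bigg[\bigg(\int_{-\pi}^{\pi}|F_{K,M_*}(re^{i\theta})|^2\,G(\theta)^2\,d\theta\bigg)^q\bigg], \qquad G(\theta):=r^m\bigg|\sum_{|k|\leq N^{9/10}} w_k (re^{i\theta})^k\bigg|,
\]
where now $G$ is a deterministic function of $\theta$ (given $r,m,x_1$).

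The next step is to establish pointwise two-sided estimates on $G(\theta)$ via Gaussian approximation. Since $|x_1|\in[m,m+1)$ and, in the relevant application range $m\in[N/6,N/3]$ we have $|k|\leq N^{9/10}=o(m^{2/3})$, Taylor expansion yields $w_k=\exp(-k^2/(2m)+o(1))$ uniformly. Moreover, since $|\log r|\leq C_5/K$ with $K\gg\sqrt{N}$, the factor $r^k$ only mildly shifts the Gaussian peak of $w_k r^k$ to $k_0=m\log r$, which satisfies $|k_0|\ll\sqrt{m}$ and $k_0^2/m=O(1)$. Completing the square and applying Poisson summation to the resulting centered Gaussian then yield, for some constant $c>0$,
\[
G(\theta)^2\gg r^{2m}m\ \text{ for }\ |\theta|\leq c/\sqrt{m}, \qquad G(\theta)^2\ll r^{2m}m\,e^{-cm\theta^2}\ \text{ uniformly in }\theta\in[-\pi,\pi],
\]
with the off-center lattice contributions in Poisson summation being exponentially smaller (including near $\theta=\pm\pi$), and the truncation to $|k|\leq N^{9/10}$ incurring an error of order $e^{-cN^{4/5}}$, which is negligible.

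For the lower bound of \eqref{eq:real82replication}, restrict the integral to $|\theta|\leq 1/(C_4\sqrt{N})$, choosing $C_4$ large enough so that this interval lies within the regime $|\theta|\leq c/\sqrt{m}$ uniformly in $m\asymp N$. There $G^2\gg r^{2m}N$, so Proposition \ref{prop:uniform} yields
\[
\E\bigg[\bigg(\int_{-\pi}^{\pi}|F_{K,M_*}|^2 G^2\,d\theta\bigg)^q\bigg]\gg r^{2mq}N^q\cdot\frac{K_r^q N^{-q+q^2/2}}{(1+(1-q)\sqrt{\log N})^q}=\frac{r^{2mq} N^{q^2/2}K_r^q}{(1+(1-q)\sqrt{\log N})^q}.
\]
For the upper bound, the case $q=1$ follows from a direct moment computation via Lemma \ref{SZ-lemma2.2!}. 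For $q\in(0,1)$, partition $[-\pi,\pi]$ into $I_0:=\{|\theta|\leq 1/(C_4\sqrt{N})\}$ and annuli $I_j:=\{e^{j-1}/(C_4\sqrt{N})\leq|\theta|\leq e^j/(C_4\sqrt{N})\}$ for $1\leq j\leq\log(\pi C_4\sqrt{N})$. Using the subadditivity $(\sum_j a_j)^q\leq\sum_j a_j^q$ valid for $q\in(0,1]$, the pointwise bound $G^2\ll r^{2m}N\,e^{-ce^{2(j-1)}/C_4^2}$ on $I_j$, and the estimates \eqref{eq:jub1}--\eqref{eq:jub2} of Proposition \ref{prop:uniform2}, the contribution of each $I_j$ to the $q$-th moment is at most
\[
r^{2mq}N^{q^2/2}\,e^{-cq e^{2(j-1)}/C_4^2}\,e^{(j-1)(2q-q^2)}\,\frac{C_4^{q^2}K^q}{(1+(1-q)\sqrt{\log N})^q},
\]
and summing over $j\geq 0$ produces a convergent series (the Gaussian factor eventually dominates the geometric one), yielding the claimed upper bound.

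The principal technical obstacle is making the Gaussian approximation for $G(\theta)$ quantitatively precise in both directions, uniformly in $r\in[e^{-C_5/K},e^{C_5/K}]$ and in $|x_1|\in[m,m+1)$. In particular, the lower bound requires verifying that no unexpected oscillatory cancellation occurs in $\sum_k w_k (re^{i\theta})^k$ for $|\theta|\leq c/\sqrt{m}$, and the upper bound needs clean control of the Poisson-summed periodic images uniformly up to $\theta=\pm\pi$, as well as confirming that the mild peak shift from $r\neq 1$ does not alter the order of the estimates.
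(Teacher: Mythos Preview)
Your structural approach is exactly the paper's: factor $\widetilde{F}_{K,M_*,m}=F_{K,M_*}\cdot h_{\tau_1}$, eliminate $\tau_1$ by rotational invariance, then decompose the $\theta$-integral into a core disc and dyadic annuli, invoking Propositions~\ref{prop:uniform} and~\ref{prop:uniform2} together with subadditivity of $x\mapsto x^q$. The lower bound argument (restrict to $|\theta|\le 1/(C_4\sqrt N)$, use $G^2\gg r^{2m}N$, apply Proposition~\ref{prop:uniform}) is identical to the paper's.

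Where you diverge is in the pointwise control of $G(\theta)$. You aim for the sharp Gaussian decay $G(\theta)^2\ll r^{2m}m\,e^{-cm\theta^2}$ via Poisson summation; the paper instead proves (Lemmas~\ref{lemma:sum by parts}--\ref{lemma:sum by parts2}) the weaker but elementary bound $G(\theta)\ll r^m\min\{1/|\theta|,\sqrt{N}\}$ by summation by parts (using only that the coefficient sequence $w_k r^k$ has bounded variation). This weaker bound already suffices: on the $j$-th annulus it contributes a factor $\asymp (C_4\sqrt{N}/e^{j-1})^{2q}$, and combined with the $(e^{j-1}/\sqrt{N})^{2q-q^2}$ from Proposition~\ref{prop:uniform2} the resulting series over $j$ is $\sum_j e^{-q^2(j-1)}<\infty$. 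So your Gaussian factor, while giving a faster-converging series, is not needed.

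Your Gaussian route has a concrete error and a real gap. The assertion ``$|k|\le N^{9/10}=o(m^{2/3})$'' is false: for $m\in[N/6,N/3]$ one has $m^{2/3}\asymp N^{2/3}\ll N^{9/10}$. Consequently the uniform approximation $w_k=\exp(-k^2/(2m)+o(1))$ fails over the full range (the cubic Stirling correction $k^3/m^2$ is of size $N^{7/10}$ at $|k|=N^{9/10}$). More seriously, to conclude $G(\theta)\ll r^m\sqrt{m}\,e^{-cm\theta^2}$ uniformly up to $|\theta|=\pi$ you would need the Fourier transform of the error $w_k-e^{-k^2/(2m)}$ to be exponentially small in $m$, which requires analytic (not merely $o(1)$ pointwise) control of the coefficient sequence; your sketch does not supply this. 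The fix is simply to abandon the Gaussian bound and use the summation-by-parts estimate $G(\theta)\ll r^m/|\theta|$, which is both easier and sufficient.
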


Recall the {truncated} chaos $F_{K,M_*}$ from \eqref{eq:FK}. 
We have
\begin{align}
    \frac{|\widetilde{F}_{K,M_*,m}(re^{i\theta})|}{|F_{K,M_*}(re^{i\theta})|}= \bigg|\sum_{j:|j-m|\leq N^{9/10}}e^{ij(\tau_1+\theta)}\,\frac{|X_1|^{j-m}}{j!/m!}r^j\bigg|.\label{eq:tilde F decomp}
\end{align}
The following two lemmas deal with the right-hand side of \eqref{eq:tilde F decomp}, whose proofs are elementary and deferred to Appendix \ref{sec:deferred proofs}. Recall the constants $C_4,C_5$.

\begin{lemma}\label{lemma:sum by parts}Uniformly in $N$ large enough,  $m\in[N/6,N/3]$, $x_1\in[m,m+1)$,  $\tau_1\in[-\pi,\pi)$, and $r\in[e^{-C_5/N},e^{C_5/N}]$, we have 
$$\bigg|\sum_{j:|j-m|\leq N^{9/10}}e^{ij\tau_1}\,\frac{|x_1|^{j-m}}{j!/m!}r^j\bigg|
    \ll \min\left\{\frac{r^m}{|\tau_1|},\sqrt{N}r^m\right\}.$$
\end{lemma}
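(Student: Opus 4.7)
The plan is to analyze the sum via triangle inequality for the $\sqrt N\,r^m$ bound and via Abel summation for the $r^m/|\tau_1|$ bound. Substituting $k=j-m$ and extracting the unimodular factor $e^{im\tau_1}$, the quantity to control is
\[
\widetilde S:=\sum_{|k|\le K_0}e^{ik\tau_1}\,c_k,\qquad c_k:=\frac{m!\,|x_1|^{k}\,r^{m+k}}{(m+k)!},\qquad K_0:=\lfloor N^{9/10}\rfloor.
\]
First I would record the structural behaviour of the coefficients. Set $b_k:=m!\,|x_1|^k/(m+k)!=c_k/r^{m+k}$. The ratio $b_{k+1}/b_k=|x_1|/(m+k+1)$ together with $|x_1|\in[m,m+1)$ shows that $(b_k)$ is strictly increasing for $k\le-1$ and strictly decreasing for $k\ge 0$, peaking at $b_0=1$; in particular $0\le b_k\le 1$ for all $k\in\Z$. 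Since $|\log r|\le C_5/N$, the factor $r^k$ lies in $[1-o(1),1+o(1)]$ uniformly on $|k|\le K_0$, so $c_k=r^m b_k(1+o(1))$ and the sequence $(c_k)$ remains unimodal with a bounded-integer mode and maximum of order $r^m$.

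Next I would deduce $|\widetilde S|\ll\sqrt N\,r^m$ by triangle inequality plus a Gaussian estimate on $b_k$. Using $\log(1-y)\le -y$ together with $|k|\le K_0\ll m$ gives
\[
\log b_k\le -\sum_{i=1}^{|k|}\frac{i-O(1)}{m+i}\le-\frac{k^2}{2m}+O\Bigl(\frac{|k|}{m}\Bigr),\qquad |k|\le K_0,
\]
so that $\sum_{|k|\le K_0}b_k\ll\sqrt m\ll\sqrt N$ (using $m\in[N/6,N/3]$). Hence $|\widetilde S|\le\sum_k c_k\ll\sqrt N\,r^m$.

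For the sharper bound $|\widetilde S|\ll r^m/|\tau_1|$, I would apply summation by parts. Let $E_k:=\sum_{\ell=-K_0}^{k}e^{i\ell\tau_1}$; these partial geometric sums satisfy $|E_k|\le 1/|\sin(\tau_1/2)|\ll 1/|\tau_1|$ uniformly in $k$. Abel summation yields
\[
\widetilde S=E_{K_0}\,c_{K_0}-\sum_{k=-K_0}^{K_0-1}E_k\,(c_{k+1}-c_k),
\]
so $|\widetilde S|\ll\bigl(|c_{K_0}|+\mathrm{TV}(c)\bigr)/|\tau_1|$, where $\mathrm{TV}(c):=\sum_k|c_{k+1}-c_k|$. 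The unimodality of $(c_k)$ combined with $\max_k c_k\ll r^m$ forces $\mathrm{TV}(c)\ll r^m$, and clearly $|c_{K_0}|\ll r^m$. This gives $|\widetilde S|\ll r^m/|\tau_1|$, and taking the $\min$ of the two bounds yields the claim.

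The main technical point is to confirm that $(c_k)$ inherits unimodality from $(b_k)$ after the mild tilting by $r^k$: the ratio $c_{k+1}/c_k=r|x_1|/(m+k+1)$ is monotonically decreasing in $k$ and crosses the value $1$ at a bounded integer $k=k^\ast$ (since $r|x_1|-m-1=O(1)$ under our assumptions), so the total-variation argument is unaffected. All other ingredients (the Gaussian decay of $b_k$ and the Dirichlet bound on $E_k$) are standard.
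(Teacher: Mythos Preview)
Your proof is correct and follows essentially the same strategy as the paper: triangle inequality plus a Gaussian tail estimate on the coefficients for the $\sqrt{N}\,r^m$ bound, and Abel summation against the Dirichlet kernel for the $r^m/|\tau_1|$ bound. The only notable difference is cosmetic: the paper first splits on the size of $|\tau_1|$, truncates to $|j-m|\le\sqrt{N}$, replaces $r^j$ by $r^m$ on that window, and then does summation by parts separately for $j\ge m$ and $j<m$; you instead keep the $r^j$ factor in $c_k$, observe that the ratio $c_{k+1}/c_k=r|x_1|/(m+k+1)$ is monotone in $k$ so $(c_k)$ is unimodal with mode at a bounded index, and bound the total variation by $2\max_k c_k\ll r^m$ in one stroke---a slightly cleaner packaging of the same idea.
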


\begin{lemma}\label{lemma:sum by parts2}There exists a large universal constant $C_4>0$ such that the following holds. Uniformly in $N$ large enough,  $m\in[N/6,N/3]$, $x_1\in[m,m+1)$,  $|\tau|\leq 1/(C_4\sqrt{N})$, and $r\in[e^{-C_5/N},e^{C_5/N}]$, we have 
$$\bigg|\sum_{j:|j-m|\leq N^{9/10}}e^{ij\tau}\,\frac{|x_1|^{j-m}}{j!/m!}r^j\bigg|
    \gg \sqrt{N}r^m.$$
\end{lemma}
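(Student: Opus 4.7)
\textbf{Proof proposal for Lemma \ref{lemma:sum by parts2}.}
The plan is to extract a common phase and magnitude factor, then show that a Gaussian-type main block of the resulting sum already beats the (negligible) tail, using that $|\tau|\leq 1/(C_4\sqrt N)$ keeps the oscillating phases $e^{ik\tau}$ essentially frozen on the effective support of the amplitudes. Factoring out $r^m e^{im\tau}$ and substituting $k=j-m$, it suffices to prove
\[
|T|:=\Bigg|\sum_{|k|\leq N^{9/10}} e^{ik\tau}\, a_k\Bigg|\gg \sqrt{N},\qquad a_k:=\frac{|x_1|^k r^k m!}{(m+k)!}\geq 0.
\]
Since $|x_1|\in[m,m+1)$ and $r=1+O(1/N)$, we have $|x_1|r/m=1+O(1/N)$, so $(|x_1|r/m)^k=1+o(1)$ uniformly in $|k|\leq N^{9/10}$. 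Combined with $\log(m/(m+i))=-i/m+O(i^2/m^2)$ and the analogous expansion for negative $k$ (using that $|k|\leq N^{9/10}\ll m$ since $m\geq N/6$), a direct Taylor expansion gives, uniformly in $|k|\leq N^{9/10}$,
\[
a_k = (1+o(1))\,\exp\!\Big(-\tfrac{k^2}{2m}+O(k^3/m^2)\Big).
\]
In particular, $a_k\in[c_1\,e^{-k^2/(c_2 m)},\, c_3\,e^{-c_4 k^2/m}]$ for some absolute constants $c_1,\dots,c_4>0$.

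The key step is to split the range according to a fixed threshold $R$ (e.g.\ $R=10$):
\[
\Re(T)=\underbrace{\sum_{|k|\leq R\sqrt m}\cos(k\tau)\,a_k}_{=:\,\Sigma_{\mathrm{main}}}+\underbrace{\sum_{R\sqrt m<|k|\leq N^{9/10}}\cos(k\tau)\,a_k}_{=:\,\Sigma_{\mathrm{tail}}}.
\]
On the main block, $|k\tau|\leq R\sqrt m/(C_4\sqrt N)\leq R/(C_4\sqrt 3)\leq 1/2$ provided $C_4\geq 4R$, so $\cos(k\tau)\geq 1/2$. Combined with the lower bound on $a_k$, a Riemann-sum comparison yields
\[
\Sigma_{\mathrm{main}}\geq \tfrac{1}{2}(1-o(1))\sum_{|k|\leq R\sqrt m} e^{-k^2/(c_2 m)}\gg \sqrt{m}\gg\sqrt{N}.
\]
For the tail, we use the crude upper bound $|\cos(k\tau)|\leq 1$ and the Gaussian-type decay $a_k\leq c_3 e^{-c_4 k^2/m}$ to obtain $|\Sigma_{\mathrm{tail}}|\leq C\sqrt m\, e^{-c_4 R^2}$. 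Choosing $R$ large enough (depending only on $c_4$ and the implicit constants), the tail is absorbed into the main block, and then fixing $C_4$ large enough depending on that $R$ secures the phase control. We conclude $|T|\geq \Re(T)\gg\sqrt m\gg\sqrt N$, which is the desired bound.

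The main technical point to be careful about is the uniform control of $a_k$ over the full range $|k|\leq N^{9/10}$: the Taylor expansion above is cleanest for $|k|\leq m^{1/2+\varepsilon}$, but for larger $k$ one uses instead the telescoping bound $\prod_{i=1}^k m/(m+i)\leq \exp(-\sum_{i=1}^k i/(m+k))\leq e^{-k^2/(3m)}$, valid whenever $|k|\leq m/2$, which handles the whole tail since $N^{9/10}\ll m$. This is the only slightly delicate calculation; everything else is a direct consequence of taking real parts and using that $|\tau|\leq 1/(C_4\sqrt N)$ is small on the scale set by $\sqrt m\asymp \sqrt N$. The order of quantifiers is important: $R$ is chosen first (absolute), then $C_4$ is chosen large in terms of $R$, and the statement then holds for all $N$ sufficiently large.
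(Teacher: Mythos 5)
Your proof is correct and follows essentially the same route as the paper: use the Gaussian-type decay of the coefficients $\frac{|x_1|^{j-m}}{j!/m!}r^j$ to truncate to a window of width $\asymp\sqrt{N}$ around $j=m$, observe that $|\tau|\le 1/(C_4\sqrt N)$ keeps the phases essentially frozen there, and lower-bound the resulting positive sum by $\gg\sqrt N r^m$ before absorbing the tail and fixing $C_4$. The only cosmetic difference is that you take real parts and use $\cos(k\tau)\ge 1/2$ on a block $|k|\le R\sqrt m$ with $R$ fixed first, whereas the paper replaces $e^{ij\tau}$ by $e^{im\tau}$ on a block of width $\sqrt{C_4N}$ and controls the replacement error of size $\sqrt N C_4^{-1/2}$ by the triangle inequality.
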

We define $C_4$ so that the conditions in Lemma \ref{lemma:sum by parts2} are satisfied. In particular, we may remove the $C_4$ that appears in \eqref{eq:jub1} and \eqref{eq:jub2}. 

\begin{proof}[Proof of Proposition \ref{prop:christmas}]
 For the first part of \eqref{eq:real82replication}, we apply Lemma \ref{lemma:sum by parts2} and Proposition \ref{prop:uniform} to obtain 
\begin{align*}
    \E\left[\left(\int_{-\pi}^{\pi}|\widetilde{F}_{K,M_*,m}(re^{i\theta})|^2\d\theta \right)^q\right]&\geq \E\left[\left(\int_{|\theta+\tau_1|\leq \frac{1}{C_4\sqrt{N}}}|\widetilde{F}_{K,M_*,m}(re^{i\theta})|^2\d\theta \right)^q\right]\\
    &\gg N^qr^{2qm}\E\left[\left(\int_{|\theta+\tau_1|\leq \frac{1}{C_4\sqrt{N}}}|{F}_{K,M_*}(re^{i\theta})|^2\d\theta \right)^q\right]\\
    &\gg N^{q^2/2}\left(\frac{r^{2m}K_r}{1+(1-q)\sqrt{\log N}}\right)^q,
\end{align*}where in the last step we use rotational symmetry conditionally on $\tau_1$. 
  For the second part of \eqref{eq:real82replication}, we apply Lemma \ref{lemma:sum by parts}, Proposition \ref{prop:uniform2} (conditionally on $\tau_1$ and using rotational symmetry), and $|a+b|^q\leq |a|^q+|b|^q$ for $q\in(0,1]$ to get
  \begin{align*}
      &\hspace{0.5cm}\E\bigg[\bigg(\int_{-\pi}^\pi|\widetilde{F}_{K,M_*,m}(re^{i\theta})|^2\d\theta \bigg)^q\bigg]\\
      &\leq \sum_{j=1}^{\log(\pi C_4\sqrt{N})}\E\bigg[\bigg(\int_{ \frac{e^{j-1}}{C_4\sqrt{N}}\leq|\theta+\tau_1|\leq \frac{e^{j}}{C_4\sqrt{N}}}|\widetilde{F}_{K,M_*,m}(re^{i\theta})|^2\d\theta \bigg)^q\bigg]+\E\bigg[\bigg(\int_{|\theta+\tau_1|\leq \frac{1}{C_4\sqrt{N}}}|\widetilde{F}_{K,M_*,m}(re^{i\theta})|^2\d\theta \bigg)^q\bigg]\\
      &\ll \sum_{j=1}^{\log(\pi C_4\sqrt{N})}r^{2qm}\Big(\frac{e^{j}}{C_4\sqrt{N}}\Big)^{-2q}\E\bigg[\bigg(\int_{ \frac{e^{j-1}}{C_4\sqrt{N}}\leq|\theta+\tau_1|\leq \frac{e^{j}}{C_4\sqrt{N}}}|{F}_{K,M_*}(re^{i\theta})|^2\d\theta \bigg)^q\bigg]\\
      &\hspace{5cm}+N^qr^{2qm}\E\bigg[\bigg(\int_{|\theta+\tau_1|\leq \frac{1}{C_4\sqrt{N}}}|{F}_{K,M_*}(re^{i\theta})|^2\d\theta \bigg)^q\bigg]\\
      &\ll \sum_{j=1}^{\log(\pi C_4\sqrt{N})}r^{2qm} e^{-2qj}N^q \times \Big(\frac{e^{j-1}}{\sqrt{N}}\Big)^{2q-q^2} \frac{K^q}{(1+(1-q)\sqrt{\log N})^{q}}+N^qr^{2qm}\times\frac{N^{-q+q^2/2}K^q}{(1+(1-q)\sqrt{\log N})^{q}} \\
      &\ll N^{q^2/2}\left(\frac{r^{2m}K}{1+(1-q)\sqrt{\log N}}\right)^q.
  \end{align*}
This completes the proof.
\end{proof}

\subsubsection{Proof of the lower bound}\label{sec:critical-gamma=2q-LB}

Observe from \eqref{eq:sum formula} that
$$\sum_{m=N/6}^{N/3} \frac{m^{-q+q^2/2}}{(1+(1-q)\sqrt{\log(N-m)})^{q}}\gg \frac{N^{1-q+q^2/2}}{(1+(1-q)\sqrt{\log N})^q}\asymp \sum_{m=0}^{N-2}\frac{m^{-q+q^2/2}}{(1+(1-q)\sqrt{\log(N-m)})^{q}}.$$
This hints at the strategy of restricting to the event $m\approx |X_1|\in[N/6,N/3]$ in order to achieve an asymptotic lower bound for $\E[|A_N|^{2q}]$, which has considerably reduced technicality. To this end, let us write
\begin{align}
    \E[|A_N|^{2q}]\geq \sum_{m=N/6}^{N/3}\E\Bigg[\bigg|\sum_{\substack{\lambda\in\cP_N}}\prod_{k\geq 1}\left(\frac{X_k}{\sqrt{k}}\right)^{m_k}\frac{1}{m_k!}\bigg|^{2q}\bone_{\{|X_1|\in[m,m+1)\}}\Bigg].\label{eq:lbfirst step}
\end{align}
Fix $m\in[N/6,N/3]$. In view of the discussions in Section \ref{sec:main ideas}, we expect that 
\begin{align}
    \E\Bigg[\bigg|\sum_{\substack{\lambda\in\cP_N}}\prod_{k\geq 1}\left(\frac{X_k}{\sqrt{k}}\right)^{m_k}\frac{1}{m_k!}\bigg|^{2q}\bone_{\{|X_1|\in[m,m+1)\}}\Bigg]\asymp \E\Bigg[\bigg|\sum_{\substack{\lambda\in\cP_N\\ |m_1(\lambda)-m|\leq N^{9/10}}}\prod_{k\geq 1}\left(\frac{X_k}{\sqrt{k}}\right)^{m_k}\frac{1}{m_k!}\bigg|^{2q}\bone_{\{|X_1|\in[m,m+1)\}}\Bigg].\label{w}
\end{align}
This is confirmed by the following lemma.
\begin{lemma}\label{lemma:tailsmall}
For $m\in[N/6,N/3]$,
$$\E\Bigg[\bigg|\sum_{\substack{\lambda\in\cP_N\\ |m_1(\lambda)-m|> N^{9/10}}}\prod_{k\geq 1}\left(\frac{X_k}{\sqrt{k}}\right)^{m_k}\frac{1}{m_k!}\bigg|^{2q}\bone_{\{|X_1|\in[m,m+1)\}}\Bigg]=o(N^{-2}).$$
\end{lemma}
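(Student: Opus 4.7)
The strategy is to extract the contribution of $X_1^{m_1(\lambda)}$ from each summand, exploit that the argument $\tau_1:=\arg(X_1)$ is uniform on $[-\pi,\pi)$ and independent of $|X_1|$ and of $\{X_k\}_{k\geq 2}$, and then invoke Poisson-type concentration to discard those indices $m_1$ that are far from $m$. Set
\[
B_n := \sum_{\substack{\lambda'\in\cP_n\\ m_1(\lambda')=0}} \prod_{k\geq 2}\left(\frac{X_k}{\sqrt{k}}\right)^{m_k(\lambda')}\frac{1}{m_k(\lambda')!},
\]
with the convention $B_0=1$ (so $B_1=0$); crucially $B_n$ is independent of $X_1$. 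Grouping partitions of $N$ by $j:=m_1(\lambda)$ gives the decomposition
\[
\sum_{\substack{\lambda\in\cP_N\\ |m_1(\lambda)-m|>N^{9/10}}}\prod_{k}\left(\frac{X_k}{\sqrt{k}}\right)^{m_k}\frac{1}{m_k!} = \sum_{\substack{0\leq j\leq N\\ |j-m|>N^{9/10}}}\frac{|X_1|^j e^{ij\tau_1}}{j!}\,B_{N-j}.
\]

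Conditionally on $|X_1|$ and $\{X_k\}_{k\geq 2}$, orthogonality of $\{e^{ij\tau_1}\}_{j}$ under $\tau_1$ gives
\[
\E_{\tau_1}\bigg[\Big|\sum_j \frac{|X_1|^j e^{ij\tau_1}}{j!}B_{N-j}\Big|^{2}\bigg] = \sum_j \frac{|X_1|^{2j}}{(j!)^2}|B_{N-j}|^2.
\]
Two applications of the hypothesis $q\leq 1$ --- first Jensen's inequality for the concave map $x\mapsto x^q$, then the subadditivity $(a+b)^q\leq a^q+b^q$ on $[0,\infty)$ --- bound the conditional $2q$-th moment by $\sum_j |X_1|^{2qj}(j!)^{-2q}|B_{N-j}|^{2q}$. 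Taking full expectation and using independence of $|X_1|$ from $\{X_k\}_{k\geq 2}$, we obtain
\[
\text{LHS of Lemma \ref{lemma:tailsmall}} \leq \sum_{\substack{0\leq j\leq N\\ |j-m|>N^{9/10}}}\frac{\E[|X_1|^{2qj}\bone_{\{|X_1|\in[m,m+1)\}}]}{(j!)^{2q}}\,\E[|B_{N-j}|^{2q}].
\]
The density of $|X_1|$ on $[c_\gamma,\infty)$ is $\gamma e^{-\gamma(u-c_\gamma)}$ with $\gamma=2q$, so the numerator is $\ll (m+1)^{2qj}e^{-2qm}$. Proposition \ref{prop:strengthened} (for $N-j$ large) together with the trivial bound for the finitely many small values of $N-j$ gives $\E[|B_{N-j}|^{2q}]\ll 1$ uniformly. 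Using Stirling and $((m+1)/m)^j\leq e^{j/m}\leq e^6$ (valid since $m\geq N/6$ and $j\leq N$), the right-hand side reduces, up to constants, to $\sum_{|j-m|>N^{9/10},\,0\leq j\leq N}p_j^{2q}$, where $p_j:=m^j e^{-m}/j!$ is the Poisson$(m)$ PMF.

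It remains to control this tail sum. The Chernoff bound for Poisson together with $m\leq N/3$ gives $\sum_{|j-m|>N^{9/10}}p_j\leq 2\exp(-cN^{4/5})$ for some absolute $c>0$. When $2q\leq 1$, the power-mean inequality yields $\sum_{\mathcal J}p_j^{2q}\leq |\mathcal J|^{1-2q}(\sum_{\mathcal J}p_j)^{2q}\leq N^{1-2q}(2e^{-cN^{4/5}})^{2q}$; when $2q>1$, the estimate $\sum_{\mathcal J}p_j^{2q}\leq (\max_j p_j)^{2q-1}\sum_{\mathcal J}p_j \ll m^{-(2q-1)/2}e^{-cN^{4/5}}$ suffices, using $\max_j p_j \ll m^{-1/2}$. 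In both cases the sum decays faster than any polynomial in $N$, so the LHS is $o(N^{-2})$. The main subtlety is the Jensen--subadditivity chain in the second paragraph, which crucially uses $q\leq 1$ to linearize the $2q$-th moment in $j$; once this is done, the Poisson tail estimate is a routine but cleanly quantitative computation.
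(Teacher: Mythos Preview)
Your proof is correct and follows essentially the same approach as the paper: decompose by $j=m_1(\lambda)$, separate the $X_1$ factor via independence, bound $\E[|B_{N-j}|^{2q}]\ll 1$ by Proposition~\ref{prop:strengthened}, and then control the resulting Poisson-type tail sum. The only cosmetic difference is that you linearize the $2q$-th moment via orthogonality in $\tau_1$ followed by Jensen and subadditivity (which handles all $q\in(0,1]$ at once), whereas the paper splits into $q\leq 1/2$ (concavity) and $q>1/2$ (Minkowski); the resulting estimates are the same.
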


\begin{proof}
We focus first on the sum over $m_1(\lambda)\geq m$. Using in turn concavity (and similarly Minkowski's inequality for $q>1/2$), independence, \eqref{eq:gamma}, and Proposition \ref{prop:strengthened}, we have
\begin{align*}
    &\hspace{0.5cm}\E\Bigg[\bigg|\sum_{\substack{\lambda\in\cP_N\\ m_1(\lambda)-m>  N^{9/10}}}\prod_{k\geq 1}\left(\frac{X_k}{\sqrt{k}}\right)^{m_k}\frac{1}{m_k!}\bigg|^{2q}\bone_{\{|X_1|\in[m,m+1)\}}\Bigg]\\
    &\leq \sum_{j=m+N^{9/10}}^{N}\E\Bigg[\bigg|\sum_{\substack{\lambda\in\cP_N\\ m_1(\lambda)=j}}\prod_{k\geq 2}\left(\frac{X_k}{\sqrt{k}}\right)^{m_k}\frac{1}{m_k!}\bigg|^{2q}\frac{|X_1|^{2qj}}{(j!)^{2q}}\bone_{\{|X_1|\in[m,m+1)\}}\Bigg]\\
    &\ll \sum_{j=m+N^{9/10}}^{N}\frac{m^{2qj}e^{-\gamma m}}{(j!)^{2q}}\E\Bigg[\bigg|\sum_{\substack{\lambda\in\cP_{N-j}\\ m_1(\lambda)=0}}\prod_{k\geq 2}\left(\frac{X_k}{\sqrt{k}}\right)^{m_k}\frac{1}{m_k!}\bigg|^{2q}\Bigg]\\
    &\ll \sum_{j=m+N^{9/10}}^{N}\Big(\frac{m}{j}\Big)^{\gamma j}e^{\gamma(j-m)}j^{-q}.
\end{align*}
Using Taylor's expansion (that $-\log(1-x)\leq -x-x^2/2$ for $0<x<1$), we have the bound that for $j'\geq 0$,
\begin{align}
   e^{j'}\Big(\frac{m}{m+j'}\Big)^{m+j'}\leq \exp\Big(-\frac{(j')^2}{2(m+j')}\Big).\label{eq:taylor}
\end{align}
Inserting $j'=j-m$ and using $m\geq N/6$ lead to 
$$\sum_{j=m+N^{9/10}}^{N}\Big(\frac{m}{j}\Big)^{\gamma j}e^{\gamma(j-m)}\leq \sum_{j'=N^{9/10}}^{N-m}\exp\Big(-\frac{\gamma(j')^2}{14m}\Big).$$
Since $m\in[N/6,N/3]$, we conclude that the above is $O(\exp(-\gamma N^{1/3}))=o(N^{-2})$ for $N$ large. The sum over $m_1(\lambda)<m$ is similar and omitted.   
\end{proof}

We will apply the same second moment approach from Section \ref{sec:lb} to estimate the right-hand side of \eqref{w}. We need a few preliminary computations, the proofs of which are elementary and deferred to the appendix.


\begin{lemma}\label{lemma:replicate thm 2}
    For $m\in[N/6,N/3]$ and a (random) function $u$ of the form \begin{align*}
    u(j)=e^{ij\tau_1}\,\frac{|x_1|^{j-m}}{j!/m!},
\end{align*}
where $|x_1|\in[m,m+1)$ and we recall that $\tau_1$ is uniformly distributed on $[-\pi,\pi]$ independent of anything else, 
 we have   \begin{align*}
     \E\Bigg[\bigg|\sum_{\substack{\lambda\in\cP_N\\ |m_1(\lambda)-m|\leq N^{9/10}}}u(m_1)\prod_{k\geq 2}\left(\frac{X_k}{\sqrt{k}}\right)^{m_k}\frac{1}{m_k!}\bigg|^{2q}\Bigg]\gg \frac{N^{q^2/2}}{(1+(1-q)\sqrt{\log N})^q}.
 \end{align*}
\end{lemma}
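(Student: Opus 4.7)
The plan is to adapt the lower bound proof of Proposition \ref{prop:superC-asymp-truncated-moments} (Section \ref{sec:lb}), replacing the role of the truncated chaos $F_{K,M_*}$ by the weighted truncated chaos $\widetilde{F}_{K,M_*,m}$, whose low moments are controlled by Proposition \ref{prop:christmas}. Let $B_{N,m}$ denote the random sum inside the expectation. First I will follow the symmetrization-plus-conditioning argument of Section \ref{sec:LT reduction} (which deduced \eqref{eq:asymp LT} from Proposition \ref{prop:superC-asymp-truncated-moments}) to reduce to partitions with $m_2(\lambda)=\cdots=m_{M_*-1}(\lambda)=0$ for a sufficiently large constant $M_*=M_*(q)$. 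The rotations $X_k\to e^{i\pi/2^\ell}X_k$ and the subsequent conditioning on $\{|R_k|<\ee_0,\,2\le k<M_*\}$ pass through unchanged, since $u(m_1)$ depends only on $\tau_1$ and the deterministic parameter $|x_1|$. Denote by $B_{N,m,M_*}$ the resulting truncated sum (with the product starting from $k\ge M_*$).

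Next I will mirror Proposition \ref{SZ-Prop-8.1}: decompose $B_{N,m,M_*}=B^{\mathrm{high}}+B^{\mathrm{low}}$ according to $\lambda_1>N/2$ versus $\lambda_1\le N/2$. For $\lambda_1=\ell>N/2$ the largest part is unique, giving
\[B^{\mathrm{high}}=\sum_{N/2<\ell\le N}\frac{X_\ell}{\sqrt{\ell}}\,B^{(N)}_{N-\ell,m,M_*},\]
where $B^{(N)}_{n,m,M_*}$ is the analogous truncated partition sum of degree $n$. Symmetrizing $X_\ell\to -X_\ell$ for $\ell>N/2$ as in \eqref{eq:symm} gives $\E[|B_{N,m,M_*}|^{2q}]\gg\E[|B^{\mathrm{high}}|^{2q}]$, and Khintchine's inequality (Lemma 4.1 of \citep{ledoux1991probability}, valid for sub-exponential $X_\ell$) applied conditionally on $\sigma(\tau_1,\{X_k\}_{k\le N/2})$ yields
\[\E[|B_{N,m,M_*}|^{2q}]\gg N^{-q}\,\E\Big[\Big(\sum_{n<N/2}|B^{(N)}_{n,m,M_*}|^2\Big)^q\Big].\]
For $n<N/2$ the constraint $\lambda_1\le N/2$ is automatic, so $B^{(N)}_{n,m,M_*}=[z^n]\widetilde{F}_{N/2,M_*,m}(z)$, and Parseval applied as in \eqref{eq:pars1} gives, for any $r\in(0,1)$,
\[\sum_{n<N/2}|B^{(N)}_{n,m,M_*}|^2\ge\frac{1}{2\pi}\int_{-\pi}^{\pi}|\widetilde{F}_{N/2,M_*,m}(re^{i\theta})|^2\,\d\theta-\frac{r^N}{2\pi}\int_{-\pi}^{\pi}|\widetilde{F}_{N/2,M_*,m}(e^{i\theta})|^2\,\d\theta.\]

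To conclude I will use the concavity inequality $(a-b)_+^q\ge a^q-b^q$ for $a,b\ge 0$ and $q\in(0,1]$, take expectations, and apply Proposition \ref{prop:christmas} with $K=N/2$ and $r=e^{-C/N}$ for a large constant $C\in(0,2C_5]$. Since $K_r\asymp N/C$ and $r^{2m}\asymp 1$ uniformly in $m\in[N/6,N/3]$, the lower bound of Proposition \ref{prop:christmas} produces a main term $\gg_C N^{q^2/2}(N/(1+(1-q)\sqrt{\log N}))^q$, while the upper bound makes the $r^N$-correction of order $e^{-Cq}$ times the same quantity. Choosing $C$ sufficiently large subsumes the correction into half the main term, and after dividing by $N^q$ the claimed lower bound follows.

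The principal technical obstacle lies in the reduction of the first step: although the mechanism is identical to the passage from Proposition \ref{prop:superC-asymp-truncated-moments} to \eqref{eq:asymp LT}, one must verify that the error contributions from partitions with some $m_k(\lambda)>0$ for $2\le k<M_*$ are indeed absorbed into $\E[|B_{N,m,M_*}|^{2q}]$. This requires combining a concavity/Minkowski bound on the error sum with the lower bound derived in the final step, and choosing $M_*$ (and the dyadic depth $L$) large enough in terms of $q$ so that the resulting ratio can be made arbitrarily small, in the same spirit as \eqref{eq:Cbound}.
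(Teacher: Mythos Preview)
Your proposal is correct and follows essentially the same route as the paper's own proof. Both arguments (i) reduce to the $M_*$-truncated sum via the symmetrization-and-conditioning scheme of Section~\ref{sec:LT reduction}, (ii) invoke the analogue of Proposition~\ref{SZ-Prop-8.1} with $\widetilde F_{K,M_*,m}$ in place of $F_{K,M_*}$ (the paper records this as \eqref{eq:3.10 replic}, whereas you spell out the symmetrization/Khintchine/Parseval steps explicitly), and (iii) feed in the two-sided estimate of Proposition~\ref{prop:christmas} and choose the parameter in $r=e^{-C/N}$ large. One small point worth tightening: you write $C\in(0,2C_5]$ and then ``choose $C$ sufficiently large'', which is only consistent once you note (as the paper does) that the implicit constants in Proposition~\ref{prop:christmas} do not depend on $C_5$, so $C_5$ itself may be taken large; the paper makes this explicit just before concluding. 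Your identification of the reduction step as the principal obstacle, requiring a matching upper bound on the truncated quantities to control the error as in \eqref{eq:Cbound}, is accurate and in line with the paper's (terse) treatment leveraging $|u(j)|\le 1$.
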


\begin{proof}
First, taking advantage of $|u(j)|\leq 1$, we may apply a similar symmetrization argument that deduced Theorem \ref{thm:main} equation \eqref{eq:asymp LT} from Proposition \ref{prop:superC-asymp-truncated-moments} in Section \ref{sec:LT reduction}, so that it suffices to prove
\begin{align}
     \E\Bigg[\bigg|\sum_{\substack{\lambda\in\cP_N\\ |m_1(\lambda)-m|\leq N^{9/10}\\\forall 2\leq k<M_*,\,m_k(\lambda)=0}}u(m_1)\prod_{k\geq M_*}\left(\frac{X_k}{\sqrt{k}}\right)^{m_k}\frac{1}{m_k!}\bigg|^{2q}\Bigg]\gg \frac{N^{q^2/2}}{(1+(1-q)\sqrt{\log N})^q}.\label{eq:N^q more 2}
 \end{align}
The proof of \eqref{eq:N^q more 2} follows from a straightforward adaptation of the lower bound part for the universality phase. First, in the proof of Proposition \ref{SZ-Prop-8.1}, we have used the relation \eqref{eq:FA relation}. 
If we instead define 
\begin{align}
    \widetilde{F}_{K,M_*,m}(z):=\exp\Big(\sum_{M_*\leq k\leq K}\frac{X_k}{\sqrt{k}}z^k\Big)\times\Big(\sum_{j:|j-m|\leq N^{9/10}}u(j)z^j\Big),\label{eq:FKMm}
\end{align}
then \eqref{eq:FA relation} has the analogue
$$\widetilde{F}_{N/2,M_*,m}(z)=\sum_{n=0}^\infty\bigg(\sum_{\substack{\lambda\in\cP_n\\\lambda_1\leq N/2\\ |m_1(\lambda)-m|\leq N^{9/10}\\\forall 2\leq k<M_*,\,m_k(\lambda)=0}}u(m_1)\prod_{k=M_*}^{N/2}\left(\frac{X_k}{\sqrt{k}}\right)^{m_k}\frac{1}{m_k!}\bigg)z^n.$$
Therefore, the following equivalent of Proposition \ref{SZ-Prop-8.1} holds:
\begin{align}\begin{split}
    &\hspace{0.5cm}\E\Bigg[\bigg|\sum_{\substack{\lambda\in\cP_N\\ |m_1(\lambda)-m|\leq N^{9/10}\\ \forall 2\leq k<M_*,\,m_k(\lambda)=0}}u(m_1)\prod_{k\geq 2}\left(\frac{X_k}{\sqrt{k}}\right)^{m_k}\frac{1}{m_k!}\bigg|^{2q}\Bigg]\\
    &\gg \frac{1}{N^q}\left(\E\left[\left(\int_{-\pi}^{\pi}|\widetilde{F}_{N/2,M_*,m}(re^{i\theta})|^2\d\theta \right)^q\right]-\E\left[r^{Nq}\left(\int_{-\pi}^{\pi}|\widetilde{F}_{N/2,M_*,m}(e^{i\theta})|^2\d\theta \right)^q\right]\right),
\end{split}\label{eq:3.10 replic}
\end{align}
and hence it suffices to prove the counterparts of Proposition \ref{SZ-Prop-8.2} and Proposition \ref{SZ-prop3.2} with $F_{K,M_*}$ replaced by $\widetilde{F}_{K,M_*,m}$. 
Indeed, by Proposition \ref{prop:christmas},
\begin{align}
    \E\left[\left(\int_{-\pi}^{\pi}|\widetilde{F}_{K,M_*,m}(re^{i\theta})|^2\d\theta \right)^q\right]\gg N^{q^2/2}\left(\frac{r^{2m}K_r}{1+(1-q)\sqrt{\log K_r}}\right)^q,\label{eq:82replication}
\end{align}
where $r=e^{-C_5/N}$ for some fixed large constant $C_5$, and
\begin{align}
    \E\left[\left(\int_{-\pi}^{\pi}|\widetilde{F}_{K,M_*,m}(e^{i\theta})|^2\d\theta \right)^q\right]\ll N^{q^2/2}\left(\frac{K}{1+(1-q)\sqrt{\log K}}\right)^q.\label{eq:32 replication}
\end{align}
Note that the constants in \eqref{eq:82replication} and \eqref{eq:32 replication} do not depend on $C_5$. 
Inserting \eqref{eq:82replication} and \eqref{eq:32 replication} into \eqref{eq:3.10 replic} yields that for some constant $C_{11}>0$,
\begin{align*}
    &\hspace{0.5cm}\E\Bigg[\bigg|\sum_{\substack{\lambda\in\cP_N\\ |m_1(\lambda)-m|\leq N^{9/10}\\ \forall 2\leq k<M_*,\,m_k(\lambda)=0}}u(m_1)\prod_{k\geq 2}\Big(\frac{X_k}{\sqrt{k}}\Big)^{m_k}\frac{1}{m_k!}\bigg|^{2q}\Bigg]\\
    &\geq \frac{1}{N^q}\bigg(\frac{N^{q^2/2}}{C_{11}}\Big(\frac{e^{-2C_5m/N}K_r}{1+(1-q)\sqrt{\log K_r}}\Big)^q-C_{11}e^{-C_5q}N^{q^2/2}\Big(\frac{K}{1+(1-q)\sqrt{\log K}}\Big)^q\bigg)
\end{align*}
Since $m/N\in[1/6,1/3]$, the right-hand side is $\gg N^{q^2/2}/(1+(1-q)\sqrt{\log N})^q$ for $C_5$ picked large enough. This completes the proof. 
\end{proof}

\begin{proof}[Proof of lower bound of \eqref{eq:gamma=2q result}]
     By \eqref{eq:lbfirst step} and Lemmas \ref{lemma:tailsmall} and \ref{lemma:replicate thm 2}, we have
\begin{align*}
    \E[|A_N|^{2q}]&\geq\sum_{m=N/6}^{N/3}\E\Bigg[\bigg|\sum_{\substack{\lambda\in\cP_N\\ |m_1(\lambda)-m|\leq N^{9/10}}}\prod_{k\geq 1}\left(\frac{X_k}{\sqrt{k}}\right)^{m_k}\frac{1}{m_k!}\bigg|^{2q}\bone_{\{|X_1|\in[m,m+1)\}}\Bigg]-o(N^{-1})\\
    &\geq\sum_{m=N/6}^{N/3} \E\Bigg[\bigg|\sum_{\substack{\lambda\in\cP_N\\ |m_1(\lambda)-m|\leq N^{9/10}}}u(m_1)\prod_{k\geq 2}\left(\frac{X_k}{\sqrt{k}}\right)^{m_k}\frac{1}{m_k!}\bigg|^{2q}\Bigg]\E\Big[\Big(\frac{|X_1|^m}{m!}\Big)^{2q}\bone_{\{|X_1|\in[m,m+1)\}}\Big]-o(N^{-1})\\
    &\gg \sum_{m=N/6}^{N/3}\frac{N^{q^2/2}}{(1+(1-q)\sqrt{\log N})^q}\,N^{-q}-o(N^{-1})\\
    &\gg \frac{N^{1-q+q^2/2}}{(1+(1-q)\sqrt{\log N})^q},
\end{align*}
as desired.
\end{proof}

\subsubsection{Proof of the upper bound}\label{sec:critical-gamma=2q-UB}
 Fix a large constant $C_{12}>0$ to be determined. We expect that for each $m\in[C_{12},N-C_{12}]$, 
\begin{align}
    \E\Bigg[\bigg|\sum_{\substack{\lambda\in\cP_N}}\prod_{k\geq 1}\left(\frac{X_k}{\sqrt{k}}\right)^{m_k}\frac{1}{m_k!}\bigg|^{2q}\bone_{\{|X_1|\in[m,m+1)\}}\Bigg]\approx \E\Bigg[\bigg|\sum_{\substack{\lambda\in\cP_N\\ |m_1-m|\leq N^{9/10}}}\prod_{k\geq 1}\left(\frac{X_k}{\sqrt{k}}\right)^{m_k}\frac{1}{m_k!}\bigg|^{2q}\bone_{\{|X_1|\in[m,m+1)\}}\Bigg],\label{i}
\end{align}
and that the contribution to $\E[|A_N|^{2q}]$ from the event $\{|X_1|\not\in[C_{12},N-C_{12}]\}$ does not exceed the order of the lower bound established in Section \ref{sec:critical-gamma=2q-LB}.
This is the purpose of the following result. 

\begin{lemma}\label{lemma:ubfirst step}
    It holds that
    \begin{align*}\E[|A_N|^{2q}]\ll \sum_{m=C_{12}}^{N-C_{12}}\E\Bigg[\bigg|\sum_{\substack{\lambda\in\cP_N\\ |m_1-m|\leq N^{9/10}}}\prod_{k\geq 1}\left(\frac{X_k}{\sqrt{k}}\right)^{m_k}\frac{1}{m_k!}\bigg|^{2q}\bone_{\{|X_1|\in[m,m+1)\}}\Bigg]+\frac{N^{1-q+q^2/2}}{(1+(1-q)\sqrt{\log N})^q}.\end{align*}
\end{lemma}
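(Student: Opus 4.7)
The plan is to decompose
\[\E[|A_N|^{2q}]=\sum_{m=0}^{\infty}\E\big[|A_N|^{2q}\bone_{\{|X_1|\in[m,m+1)\}}\big]\]
and treat three ranges of $m$ separately: the bulk $m\in[C_{12},N-C_{12}]$, the left tail $m<C_{12}$, and the right tail $m>N-C_{12}$. The bulk will produce the main sum in the target inequality, while both tails will be absorbed into the additive error $N^{1-q+q^2/2}/(1+(1-q)\sqrt{\log N})^q$.

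For the bulk, I would further split
\[A_N=\sum_{\substack{\lambda\in\cP_N\\ |m_1(\lambda)-m|\le N^{9/10}}}a(\lambda)+\sum_{\substack{\lambda\in\cP_N\\ |m_1(\lambda)-m|>N^{9/10}}}a(\lambda)=:A_N^{(m,g)}+A_N^{(m,b)}\]
and apply concavity for $q\le 1/2$ or Minkowski's inequality for $q>1/2$. The contribution of $A_N^{(m,g)}$ is precisely the summand appearing in the desired lemma, so it remains to bound the ``bad'' contribution. Repeating the argument of Lemma~\ref{lemma:tailsmall} (concavity/Minkowski to isolate $m_1(\lambda)=j$, independence, Stirling, and Proposition~\ref{prop:strengthened} to bound the no-ones chaos moment on $N-j$ elements) yields
\[\E\big[|A_N^{(m,b)}|^{2q}\bone_{\{|X_1|\in[m,m+1)\}}\big]\ll\sum_{|j-m|>N^{9/10}}\Big(\frac{m}{j}\Big)^{\gamma j}e^{\gamma(j-m)}\,j^{-q}.\]
A uniform-in-$m$ Taylor analysis of $g(j):=j\log(m/j)+j-m$ (which is $0$ at $j=m$, concave, and behaves like $-(j-m)^2/(2\max(j,m))$ nearby) shows that $(m/j)^{\gamma j}e^{\gamma(j-m)}\le\exp(-cN^{4/5})$ whenever $|j-m|\ge N^{9/10}$ and $m\in[0,N]$. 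Summing over both $j$ and $m\in[C_{12},N-C_{12}]$ then gives a total bad-term contribution of $o(N^{-a})$ for any $a>0$.

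For the left tail $m<C_{12}$, on the event $\{|X_1|<C_{12}\}$ we have $|X_1|\le C_{12}$. Splitting $A_N$ by the value of $m_1(\lambda)=j$, applying concavity/Minkowski, and using independence together with Proposition~\ref{prop:strengthened} on the remaining chaos moment yield
\[\E\big[|A_N|^{2q}\bone_{\{|X_1|<C_{12}\}}\big]\ll\sum_{j=0}^{N}\frac{C_{12}^{2qj}}{(j!)^{2q}}\,\frac{1}{(1+(1-q)\sqrt{\log(N-j)})^q}\ll\frac{1}{(1+(1-q)\sqrt{\log N})^q},\]
which is absorbed since $N^{1-q+q^2/2}\to\infty$. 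For the right tail $m>N-C_{12}$, I would again split by $m_1(\lambda)=j$, evaluate $\E[|X_1|^{2qj}\bone_{\{|X_1|\in[m,m+1)\}}]\asymp m^{2qj}e^{-\gamma m}$, and use Stirling to reduce the combined factor to $(m/j)^{\gamma j}e^{\gamma(j-m)}/j^q$, which by the same Taylor bound is sharply peaked at $j=m$ with Gaussian-type decay in $(j-m)^2/m$. Summing this against the chaos moment on $N-j$ elements (which is bounded by a constant for $N-j$ small and by Proposition~\ref{prop:strengthened} otherwise) contributes $\ll m^{1/2-q}$ per $m$; summing over $m\in(N-C_{12},N]$ gives $\ll C_{12}\cdot N^{1/2-q}$, which is dominated by the target error since $1/2-q<1-q+q^2/2$ for all $q\in(0,1]$.

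The main obstacle will be verifying the Taylor bound on $g(j)$ uniformly in $m\in[0,N]$ across the three regimes $j>m$, $j<m/2$, and intermediate $j$; in particular, the naive Taylor expansion $-\log(1\pm x)=\mp x+x^2/2+\cdots$ is only valid for $|x|$ small, and one must combine it with the coarse bounds $g(j)\le -cm$ (for $j\le m/2$) and $g(j)\le -cj$ (for $j\ge 2m$) to get a uniform statement. Once this is in place, the three regimes combine directly to yield the lemma.
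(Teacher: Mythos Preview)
Your approach is essentially the same as the paper's: decompose by $m$, handle the bulk via the good/bad split (the bad piece reproducing Lemma~\ref{lemma:tailsmall}), and control the two tails by expanding in $m_1(\lambda)=j$ together with Proposition~\ref{prop:strengthened} and the Stirling/Taylor estimate. The paper carries this out in exactly the same way, separating $q\le 1/2$ (concavity) from $q>1/2$ (Minkowski).

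There is one small gap in your right-tail treatment. The range $m>N-C_{12}$ is not $(N-C_{12},N]$ but $(N-C_{12},\infty)$: nothing prevents $|X_1|$ from exceeding $N$. Your per-$m$ bound $\ll m^{1/2-q}$ is correct only when the Gaussian peak $j=m$ lies inside $[0,N]$; for $m>N$ the sum $\sum_{j\le N}(m/j)^{\gamma j}e^{\gamma(j-m)}j^{-q}$ is dominated by the boundary term $j=N$, giving a per-$m$ contribution $\ll N^{-q}\exp(-\gamma(m-N)^2/(2N))$. Summing this over $m>N$ yields an additional $\ll N^{1/2-q}$, so the total right-tail contribution is still $\ll N^{1/2-q}\ll N^{1-q+q^2/2}/(1+(1-q)\sqrt{\log N})^q$ and the argument closes. (The paper avoids this case split by swapping the order of summation in $m$ and $j$ and summing the Gaussian in $m$ first.) With this correction your sketch is complete.
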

\begin{proof}
The first step is to control the sum
$$\sum_{m\not\in[C_{12},N-C_{12}]}\E\Bigg[\bigg|\sum_{\substack{\lambda\in\cP_N}}\prod_{k\geq 1}\left(\frac{X_k}{\sqrt{k}}\right)^{m_k}\frac{1}{m_k!}\bigg|^{2q}\bone_{\{|X_1|\in[m,m+1)\}}\Bigg].$$
We use the same approach as in Lemma \ref{lemma:tailsmall}. We illustrate the case $0<q<1/2$ first using concavity. Using  Proposition \ref{prop:strengthened} and \eqref{eq:taylor}, the contribution from the sum over $m\geq N-C_{12}$ can be bounded by
\begin{align*}
    &\hspace{0.5cm}\sum_{m\geq N-C_{12}}\E\Bigg[\bigg|\sum_{\substack{\lambda\in\cP_N}}\prod_{k\geq 1}\left(\frac{X_k}{\sqrt{k}}\right)^{m_k}\frac{1}{m_k!}\bigg|^{2q}\bone_{\{|X_1|\in[m,m+1)\}}\Bigg]\\
    &\leq \sum_{m\geq N-C_{12}}\sum_{j=0}^N\E\Bigg[\bigg|\sum_{\substack{\lambda\in\cP_N\\ m_1(\lambda)=j}}\prod_{k\geq 2}\left(\frac{X_k}{\sqrt{k}}\right)^{m_k}\frac{1}{m_k!}\bigg|^{2q}\Bigg]\times\E\Bigg[\frac{|X_1|^{2qj}}{(j!)^{2q}}\bone_{\{|X_1|\in[m,m+1)\}}\Bigg]\\
    &\leq \sum_{m\geq N-C_{12}}\sum_{j=0}^N \Big(\frac{m}{j}\Big)^{\gamma j}e^{\gamma(j-m)}j^{-q}(1+(1-q)\sqrt{\log(1+N-j)})^{-q}\\
    &\ll \sum_{m\geq N-C_{12}}\sum_{j=0}^Nj^{-q}e^{-(j-m)^2/(2j)}(1+(1-q)\sqrt{\log(1+N-j)})^{-q}\\
    &\ll N^{1-q}(1+(1-q)\sqrt{\log N})^{-q}.
\end{align*}
Likewise,
$$\sum_{m\leq C_{12}}\E\Bigg[\bigg|\sum_{\substack{\lambda\in\cP_N}}\prod_{k\geq 1}\left(\frac{X_k}{\sqrt{k}}\right)^{m_k}\frac{1}{m_k!}\bigg|^{2q}\bone_{\{|X_1|\in[m,m+1)\}}\Bigg]\ll \sum_{m\leq C_{12}}\sum_{j=0}^Nj^{-q}e^{-(j-m)^2/(2j)}\ll 1.$$

Suppose now that $q\geq 1/2$. By Minkowski's inequality, Proposition \ref{prop:strengthened}, and \eqref{eq:taylor}, there are constants $C_{13},C_{14}>0$ such that for each $m\geq N-C_{12}$,
\begin{align*}
  &\hspace{0.5cm} \E\Bigg[\bigg|\sum_{\substack{\lambda\in\cP_N}}\prod_{k\geq 1}\left(\frac{X_k}{\sqrt{k}}\right)^{m_k}\frac{1}{m_k!}\bigg|^{2q}\bone_{\{|X_1|\in[m,m+1)\}}\Bigg]^{1/(2q)}\\
  &\leq \sum_{j=0}^N\Bigg(\E\Bigg[\bigg|\sum_{\substack{\lambda\in\cP_N\\ m_1(\lambda)=j}}\prod_{k\geq 2}\left(\frac{X_k}{\sqrt{k}}\right)^{m_k}\frac{1}{m_k!}\bigg|^{2q}\Bigg]\times \E\Bigg[\frac{|X_1|^{2qj}}{(j!)^{2q}}\bone_{\{|X_1|\in[m,m+1)\}}\Bigg]\Bigg)^{1/(2q)}\\
   &\leq \sum_{j=0}^N \Bigg(\Big(\frac{m}{j}\Big)^{\gamma j}e^{\gamma(j-m)}j^{-q}(1+(1-q)\sqrt{\log(1+N-j)})^{-q}\Bigg)^{1/(2q)}\\
    &\ll \sum_{j=0}^Nj^{-1/2}e^{-(j-m)^2/(4qj)}(1+(1-q)\sqrt{\log(1+N-j)})^{-1/2}.
\end{align*}
Note that 
$$\frac{j^{-1/2}e^{-(j-m)^2/(4qj)}}{(1+(1-q)\sqrt{\log(1+N-j)})^{1/2}}
    \ll \begin{cases}
       j^{-1/2} (1+(1-q)\sqrt{\log(1+N-j)})^{-1/2}&\text{ if }j\geq m-C_{14}\sqrt{N};\\
        O(e^{-(m-j-C_{14}\sqrt{N})/C_{13}})&\text{ if }j< m-C_{14}\sqrt{N}.
    \end{cases}$$
Therefore,
\begin{align*}
     &\hspace{0.5cm}\E\Bigg[\sum_{m\geq N-C_{12}}\bigg|\sum_{\substack{\lambda\in\cP_N}}\prod_{k\geq 1}\left(\frac{X_k}{\sqrt{k}}\right)^{m_k}\frac{1}{m_k!}\bigg|^{2q}\bone_{\{|X_1|\in[m,m+1)\}}\Bigg]\\
     &\ll 1+\sum_{N-C_{12}\leq m\leq N+C_{14}\sqrt{N}}\Big((1+(1-q)\sqrt{\log N})^{-q}+1\Big)\ll \sqrt{N}
     \ll \frac{N^{1-q+q^2/2}}{(1+(1-q)\sqrt{\log N})^q}.
\end{align*}
The sum over $m\leq C_{12}$ can be similarly bounded by $\ll 1$. 

Finally, for $m\in[C_{12},N-C_{12}]$, we apply the same technique in Lemma \ref{lemma:tailsmall} that leads to
$$\E\Bigg[\bigg|\sum_{\substack{\lambda\in\cP_N\\ |m_1-m|> N^{9/10}}}\prod_{k\geq 1}\left(\frac{X_k}{\sqrt{k}}\right)^{m_k}\frac{1}{m_k!}\bigg|^{2q}\bone_{\{|X_1|\in[m,m+1)\}}\Bigg]\ll N^{-2}.$$
This completes the proof by concavity if $q<1/2$ and Minkowski's inequality if $q\geq 1/2$.
\end{proof}

\begin{proof}[Proof of upper bound of \eqref{eq:gamma=2q result}] 
    Conditioning on $R_1$, the right-hand side of \eqref{i} can be rewritten using independence as
\begin{align}
    \E\Bigg[\frac{|X_1|^{2qm}}{(m!)^{2q}}\bone_{\{|X_1|\in[m,m+1)\}}\Bigg]\times \E\Bigg[\bigg|\sum_{\substack{\lambda\in\cP_N\\ |m_1-m|\leq N^{9/10}}}\frac{e^{i\tau_1 (m_1-m)}|x_1|^{m_1-m}}{m_1!/m!}\prod_{k\geq 2}\left(\frac{X_k}{\sqrt{k}}\right)^{m_k}\frac{1}{m_k!}\bigg|^{2q} \Bigg],\label{e}
\end{align}where $x_1\in[m,m+1)$. 
The first expectation contributes $\asymp m^{-q}$, which directly follows from \eqref{eq:gamma}. For the second expectation, we use the same multiplicative chaos approach as in the universality phase. Similarly as done in Section \ref{sec:critical-gamma=2q-LB}, we first apply the same argument that deduced Theorem \ref{thm:main} equation \eqref{eq:asymp LT} from Proposition \ref{prop:superC-asymp-truncated-moments}  in Section \ref{sec:LT reduction}, to start the product in \eqref{e} from $k=M_*$ instead of from $k=2$. Define
$$A_{N,M_*,m}:=\sum_{\substack{\lambda\in\cP_N\\ |m_1-m|\leq N^{9/10}\\m_1=\dots=m_{M_*-1}=0}}u(m_1)\prod_{k\geq M_*}\left(\frac{X_k}{\sqrt{k}}\right)^{m_k}\frac{1}{m_k!}.$$
It remains to bound 
$$\E\Bigg[\bigg|\sum_{\substack{\lambda\in\cP_N\\ |m_1-m|\leq N^{9/10}\\ m_1=\dots=m_{M_*-1}=0}}\frac{e^{i\tau_1 (m_1-m)}|x_1|^{m_1-m}}{m_1!/m!}\prod_{k\geq M_*}\left(\frac{X_k}{\sqrt{k}}\right)^{m_k}\frac{1}{m_k!}\bigg|^{2q} \Bigg]=\E\left[\left|A_{N,M_*,m}\right|^{2q} \right]$$
from above. 
First, the same argument of Proposition \ref{SZ-prop3.1} applies (possibly with a different constant $C(q)$) with $F_{K,M_*}$ replaced by $\widetilde{F}_{K,M_*,m}$ defined in \eqref{eq:FKMm}: for $1/2\leq q\leq 1$,
\begin{align}
    \E\left[\left|A_{N,M_*,m}\right|^{2q} \right]^{1/(2q)}\ll \frac{1}{\sqrt{N}}\sum_{j=1}^J\E\left[\left(\frac{1}{2\pi}\int_{-\pi}^{\pi}|\widetilde{F}_{N/2^j,M_*,m}(\exp(j/N+i\theta))|^2 \d\theta \right)^q \right]^{1/(2q)}+\frac{1}{N},\label{eq:q1}
\end{align}
and for $0<q<1/2$,
\begin{align}
    \E\left[\left|A_{N,M_*,m}\right|^{2q} \right]\ll \frac{1}{N^q}\sum_{j=1}^J\E\left[\left(\frac{1}{2\pi}\int_{-\pi}^{\pi}|\widetilde{F}_{N/2^j,M_*,m}(\exp(j/N+i\theta))|^2 \d\theta \right)^q \right]+\frac{1}{N^{2q}}.\label{eq:q2}
\end{align}
These follow by arguing similarly as in the proof of Proposition \ref{prop:M*2}, and adapting Proposition \ref{SZ-prop3.1} to the case $0<q<1/2$ by applying concavity instead of Minkowski's inequality in \eqref{eq:SZ3.1Minkowski}. Inserting the upper bound of Proposition \ref{prop:christmas} (second inequality of \eqref{eq:real82replication}) into \eqref{eq:q1} and \eqref{eq:q2}
yields that for $q\in(0,1]$ and $m\in[C_{12},N-C_{12}]$,
\begin{align}
    \E\Bigg[\bigg|\sum_{\substack{\lambda\in\cP_N\\ |m_1-m|\leq N^{9/10}\\m_1=\dots=m_{M_*-1}=0}}u(m_1)\prod_{k\geq 2}\left(\frac{X_k}{\sqrt{k}}\right)^{m_k}\frac{1}{m_k!}\bigg|^{2q} \Bigg]\ll \frac{m^{q^2/2}}{(1+(1-q)\sqrt{\log(N-m)})^{q}}.\label{eq:ub key step}
\end{align}
 Combining Lemma \ref{lemma:ubfirst step} with \eqref{eq:ub key step}, we have
\begin{align*}
    \E[|A_N|^{2q}]&\ll \sum_{m=C_{12}}^{N-C_{12}}\E\Bigg[\bigg|\sum_{\substack{\lambda\in\cP_N\\ |m_1-m|\leq N^{9/10}}}\prod_{k\geq 1}\left(\frac{X_k}{\sqrt{k}}\right)^{m_k}\frac{1}{m_k!}\bigg|^{2q}\bone_{\{|X_1|\in[m,m+1)\}}\Bigg]+O\Big(\frac{N^{1-q+q^2/2}}{(1+(1-q)\sqrt{\log N})^q}\Big)\\
    &\ll \sum_{m=C_{12}}^{N-C_{12}}m^{-q}\,\frac{m^{q^2/2}}{(1+(1-q)\sqrt{\log(N-m)})^{q}}+O\Big(\frac{N^{1-q+q^2/2}}{(1+(1-q)\sqrt{\log N})^q}\Big)\\
    &\ll \frac{N^{1-q+q^2/2}}{(1+(1-q)\sqrt{\log N})^q},
\end{align*}as desired. 
\end{proof}

\section{Remaining proofs}

\subsection{Sharp tightness of scaled secular coefficients
}\label{subsection-coro}

In this subsection, we prove Corollary \ref{coro} using the universality result of Theorem \ref{thm:main} (i). 


\begin{proof}[Proof of Corollary \ref{coro}]Suppose  that $\E[e^{\ee|X_k|}]<\infty$, where we may without loss of generality assume that $\ee<1$.
For (i), we apply Markov's inequality to $|A_N|^{\ee/3}$, which yields
$$\P\big(|A_N|(\log N)^{1/4}>C\big)=\P\big(|A_N|^{\ee/3}>C^{\ee/3}(\log N)^{-\ee/12}\big)\leq \frac{\E[|A_N|^{\ee/3}]}{C^{\ee/3}(\log N)^{-\ee/12}}\leq \frac{C(\ee)}{C^{\ee/3}},$$
where $C(\ee)$ is the asymptotic constant in \eqref{eq:asymp LT}. Picking $C$ large enough gives the tightness of $|A_N|(\log N)^{1/4}$.

    For (ii), we apply the Paley-Zygmund inequality to $|A_N|^{\ee/3}$, which gives that for some large constant $C(\ee)>0$, uniformly in $N$,
$$\P\bigg(|A_N|>\frac{(\log N)^{-1/4}}{C(\ee)}\bigg)\geq \P\bigg(|A_N|^{\ee/3}>\frac{\E[|A_N|^{\ee/3}]}{2}\bigg)\geq \frac{(\E[|A_N|^{\ee/3}])^2}{4\E[|A_N|^{2\ee/3}]}\geq\frac{1}{C(\ee)},$$where the last step follows from Theorem \ref{thm:main} (i).
\end{proof}

\subsection{Regularity of critical non-Gaussian holomorphic chaos}\label{sec:reg}

In this subsection, we prove Corollary \ref{coro-reg} using the universality result Theorem \ref{thm:main} (i).

\begin{proof}[Proof of Corollary \ref{coro-reg}]
    By definition of the Sobolev space $H^s$, it suffices to show that the series \[C_s:=\sum_{n=0}^\infty(1+n^2)^s|A_n|^2\] converges almost surely for desired $s$. If there is some $\gamma>2$ such that $\E[e^{\gamma|X_k|}]<\infty$, then by Theorem \ref{thm:main} (i) we have \[\E[C_s]\ll \sum_{n=0}^\infty (1+n)^{2s}<\infty\] for $s<-1/2$. Therefore, the corresponding $\hmc_1$ is in $H^s$ almost surely for $s<-1/2$. 
    Similarly, if $X_k$ satisfies (EXP) with $\gamma\in(0,2]$, then for any $0<q<\gamma/2$, Theorem \ref{thm:main} (i) implies \[\E[|C_s|^q]\leq \sum_{n=0}^{\infty}(1+n^2)^{qs}\E[|A_n|^{2q}]\ll \sum_{n=0}^\infty\frac{(1+n)^{2qs}}{(1+(1-q)\sqrt{\log(1+n)})^{q}}<\infty\] for $s<-1/(2q)$. Since $q<\gamma/2$ is arbitrary the result follows.
\end{proof}

\begin{remark}
To show the other direction of irregularity, i.e.~non-Gaussian $\hmc_1$ is almost surely not $H^s$ for $s>-1/2$ in the first case of Corollary \ref{coro-reg} (and respectively $s>-1/\gamma$ in the second case), one may need a convergence in probability result of the total mass of critical non-Gaussian multiplicative chaos $$\frac{1}{2\pi}\int_{-\pi}^{\pi}\exp\bigg(2\Re\sum_{k=1}^{\infty}\frac{X_k}{\sqrt{k}}(re^{i\theta})^k\bigg)\d\theta$$ as $r\to 1$; see the proof of irregularity of Gaussian $\hmc$ in Section 6 of \citep{najnudel2023secular}. This problem, to the best of our knowledge, remains open. We refer to \citep{junnila2020multiplicative} and references therein for studies on sub-critical non-Gaussian multiplicative chaos.
\end{remark}

\section*{Statements and declarations}
\noindent The authors declare no competing interests.

\section*{Acknowledgment}
\noindent We thank Nicholas Cook, Adam Harper, and Max Wenqiang Xu for reading and providing thoughtful comments on our earlier drafts.
We are grateful to two anonymous referees for giving valuable advice on the previous version of our article.
We also thank  Kannan Soundararajan for enlightening discussions. H.G.~is partially supported by NSF grant DMS-2154029.

\bibliography{bibfile}
\bibliographystyle{plain}

\appendix

\section{Some technical computations regarding exponential moments}\label{sec:ubdef}
We collect some technical computations of (exponential) moments under distinct probability measures in this section. Recall the definitions in Section \ref{sec:prop2.1setup}.
\begin{lemma}\label{lemma:basic}
Fix $M_*\in\N$. For any $K$ large enough and $e^{-1/K}\leq r\leq e^{1/K}$,
    $$\exp\Big(\sum_{k=M_*}^K\frac{r^{2k}}{k}\Big)\asymp_{M_*} K.$$
\end{lemma}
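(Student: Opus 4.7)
The plan is to reduce everything to the estimate $\sum_{k=M_*}^K r^{2k}/k = \log K + O(1)$, where the implicit constant depends only on $M_*$; exponentiating then yields $\exp\bigl(\sum_{k=M_*}^K r^{2k}/k\bigr)\asymp K$ as required. Since $M_*$ is fixed in the statement, we may harmlessly assume $K\ge M_*$ (the opposite case has an empty sum, so the left-hand side equals $1$ and $K$ lies in a bounded range).

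The natural decomposition is
\[
\sum_{k=M_*}^K \frac{r^{2k}}{k} \;=\; \sum_{k=M_*}^K \frac{1}{k} \;+\; \sum_{k=M_*}^K \frac{r^{2k}-1}{k}.
\]
The first sum is a partial harmonic sum starting from $M_*$, so it equals $\log K + O(1)$ by the standard estimate $\sum_{k=1}^K 1/k = \log K + \gamma + O(1/K)$, with the $O(1)$ depending only on $M_*$.

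For the second sum, I will exploit the constraint $|\log r|\le 1/K$. For each $k\in[M_*,K]$, we have $|2k\log r|\le 2$, so the elementary inequality $|e^x-1|\le e^{|x|}|x|$ with $x=2k\log r$ gives $|r^{2k}-1|\le 2e^2 \, k/K$. Summing,
\[
\biggl|\sum_{k=M_*}^K \frac{r^{2k}-1}{k}\biggr| \;\le\; \sum_{k=M_*}^K \frac{2e^2}{K} \;\le\; 2e^2,
\]
which is $O(1)$ uniformly in $r$ in the given range. This is really the only non-trivial observation, and it is quite mild; there is no genuine obstacle here, just a check that the first-order behaviour of $r^{2k}$ does not distort the harmonic sum. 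Combining the two estimates and exponentiating completes the proof.
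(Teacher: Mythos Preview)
Your proof is correct. The decomposition $\sum r^{2k}/k = \sum 1/k + \sum (r^{2k}-1)/k$ together with the bound $|r^{2k}-1|\le 2e^2 k/K$ (from $|2k\log r|\le 2$) cleanly gives $\sum_{k=M_*}^K r^{2k}/k = \log K + O(1)$, uniformly in the stated range of $r$, and you handle the trivial case $K<M_*$ appropriately.

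The paper's argument is organised differently: it splits into the two cases $r\ge 1$ and $r\le 1$, treats one direction trivially in each case, and for the nontrivial direction groups the terms into dyadic blocks $e^{n-1}\le k<e^n$, bounding each block by $e^{2e^n/K}$ and summing. Your approach is more direct and symmetric in $r$, avoiding the case split and the dyadic grouping; the paper's dyadic grouping is more in the spirit of the log-correlated decomposition used elsewhere in the article, but for this lemma your route is the shorter one.
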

\begin{remark} In practice, $M_*$ is a constant depending only on $q$ and the distribution of $X_k$, while $K$ grows to infinity with $N\to\infty$.\end{remark}
\begin{proof}
    Suppose that $1\leq r\leq e^{1/K}$. Then the lower bound $\exp\left(\sum_{k=M_*}^K\frac{r^{2k}}{k}\right)\gg K$ is trivial and there exists some constant $L>0$ such that
    \[\sum_{k=M_*}^K\frac{r^{2k}}{k}\leq \sum_{n=\floor{\log M_*}}^{\ceil{\log K}}\sum_{e^{n-1}\leq k<e^n}\frac{r^{2k}}{k}\leq \sum_{n=\floor{\log M_*}}^{\ceil{\log K}}e^{2e^n/K}\leq \log K+1+L\sum_{n=\floor{\log M_*}}^{\ceil{\log K}}\frac{e^n}{K}\leq \log K+L.\]
    The other case $e^{-1/K}\leq r\leq 1$ is similarly established.
\end{proof}

\begin{lemma}\label{SZ-lemma2.2}
   Suppose that $\E[e^{c_0|R_1|}]<\infty$ for some $c_0>0$. Fix a positive $\beta$. For any $K\geq 1$ and $e^{-1/K}\leq r\leq e^{1/K}$, we have the following asymptotics for $k_0\le k\le K$, where $k_0$ is some constant depending only on $c_0$ and $\beta$. 
    \begin{enumerate}[(i)]
        \item $\E\left[\exp\left(2\beta\Re \frac{X_k r^k}{\sqrt{k}}\right)\right] = \E\left[\exp\left(2\beta\frac{r^k}{\sqrt{k}}R_k\cos(\tau_k)\right)\right]=1+\beta^2\frac{r^{2k}}{k}+O(k^{-3/2});$
        \item $\E\left[\exp\left(2\beta\frac{r^k }{\sqrt{k}}R_k\cos(\tau_k)\right)R_k\cos(\tau_k)\right]=\beta\frac{r^k}{\sqrt{k}}+O(k^{-1});$
        \item $\E\left[\exp\left(2\beta\frac{r^k }{\sqrt{k}}R_k\cos(\tau_k)\right)R_k^2\right] = 1+O(k^{-{1}/{2}});$
        \item For any 
$\alpha\in\mathbb{R}$, $\E\left[\exp\left(2\beta\frac{r^k }{\sqrt{k}}R_k\cos(\tau_k)\right)R_k^2\cos^2(\tau_k+\alpha)\right] = \frac{1}{2}+O(k^{-{1}/{2}});$
        \item $\E\left[\exp\left(2\beta\frac{r^k }{\sqrt{k}}R_k\cos(\tau_k)\right)|R_k\cos(\tau_k)|^3\right]\ll 1.$
    \end{enumerate}
  Moreover, if $X_1$ is compactly supported, the above statements hold for all $1\leq k\leq K$ (i.e., $k_0=1$ works).
\end{lemma}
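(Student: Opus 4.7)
The plan is to treat all five parts by a common Taylor expansion of the exponential factor, exploiting that $2\beta r^k/\sqrt{k}$ becomes uniformly small for $k$ large, together with the hypothesis $\E[e^{c_0|R_1|}]<\infty$ to control the remainders.

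First I would fix $k_0=k_0(c_0)$ large enough so that $4e/\sqrt{k_0}\le c_0/4$, which, using $\beta\le 2$ and $r^k\le e$ under $e^{-1/K}\le r\le e^{1/K}$, ensures $2\beta r^k/\sqrt{k}\le c_0/2$ uniformly for $k\ge k_0$, $\beta\in(0,2]$, and admissible $r$. The pointwise bound $\exp(|2\beta r^k R_k\cos(\tau_k)/\sqrt{k}|)\le\exp(c_0|R_k|/2)$ then yields the baseline estimate, valid for every integer $j\ge 0$,
$$\E\!\left[|R_k|^j\exp\!\left(\Big|\tfrac{2\beta r^k}{\sqrt{k}}R_k\cos(\tau_k)\Big|\right)\right]\le\E\!\left[|R_1|^j e^{c_0|R_1|/2}\right]\le C(c_0,j),$$
where one absorbs the polynomial prefactor into the exponential via $|R_1|^j e^{c_0|R_1|/2}\le C(c_0,j)\,e^{c_0|R_1|}$. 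This serves as the universal remainder control for all five parts.

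For (i), I would apply the third-order Taylor expansion $e^x=1+x+x^2/2+O(|x|^3 e^{|x|})$ to $x=2\beta r^k R_k\cos(\tau_k)/\sqrt{k}$ and integrate term-by-term. Independence of $R_k$ and $\tau_k$, combined with $\E[\cos\tau_k]=0$, $\E[\cos^2\tau_k]=1/2$, and $\E[R_k^2]=1$, gives that the zeroth- and first-order terms contribute $1+0$, the second-order term contributes exactly $\beta^2 r^{2k}/k$, and the remainder is $O(k^{-3/2})$ by the baseline estimate with $j=3$. Statement (ii) follows from a second-order expansion after multiplying by $R_k\cos(\tau_k)$: the zeroth-order term vanishes by symmetry in $\tau_k$, the first-order term produces $2\beta r^k/\sqrt{k}\cdot\E[R_k^2\cos^2\tau_k]=\beta r^k/\sqrt{k}$, and the remainder is $O(k^{-1})$. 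For (iii) and (iv), the crude bound $|e^x-1|\le|x|e^{|x|}$ already suffices: the leading term produces $\E[R_k^2]=1$ in (iii), respectively $\E[R_k^2\cos^2(\tau_k+\alpha)]=1/2$ in (iv), the latter being independent of $\alpha$ because $\tau_k$ is uniform on $[-\pi,\pi)$; the correction is $O(k^{-1/2})$ by the baseline estimate with $j=3$. Finally, (v) is immediate: bound the exponential pointwise by $\exp(c_0|R_k|/2)$ and invoke the baseline estimate with $j=3$ to obtain a bound that is $\ll 1$.

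No substantial obstacle is anticipated; the argument is routine Taylor bookkeeping, and the only real care is to ensure that $k_0$ and all implicit constants depend solely on $c_0$ (together with the fixed ranges $\beta\in(0,2]$ and $e^{-1/K}\le r\le e^{1/K}$), which is guaranteed by the initial choice of $k_0$ and the uniform bound $r^k\le e$.
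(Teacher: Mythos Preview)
Your proposal is correct and follows essentially the same approach as the paper: both expand the exponential via Taylor's theorem, use $\E[\cos\tau_k]=0$, $\E[\cos^2\tau_k]=1/2$, $\E[R_k^2]=1$ to identify the leading terms, and control the remainder through the exponential moment hypothesis $\E[e^{c_0|R_1|}]<\infty$. The only cosmetic difference is that the paper writes out the full series $\sum_{m\ge 3}$ and bounds the tail via a $k^{1/8}$ comparison, whereas you use the Lagrange remainder form $e^x=1+x+x^2/2+O(|x|^3 e^{|x|})$ together with a single baseline estimate $\E[|R_1|^j e^{c_0|R_1|/2}]\le C(c_0,j)$; your packaging is slightly cleaner but the content is the same.
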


\begin{proof}
Recall that $\E[\cos(\tau_k)^2]=\frac{1}{2}$. For (i), by  Taylor's expansion  and Fubini's theorem,
\begin{align*}
    \E\left[\exp\left(2\beta\Re \frac{X_k r^k}{\sqrt{k}}\right)\right] &= \E\left[\exp\left(2\beta \frac{r^k}{\sqrt{k}}R_k\cos(\tau_k)\right)\right]=: 1+\beta^2\frac{r^{2k}}{k}+E_r(k,\beta),
\end{align*}
where, after noticing $k^{({m-3})/({2m})}\ge k^{1/8}$ for $m\ge 4$, we have
\begin{align*}
    |E_r(k,\beta)| &= \left|\sum_{m=3}^\infty\frac{2^m\beta^m r^{mk}}{m!k^{m/2}}\E\left[(R_k\cos(\tau_k))^m\right]\right|\\
    &\ll k^{-3/2}\left(\E[|R_k|^3]+\sum_{m=4}^\infty\frac{(2e\beta/k^{1/8})^m}{m!}\E[|R_k|^m]\right)\\
    &\le k^{-3/2}\left(\E[|R_k|^3]+\E\left[e^{c_0|R_k|}\right]\right)=O(k^{-3/2})
\end{align*}for $k$ larger than some universal constant. Likewise, we have for (ii),
\begin{align*}
    \E\left[\exp\left(2\beta\Re \frac{X_k r^k}{\sqrt{k}}\right)\Re X_k\right] &= \E\left[\exp\left(2\beta\frac{r^k }{\sqrt{k}}R_k\cos(\tau_k)\right)R_k\cos(\tau_k)\right]=: \beta\frac{r^k}{\sqrt{k}}+\widetilde{E}_r(k,\beta),
\end{align*}
where for $k$ larger than some universal constant (and additionally the fact $m\le (\frac{3}{2})^m$ for $ m\ge 1$)
\begin{align*}
    |\widetilde{E}_r(k,\beta)| = \left|\sum_{m=2}^\infty\frac{2^m\beta^mr^{mk}}{m!k^{m/2}}\E\left[(R_k\cos(\tau_k))^{m+1}\right]\right| \le k^{-1}\left(\E[|R_k|^3]+\E\left[e^{c_0|R_k|}\right]\right)=O(k^{-1}).
\end{align*}
For (iv), we use only the first term in the expansion and get
\begin{align*}
   &\hspace{0.5cm} \E\left[\exp\left(2\beta\frac{r^k }{\sqrt{k}}R_k\cos(\tau_k)\right)R_k^2\cos^2(\tau_k+\alpha)\right]\\
   &= \E[R_k^2\cos^2(\tau_k+\alpha)]+\E\left[\sum_{m=1}^\infty \frac{(2\beta r^k)^m}{m!k^{m/2}}R_k^{m+2}\cos^{m}(\tau_k)\cos^2(\tau_k+\alpha)\right]= \frac{1}{2}+O(k^{-\frac 12}). 
\end{align*}
For (iii), similarly as in (iv):
\begin{align*}
    \E\left[\exp\left(2\beta\frac{r^k }{\sqrt{k}}R_k\cos(\tau_k)\right)R_k^2\right] = \E[R_k^2]+\E\left[\sum_{m=1}^\infty\frac{(2\beta r^k)^m}{m!k^{m/2}}R_k^{m+2}\cos^m(\tau_k)\right] = 1+O(k^{-{1}/{2}}).
\end{align*}
For (v), similarly we have:
\begin{align*}
    \E\left[\exp\left(2\beta\frac{r^k }{\sqrt{k}}R_k\cos(\tau_k)\right)|R_k|^3\right] \le \E[|R_k|^3]+\E\left[\sum_{m=1}^\infty\frac{(2\beta r^k)^m}{m!k^{m/2}}|R_k|^{m+3}\cos^m(\tau_k)\right] \ll 1.
\end{align*}
This completes the proof.
\end{proof}

\begin{lemma}\label{SZ-lemma2.2!}
      Assume the same settings of Lemma \ref{SZ-lemma2.2}. For $\theta\in[-\pi,\pi)$, $e^{-1/K}\leq r\leq e^{1/K}$, and any $M_*$ large enough depending on $c_0$,
    \[\E\big[|F_{K,M_*}(re^{i\theta})|^{2}\big] \asymp_{M_*} K.\]
\end{lemma}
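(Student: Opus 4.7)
\textbf{Proof proposal for Lemma \ref{SZ-lemma2.2!}.} The plan is to exploit independence of the $X_k$ to turn the second moment into a product of single-coordinate Laplace transforms, then apply the asymptotic in Lemma \ref{SZ-lemma2.2}(i) together with Lemma \ref{lemma:basic}.

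First, write
\[
|F_{K,M_*}(re^{i\theta})|^2 = \exp\Big(2\sum_{k=M_*}^K\frac{r^k}{\sqrt{k}}\Re(X_k e^{ik\theta})\Big),
\]
and use independence of $\{X_k\}$ to obtain
\[
\E\big[|F_{K,M_*}(re^{i\theta})|^2\big] = \prod_{k=M_*}^K \E\Big[\exp\Big(\frac{2r^k}{\sqrt{k}}\Re(X_k e^{ik\theta})\Big)\Big].
\]
Since $X_k = e^{i\tau_k}R_k$ with $\tau_k$ uniform on $[-\pi,\pi)$ and independent of $R_k$, rotational invariance gives $\Re(X_k e^{ik\theta}) \stackrel{d}{=} R_k\cos(\tau_k)$, so each factor equals $\E[\exp(\frac{2r^k}{\sqrt{k}}R_k\cos(\tau_k))]$, which is independent of $\theta$. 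This is precisely the quantity estimated in Lemma \ref{SZ-lemma2.2}(i) with $\beta=1$.

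Next, choosing $M_*\geq k_0$ (the threshold from Lemma \ref{SZ-lemma2.2}), the asymptotic $\E[\exp(\frac{2r^k}{\sqrt{k}}R_k\cos(\tau_k))] = 1 + \frac{r^{2k}}{k} + O(k^{-3/2})$ holds for every $k$ in the product. Taking the logarithm and using $\log(1+x) = x + O(x^2)$ for small $x$ (valid since $r^{2k}/k = O(1/k)$ uniformly in $k\ge M_*$ and $r\in[e^{-1/K},e^{1/K}]$), we obtain
\[
\log \E\big[|F_{K,M_*}(re^{i\theta})|^2\big] = \sum_{k=M_*}^K \frac{r^{2k}}{k} + \sum_{k=M_*}^K O(k^{-3/2}).
\]
The tail sum $\sum_{k\geq M_*} k^{-3/2}$ is bounded by an absolute constant, so the error contributes a multiplicative factor of order $1$. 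Exponentiating and applying Lemma \ref{lemma:basic}, which gives $\exp(\sum_{k=M_*}^K r^{2k}/k) \asymp K$, yields $\E[|F_{K,M_*}(re^{i\theta})|^2] \asymp K$.

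There is no real obstacle here; the only subtle point is ensuring that $M_*$ is taken large enough (depending on $c_0$) so that Lemma \ref{SZ-lemma2.2}(i) applies uniformly across the product and the $O(k^{-3/2})$ remainders can be absorbed without changing the asymptotic order. Since both steps depend only on the moment hypothesis $\E[e^{c_0|R_1|}]<\infty$, the conclusion holds uniformly in $\theta\in[-\pi,\pi)$ and $r\in[e^{-1/K},e^{1/K}]$.
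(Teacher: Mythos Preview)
Your proof is correct and follows essentially the same approach as the paper: factor the moment by independence, use rotational invariance to remove $\theta$, apply Lemma \ref{SZ-lemma2.2}(i) term-by-term, and then invoke Lemma \ref{lemma:basic} after absorbing the summable $O(k^{-3/2})$ errors. The paper's proof is slightly terser (it writes the product directly as $\exp(\sum r^{2k}/k + O(1))$ without spelling out the $\log(1+x)=x+O(x^2)$ step), but the argument is the same.
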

\begin{proof}
Using independence of $\{(R_k,\tau_k)\}_{k\ge 1}$ and rotational invariance of $X_k$, as well as Lemma \ref{SZ-lemma2.2} (i), we have
\begin{align*}
    \E\big[|F_{K,M_*}(re^{i\theta})|^2\big] = \E\big[|F_{K,M_*}(r)|^2\big] &= \E\bigg[\exp\Big(2\Re \sum_{k=M_*}^K\frac{X_k r^k}{\sqrt{k}}\Big)\bigg]\\
    &= \prod_{k=M_*}^K\E\bigg[\exp\Big(2\frac{r^k}{\sqrt{k}}R_k\cos(\tau_k)\Big)\bigg]\\
    &=\exp\bigg(\sum_{k=M_*}^K\frac{r^{2k}}{k}+O(1)\bigg)\asymp_{M_*} \exp\bigg(\sum_{k=M_*}^K\frac{r^{2k}}{k}\bigg)\asymp_{M_*} K,
\end{align*}where the last step uses Lemma \ref{lemma:basic}.    
\end{proof}

Let us recall definitions of tilted probability measures $\Q^{(1)}_{r,M,K}$ and $\Q^{(2)}_{r,M,K,\theta}$ in \eqref{eq:Q1 def} and \eqref{eq:dq2}.

\begin{lemma}\label{lemma-Q-computations}
Assume the same settings of Lemma \ref{SZ-lemma2.2}.  We have for $\Q^{(1)}=\Q^{(1)}_{r,M,K}$, and any  $e^M\vee k_0\leq k< K$ and $e^{-1/K}\le r\leq e^{1/K}$, 
\begin{enumerate}[(i)]
    \item $\E^{\Q^{(1)}}\left[R_k\cos(\tau_k)\right]= \frac{r^k}{\sqrt{k}}+O(k^{-1});$
    \item $\E^{\Q^{(1)}}\left[R_k^2\cos^2(\tau_k)\right]=\frac{1}{2}+O(k^{-{1}/{2}});$
    \item $\E^{\Q^{(1)}}\left[|R_k\cos(\tau_k)|^3\right]\ll 1$.
\end{enumerate}
We also have for $\Q^{(2)}=\Q^{(2)}_{r,M,K,\theta}$, and any  $e^M\vee k_0\leq k<K_r$, $\theta\in[-\pi,\pi)$, and $e^{-1/K}\le r\le e^{1/K}$,
\begin{enumerate}[(i)]
    \item $\E^{\Q^{(2)}}\left[R_k\cos(\tau_k)\right]=(1+\cos(k\theta))\frac{r^k}{\sqrt{k}}+O(k^{-1})$;
    \item $\E^{\Q^{(2)}}\left[R_k^2\cos^2(\tau_k)\right]=\frac{1}{2}+O(k^{-{1}/{2}})\text{ and } \E^{\Q^{(2)}}\left[R_k^2\cos(\tau_k)\cos(\tau_k+k\theta)\right]=\frac{\cos(k\theta)}{2}+O(k^{-{1}/{2}});$
    \item $\E^{\Q^{(2)}}\left[|R_k\cos(\tau_k)|^3\right]\ll 1$.
\end{enumerate}
 Moreover, if $X_1$ is compactly supported, the above statements hold with the choice $k_0=1$.
\end{lemma}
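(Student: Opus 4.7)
The starting observation is that both Radon--Nikodym derivatives factorize over $k$ by independence of $\{(R_k,\tau_k)\}_{k\ge 1}$. Hence, for any integrable function $f$ of a single pair $(R_k,\tau_k)$ and any $k$ in the corresponding range,
\[
\E^{\Q^{(1)}}[f(R_k,\tau_k)]=\frac{\E[f(R_k,\tau_k)\exp(2(r^k/\sqrt{k})R_k\cos(\tau_k))]}{\E[\exp(2(r^k/\sqrt{k})R_k\cos(\tau_k))]},
\]
and similarly for $\Q^{(2)}$ with the tilt $2(r^k/\sqrt{k})R_k(\cos(\tau_k)+\cos(\tau_k+k\theta))$ in both places. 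All three $\Q^{(1)}$-claims then reduce to ratios of quantities estimated in Lemma \ref{SZ-lemma2.2} with $\beta=1$: (i) is obtained by dividing the asymptotic in Lemma \ref{SZ-lemma2.2}(ii) by that of (i); (ii) follows from Lemma \ref{SZ-lemma2.2}(iv) with $\alpha=0$ divided by (i); and (iii) comes from Lemma \ref{SZ-lemma2.2}(v) together with the fact that the denominator is bounded above and below by absolute constants for $k\ge k_0$.

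For the $\Q^{(2)}$ estimates, I will Taylor-expand the exponential of the tilt $T_k:=2(r^k/\sqrt{k})R_k(\cos(\tau_k)+\cos(\tau_k+k\theta))$, using the two elementary trigonometric identities
\[
\E[\cos(\tau_k)\cos(\tau_k+k\theta)]=\tfrac12\cos(k\theta),\qquad \E[(\cos(\tau_k)+\cos(\tau_k+k\theta))^2]=1+\cos(k\theta).
\]
The $n=0,1,2$ terms then give the denominator $\E[e^{T_k}]=1+2(1+\cos(k\theta))(r^{2k}/k)+O(k^{-3/2})$, where the tail $n\ge 3$ is absorbed into $O(k^{-3/2})$ via the same exponential-moment trick used in the proof of Lemma \ref{SZ-lemma2.2}(i) (bounding $\E[|R_k|^n]\le C^n n!\,\E[e^{c_0|R_k|}]/c_0^n$ and using $|\cos(\tau_k)+\cos(\tau_k+k\theta)|\le 2$ to replace $\beta$ by $2$ in those estimates). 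For (i), the numerator expansion produces $(1+\cos(k\theta))(r^k/\sqrt{k})+O(k^{-1})$, and dividing by the denominator yields the stated asymptotic. For (ii), the leading terms $\E[R_k^2\cos^2(\tau_k)]=1/2$ and $\E[R_k^2\cos(\tau_k)\cos(\tau_k+k\theta)]=\cos(k\theta)/2$ appear at order $n=0$, and the $O(k^{-1/2})$ correction arises exactly as in Lemma \ref{SZ-lemma2.2}(iv), with each occurrence of $\cos(\tau_k)$ in the expansion bounded by $1$. For (iii), one simply uses $|R_k\cos(\tau_k)|^3\le |R_k|^3$ and the crude bound $e^{T_k}\le e^{(4r^k/\sqrt{k})|R_k|}$, after which $\E[|R_k|^3 e^{(4/\sqrt{k})|R_k|}]\ll 1$ for $k\ge k_0$.

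The only mildly delicate point is keeping the tail $n\ge 3$ of the Taylor series under the tilted measure uniform in $\theta$; this is handled by the same argument as in Lemma \ref{SZ-lemma2.2} since the bound $|\cos(\tau_k)+\cos(\tau_k+k\theta)|\le 2$ is independent of $\theta$, so only the prefactor in the geometric-like series changes, and the resulting bound is $O(k^{-3/2})$ uniformly in $\theta\in[-\pi,\pi)$. No sum-to-product rewriting is required, although one could alternatively set $\beta_k=|2\cos(k\theta/2)|\in[0,2]$ and substitute $\tau_k\mapsto\tau_k-k\theta/2$ (adjusting by $\pi$ if $\cos(k\theta/2)<0$) to reduce directly to the setting of Lemma \ref{SZ-lemma2.2}.
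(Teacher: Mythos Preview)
Your proposal is correct. For $\Q^{(1)}$ it is essentially identical to the paper's proof: factorize over $k$ and take ratios of the estimates in Lemma~\ref{SZ-lemma2.2}.

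For $\Q^{(2)}$ there is a minor difference worth noting. Your main line of argument re-expands the exponential of the two-angle tilt $T_k$ term by term, computing the needed trigonometric moments directly and bounding the $n\ge 3$ tail by the same exponential-moment trick as in Lemma~\ref{SZ-lemma2.2}. The paper instead applies the sum-to-product identity $\cos(\tau_k)+\cos(\tau_k+k\theta)=2\cos(k\theta/2)\cos(\tau_k+k\theta/2)$ together with the rotational invariance of $\tau_k$ (and the vanishing $\E[e^{\beta\cos\tau_k}\sin\tau_k]=0$) to reduce every $\Q^{(2)}$ quantity to a quotient of expressions already handled by Lemma~\ref{SZ-lemma2.2} with $\beta=2\cos(k\theta/2)$ and a suitable phase shift $\alpha$. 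You mention exactly this reduction at the end as an alternative. The paper's route is a bit cleaner because it reuses Lemma~\ref{SZ-lemma2.2} wholesale rather than repeating its tail estimate, while your direct expansion makes the $\theta$-uniformity explicit via $|\cos\tau_k+\cos(\tau_k+k\theta)|\le 2$; both lead to the same conclusions.
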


\begin{proof}
    By definition,  Taylor's expansion, Fubini's theorem, and Lemma \ref{SZ-lemma2.2}, we have
    \begin{align*}
        \E^{\Q^{(1)}}\left[R_k\cos(\tau_k)\right] &= \frac{\E\left[\exp(\frac{2r^k}{\sqrt{k}}R_k\cos(\tau_k))R_k\cos(\tau_k)\right]}{\E\left[\exp(\frac{2r^k}{\sqrt{k}}R_k\cos(\tau_k))\right]}= \frac{\frac{r^{k}}{\sqrt{k}}+O(k^{-1})}{1+\frac{r^{2k}}{k}+O(k^{-3/2})}= \frac{r^k}{\sqrt{k}}+O(k^{-1}),\\
        \E^{\Q^{(1)}}\left[R_k^2\cos^2(\tau_k)\right] &= \frac{\E\left[\exp(\frac{2r^k}{\sqrt{k}}R_k\cos(\tau_k))R_k^2\cos^2(\tau_k)\right]}{\E\left[\exp(\frac{2r^k}{\sqrt{k}}R_k\cos(\tau_k))\right]}= \frac{\frac{1}{2}+O(k^{-{1}/{2}})}{1+\frac{r^{2k}}{k}+O(k^{-3/2})}= \frac{1}{2}+O(k^{-{1}/{2}}),
    \end{align*}
    and \[\E^{\Q^{(1)}}\left[|R_k\cos(\tau_k)|^3\right] = \frac{\E\left[\exp(\frac{2r^k}{\sqrt{k}}R_k\cos(\tau_k))|R_k\cos(\tau_k)|^3\right]}{\E\left[\exp(\frac{2r^k}{\sqrt{k}}R_k\cos(\tau_k))\right]}\ll \frac{1}{1+\frac{r^{2k}}{k}+O(k^{-3/2})}\ll 1.\] Computation of moments under $\Q^{(2)}$ is similar. Using the rotational invariance of the distribution of $\tau_k$ in the third line, the fact $\E[e^{\beta \cos(\tau_k)}\sin(\tau_k)]=\E[e^{\beta \sin(\tau_k)}\cos(\tau_k)]= 0$ for any fixed $\beta$ in the fourth equality, and Lemma \ref{SZ-lemma2.2} in the fifth, we obtain
    \begin{align*}
        \E^{\Q^{(2)}}\left[R_k\cos(\tau_k)\right] &= \frac{\E\left[\exp(\frac{2r^kR_k}{\sqrt{k}}(\cos(\tau_k)+\cos(\tau_k+k\theta)))R_k\cos(\tau_k)\right]}{\E\left[\exp(\frac{2r^kR_k}{\sqrt{k}}(\cos(\tau_k)+\cos(\tau_k+k\theta)))\right]}\\
        &= \frac{\E\left[\exp(\frac{4r^k\cos(\frac{k\theta}{2})}{\sqrt{k}}R_k\cos(\tau_k+\frac{k\theta}{2}))R_k\cos(\tau_k)\right]}{\E\left[\exp(\frac{4r^k\cos(\frac{k\theta}{2})}{\sqrt{k}}R_k\cos(\tau_k+\frac{k\theta}{2}))\right]}\\
        &= \frac{\E\left[\exp(\frac{4r^k\cos(\frac{k\theta}{2})}{\sqrt{k}}R_k\cos(\tau_k))R_k\cos(\tau_k-\frac{k\theta}{2})\right]}{\E\left[\exp(\frac{4r^k\cos(\frac{k\theta}{2})}{\sqrt{k}}R_k\cos(\tau_k))\right]}\\
        &= \frac{\E\left[\exp(\frac{4r^k\cos(\frac{k\theta}{2})}{\sqrt{k}}R_k\cos(\tau_k))R_k\cos(\tau_k)\right]}{\E\left[\exp(\frac{4r^k\cos(\frac{k\theta}{2})}{\sqrt{k}}R_k\cos(\tau_k))\right]}\cos(\frac{k\theta}{2})\\
        &= \frac{2\cos^2(\frac{k\theta}{2})\frac{r^k}{\sqrt{k}}+O(k^{-1})}{1+4\cos^2(\frac{k\theta}{2})\frac{r^{2k}}{k}+O(k^{-3/2})}= (1+\cos(k\theta))\frac{r^k}{\sqrt{k}}+O(k^{-1}),       \end{align*}
        and analogously, taking $\beta = 2\cos(\frac{k\theta}{2}),~\alpha = \frac{k\theta}{2}$ in Lemma \ref{SZ-lemma2.2} (iv),\begin{align*}
        \E^{\Q^{(2)}}\left[R_k^2\cos^2(\tau_k)\right] &= \frac{\E\left[\exp(\frac{4r^k\cos(\frac{k\theta}{2})}{\sqrt{k}}R_k\cos(\tau_k))R_k^2\cos^2(\tau_k+\frac{k\theta}{2})\right]}{\E\left[\exp(\frac{4r^k\cos(\frac{k\theta}{2})}{\sqrt{k}}R_k\cos(\tau_k))\right]}\\
        &= \frac{\frac{1}{2}+O(k^{-{1}/{2}})}{1+4\cos^2(\frac{k\theta}{2})\frac{r^{2k}}{k}+O(k^{-3/2})}= \frac{1}{2}+O(k^{-{1}/{2}}),
    \end{align*}
    and using Lemma \ref{SZ-lemma2.2} (iii), (iv),
    \begin{align*}
        \E^{\Q^{(2)}}\left[R_k^2\cos(\tau_k)\cos(\tau_k+k\theta)\right] &= \frac{\cos(k\theta)-1}{2}\E^{\Q^{(2)}}\left[R_k^2\right]+\E^{\Q^{(2)}}\left[R_k^2\cos^2(\tau+\frac{k\theta}{2})\right]\\
        &= \frac{\cos(k\theta)}{2}+O(k^{-{1}/{2}}).
    \end{align*} Finally, \[\E^{\Q^{(2)}}\left[|R_k\cos(\tau_k)|^3\right] \ll \frac{\E[|R_k|^3]+O(\frac{1}{\sqrt{k}})}{1+4\cos^2(\frac{k\theta}{2})\frac{r^{2k}}{k}+O(k^{-3/2})}\ll 1.\] This completes the proof.
\end{proof}

Recall \eqref{eq:muk def} and \eqref{eq:nuk def}. 
The discrepancy of the sums over $\mu_k$ and $\nu_k$ is small due to the fluctuation of the cosine function, as pointed out in the next lemma. While the key arguments are already contained in \citep[equation (12.7)]{soundararajan2022model}, we provide a proof here for completeness.

\begin{lemma}\label{lemma:basic 2}We have for any $m\in\N$ and $\theta\in[-\pi,\pi)\setminus\{0\}$ that
    \begin{align}
        \Bigg|\sum_{e^{m-1}\leq k<e^m}\frac{r^{2k}\cos(k\theta)}{k}\Bigg|\ll \frac{\max\{1,r^{2e^m}\}}{|\theta|e^m}.\label{eq:A50}
    \end{align}
    In particular, if $r\leq 1$, for any  $\theta\in[-\pi,\pi)\setminus\{0\}$,
    \begin{align}
        \sum_{m\geq \log\frac{1}{|\theta|}}\Bigg|\sum_{e^{m-1}\leq k<e^m}(\mu_k-\nu_k)\Bigg|\ll 1.\label{eq:A52}
    \end{align}
    Moreover, if $e^m\leq K$ and $r\leq e^{1/K}$, we have
    \begin{align}
        \Bigg|\sum_{e^{m-1}\leq k<e^m}\frac{r^{2k}\cos(k\theta)}{k}\Bigg|\ll \frac{1}{|\theta|e^m}.\label{eq:A51}
    \end{align}
\end{lemma}

\begin{proof}
By summing the geometric series in the third step and using $|1-te^{i\theta}|\geq|\sin(\theta/2)|\geq |\theta|/\pi$, we have
\begin{align*}
    \Bigg|\sum_{e^{m-1}\leq k<e^m}\frac{r^{2k}\cos(k\theta)}{2k}\Bigg|&=\Bigg|\int_0^r\sum_{e^{m-1}\leq k<e^m}t^{2k-1}\cos(k\theta)\d t\Bigg|\\
    &\leq \Bigg|\int_0^r\sum_{e^{m-1}\leq k<e^m}t^{2k-1}e^{ik\theta}\d t\Bigg|\\
    &\leq \int_0^r \frac{\pi}{|\theta|}2t^{2\lceil e^{m-1}\rceil-1}\d t\ll \frac{r^{2\lceil e^{m-1}\rceil}}{|\theta|e^m}\leq \frac{\max\{1,r^{2e^m}\}}{|\theta|e^m}.
\end{align*}
This proves \eqref{eq:A50}. If $r\leq 1$, by \eqref{eq:muk def}, \eqref{eq:nuk def}, and \eqref{eq:A50},
\begin{align*}
    \sum_{m\geq \log\frac{1}{|\theta|}}\Bigg|\sum_{e^{m-1}\leq k<e^m}(\mu_k-\nu_k)\Bigg|&\ll 1+\sum_{m\geq \log\frac{1}{|\theta|}}\Bigg|\sum_{e^{m-1}\leq k<e^m}\frac{r^{2k}\cos(k\theta)}{k}\Bigg|\\
    &\ll 1+\sum_{m\geq \log\frac{1}{|\theta|}}\frac{1}{|\theta|e^m}\ll 1,
\end{align*}
proving \eqref{eq:A52}. If $e^m\leq K$ and $r\leq e^{1/K}$, we have $r^{2e^m}\leq e^2$. Inserting into \eqref{eq:A50} yields \eqref{eq:A51}.    
\end{proof}

Finally, we record the following estimate for the denominator of \eqref{eq:dq2}.
\begin{lemma}\label{lemma:basic3}For any $\theta\in[-\pi,\pi)\setminus\{0\}$, $M\geq \log(10^3/|\theta|)$, and $e^{-1/K}\le r\le e^{1/K}$ where $K\geq K_r$,
  \[\E\bigg[\exp\bigg(2\sum_{m=M+1}^{\log K_r}(Z_0(m)+Z_\theta(m))\bigg)\bigg]\asymp \frac{K_r^2}{e^{2M}}.\]
\end{lemma}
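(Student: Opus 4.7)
The plan is to evaluate the expectation directly using independence across $k$, combine the two cosines into a single one via a sum-to-product identity, and then apply the Taylor-expansion asymptotics from Lemma \ref{SZ-lemma2.2} together with the cancellation estimate in Lemma \ref{lemma:basic 2}. Concretely, writing out the definitions of $Z_0(m)$ and $Z_\theta(m)$ in \eqref{eq:ztheta} and collapsing the sum over batches into a single sum over $k$, the quantity to evaluate becomes
\[
\prod_{k=e^M}^{K_r-1}\E\bigg[\exp\bigg(\frac{2r^k}{\sqrt{k}}R_k\bigl(\cos(\tau_k)+\cos(\tau_k+k\theta)\bigr)\bigg)\bigg].
\]
Using $\cos(\tau_k)+\cos(\tau_k+k\theta)=2\cos(k\theta/2)\cos(\tau_k+k\theta/2)$ together with the rotational invariance of $\tau_k$ (which lets us replace $\tau_k+k\theta/2$ by $\tau_k$ inside the expectation), each factor reduces to $\E[\exp(\tfrac{4r^k\cos(k\theta/2)}{\sqrt{k}}R_k\cos(\tau_k))]$.

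Now I would invoke Lemma \ref{SZ-lemma2.2}(i) with $\beta=2\cos(k\theta/2)\in[-2,2]$, which gives
\[
\E\bigg[\exp\Big(\tfrac{4r^k\cos(k\theta/2)}{\sqrt{k}}R_k\cos(\tau_k)\Big)\bigg]=1+\frac{4\cos^2(k\theta/2)r^{2k}}{k}+O(k^{-3/2})
\]
uniformly in $k\ge k_0$ (which is ensured since $e^M\ge 10^3/|\theta|\ge k_0$ for $|\theta|\le\pi$ as long as we take $M$ large enough; the boundary values below $k_0$ contribute only an $O(1)$ factor). Taking the product and the logarithm, and using $4\cos^2(k\theta/2)=2+2\cos(k\theta)$, I obtain
\[
\log\E\bigg[\exp\bigg(2\sum_{m=M+1}^{\log K_r}(Z_0(m)+Z_\theta(m))\bigg)\bigg]
=\sum_{k=e^M}^{K_r-1}\frac{2r^{2k}}{k}+\sum_{k=e^M}^{K_r-1}\frac{2r^{2k}\cos(k\theta)}{k}+O(1),
\]
since $\sum_{k\ge e^M}k^{-3/2}=O(e^{-M/2})=O(1)$.

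The first sum equals $2\log(K_r/e^M)+O(1)$ by the standard estimate used in Lemma \ref{lemma:basic} (and the bound $e^{-1/K}\le r\le e^{1/K}$ makes $r^{2k}=1+O(1/K)$ uniformly for $k\le K_r\le K$). For the oscillatory second sum, I split the range into batches $e^{m-1}\le k<e^m$ and apply Lemma \ref{lemma:basic 2}, which bounds each batch by $|\theta|^{-1}e^{-m}$. Since $M\ge\log(10^3/|\theta|)$, one has
\[
\sum_{m=M+1}^{\log K_r}\bigg|\sum_{e^{m-1}\le k<e^m}\frac{2r^{2k}\cos(k\theta)}{k}\bigg|\ll \sum_{m\ge M+1}\frac{1}{|\theta|e^m}\ll \frac{1}{|\theta|e^M}\ll 10^{-3},
\]
so this contribution is absorbed into the $O(1)$ error. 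Combining everything yields $\log\E[\,\cdot\,]=2\log(K_r/e^M)+O(1)$, which gives the claimed asymptotic $\asymp K_r^2/e^{2M}$. The main (and only) subtlety is making sure the oscillatory sum is genuinely summable; this is exactly what the hypothesis $M\ge\log(10^3/|\theta|)$ is there to guarantee, via Lemma \ref{lemma:basic 2}.
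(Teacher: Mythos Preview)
Your proposal is correct and follows essentially the same approach as the paper: factor the expectation over $k$ by independence, compute each factor as $1+\tfrac{2r^{2k}}{k}+\tfrac{2r^{2k}\cos(k\theta)}{k}+O(k^{-3/2})$ (the paper cites the computation from the proof of Lemma~\ref{lemma-Q-computations}, while you reach it via the sum-to-product identity and Lemma~\ref{SZ-lemma2.2}(i)), then use Lemmas~\ref{lemma:basic} and~\ref{lemma:basic 2} to handle the harmonic and oscillatory sums respectively. Two minor cosmetic points: Lemma~\ref{SZ-lemma2.2}(i) is stated only for $\beta\in(0,2]$, but the extension to $\beta\in[-2,2]$ is immediate by the symmetry of $R_k$ (or of $\tau_k$); and your claim $r^{2k}=1+O(1/K)$ is slightly too strong for $k$ near $K_r$---what actually holds (and is all you need) is $r^{2k}\asymp 1$ uniformly for $k\le K_r$.
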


\begin{proof}
Recall in the proof of Lemma \ref{lemma-Q-computations} we computed
\begin{align*}
    \E\left[\exp\Big(\frac{2r^kR_k}{\sqrt{k}}(\cos(\tau_k)+\cos(\tau_k+k\theta))\Big)\right]&=1+4\cos^2(\frac{k\theta}{2})\frac{r^{2k}}{k}+O(k^{-3/2})\\
    &=1+\frac{2r^{2k}}{k}+\frac{2r^{2k}}{k}\cos(k\theta)+O(k^{-3/2}).
\end{align*}
Therefore,
\begin{align*}
    \E\bigg[\exp\bigg(2\sum_{m=M+1}^{\log K_r}(Z_0(m)+Z_\theta(m))\bigg)\bigg]&=\prod_{e^M\leq k<K_r}\bigg(1+\frac{2r^{2k}}{k}+\frac{2r^{2k}}{k}\cos(k\theta)+O(k^{-3/2})\bigg)\\
    &\asymp \exp\bigg(\sum_{e^M\leq k<K_r}\Big(\frac{2r^{2k}}{k}+\frac{2r^{2k}}{k}\cos(k\theta)\Big)\bigg).
\end{align*}
By Lemmas \ref{lemma:basic} and \eqref{eq:A51} of \ref{lemma:basic 2}, the desired statement follows.    
\end{proof}

\section{Deferred proofs from Section \texorpdfstring{\ref{sec: weighted partial mass}}{}}
\label{sec:deferred proofs}

\begin{proof}[Proof of Lemma \ref{lemma:sum by parts}]
Suppose first that $|\tau_1|\leq  N^{-1/2}$. Then 
\begin{align}
    \bigg|\sum_{j:|j-m|\leq N^{9/10}}e^{ij\tau_1}\,\frac{|x_1|^{j-m}}{j!/m!}r^j\bigg|\leq \sum_{j:|j-m|\leq N^{9/10}}\frac{|x_1|^{j-m}}{j!/m!}r^j\ll \sum_{j:|j-m|\leq N^{9/10}}\frac{m^{j-m}}{j!/m!}r^m.\label{eq:r^jr^m}
\end{align}
Using \eqref{eq:taylor}, the sum over $j\geq m$ can be bounded by
\begin{align}
    \sum_{j:0\leq j-m\leq N^{9/10}}\frac{m^{j-m}}{j!/m!}\ll \sum_{j'=0}^{N^{9/10}}e^{j'}\Big(\frac{m}{m+j'}\Big)^{m+j'}\leq \sum_{j'=0}^{N^{9/10}}\exp\Big(-\frac{(j')^2}{m}\Big)\ll \sqrt{N}.\label{eq:e^j^2}
\end{align}
The other sum over $j<m$ can be bounded similarly, leading to 
$$\bigg|\sum_{j:|j-m|\leq N^{9/10}}e^{ij\tau_1}\,\frac{|x_1|^{j-m}}{j!/m!}r^j\bigg|\ll \sqrt{N}r^m\ll \frac{r^m}{|\tau_1|}$$in the case $|\tau_1|\leq  N^{-1/2}$.

Now suppose that $|\tau_1|\geq N^{-1/2}$. A consequence of \eqref{eq:r^jr^m} and \eqref{eq:e^j^2} is that
$$\bigg|\sum_{j:|j-m|\leq N^{9/10}}e^{ij\tau_1}\,\frac{|x_1|^{j-m}}{j!/m!}\,r^j\bigg|\ll \bigg|\sum_{j:|j-m|\leq \sqrt{N}}e^{ij\tau_1}\,\frac{|x_1|^{j-m}}{j!/m!}\bigg|\,r^m+r^m.$$
On the other hand, summation by parts yields (without loss of generality, assume $\sqrt{N}\in\Z$)
$$\sum_{j:0\leq j-m\leq \sqrt{N}}e^{ij\tau_1}\,\frac{|x_1|^{j-m}}{j!/m!}=\frac{|x_1|^{\sqrt{N}}}{(\sqrt{N}+m)!/m!}\,\sum_{j=0}^{\sqrt{N}}e^{ij\tau_1}-\sum_{k=1}^{\sqrt{N}}\Big(\sum_{j=0}^{k-1}e^{ij\tau_1}\Big)\Big(\frac{|x_1|^k}{(k+m)!/m!}-\frac{|x_1|^{k-1}}{(k-1+m)!/m!}\Big).$$
Applying triangle inequality then leads to
\begin{align*}
    \bigg|\sum_{j:0\leq j-m\leq \sqrt{N}}e^{ij\tau_1}\,\frac{|x_1|^{j-m}}{j!/m!}\bigg|&\leq \frac{|x_1|^{\sqrt{N}}}{|\tau_1|(\sqrt{N}+m)!/m!}+\frac{1}{|\tau_1|}\sum_{k=1}^{\sqrt{N}}\bigg|\frac{|x_1|^k}{(k+m)!/m!}-\frac{|x_1|^{k-1}}{(k-1+m)!/m!}\bigg|\ll \frac{1}{|\tau_1|}\ll\sqrt{N}.
\end{align*}
The other sum over $j:-\sqrt{N}\leq j-m<0$ is similar. This finishes the proof.   
\end{proof}

\begin{proof}[Proof of Lemma \ref{lemma:sum by parts2}]
The main idea is that, under our assumptions, $e^{ij\tau}\approx e^{im\tau},~|x_1|\approx m,$ and $r^j\approx r^m$ for $|j-m|\leq \sqrt{C_4N}$, and the sum over $j$ with $|j-m|\geq\sqrt{C_4N}$ is negligible. Let us make these approximations precise. First, for $N$ large enough,
\begin{align*}
   \bigg|\sum_{\sqrt{C_4N}\leq |j-m|\leq N^{9/10}}e^{ij\tau_1}\,\frac{|x_1|^{j-m}}{j!/m!}r^j\bigg|&\leq   \sum_{\sqrt{C_4N}\leq |j-m|\leq N^{9/10}}\frac{|x_1|^{j-m}}{j!/m!}r^j\leq Lr^m\sum_{\sqrt{C_4N}\leq |j-m|\leq N^{9/10}}\frac{m^{j-m}}{j!/m!},
\end{align*}since $|\log r|=O(1/N)$. 
Applying the same argument in \eqref{eq:e^j^2} leads to 
$$\sum_{\sqrt{C_4N}\leq |j-m|\leq N^{9/10}}\frac{m^{j-m}}{j!/m!}\ll \sum_{j=\sqrt{C_4N}}^{N^{9/10}}\exp\Big(-\frac{j^2}{m}\Big)\ll e^{-3C_4}.$$
It follows that
\begin{align}
    \bigg|\sum_{\sqrt{C_4N}\leq |j-m|\leq N^{9/10}}e^{ij\tau_1}\,\frac{|x_1|^{j-m}}{j!/m!}r^j\bigg|\leq Lr^me^{-3C_4}.\label{eq:sum1}
\end{align}
Second, for $N$ large enough,
$$
\sum_{m-\sqrt{C_4N}< j< m+\sqrt{C_4N}}\frac{|x_1|^{j-m}}{j!/m!}r^j|e^{ij\tau}-e^{im\tau}|\ll r^m \sum_{j=1}^{\sqrt{C_4N}}\exp\Big(-\frac{j^2}{m}\Big)C_4^{-1/2}\ll r^m\sqrt{N}C_4^{-1/2}.$$
By the triangle inequality,
\begin{align}
   \bigg|\sum_{m-\sqrt{C_4N}< j< m+\sqrt{C_4N}}e^{ij\tau}\frac{|x_1|^{j-m}}{j!/m!}r^j-\sum_{m-\sqrt{C_4N}< j< m+\sqrt{C_4N}}e^{im\tau}\frac{|x_1|^{j-m}}{j!/m!}r^j\bigg| \leq Lr^m\sqrt{N}C_4^{-1/2}.\label{eq:sum2}
\end{align}
In addition,
\begin{align}
     \bigg|\sum_{m-\sqrt{C_4N}< j< m+\sqrt{C_4N}}e^{im\tau}\frac{|x_1|^{j-m}}{j!/m!}r^j\bigg| =\sum_{m-\sqrt{C_4N}< j< m+\sqrt{C_4N}}\frac{|x_1|^{j-m}}{j!/m!}r^j\geq \frac{1}{L} r^m\sqrt{N}.\label{eq:sum3}
\end{align}
Combining \eqref{eq:sum1}--\eqref{eq:sum3} and applying the triangle inequality, we conclude that 
$$\bigg|\sum_{j:|j-m|\leq N^{9/10}}e^{ij\tau}\,\frac{|x_1|^{j-m}}{j!/m!}r^j\bigg|\geq \frac{1}{L} r^m\sqrt{N}-Lr^me^{-3C_4}-Lr^m\sqrt{N}C_4^{-1/2}.$$
The claim then follows by picking $C_4$ large enough.
\end{proof} 
\end{document}